\numberwithin{equation}{section}
\newtheorem*{proposition*}{Proposition}
\newtheorem*{theorem*}{Theorem}
\newtheorem*{conjecture*}{Conjecture}
\newtheorem*{claim*}{Claim}
\newtheorem*{lemma*}{Lemma}
\newtheorem*{corollary*}{Corollary}
\newtheorem{theorem}{Theorem}[section]
\newtheorem{proposition}[theorem]{Proposition}
\newtheorem{lemma}[theorem]{Lemma}
\newtheorem{corollary}[theorem]{Corollary}
\newtheorem*{definition*}{Definition}
\newtheorem{definition}{Definition}[section]
\newtheorem*{assumption*}{\mathcal{A}ssumption}
\newtheorem*{remark*}{Remark}
\newtheorem{remark}{Remark}[section]
\newtheorem{thmx}{Theorem}
\newcommand{\R}{\mathbb{R}}
\newcommand{\s}{\mathbb{S}}
\newcommand{\N}{\mathbb{N}}
\newcommand{\snabla}{\slashed{\nabla}}
\begin{document}

\title{Late-time asymptotics for the wave equation on spherically symmetric, stationary spacetimes}
\author{Y. Angelopoulos, S. Aretakis, and D. Gajic}
\date{February 15, 2018}
\normalsize

\maketitle

\begin{abstract}
We derive precise late-time asymptotics for solutions to the wave equation on spherically symmetric, stationary and asymptotically flat spacetimes including as special cases the Schwarzschild and Reissner--Nordstr\"{o}m families of black holes. We also obtain late-time asymptotics for the time derivatives of all orders and for the radiation field along null infinity. We show that the leading-order term in the asymptotic expansion is related to the existence of the conserved Newman--Penrose quantities on null infinity. 
As a corollary we obtain a characterization of all solutions which satisfy Price's polynomial law  $\tau^{-3}$ as a \textit{lower} bound. 
Our analysis relies on physical space techniques and uses the vector field approach for almost-sharp decay estimates introduced in our companion paper. In the black hole case, our estimates hold in the domain of outer communications up to and including the event horizon. 
Our work is motivated by the stability problem for black hole exteriors and  strong cosmic censorship for black hole interiors. 

\end{abstract}

\tableofcontents

\section{Introduction}

\subsection{Introduction and background}
\label{introo1}

This paper derives precise late-time asymptotics for solutions to the wave equation
\begin{equation}
\label{waveequation}
\square_g\psi=0.
\end{equation} 
on four-dimensional spherically symmetric, stationary and asymptotically flat globally hyperbolic Lorentzian manifolds $(\mathcal{M},g)$. Obtaining such late-time asymptotics  has important applications in mathematical physics and is relevant in the study of the following problems that arise in general relativity: 1) the  long-time study of the Einstein equations,  2) the black hole stability problem,  3) strong cosmic censorship and 4) the propagation of gravitational waves. 

According to the strong Huygens principle, solutions $\psi$ to the flat wave equation 
 on the four-dimensional Minkowski spacetime with compactly supported initial data have trivial asymptotics since for all $x\in\mathbb{R}^{3}$ there is a time $t_{0}(x)$ such that $\psi(t,x)=0$ for all $t\geq t_{0}(x)$.  The situation is drastically different however if we consider the wave equation on curved backgrounds. Indeed, heuristic arguments first put forth by Price \cite{RP72} in 1972 suggest that solutions $\psi$ to  the wave equation on  Schwarzschild backgrounds evolving from compactly supported initial data satisfy
\begin{equation}
\psi(t,{r}_{0},\theta,\varphi)\sim t^{-3}
\label{price1}
\end{equation}
 asymptotically as $t\rightarrow \infty$ along constant $r={r}_{0}$ hypersurfaces. Here $(t,r,\theta,\varphi)$ denote the Boyer--Lindquist coordinates and $r_{0}>2M$, where $M>0$ is the mass parameter of the Schwarzschild spacetime. In fact, Price predicted that 
\begin{equation}
 \psi_{\ell}(t,{r}_{0},\theta,\varphi)\sim t^{-3-2\ell},
\label{pricel}
\end{equation} asymptotically as $t\rightarrow \infty$ along constant $r=r_{0}$ hypersurfaces, 
 where $\psi_{\ell}$ is supported on the $\ell^{\text{th}}$ spherical harmonic frequency. Further work was subsequently obtained by Gundlach, Price and Pullin \cite{CGRPJP94b} (see also \cite{gomez1994})
 for the scalar field along the Schwarzschild event horizon $\mathcal{H}=\left\{(v,r=2M,\omega): v\in\mathbb{R},\, \omega\in\mathbb{S}^{2} \right\}$ (here $v$ is an appropriate ``time'' parameter on $\mathcal{H}$): 
 \begin{equation}
 \psi\left.\!\!\right|_{\mathcal{H}} (v,r=2M,\omega)\    \sim v^{-3} 
 \label{pricehorizon}
 \end{equation}
 and for the radiation field $r\psi$ along the null infinity $\mathcal{I}=\left\{(u,r=\infty, \omega): u\in\mathbb{R},\, \omega\in\mathbb{S}^{2} \right\}$ (here $u$ is an appropriate ``time'' parameter on $\mathcal{I}$):
 \begin{equation}
 r\psi\left.\!\!\right|_{\mathcal{I}}(u,r=\infty, \omega)\ \sim u^{-2}. 
 \label{pricerf}
 \end{equation}
 The structure of the tail of solutions to the wave equation has been intensively studied from a heuristic point of view. Late-time asymptotics for a more general class of curved spacetimes were  predicted in \cite{clsy95} via the analytic continuation of the Green's function.  For works on tails for extremal black holes see \cite{other2, harvey2013,ori2013,mhighinsta,zimmerman1} and also the discussion in  Section \ref{introappli}. For late-time tails of non-linear wave equations see \cite{bizonnonlinear}. 
 
Although very interesting, statements of the form \eqref{price1} are not completely satisfactory from a mathematical point of view for the following two reasons:
\begin{enumerate}

\item \textbf{(Precise leading order term)} The above heuristics do not seem to provide the exact leading-order term in terms of an explicit natural quantity $Q(r,\theta, \varphi,\text{i.d.}[\psi])$ of the initial data such that along constant $r=r_{0}$ hypersurfaces
\begin{equation}
\psi(t,r_{0}, \theta, \varphi)=Q(r_{0},\theta,\varphi,\text{i.d.}[\psi])\cdot \frac{1}{t^3}+O(t^{-3-\epsilon}), \ \epsilon>0, \ \ \text{ as }t\rightarrow \infty.
\label{price2}
\end{equation}
\item \textbf{(Global quantitative estimate)} In view of its  asymptotic character, \eqref{price1} (or even \eqref{price2}) does not provide quantitative bounds for the size of the solutions at \textbf{all} times  in terms of the initial data.  
\end{enumerate}
In fact, the above heuristics only suggest an \textit{upper} bound on the best possible rate in quantitative decay estimates. That is, they suggest that given $R>0$ the following estimate should hold for $r\leq R$,
\begin{equation}
|\psi(t,r,\theta,\varphi)|\leq C_{R}\cdot \sqrt{E_{t=0}[\psi]}\cdot \frac{1}{t^{3}} 
\label{pricepsi}
\end{equation}
where $C_{R}$ is a uniform constant that depends on $R$, and $E_{t=0}[\psi]$ an appropriate weighted higher-order energy norm of the initial data. There has been great progress in establishing estimates of the form \eqref{pricepsi}. Specifically, Dafermos and Rodnianski obtained such bounds for the fully non-linear Einstein--Maxwell-scalar field system in the context of spherical symmetry. Kronthaler \cite{kro} obtained $t^{-3}$ decay for the wave equation on Schwarzschild for a class of smooth spherically symmetric compactly supported initial data with support away from the event horizon. The symmetry assumption on $\psi$ was subsequently removed by Donninger, Schlag and Soffer in \cite{other1} where sharp decay rates were derived for general solutions on Schwarzschild. The same authors obtained in \cite{dssprice}  improved $\ell$-dependent decay rates for fixed spherical harmonic modes consistent with Price's heuristics \eqref{pricel}. Specifically they proved the $2\ell+2$ decay rate for general initial data and the $2\ell+3$ decay rate for static initial data. Sharp upper bounds  were obtained by Metcalfe, Tataru and Tohaneanu \cite{metal} for a general class of asymptotically flat spacetimes (without exact symmetries) based on properties of the fundamental solution for the constant coefficient d'Alembertian  (see also \cite{tataru3}).  A new vector field approach to almost-sharp decay rates for spherically symmetric backgrounds was presented in our companion paper \cite{paper1}. For extensive decay results on a very general class of spacetimes (with non-constant Bondi mass) we refer the reader to the works of Moschidis \cite{moschidis1,molog}.

On the other hand, obtaining a rigorous proof of the asymptotic estimate \eqref{price1} had remained an open problem. The purpose of the present paper is to provide a rigorous proof  of the polynomial tails in the late-time asymptotic expansion of solutions to the wave equation on a general class of spherically symmetric, stationary and asymptotically flat spacetimes (including Schwarzschild and sub-extremal Reissner--Nordstr\"{o}m).  In fact our work  addresses both points discussed above: 
\begin{enumerate}
\item we provide an explicit expression for $Q(r,\theta,\varphi,\text{i.d.}[\psi])$ in terms of the initial data of $\psi$ and give a natural characterization in terms of the Newman--Penrose constant on null infinity,

\item 

we obtain \textit{sharp, global in space and time  quantitative upper and lower bounds.} 
Specifically, given $R>0$ we obtain bounds for solutions to the wave equation of the form 
\begin{equation}
\left| \psi(t,r,\theta,\varphi) -Q(r,\theta,\varphi,\text{i.d.}[\psi])  \cdot \frac{1}{t^{3}}\right| \leq C_{R}\sqrt{E_{t=0}[\psi]}\cdot \frac{1}{t^{3+\epsilon}}, 
\label{ourintro}
\end{equation}
for $r\leq R$, with $C_{R}$ a uniform constant that depends on $R$,  $\epsilon>0$ and $E_{t=0}[\psi]$ some appropriate weighted higher-order energy norm of the initial data. In fact we obtain quantitative bounds global in space (for all $r$) which at the limit $r\rightarrow \infty$ yield the asymptotic behavior of the radiation field at null infinity proving therefore \eqref{pricerf}  (see \eqref{np0rfinfo1} below).
\end{enumerate}

 As a corollary, we provide a  characterization of
 \begin{enumerate}
 \item all initial data which give rise to solutions to the wave equation which satisfy Price's polynomial law \eqref{price1} as a \textit{lower} bound,
 \item  all spherically symmetric initial data which lead to solutions which decay in time faster than any polynomial rate. 
\end{enumerate}
Our proof is based on purely physical space techniques and makes use of the vector field approach for almost-sharp decay estimates introduced in our companion paper \cite{paper1}. We make no use of conformal compactifications and, in the black hole case, we do not need to assume that the initial data are supported away from the event horizon. Our results and methods are consistent with possible applications to non-linear problems, the main prototype of which is the stability problem for exterior regions (see for instance \cite{lecturesMD}). For other potential applications see Section \ref{introappli}.

An informal statement of our main results can be found in Section \ref{introsumm} whereas all the main theorems are listed in Section \ref{themaintheorems}. We conclude this introductory note with the following comments.

\begin{itemize}

\item

 We show that the quantity $Q(r,\theta,\varphi,\text{i.d.}[\psi])$ is independent of $r,\theta,\varphi$ and hence is a constant that depends only on the initial data of $\psi$. In fact, we show that it is equal to a multiple of the Newman--Penrose constant of the time-integral of $\psi$, that is of the regular solution $\widetilde{\psi}$ of the wave equation which satisfies $T\widetilde{\psi}=\psi$.  We call the latter constant, which is  introduced for the first time in this paper,  \textit{``the time-inverted Newman--Penrose constant of $\psi$''} and denote it by $I_{0}^{(1)}[\psi]$. 

For the definition of the Newman--Penrose constant along null infinity see \cite{NP1, np2} and Section \ref{npsection}. For details regarding the time-inverted Newman--Penrose constant see Section \ref{sec:consttinvphi}. 

The existence of the Newman--Penrose constant is a special case of conservation laws along null hypersurfaces, the general theory of which was presented in \cite{aretakisglue}.

\item An immediate corollary of \eqref{ourintro} and the above comment is the following: \textit{All (compactly-supported) initial data which give rise to solutions to the wave equation which satisfy Price's polynomial law $\tau^{-3}$ as a lower bound satisfy}
\begin{equation}
I_{0}^{(1)}[\psi]\neq 0.
\label{charaintro}
\end{equation}
It follows from the definition of $I^{(1)}_{0}[\psi]$, for which we obtain an explicit expression, that \eqref{charaintro} holds for generic (compactly-supported) initial data.

\item The estimate \eqref{ourintro} provides the first lower pointwise bound for solutions to the wave equation on Schwarzschild (and sub-extremal Reissner--Nordstr\"{o}m) backgrounds. The only previously shown result in this direction is the remarkable work of Luk--Oh \cite{Luk2015} where it is shown that a certain weighted energy flux through the horizon generically blows up in time\footnote{This implies in particular that the \emph{time derivatives} of solutions arising from generic initial data cannot decay with a polynomial decay rate faster than $v^{-4}$ along the event horizon. Note also that on Schwarzschild--anti de Sitter (and more generally Kerr--anti de Sitter) backgrounds, Holzegel--Smulevici have shown in \cite{gusmu2} that the decay rate in any \emph{uniform} energy decay estimate cannot be \emph{faster} than logarithmic. See also the analogous results of Keir for ultracompact neutron stars \cite{keir} and supersymmetric microstate geometries \cite{keir2} (where the sharp uniform decay rate is in fact sub-logarithmic).}.  This blow-up result is used in \cite{Luk2015} for obtaining a rigorous proof of the linear instability of the inner Cauchy horizon. 
The lower pointwise bounds for the scalar field and its derivatives along the event horizon obtained in the present paper recover in particular the blow-up result on the event horizon of \cite{Luk2015}. See also Section \ref{lukoh} for a more detailed discussion regarding the relation of \cite{Luk2015} and the present paper.

\item
We provide asymptotics in the case of non-compactly supported initial data as well. We show that the leading order term in this case is proportional to $I_{0}[\psi]\cdot \frac{1}{t^{2}}$, where $I_{0}[\psi]$ denotes the Newman--Penrose constant of $\psi$ (in fact we obtain a quantitative bound similar to \eqref{ourintro}). If the initial data are non-compactly supported but sufficiently decaying towards infinity such that $I_{0}[\psi]=0$ then the solution $\psi$ asymptotes to $ -8I_{0}^{(1)}[\psi] \cdot \frac{1}{t^{3}}$ in which case we obtain the estimate \eqref{ourintro}. If additionally  $ I_{0}^{(1)}[\psi] =0$ then we obtain that the leading-order term of the spherical mean of $\psi$ is proportional to $I_{0}^{(2)}[\psi]\cdot \frac{1}{t^{4}}$ where $I_{0}^{(2)}[\psi]$ denotes what we call the time-inverted Newman--Penrose of second order (see Section \ref{timeinvertednpsection}). The $t^{-4}$ decay rate was first observed numerically in \cite{karzdma}; see also \cite{priceburko,dssprice} and Remark \ref{rmk:initstatic}. We show that this increase in the decay rate in fact continues to \emph{all} orders if we assume the additional vanishing of time-inverted Newman--Penrose constants of higher order. This allows us to obtain a \textit{characterization of all spherically symmetric initial data which lead to solutions which decay in time faster than any polynomial rate in terms of the vanishing of the time-inverted Newman--Penrose constants of all orders}; see Remark \ref{rmk:vanishingkthnpconsts} for more details.

\item

We derive asymptotics for the time derivatives of all orders as well as for the radiation field along the null infinity $\mathcal{I}$ and all its time derivatives\footnote{Regularity results for the radiation field were obtained in \cite{baskinwang} using a partial compactification of Schwarzschild spacetime  (see also \cite{baskinw}). }. The latter confirms the heuristics \eqref{pricerf}. In fact for sufficiently decaying initial data we obtain
\begin{equation}
\left|r\psi\left.\right|_{\mathcal{I}}(u,\cdot) +2I^{(1)}_{0}[\psi]\cdot \frac{1}{u^{2}}\right|\leq C\cdot \sqrt{E_{t=0}[\psi]}\cdot \frac{1}{u^{2+\epsilon}}, \ \ \epsilon>0.
\label{np0rfinfo1}
\end{equation}

\item

 In the black hole case, our estimates hold in the domain of outer communications up to and including the event horizon confirming the heuristics \eqref{pricehorizon}. In fact for smooth compactly supported initial data we obtain
\begin{equation}
\left| \psi\left.\right|_{\mathcal{H}}(v,\cdot) +8I^{(1)}_{0}[\psi]\cdot \frac{1}{v^{3}}\right|\leq C\cdot \sqrt{E_{t=0}[\psi]}\cdot \frac{1}{v^{3+\epsilon}}, \ \ \epsilon>0.
\label{np0rfinfo2}
\end{equation} 
 This is of importance for potential applications to the stability problem for the exterior black hole region and to the strong cosmic censorship and the structure of the interior of black holes. See also Section \ref{introappli}.

 \item (Relation with scattering constructions) The strongly correlated asymptotics \eqref{np0rfinfo1} and \eqref{np0rfinfo2}  along the event horizon $\mathcal{H}$ and the null infinity $\mathcal{I}$ are in complete agreement with the results of Dafermos--Rodnianski--Shlapentokh-Rothman in \cite{linearscattering} where it is shown that generic polynomially decaying scattering data on $\mathcal{H}$ and $\mathcal{I}$ lead to singular backwards-in-time solutions. It is a very interesting problem to investigate the relevance of the forwards-in-time theory of the present paper and  definitive correlation conditions between polynomially decaying scattering data on $\mathcal{H}$ and $\mathcal{I}$ which give rise to regular backwards solutions.

\end{itemize}

\subsection{Summary of main results}
\label{introsumm}

In this section we present rough versions of the main results. For the rigorous statements of the main theorems see Section \ref{themaintheorems}.

We consider spherically symmetric, stationary and asymptotically flat spacetimes $(\mathcal{M}, g)$ and study the global behavior of solutions $\psi$ to the linear wave equation \eqref{waveequation} on such backgrounds. The spacetimes under consideration include the Schwarzschild and the Reissner--Nordstr\"{o}m family of black holes. For the precise assumptions on the spacetime metric $g$ see Section \ref{thespacemanif}. 

Let $\tau$ be a ``time'' function, the level sets $\Sigma_{\tau}$ of which are hyperboloidal hypersurfaces terminating at null infinity (see Section \ref{sec:foliations}). We will obtain the leading order term of the late-time asymptotics of $\psi\left.\! \right|_{\Sigma_{\tau}}\!(\tau,\cdot)$ as a function of $\tau$ and the initial data for $\psi$. 

We make the following ``black-box assumptions'' for the wave equation (see also Section \ref{sec:waveassm}):
\begin{enumerate}
\item\textbf{Boundedness of the non-degenerate energy:} We assume the bound
\[\int_{\Sigma_{\tau}}|\partial\psi |^{2} \leq C \int_{\Sigma_{0}}|\partial\psi |^{2}  \]
for all $\tau\geq 0$, for appropriate derivatives $\partial\psi$ of $\psi$ (see Section \ref{sec:waveassm} for the details). 

\item \textbf{Integrated local energy decay estimate:} For all $R>0$, we assume the bound
\[ \int_{0}^{t}\int_{\Sigma_{\tau}\cap \left\{r\leq R \right\}}|\partial\psi |^{2} \leq C_{R}\sum_{k=0}^{n}\int_{\Sigma_{0}} |\partial T^{k}\psi |^{2}  \]
for some $n\in\mathbb{N}$ which is related to the trapping effect. Note that this estimate, also known as the Morawetz estimate, implies that the event horizon of black hole regions, if present, must necessarily be non-degenerate. 
\end{enumerate}

We first introduce the following expression for compactly supported initial data
\begin{equation}
I_{0}^{(1)}[\psi]=
-\frac{M}{4\pi}\int_{\Sigma_{0}}\Big( 2(1-h_{\Sigma_{0}}D)r\partial_{\rho}\phi-(2-Dh_{\Sigma_{0}})rh_{\Sigma_{0}} T\phi-(r\cdot (Dh_{\Sigma_{0}})')\cdot\phi\Big)\sin\theta dr d\theta d\varphi,
\label{inftinp1}
\end{equation}
where $\phi=r\psi$, $\partial_{\rho}$ is a radial vector field tangential to $\Sigma_{0}$, $T$ is the stationary Killing vector field, and $h_{\Sigma_{0}}$ is a function that depends on the embedding of $\Sigma_{0}$ in $\mathcal{M}$  as in Section \ref{thespacemanif}. For the definition of $I_{0}^{(1)}[\psi]$ for non-compactly supported initial data see Section \ref{timeinvertednpsection}.

We show the following result.

\begin{thmx} \textbf{(Asymptotics for solutions with compactly supported data)} Let $\psi$ be a solution to the wave equation \eqref{waveequation} on the Lorentzian manifolds $(\mathcal{M},g)$, defined in Section \ref{thespacemanif}, with smooth compactly supported initial data. 
Then  $\psi$ satisfies the following asymptotic estimate in the region where $\left\{ r\leq R\right\}$
\begin{equation}
 \psi\left.\! \right|_{\Sigma_{\tau}}\!(\tau,\cdot)  \sim_{asym} -8I_{0}^{(1)}[\psi]\cdot \frac{1}{\tau^{3}},
\label{infasy1}
\end{equation}
as $\tau\rightarrow \infty$. where the constant $I_{0}^{(1)}[\psi]$ is given  by \eqref{inftinp1}.  
Furthermore, in the ``near-infinity'' region  where  $\left\{ r\geq R\right\}$ we have
\begin{equation}
 \psi\left.\! \right|_{\Sigma_{\tau}}\!(\tau,\cdot) \sim_{asym}-4I_{0}^{(1)}[\psi]\cdot \left( 1+\frac{u}{v} \right)\cdot \frac{1}{u^{2}\cdot v},
\label{nearinfiinfo1}
\end{equation}
where $(u,v)$ are double null coordinates. 
\end{thmx}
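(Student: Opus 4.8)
The plan is to isolate the mechanism behind the $\tau^{-3}$ tail: it arises not from $\psi$ itself (whose own Newman--Penrose constant vanishes identically for compactly supported data) but from its \emph{time-integral}. First I would decompose $\psi$ into spherical harmonics and observe, via the almost-sharp $\ell$-dependent rates of the companion paper \cite{paper1} in accordance with \eqref{pricel}, that every mode $\psi_\ell$ with $\ell\geq 1$ decays at least like $\tau^{-5}$, hence strictly faster than $\tau^{-3}$. The leading behaviour is therefore captured entirely by the spherical mean, and it suffices to analyse the $\ell=0$ equation for $\phi=r\psi_0$, which in double-null coordinates is a $1+1$ dimensional wave equation with a decaying potential of size $\sim M r^{-3}$.

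The key step is to construct the regular time-integral $\widetilde{\psi}$, the unique solution of \eqref{waveequation} satisfying $T\widetilde{\psi}=\psi$. Since $\psi$ is compactly supported and spherically symmetric, I would solve $T\widetilde{\psi}=\psi$ along $\Sigma_{0}$ and propagate, checking that the resulting Cauchy data are admissible for the black-box assumptions (this is where the non-degeneracy of the horizon enforced by the Morawetz assumption enters). I would then compute the Newman--Penrose constant of $\widetilde{\psi}$, show it equals $I_{0}^{(1)}[\psi]$, and recover the explicit expression \eqref{inftinp1} by integrating the defining conservation law for $I_{0}[\cdot]$ along $\Sigma_{0}$ and using $T\widetilde{\psi}=\psi$ to trade $\widetilde{\psi}$-data for $\psi$-data. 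Generically $I_{0}^{(1)}[\psi]=I_{0}[\widetilde{\psi}]\neq 0$, so $\widetilde{\psi}$ behaves like a solution with a nonvanishing Newman--Penrose constant even though $\psi$ does not.

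The analytic heart is the asymptotic profile of a solution $\Phi$ with $I_{0}[\Phi]\neq 0$. Using the $r^{p}$-weighted energy hierarchy together with the time-weighted vector fields of \cite{paper1}, I would show that the conserved quantity $I_{0}[\Phi]$ fixes the late-advanced-time limit of the radiation field on $\mathcal{I}$, then propagate this inward to obtain $\Phi\left.\right|_{\Sigma_{\tau}}\sim 4I_{0}[\Phi]\cdot\tau^{-2}$ in $\{r\leq R\}$ together with its matching near-infinity double-null profile. Crucially I would derive the asymptotics of $T\Phi$ \emph{directly} within the same hierarchy, obtaining $T\Phi\left.\right|_{\Sigma_{\tau}}\sim -8I_{0}[\Phi]\cdot\tau^{-3}$, rather than differentiating the expansion for $\Phi$, whose error term is not controlled after differentiation. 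Setting $\Phi=\widetilde{\psi}$ and using $\psi=T\widetilde{\psi}$ yields \eqref{infasy1}, while tracking the full $(u,v)$-dependence of $\phi=r\widetilde{\psi}$ near $\mathcal{I}$ and applying the stationary derivative produces the profile \eqref{nearinfiinfo1}.

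The main obstacle is twofold. First, pinning down the \emph{exact} constants $4$ and $-8$, not merely the rates: this forces me to track the Newman--Penrose constant quantitatively through every level of the $r^{p}$ hierarchy and to compute the precise radiation-field limit, where the $Mr^{-3}$ potential supplies the correct leading coefficient. Second, since a naive differentiation of the $\tau^{-2}$ expansion is illegitimate, I must build the asymptotics for $\psi=T\widetilde{\psi}$ and for $\widetilde{\psi}$ simultaneously inside one almost-sharp-decay scheme, noting that $T$ agrees with $\partial_{\tau}$ up to a spatial derivative whose contribution is lower order. Finally I would patch the interior $\{r\leq R\}$ estimate to the near-infinity $\{r\geq R\}$ estimate consistently across $r=R$, ensuring the $(1+u/v)\,u^{-2}v^{-1}$ profile reproduces both the interior $\tau^{-3}$ tail and the $r\psi\left.\right|_{\mathcal{I}}\sim -2I_{0}^{(1)}[\psi]\,u^{-2}$ behaviour of \eqref{np0rfinfo1}.
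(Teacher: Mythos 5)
Your proposal follows essentially the same route as the paper: reduction to the spherical mean (with the $\ell\geq 1$ modes decaying strictly faster), construction of the time integral $\psi^{(1)}$ whose Newman--Penrose constant is precisely the time-inverted constant $I_0^{(1)}[\psi]$ of \eqref{inftinp1}, derivation of the precise asymptotics for solutions with non-vanishing Newman--Penrose constant, and — crucially, as in the paper — obtaining the asymptotics of $T\psi^{(1)}$ directly within the weighted hierarchy (by commutation) rather than by differentiating the $\tau^{-2}$ expansion, which is exactly how Sections \ref{sec:asymradfieldcaseI}, \ref{sec:consttinvphi} and \ref{sec:asympsi2} proceed. The only inaccuracy is your claim that the $\ell\geq 1$ modes decay like $\tau^{-5}$: the rates actually available from \cite{paper1} and used in this paper are $\tau^{-7/2+\epsilon}$ (the $\ell$-dependent rates \eqref{pricel} are deferred to future work), but since this still beats $\tau^{-3}$ your argument is unaffected.
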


The asymptotic estimate \eqref{infasy1} is to be considered in the sense that 
\[\left| \psi\left.\! \right|_{\Sigma_{\tau}}\!(\tau,\cdot) +8I_{0}^{(1)}[\psi]\cdot \frac{1}{\tau^{3}}\right| \leq C_{R}\sqrt{E[\psi]}\cdot \frac{1}{\tau^{3+\epsilon}}, \]
for $r\leq R$ where $\epsilon>0$ and  $E[\psi]$ is some appropriate weighted higher-order energy norm. Similar comment applies for the asymptotic estimate \eqref{nearinfiinfo1} (see Theorem \ref{thm:asmpsinpn011} of Section \ref{themaintheorems}).

The asymptotics \eqref{infasy1} and \eqref{nearinfiinfo1} hold for general initial data which are sufficiently regular and decaying towards infinity such that the Newman--Penrose constant vanishes. See also Theorem \ref{thm:asmpsinpn011} of Section \ref{themaintheorems}.

An immediate corollary of the above theorem is a complete characterization of all solutions to the wave equation which satisfy Price's polynomial law $\tau^{-3}$ as a lower bound. There arise from initial data on a Cauchy hypersurface such that 
\begin{equation}
I_{0}^{(1)}[\psi]\neq 0.
\label{charaintro}
\end{equation}
Note that \eqref{charaintro} holds for generic initial data with vanishing Newman--Penrose constant. In this case,  \eqref{infasy1} provides indeed the leading term in the late-time asymptotics of $\psi$. 
 If, additionally, $\psi$ satisfies $I_{0}^{(1)}[\psi]= 0$ then the estimate \eqref{infasy1} shows that $\psi$ decays faster than $\tau^{-3}$. In fact, in this case we still obtain the leading order term for the spherical mean $\int_{\mathbb{S}^{2}}\psi$. 
 \begin{thmx} \textbf{(Higher-order asymptotics for the spherical mean)} For compactly supported initial data such that $I^{(1)}_{0}[\psi]=0$ we obtain that 
\begin{equation*}
\begin{split}
 \int_{\mathbb{S}^{2}}\psi\left.\! \right|_{\Sigma_{\tau}}\!(\tau,\cdot)  \sim_{asym} 24\cdot {I_0^{(2)} [\psi ]}  \cdot \frac{1}{\tau^{4}}. 
\end{split}
\end{equation*}
where $I^{(2)}_{0}[\psi]$ that can be computed explicitly by the initial data and is as defined in Section \ref{timeinvertednpsection}. Inductively, if
\[ I_{0}^{(1)}[\psi]=I_{0}^{(2)}[\psi]=\cdots=I_{0}^{(k-1)}[\psi]=0 \]
for $k\in\mathbb{N}^{*}$, for constants $I_{0}^{(1)}[\psi], I_{0}^{(2)}[\psi],\cdots, I_{0}^{(k-1)}[\psi]$ which can be explicitly computed by the initial data as defined in Section \ref{timeinvertednpsection} then 
\begin{equation*}
\begin{split}
 \int_{\mathbb{S}^{2}}\psi\left.\! \right|_{\Sigma_{\tau}}\!(\tau,\cdot)  \sim_{asym} 4(-1)^k(k+1)!\cdot \frac{I_0^{(k)} [\psi ]}{(\tau+1)^{k+2}}
\end{split}
\end{equation*}
in $\left\{ r\leq R\right\}$, for a constant $I^{(k)}_{0}[\psi]$ that can be computed explicitly by the initial data. Furthermore, in this case we obtain in $\left\{r\geq R\right\}$:
\begin{equation*}
\begin{split}
\int_{\mathbb{S}^{2}}\psi \left.\! \right|_{\Sigma_{\tau}}\!(\tau,\cdot)  \sim_{asym} 4(-1)^k k!\cdot {I_0^{(k)} [\psi ]}\cdot \left(1+\sum_{j=1}^k\left(\frac{u}{v}\right)^j\right)\cdot \frac{1}{(u+1)^{k+1}v}.
 \end{split}
\end{equation*}
The above integrals are considered with the standard volume form on $\mathcal{S}^2$: $\sin\theta d\theta d\varphi$.
\end{thmx}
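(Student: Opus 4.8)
The plan is to argue by induction on $k$, using the time-integral construction of Section \ref{timeinvertednpsection} together with the leading-order asymptotics of the first theorem above as the base case, and reducing the order-$k$ statement to the order-$(k-1)$ statement by integrating once in time and then differentiating the resulting profile. Throughout, the restriction to $\int_{\mathbb{S}^{2}}\psi$ is essential and convenient: the spherical mean isolates the $\ell=0$ mode, and all $\ell\geq 1$ modes decay strictly faster (at rate $\tau^{-3-2\ell}$ by Price's heuristic and the companion estimates), so they are subleading and it suffices to track the spherically symmetric part. For $k=1$ the assertion is exactly the first theorem above integrated over $\mathbb{S}^{2}$: with the constants in \eqref{inftinp1} normalised to absorb the $4\pi$ from the $\ell=0$ angular integral, the pointwise profiles $-8I_0^{(1)}[\psi]\tau^{-3}$ and $-4I_0^{(1)}[\psi](1+u/v)(u^2v)^{-1}$ reproduce the stated $k=1$ constants $4(-1)^1(2)!=-8$ and $4(-1)^1(1)!=-4$.

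The first structural ingredient I would record, from the construction of the higher-order time-inverted constants in Section \ref{timeinvertednpsection}, is the \emph{nesting identity}: writing $\widetilde\psi$ for the time-integral ($T\widetilde\psi=\psi$), one has the shift $I_0^{(j)}[\widetilde\psi]=I_0^{(j+1)}[\psi]$ for all $j\geq 0$, where $I_0^{(0)}=I_0$ is the genuine Newman--Penrose constant. Consequently the hypotheses $I_0^{(1)}[\psi]=\cdots=I_0^{(k-1)}[\psi]=0$ translate into $I_0[\widetilde\psi]=I_0^{(1)}[\widetilde\psi]=\cdots=I_0^{(k-2)}[\widetilde\psi]=0$, which are precisely the hypotheses required to invoke the inductive statement at level $k-1$ for $\widetilde\psi$. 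Since $I_0[\psi]=0$ for compactly supported data, $\widetilde\psi$ exists as a regular solution; using its explicit form one checks that $\widetilde\psi$ is sufficiently regular and decaying to lie in the class for which the asymptotics hold. For this reason the induction must be run not for compactly supported data but in the broader class of adequately decaying data with vanishing Newman--Penrose constant, which (unlike compact support) is stable under time-integration.

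For the inductive step, applying the order-$(k-1)$ statement to $\widetilde\psi$ and using $I_0^{(k-1)}[\widetilde\psi]=I_0^{(k)}[\psi]$ gives, in $\{r\leq R\}$,
\[
\int_{\mathbb{S}^{2}}\widetilde\psi\big|_{\Sigma_\tau}\sim_{asym} 4(-1)^{k-1}k!\cdot\frac{I_0^{(k)}[\psi]}{(\tau+1)^{k+1}},
\]
together with the corresponding near-infinity profile whose geometric sum runs only to $j=k-1$ and carries prefactor $4(-1)^{k-1}(k-1)!$. It then remains to pass from $\widetilde\psi$ to $\psi=T\widetilde\psi$. Since $T$ is the stationary Killing field and commutes with the spherical average, $\int_{\mathbb{S}^{2}}\psi=T\!\int_{\mathbb{S}^{2}}\widetilde\psi$, and on the leading $\tau$-profile $T$ acts as $\partial_\tau$ up to lower-order corrections; from $\partial_\tau[(\tau+1)^{-(k+1)}]=-(k+1)(\tau+1)^{-(k+2)}$ we obtain the factor $4(-1)^{k-1}k!\cdot(-(k+1))=4(-1)^k(k+1)!$, as claimed. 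In the near-infinity region the same differentiation, now with $T=\partial_u+\partial_v$ in double-null coordinates acting on $(1+\sum_{j=1}^{k-1}(u/v)^j)(u+1)^{-k}v^{-1}$ and retaining only top-order terms (so that $u+1$ and $u$ are identified at leading order), multiplies the profile by $-k$ and appends the term $(u/v)^k$ to the sum, producing the stated expression with prefactor $4(-1)^{k-1}(k-1)!\cdot(-k)=4(-1)^k k!$.

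The main obstacle is the final differentiation step. One cannot simply differentiate an asymptotic inequality, so the argument must instead invoke the genuine asymptotics for the time derivatives $T^m$ of a solution (derived earlier in the paper), which guarantee both that the leading profile differentiates correctly \emph{and} that the remainder gains an extra power of decay, i.e.\ an $O(\tau^{-(k+1)-\epsilon})$ error for $\widetilde\psi$ yields an $O(\tau^{-(k+2)-\epsilon})$ error for $\psi=T\widetilde\psi$. Establishing this improved-decay remainder requires the commuted boundedness and integrated-local-energy-decay estimates together with the almost-sharp decay machinery of the companion paper \cite{paper1}, and is the technical heart of the argument. The near-infinity bookkeeping—verifying that the discrepancies introduced by replacing $u+1$ with $u$ and by the subleading inverse powers of $v$ are genuinely of lower order in the region $\{r\geq R\}$—is a secondary, purely computational point.
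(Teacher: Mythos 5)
Your core mechanism is the paper's own: the paper constructs the $k$-th time integral $\psi^{(k)}$ (Proposition \ref{prop:tinverseconstr_k}), notes $I_0[\psi^{(k)}]=I_0^{(k)}[\psi]$, transfers the initial-data norms (Corollary \ref{cor:estPnorms}, Proposition \ref{propositiontransition}), and then applies the $T^k$-derivative asymptotics for solutions with \emph{non-vanishing} Newman--Penrose constant (Proposition \ref{prop:tkpsi_lower}, i.e.\ Theorem \ref{thm:asmpsinpn0rf2}) to $\psi^{(k)}$, using $\psi=T^k\psi^{(k)}$. Your induction, once made rigorous, telescopes into exactly this one-shot argument: as you yourself note, the passage from $\widetilde\psi$ to $\psi=T\widetilde\psi$ cannot be done by differentiating an asymptotic estimate, and the only non-circular repair is to carry the full family of $T^m$-asymptotics in the induction hypothesis --- but then the level-$(k-1)$ hypothesis applied to $\widetilde\psi$ with $m=1$ is literally the level-$k$ claim, so all the content sits in the base case, which is precisely Theorem \ref{thm:asmpsinpn0rf2}. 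You should therefore state explicitly that the ``$T^m$-asymptotics derived earlier'' you invoke are those of the non-vanishing-NP case applied to the deepest time integral; invoking $T$-derivative asymptotics of the intermediate time integrals themselves (which all have vanishing NP constant) is the statement under proof and would be circular. Your nesting identity $I_0^{(j)}[\widetilde\psi]=I_0^{(j+1)}[\psi]$ and your remark that the induction must run in a class of data stable under time integration (rather than compactly supported data) are both correct and match the paper.

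The one genuinely incorrect step is your justification for discarding the $\ell\geq1$ modes. You claim they are subleading because they decay like $\tau^{-3-2\ell}$; but the paper does not prove Price's $\ell$-dependent rates (this is explicitly deferred to upcoming work), and the estimates actually available (Proposition \ref{prop:pointdecpsi}) give only $(1+\tau)^{-7/2+\epsilon}$ for $\psi_{\ell=1}$ and $\psi_{\ell\geq2}$. For $k\geq2$ the asserted leading term $\tau^{-k-2}$ decays \emph{faster} than $(1+\tau)^{-7/2+\epsilon}$, so a decomposition-by-decay-rates argument cannot isolate it, and your reduction would collapse for every $k\geq2$. What actually permits the reduction --- and why the theorem is phrased for the spherical mean --- is orthogonality, not decay: $\int_{\mathbb{S}^2}\psi_{\ell=\ell'}\,d\omega=0$ identically for $\ell'\geq1$, so $\int_{\mathbb{S}^2}\psi\,d\omega$ equals the contribution of $\psi_0$ exactly, and the statement is from the outset one about spherically symmetric solutions, as in Theorem \ref{prop:tkpsi_lower1}.
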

\begin{remark}
\label{rmk:vanishingkthnpconsts}
The above theorem provides a complete characterization of all spherically symmetric solutions to the wave equation which decay like $\tau^{-k}$ with $k\in\mathbb{N}$ and $k\geq 3$: such solutions arise from initial data such that 
\[I_{0}[\psi]= I_{0}^{(1)}[\psi]=I_{0}^{(2)}[\psi]=\cdots =I_{0}^{(k-3)}[\psi]=0, \ \ \ I_{0}^{(k-2)}[\psi]\neq 0. \]
Hence all spherically symmetric solutions to the wave equation which decay faster than any polynomial rate (eg. exponentially decaying solutions) satisfy \[I_{0}^{(j)}[\psi]=0\] for all $j\in\mathbb{N}$.

\end{remark}

\begin{remark}\textbf{(The leading-order coefficient $I_{0}^{(1)}[\psi]$)}
 The constant $I_{0}^{(1)}[\psi]$ coincides with the Newman--Penrose constant of the time integral $\widetilde{\psi}$, i.e.~the unique regular solution $\widetilde{\psi}$ to the wave equation such that 
\[T\widetilde{\psi}=\psi.\]
We shall call this constant the time-inverted Newman--Penrose constant of $\psi$. See also Section \ref{sec:consttinvphi}. 
 This observation implies that the existence of this conserved quantity is fundamental in the late-time asymptotics even in the case of compactly supported initial data. For a classification of null hypersurfaces admitting such conserved quantities see \cite{aretakisglue}. 
\end{remark}

\begin{remark}\textbf{(Asymptotics for time-symmetric initial data)}
\label{rmk:initstatic}
 If we consider initial data on the $\left\{t=0 \right\}$ hypersurface on Schwarzschild spacetime, where $t$ is the Boyer--Lindquist coordinate with \underline{support away from the bifurcation sphere} and  $T\psi\left.\right|_{t=0}=0$ then a simple calculation shows that $I^{(1)}_{0}[\psi]=0$. Hence we obtain faster decay for such solutions (in fact in view of the above theorem we obtain the precise late-time asymptotics) which is in agreement with one of the results of \cite{dssprice}. 
\end{remark}

\begin{remark} \textbf{(Asymptotics for the radiation field)}
 The asymptotic estimate \eqref{nearinfiinfo1} yields the following asymptotic for the radiation field $r\psi$ along null infinity
\begin{equation}
r\psi\left.\right|_{\mathcal{I}^{+}}(u,\cdot) \sim_{asym} -2I^{(1)}_{0}[\psi]\cdot \frac{1}{u^{2}}.
\label{np0rfinfo}
\end{equation}
In fact, for obtaining the more general \eqref{nearinfiinfo1} we first need to obtain the above estimate. See also Section \ref{prevtech}.

\end{remark}

\begin{thmx}\textbf{(Asymptotics for $T^{k}\psi$)} The derivatives $T^{k}\psi$, $k\geq 1$, satisfy the following asymptotic estimate in the region where $\left\{ r\leq R\right\}$
\begin{equation}
T^{k} \psi\left.\! \right|_{\Sigma_{\tau}}\!(\tau,\cdot)  \sim_{asym} T^{k}\left( -8I_{0}^{(1)}[\psi]\cdot \frac{1}{\tau^{3}}\right),
\label{infasyk}
\end{equation}
as $\tau\rightarrow \infty$. Clearly the right hand side decays like $\tau^{-3-k}$. We also obtain the following estimate in the  region where  $\left\{ r\geq R\right\}$
\begin{equation}
 T^{k}\psi\left.\! \right|_{\Sigma_{\tau}}\!(\tau,\cdot) \sim_{asym}T^{k}\left( -4I_{0}^{(1)}[\psi]\cdot \left( 1+\frac{u}{v} \right)\cdot \frac{1}{u^{2}\cdot v}\right).
\label{nearinfiinfok}
\end{equation}
Clearly the right hand side decays like $u^{-2-k}\cdot v^{-1}$.
\end{thmx}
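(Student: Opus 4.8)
The plan is to deduce the asymptotics of $T^{k}\psi$ from the asymptotics already established for $\psi$ by reapplying the main theorems to the solution $T^{k}\psi$ itself, rather than by formally differentiating the expansion for $\psi$. The two structural inputs are: first, since $T$ is the stationary Killing field we have $[\square_{g},T]=0$, so $T^{k}\psi$ solves \eqref{waveequation}, and its initial data on $\Sigma_{0}$ are obtained from those of $\psi$ by differentiation, hence are again smooth and compactly supported; second, because the slices $\Sigma_{\tau}$ are the images of $\Sigma_{0}$ under the time-$\tau$ flow of $T$, restriction and $T$-differentiation commute, so that $T^{k}\psi|_{\Sigma_{\tau}}(\tau,\cdot)=\partial_{\tau}^{k}\big(\psi|_{\Sigma_{\tau}}(\tau,\cdot)\big)$ in $\{r\le R\}$, while near infinity $T$ acts as $\partial_{u}+\partial_{v}$ in the double-null coordinates. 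Thus the content of the statement is precisely that the leading profile of $\psi$ may be differentiated $k$ times, which is exactly what the invariant reapplication will justify.

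The key computation is the bookkeeping of the time-inverted Newman--Penrose constants of $\Psi:=T^{k}\psi$. By definition the $m$-th time integral of $\Psi$ is the regular solution whose $m$-th $T$-derivative is $\Psi$; for $1\le m\le k$ this is $T^{k-m}\psi$, which has compactly supported data, whereas for $m=k+1$ it is the time integral $\widetilde{\psi}$ of $\psi$. Since the ordinary Newman--Penrose constant is computed as a limit at null infinity of the initial data and therefore vanishes for compactly supported data, I would conclude
\begin{equation*}
I_{0}[\Psi]=I_{0}^{(1)}[\Psi]=\cdots=I_{0}^{(k)}[\Psi]=0,\qquad I_{0}^{(k+1)}[\Psi]=I_{0}\big[\widetilde{\psi}\big]=I_{0}^{(1)}[\psi].
\end{equation*}
In other words, for the solution $\Psi$ the first possibly non-vanishing time-inverted Newman--Penrose constant is of order $k+1$ and equals $I_{0}^{(1)}[\psi]$.

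With this in hand the conclusion follows from the higher-order asymptotics theorem applied to $\Psi$ with index $k+1$: in $\{r\le R\}$ the spherical mean of $\Psi$ has leading term proportional to $I_{0}^{(k+1)}[\Psi]\,\tau^{-(k+3)}=I_{0}^{(1)}[\psi]\,\tau^{-(3+k)}$, and since the leading behavior is carried by the $\ell=0$ mode (higher modes decaying like $\tau^{-3-2\ell}$, hence faster), this also governs the full field. It then remains to check that the explicit constant $4(-1)^{k+1}(k+2)!\,I_{0}^{(1)}[\psi]$ produced by that theorem coincides with $T^{k}\big(-8I_{0}^{(1)}[\psi]\tau^{-3}\big)=-8I_{0}^{(1)}[\psi]\,\partial_{\tau}^{k}(\tau^{-3})$, and similarly that the near-infinity profile $4(-1)^{k+1}(k+1)!\,I_{0}^{(1)}[\psi]\big(1+\sum_{j=1}^{k+1}(u/v)^{j}\big)(u+1)^{-(k+2)}v^{-1}$ agrees with $(\partial_{u}+\partial_{v})^{k}$ applied to $-4I_{0}^{(1)}[\psi](1+u/v)u^{-2}v^{-1}$; both reduce to comparing binomial factors.

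The main obstacle, and the reason the naive route fails, is that one cannot in general differentiate an asymptotic expansion: the error $O(\tau^{-3-\epsilon})$ in the expansion of $\psi$ need not differentiate to something $o(\tau^{-3-k})$. The invariant reapplication circumvents this, but it relocates the difficulty to two points that must be handled with care: verifying the regularity and uniqueness of the iterated time integrals $\widetilde{\psi},\widetilde{\widetilde{\psi}},\dots$ so that the identities for $I_{0}^{(m)}[\Psi]$ are legitimate (this uses the existence theory for the time integral established earlier), and carrying out the constant-matching near infinity, where the profile $1+\sum_{j}(u/v)^{j}$ interacts nontrivially with $T=\partial_{u}+\partial_{v}$. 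The first is the genuinely conceptual step; the second is algebra.
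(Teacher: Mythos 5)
Your proposal is correct and follows essentially the same route as the paper: both arguments hinge on the identity $T^{k}\psi_{0}=T^{k+1}\psi^{(1)}$, where $\psi^{(1)}$ is the single genuine time integral of the spherical mean with $I_{0}[\psi^{(1)}]=I_{0}^{(1)}[\psi]$, followed by an application of the non-vanishing Newman--Penrose asymptotics for $T$-derivatives (Proposition \ref{prop:tkpsi_lower}, built on Propositions \ref{prop:tklower}--\ref{asymprecTk}), with the higher angular modes absorbed by the faster decay of Proposition \ref{prop:pointdecpsi}. Your bookkeeping for $\Psi=T^{k}\psi$ --- namely $\Psi^{(m)}=T^{k-m}\psi$ for $m\leq k$ and $\Psi^{(k+1)}=\psi^{(1)}$, whence $I_{0}^{(m)}[\Psi]=0$ for $m\leq k$ and $I_{0}^{(k+1)}[\Psi]=I_{0}^{(1)}[\psi]$ --- is a repackaging of exactly this reduction, and your constant matching is right, e.g.\ $T^{k}\left(-8I_{0}^{(1)}[\psi]\tau^{-3}\right)=4(-1)^{k+1}(k+2)!\,I_{0}^{(1)}[\psi]\,\tau^{-3-k}$.

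One caveat: invoking Theorem \ref{prop:tkpsi_lower1} as a black box with index $k+1$ imports its hypothesis that $D$ admits an expansion $1-2Mr^{-1}+\sum_{m=0}^{n-1}d_{m}r^{-m-1}+O_{3+n}(r^{-n-\beta})$ with $n\geq k+1$, which the statement you are proving does not assume. That hypothesis enters only through Proposition \ref{prop:tinverseconstr_k}, i.e.\ through the construction of \emph{iterated} time integrals for general (non-compactly supported) data; in your situation the integrals $\Psi^{(1)},\dots,\Psi^{(k)}$ are explicitly $T^{k-1}\psi,\dots,\psi$, all with compactly supported data, so only one genuine inversion of $T$ ever occurs and no refined expansion of $D$ is needed. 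To keep the hypotheses sharp you should therefore bypass Theorem \ref{prop:tkpsi_lower1} and apply Proposition \ref{prop:tkpsi_lower} with index $k+1$ directly to $\psi^{(1)}$ (justified by Proposition \ref{prop:constructionTinvpsi}, Corollary \ref{cor:estPnorms} and Proposition \ref{propositiontransition}), which is precisely what the paper does; the uniqueness argument for the intermediate time integrals, which you correctly flag as the conceptual point, is then not needed at all.
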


So far we have considered initial data which are sufficiently decaying such that the Newman--Penrose constant vanishes. The next theorem concerns asymptotics in the case of finite non-vanishing  Newman--Penrose constant. If fact, according to the analysis of the present paper, it is of fundamental importance to first prove the following theorem and then attempt to prove the previous theorems. 

\begin{thmx} \textbf{(Asymptotics for $\psi$ with non-vanishing Newman--Penrose constant)}
If $\psi$ is a solution to the wave equation with non-vanishing Newman--Penrose constant $I_{0}[\psi]\neq 0$ then we have
\begin{equation}
\psi\left.  \right|_{\Sigma_{\tau}}\!(\tau,\cdot)  \sim_{asym} 4 I_{0}[\psi]\cdot \frac{1}{\tau^{2}} 
\label{nzeronpinfo}
\end{equation}
in $\left\{r\leq R \right\}$, and more generally if $k\geq 1$ then
\begin{equation}
 T^{k}\psi \left. \right|_{\Sigma_{\tau}}\!(\tau,\cdot)  \sim_{asym} T^{k}\left(4 I_{0}[\psi]\cdot \frac{1}{\tau^{2}}  \right)
\label{tkn0np}
\end{equation}
in $\left\{r\leq R \right\}$. Note that the right hand side of \eqref{tkn0np} decays like $\tau^{-2-k}$. As far as the ``near-infinity'' region $\left\{ r\geq R\right\}$ is concerned, we have 
\[\psi\left.\! \right|_{\Sigma_{\tau}}\!(\tau,\cdot) \sim_{asym} 4I_{0}[\psi]\left(1+\frac{u}{v} \right)\cdot \frac{1}{u\cdot v}  \] 
and more generally
\[ T^{k}\psi\left.\! \right|_{\Sigma_{\tau}}\!(\tau,\cdot) \sim_{asym} T^{k}\left(4I_{0}[\psi]\left(1+\frac{u}{v} \right)\cdot \frac{1}{u\cdot v}  \right).  \]

 \end{thmx}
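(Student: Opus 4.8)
The plan is to establish the statement first for the spherical mean and then recover the general $T^{k}$ case by commutation. Writing $\phi=r\psi$ and passing to double null coordinates $(u,v)$, the wave equation \eqref{waveequation} reduces to a $1+1$-dimensional equation $\partial_{u}\partial_{v}\phi=V\phi$ with a short-range potential $V=O(r^{-3})$ on the spherically symmetric part (the higher modes carry in addition the angular operator $\sim r^{-2}\sD$). By the almost-sharp decay estimates of \cite{paper1}, every fixed nonzero frequency $\psi_{\ell}$ decays strictly faster than $\tau^{-2}$ in $\{r\le R\}$ (in agreement with the rate $\tau^{-2\ell-2}$), so it suffices to extract the leading term from the $\ell=0$ part and to absorb $\sum_{\ell\ge 1}\psi_{\ell}$ into the $O(\tau^{-2-\epsilon})$ error. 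For the derivatives, since $T$ is Killing we have $[T,\square_{g}]=0$, so $T^{k}\psi$ again solves \eqref{waveequation}; the $T$-invariance of the foliation identifies $T$ with $\partial_{\tau}$ in $\{r\le R\}$ and with $\partial_{u}+\partial_{v}$ near infinity, and because the Newman--Penrose constant of $T^{j}\psi$ vanishes for $j\ge 1$, the asymptotics of $T^{k}\psi$ are exactly the $T$-derivatives of the leading term of $\psi$, with remainder obtained by rerunning the $k=0$ argument on the commuted field.

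The analytic core is the radiation field. I would use the normalisation in which $I_{0}[\psi]=\lim_{v\to\infty}r^{2}\partial_{v}\phi$ at fixed $u$, the $u$-independence of which (its conservation along $\mathcal I^{+}$, Section~\ref{npsection}) follows by integrating the transport identity for $r^{2}\partial_{v}\phi$, whose right-hand side is a sum of terms each of size $O(r^{-1})$ and hence vanishing in the limit $v\to\infty$, the error integrals being controlled by the almost-sharp decay estimates of \cite{paper1}. In particular $\partial_{v}\phi\sim I_{0}[\psi]\,r^{-2}$, so $\phi$ stays bounded and the radiation field $\Phi(u):=\lim_{v\to\infty}\phi(u,v)$ exists. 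The first quantitative goal is then
\[ r\psi|_{\mathcal I^{+}}(u,\cdot)=2I_{0}[\psi]\,u^{-1}+O(u^{-1-\epsilon}),\]
the $I_{0}\ne 0$ counterpart of \eqref{np0rfinfo}: the nonvanishing constant is the source of the tail, and integrating the $r^{p}$-weighted identities together with the conservation of $I_{0}[\psi]$ pins down both the $u^{-1}$ rate and the coefficient $2I_{0}[\psi]$.

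Given the sharp behaviour at $\mathcal I^{+}$, the plan is to propagate inward. Using $\partial_{v}\phi\sim I_{0}[\psi]r^{-2}$ and $\phi(u,v)=\Phi(u)-\int_{v}^{\infty}\partial_{v}\phi\,dv'$ with $r\approx (v-u)/2$ gives $\phi=\Phi(u)-2I_{0}[\psi]r^{-1}+\ldots$, and hence to leading order $\psi\approx \Phi(u)/r\approx 4I_{0}[\psi](uv)^{-1}$; refining this to the next order in $r^{-1}$ reproduces the near-infinity profile $4I_{0}[\psi](1+u/v)(uv)^{-1}$ of the statement in $\{r\ge R\}$. To reach $\{r\le R\}$ I would combine this with the boundedness and integrated local energy decay assumptions of Section~\ref{sec:waveassm}: up to fast-decaying energy errors the interior value on $\Sigma_{\tau}$ is determined by the flux crossing the late ingoing cone emanating from $\mathcal I^{+}$ at retarded time $u\sim\tau$, which converts the $u^{-1}$ decay of the radiation field into the interior rate $\tau^{-2}$, the coefficient $4I_{0}[\psi]$ being fixed quantitatively by the same conserved quantity.

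I expect the principal obstacle to be \emph{sharpness} rather than the rate itself: producing the precise coefficient together with a genuine $O(\tau^{-2-\epsilon})$ remainder. This forces one (i) to commute sufficiently far up the $r^{p}$-hierarchy of \cite{paper1} to gain the extra $\epsilon$ of decay while keeping the backscattering generated by $V$ under control, (ii) to absorb the loss of $T$-derivatives caused by trapping at the photon sphere, which is exactly why the integrated local energy decay assumption is stated with a finite number of commutations, and (iii) to transport the precise coefficient, not merely a bound, from $\mathcal I^{+}$ through the near-infinity region into $\{r\le R\}$ without degradation; it is in this last step that the conservation of $I_{0}[\psi]$ is used in an essential and quantitative way.
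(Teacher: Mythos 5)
Your treatment of the wave zone is essentially the paper's: the transport estimate for $v^{2}\partial_{v}\phi$ obtained by integrating $\partial_{u}\partial_{v}\phi=-\frac{DD'}{4r}\phi$ in $u$ against the almost-sharp bounds of \cite{paper1}, followed by integration in $v$, is exactly Propositions \ref{prop:lower1} and \ref{cor1}. The genuine gap is in how you bring the coefficient from the wave zone into $\{r\leq R\}$. Your proposed mechanism --- the interior value on $\Sigma_{\tau}$ is ``determined by the flux crossing the late ingoing cone'', controlled by the energy boundedness and Morawetz assumptions --- cannot work: those assumptions give $L^{2}$/integrated control, degrade and lose derivatives at trapping, and are insensitive to a pointwise coefficient; no flux identity converts $u^{-1}$ decay of the radiation field into the pointwise statement $\psi=4I_{0}[\psi]\tau^{-2}+O(\tau^{-2-\epsilon})$. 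What the paper actually uses here, and what is absent from your proposal, is an \emph{improved pointwise decay estimate for the derivative tangential to the foliation}, $r^{1/2}|\partial_{v}\psi|+\rho^{1/2}|\partial_{\rho}\psi|\lesssim \tau^{-5/2+\epsilon}$ (Section \ref{improvedhyperboloidaldecay}, estimate \eqref{eq:pointdecayNYpsi1}); this is itself nontrivial, requiring the degenerate elliptic estimate of Lemma \ref{lm:elliptic}, the commuted redshift estimates of Lemmas \ref{lm:redshift} and \ref{lm:redshiftNpsi}, and the Gr\"onwall-type Lemma \ref{lm:gronwall}. One then integrates this bound along $\Sigma_{\tau}$ inward \emph{starting from the curve} $\gamma_{\alpha+6\epsilon}$ where $r\sim v^{\alpha+6\epsilon}$: the integration costs a factor $r^{1/2}|_{\gamma}\sim u^{(\alpha+6\epsilon)/2}$, and only because $\alpha+6\epsilon<1$ strictly does the total error decay faster than $u^{-2}$, so that the coefficient survives (Proposition \ref{prop:psi_lower}). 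Your proposal contains neither this derivative estimate nor the curves $\gamma_{\alpha}$ with $\alpha<1$, so the intermediate region $R\leq r\lesssim u^{\alpha}$ and the interior $\{r\leq R\}$ are never actually reached with the correct coefficient.

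A second gap concerns $k\geq 1$. You assert that, since $I_{0}[T^{j}\psi]=0$ for $j\geq 1$, the asymptotics of $T^{k}\psi$ are ``exactly the $T$-derivatives of the leading term of $\psi$, with remainder obtained by rerunning the $k=0$ argument on the commuted field''. This is a non sequitur: an asymptotic expansion cannot be differentiated ($\psi=4I_{0}\tau^{-2}+O(\tau^{-2-\epsilon})$ carries no information about $T\psi$), and rerunning the $k=0$ argument on $T\psi$ produces \emph{no} leading term at all, precisely because the leading term in that argument is proportional to the Newman--Penrose constant of the field considered, which for $T\psi$ vanishes. The paper instead derives asymptotics for $\partial_{v}^{k+1}\phi$ by commuting the transport equation with $\partial_{v}^{k}$ (Proposition \ref{prop:tklower}), converts these into asymptotics for $\partial_{v}(T^{k}\phi)$ via the wave equation (Proposition \ref{cor:estpartialvTkphi}), integrates again from $\gamma_{\alpha_{k}}$ (Proposition \ref{asymprecTk}), and only then repeats the interior extension with the commuted improved derivative bounds (Proposition \ref{prop:tkpsi_lower}). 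Without this hierarchy of commuted transport estimates, the $T^{k}$ statements in the theorem are not proved.
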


\subsection{Applications}
\label{introappli}

We list below upcoming extensions and a few potential applications of our results and techniques.

\begin{itemize}

\item 

(Refinements and extensions) In upcoming works we extend our results to the following directions:
\begin{itemize}
\item We obtain higher-order terms in the late-time asymptotic expansion and show that they contain logarithmic corrections. See also \cite{gomez1994,SH99}.
\item 
We relax the symmetry assumptions on  the spacetime metric so that our class of spacetimes includes in particular the sub-extremal Kerr family of black holes (see \cite{pricekerr, burkokerr} for work on tails on Kerr backgrounds). 

\item 
We obtain late-time asymptotics for each spherical harmonic parameter $\ell$ confirming the heuristics \eqref{pricel}. This is possible via an extension of the method of \cite{paper1} and the derivation of new elliptic estimates for solutions supported on angular frequencies greater of equal to  $\ell$.  

\end{itemize}

\item
(Linearized gravity and gravitational waves)
Determining the leading-order term in the late-time asymptotic expansion along null infinity for solutions to the Teukolsky equation, and more generally, for the linearized Einstein equations would be of  importance for the study of the propagation of gravitational waves. In a future work we will investigate the relevance of our method to the study of these systems.

\item 
(The black hole interior and  strong cosmic censorship)
Our results are relevant for the study of the wave equation in the black hole interior region and in particular for understanding the extendibility of solutions beyond the Cauchy horizon. Bounds of the scalar field in the interior regions are important for investigating the stability properties of the Cauchy horizon and hence addressing  strong cosmic censorship.  Such bounds rely heavily on upper and lower bounds for solutions to the wave equation along the event horizon, as first demonstrated rigorously by Dafermos in \cite{MD03,MD05c}. See also \cite{MD12, Luk2015, LukSbierski2016, DafShl2016, Hintz2015, Franzen2014, Luk2016a,Luk2016b} for results in the interior of sub-extremal black holes. 

In the case of extremal black hole interiors, the third author showed in \cite{gajic} that the sharpness of the decay rates along the event horizon for solutions to the wave equation  is intimately related to the extendibility properties across the Cauchy horizon; see also analogous results in extremal Kerr--Newman \cite{gajic2}. In fact, \cite{gajic} illustrates how the \emph{precise} leading-order behaviour of spherically symmetric solutions along the extremal event horizon  and the conjectured next-to-leading order behaviour predicted by numerics of \cite{harvey2013} are crucial for the statement of $C^2$-extendibility of spherically symmetric solutions across the horizon. Obtaining a rigorous proof of these asymptotics is the topic of a future work. 

We further remark that the precise exponents in the \emph{exponential} decay rates along the event horizon of solutions to (\ref{waveequation}) on sub-extremal Reissner--Nordstr\"om--de Sitter are also relevant for establishing higher-regularity extendibility properties beyond the Cauchy horizon in the cosmological black hole interior \cite{Costa2015, Costa2014, Costa2014a, Hintz2015a}.

In comparison, in the black hole exterior of sub-extremal Kerr--{anti} de Sitter satisfying the Hawking--Reall bound, the sharp uniform energy decay rate of solutions to (\ref{waveequation}) has been shown to be merely \emph{logarithmic}, see \cite{gusmu1,gusmu2}.

\item (Extremal black holes)
 A mathematical study of the wave equation on extremal Reissner--Nordstr\"om and extremal Kerr was initiated by the second author in \cite{aretakis1, aretakis2,aretakis3, aretakis4, aretakis2012, aretakis2013} establishing in particular that transversal derivatives  along the event horizon generically \emph{do not} decay and higher-order transversal derivatives \emph{blow up} asymptotically in proper time (see also  \cite{aag1}).

Subsequent numerical and heuristic work by Reall et al  \cite{hm2012,harvey2013}, Ori \cite{ori2013} and Sela\cite{Sela2015} gave evidence in support of a modified weaker power-law of solutions to (\ref{waveequation}) on extremal Reissner--Nordstr\"om. We also refer the reader to the very interesting recent works \cite{zimmerman1, zimmerman2} for the power-law decay on extremal Kerr backgrounds. 

In an upcoming work \cite{aag7} we derive the late time asymptotics of scalar fields on extremal Reissner--Nordstr\"om.

\end{itemize}

\subsection{Overview of techniques}
\label{prevtech}
In this section we present the main new ideas for obtaining the precise late-time asymptotic behavior of solutions to spherically symmetric, stationary and asymptotically flat spacetimes.

\medskip

\textbf{Step 1: Spherical mean decomposition}

\smallskip

We first decompose any solution $\psi$ to the wave equation as follows:
\[\psi=\psi_{0}+\psi_{ 1}, \]
where 
\[\psi_{0}=\frac{1}{4\pi}\int_{\mathbb{S}^{2}}\psi \sin\theta d\theta d\varphi\]
and \[\psi_{ 1}=\psi-\psi_{0}.\]
Based on the vector field approach that was first introduced in \cite{paper1}, for smooth compactly supported initial data (and more generally for sufficiently regular and sufficiently decaying initial data) we obtain the following decay rate for $\psi_{ 1}$:
\[|\psi_{1}(\tau,r,\theta,\varphi)|\leq C\cdot \sqrt{E^{\epsilon}[\psi_{1}]}\cdot \frac{1}{\tau^{\frac{7}{2}-\epsilon}},   \]
where 
\[E^{\epsilon}[\psi_{1}] =  E_{1;1}^{\epsilon}[\psi_{\ell=1}]+E^{\epsilon}_{2;1}[\psi_{\ell\geq 2}]   \] 
where the energy norms are as defined in the Appendix \ref{apx:energynorms}.

Therefore, since the leading order term in the late-time asymptotics of $\psi$ is conjectured to decay like $\tau^{-3}$, it suffices to obtain the late-time asymptotics for the spherically symmetric part $\psi_{0}$ of $\psi$. 

Since the sharp decay rate for solutions to the wave equation is intimately tied with the asymptotics of the metric towards infinity, in order to obtain the asymptotics we need to rely on properties of asymptotical flat spacetimes. One crucial property of such spacetimes is the existence of the so-called \textit{Newman--Penrose constant} along null infinity.

\medskip

\textbf{Step 2: The Newman--Penrose constant $I_{0}$ along null infinity}

\smallskip

Given a spherically symmetric solution $\psi_{0}$ to the wave equation, the function defined on null infinity:
\[I_{0}[\psi_{0}](u): u\mapsto \lim_{r\rightarrow \infty}r^{2}\partial_{r}(r\psi_{0}) (u,r) \]
is constant, that is independent of $u$ (see \cite{np2,NP1}). Here $\partial_{r}$ is to be considered with respect to the outgoing Eddington--Finkelstein coordinate system $(u,r,\theta,\varphi)$. The Newman--Penrose constant  $I_{0}[\psi_{0}]$ of $\psi_{0}$ is precisely the (constant) value of the above function. The exisence of the Newman--Penrose constant should be thought of as a \textit{conservation law} for the evolution of the scalar field $\psi_{0}$.

Hence, if the initial data corresponding to $\psi_{0}$ are such that the rescaled limit $\lim_{r\rightarrow \infty}r^{2}\partial_{r}(r\psi_{0})(u=0,r)=1$ then  the limit $\lim_{r\rightarrow \infty}r^{2}\partial_{r}(r\psi_{0})(u,r)$ is always equal to $1$ for all $u$. This could in principle lead to lower bounds for $\psi_{0}$. 

In the case, however, of smooth compactly supported initial data, by the domain of dependence theorem we have that the Newman--Penrose constant vanishes: $I_{0}[\psi_{0}]=0$. A crucial step of our method is the use of the time integral $\psi^{(1)}$. This is a function $\psi_{0}^{(1)}$ associated to $\psi_{0}$, which is constructed by inverting the time-translation operator $T$, and whose Newman--Penrose constant is non-zero for generic initial data and hence can in principle be used to derive lower bounds. 

\medskip

\textbf{Step 3: The time integral $\psi^{(1)}$ and the time-inverted N--P constant $I_{0}^{(1)}$}

\smallskip

\noindent Given a spherically symmetric solution $\psi$ to the wave equation with vanishing Newman--Penrose constant
\[I_{0}[\psi]=0,\]
and in fact such that 
\[r^{3}\partial_{r}(r\psi)<\infty, \]
there exists a \textit{unique} smooth solution $\psi^{(1)}$ to the wave equation such that 
\[T\psi^{(1)}=\psi, \]
and 
\[\lim_{r\rightarrow \infty}r\psi^{(1)}\left.\right|_{\Sigma_{0}}<\infty, \ \ \ \ \lim_{r\rightarrow \infty}r^{2}\partial_{r}\psi^{(1)}\left. \right|_{\Sigma_{0}}<\infty.  \]
We call $\psi^{(1)}$ the \textit{time integral} of $\psi$. The uniqueness of $\psi^{(1)}$ is due to an integrability condition that the limit $\lim_{r\rightarrow \infty}r^{2}\partial_{r}\psi^{(1)}\left. \right|_{\Sigma_{0}}$ is required to satisfy that ensures the global smoothness of  $\psi^{(1)}$. Furthermore, the Newman--Penrose constant $I_{0}[\psi^{(1)}]$ of $\psi^{(1)}$ is finite, and in fact equal to 
\begin{equation}
\begin{split}
I_0[\psi^{(1)}]=& -\lim_{r\to \infty}r^3\partial_{r}\phi\left.\right|_{\Sigma_{0}}+M R(2-Dh_{\Sigma_{0}}(R))\phi\left.\right|_{\Sigma_{0}\cap\left\{r=R \right\}}+2M\int_{r\geq R}r L\phi\Big|_{\mathcal{N}_{0}}\,dv\\
&-M\int_{r_{\rm min}}^R 2(1-h_{\Sigma_{0}}D)r\partial_{\rho}\phi-(2-Dh_{\Sigma_{0}})rh_{\Sigma_{0}} T\phi-(r\cdot (Dh_{\Sigma_{0}})')\cdot\phi\Big|_{\Sigma_{0}}\,d\rho,
\end{split}
\label{tinpconstaintro}
\end{equation}
where $\phi=r\psi$. Here, the function $h_{\Sigma_{0}}$ depends on the geometry of the hypersurface $\Sigma_{0}$ defined in Section \ref{sec:foliations}. The vector field $L$ is defined in Section \ref{thespacemanif}.  In particular, if the initial data for $\psi$ is compactly supported in $\{r_{\rm min}\leq r<R\}$, we have that
\begin{equation*}
\begin{split}
I_0[\psi^{(1)}]=&\:-M\int_{r_{\rm min}}^R 2(1-h_{\Sigma_{0}}D)r\partial_{\rho}\phi-(2-Dh_{\Sigma_{0}})rh_{\Sigma_{0}} T\phi-(r\cdot (Dh_{\Sigma_{0}})')\cdot\phi\Big|_{\Sigma_{0}}\,d\rho.
\end{split}
\end{equation*}
This constant will be called the \textit{time-inverted Newman--Penrose constant} of $\psi$ and denoted by $I_{0}^{(1)}[\psi]$. Clearly, $I_0[\psi^{(1)}]$ is \textbf{non-zero} for generic smooth compactly supported initial data for $\psi$. We will use this property of $\psi^{(1)}$ to obtain late-time asymptotics for $\psi^{(1)}$, and subsequently for $\psi$.

\medskip

\textbf{Step 4: Bounds for the Newman--Penrose scalar away from infinity}

\smallskip

Along any fixed $u=c$ hypersurface we have that $\lim_{r\rightarrow \infty}r^{2}\partial_{r}(r\psi^{(1)})(u,r)=I_{0}^{(1)}[\psi]$. One crucial observation of our analysis is that we obtain bounds for the rescaled derivative $r^{2}\partial_{r}(r\psi^{(1)})(u,r)$ away from infinity. In fact we obtain the following bound for the modified derivative $v^{2}\partial_{v}(r\psi^{(1)})(u,r) $
everywhere to the right of the curve $\gamma_{\alpha}$ (see Figure \ref{fig:p1p2}) where $r\sim v^{\alpha}$ with $\alpha<1$ but sufficiently close to $1$. More precisely, we define the curve $\gamma_{\alpha}$ as follows:
\[\gamma_{\alpha}:=\left\{ v-u=v^{\alpha} \right\}  \cap\left\{ r\geq R \right\}, \ \text{ with } \frac{2}{3}< \alpha<1. \]
Recall that $r^{*}=\frac{1}{2}(v-u)$ and since for $r\geq R$ we have $r^{*}\sim r$ we obtain that indeed $r\sim (v-u)\sim v^{\alpha}\sim u^{\alpha}$ along $\gamma_{\alpha}$. We next define the region $\mathcal{B}_{\alpha}$ to be the spacetime region to the right of $\gamma_{\alpha}$ (see Figure \ref{fig:p1p2}): 
\[\mathcal{B}_{\alpha}:=\left\{ v-u\geq v^{\alpha} \right\}  \cap\left\{ r\geq R \right\}, \ \text{ with } \frac{2}{3}< \alpha<1. \]

 \begin{figure}[H]
   \centering
		\includegraphics[scale=0.9]{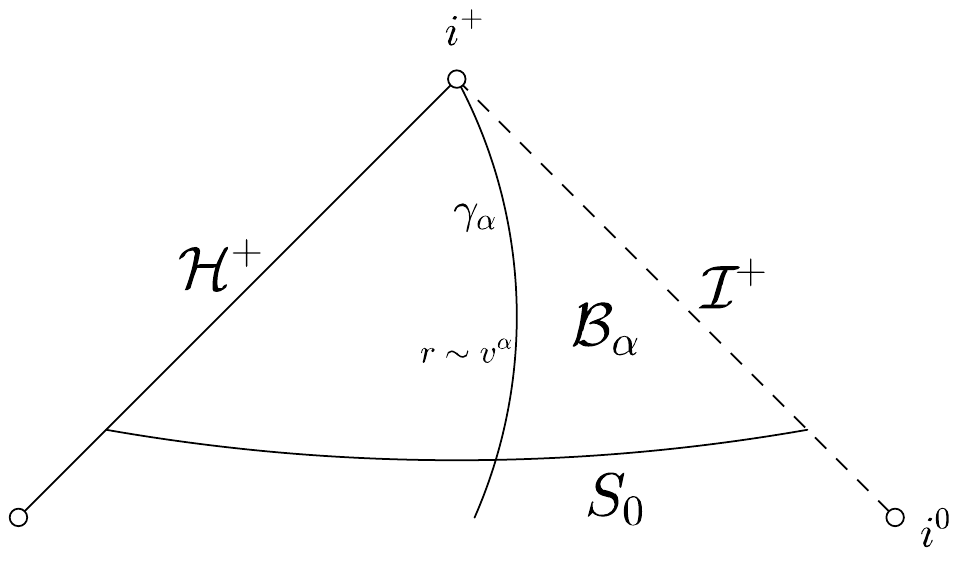}
		\caption{The curve $\gamma_{\alpha}$ and the region $\mathcal{B}_{\alpha}$}
	\label{fig:p1p2}
\end{figure}

We show the following estimate (see Proposition \ref{prop:lower1}):
\begin{equation}
 \left| \partial_{v}(r\psi^{(1)})-I_{0}[\psi^{(1)} ] \cdot \frac{2}{v^{2}}\right|\leq C\sqrt{E[\psi^{(1)}]}\cdot \frac{1}{v^{2+\eta}} 
\label{intronpbound}
\end{equation} 
everywhere in $\mathcal{B}_{\alpha}$, for some $\eta(\alpha)>0$ which has the property that $\eta(\alpha)\rightarrow 1$ as $\alpha \rightarrow 1$.

\medskip

\textbf{Step 5: Asymptotics of the radiation field $r\psi^{(1)}$ of the time integral $\psi^{(1)}$}

\smallskip

The next step is to use the estimate \eqref{intronpbound} in order to obtain asymptotics for $r\psi$ in an appropriate neighborhood of infinity. By integrating in the $v$ direction from $\gamma_{\alpha}$ we obtain for any point $(u,v)\in\mathcal{B}_{\alpha}$: 
\begin{equation}
r\psi^{(1)} (u,v)=r\psi^{(1)}  (u,v_{\gamma_{\alpha}}(u))+\int_{v_{\gamma_{\alpha}}(u)}^{v}\partial_v(r\psi^{(1)} ) (u,v')\,dv'.
\label{introvinte}
\end{equation}
We first need to obtain an estimate for the boundary term on the right hand side. We make use of an almost-sharp decay estimate that was proved using the vector field approach introduced in \cite{paper1}. We remark that the approach of \cite{paper1} allowed us to obtain almost-sharp bounds for solutions to the wave equation with {\textbf{non-vanishing} Newman--Penrose constant:
\[ \left| r^{1/2}\psi^{(1)}  \right| \leq C\cdot \sqrt{E_{\epsilon}}\cdot \frac{1}{u^{3/2-\epsilon}} \]
for $\epsilon>0$. We also use the following bound for the area-radius $r$ along $\gamma_{\alpha}$:
\[ r\sim v^{\alpha}\sim u^{\alpha}.  \]
Hence we obtain along $\gamma_{\alpha}$:
\begin{equation}
\left| r\psi^{(1)} \left. \right|_{\gamma_{\alpha}}\right| =r^{1/2}\cdot \left| r^{1/2}\psi^{(1)}  \left. \right|_{\gamma_{\alpha}}\right|\leq C\sqrt{E_{\epsilon}}\cdot \frac{1}{u^{\frac{3}{2}-\frac{\alpha}{2}-\epsilon}}.
\label{introboundongamma}
\end{equation}

Note that for $\alpha<1$ and $\epsilon>0$ sufficiently small we have that the above decay rate is strictly faster than 1, that is $\frac{3}{2}-\frac{\alpha}{2}-\epsilon>1$.

On the other hand, the integral term on the right hand side of \eqref{introvinte} is the term that is responsible for the first-order late-time asymptotics of $r\psi^{(1)}$. Using \eqref{intronpbound} and omitting various distracting technical terms we obtain (see also estimate \eqref{estimateforvintegral})
\begin{equation}
\begin{split}
\Bigg|\int_{v_{\gamma_{\alpha}}(u)}^{v}&\partial_v(r\psi^{(1)})(u,v')\,dv'-2I_0 [\psi^{(1)} ] ((u+1)^{-1}-v^{-1})\Bigg|\\
\leq&\: C\sqrt{E^{\epsilon}_{0,I_0\neq 0;0}[\psi^{(1)}]}\cdot u^{\frac{3}{2}-\frac{\alpha}{2}-\epsilon  }.
\end{split}
\label{intropartialv}
\end{equation}
Using \eqref{introvinte}, \eqref{introboundongamma} and \eqref{intropartialv} we immediately obtain that the first-order asymptotics for the radiation field  $\phi^{(1)}=r\psi^{(1)}$ on null infinity ($v=\infty$) is given by:
\[r\psi^{(1)}\left.\right|_{\mathcal{I}}\sim_{\text{asym}} 2I_{0}[\psi^{(1)}]\cdot \frac{1}{u+1}   \]
as $u\rightarrow \infty$. In fact, the same technique allows us to obtain quantitative estimates and compute the asymptotics for $r\psi^{(1)}$ in a spacetime neighborhood  $\mathcal{B}_{\delta}$ of null infinity region for an appropriate $\delta >\alpha+6\epsilon$. See also Proposition \ref{cor1}.

\medskip

\textbf{Step 6:  Asymptotics of $\psi^{(1)}$}

Having obtained the asymptotics for $r\psi^{(1)}$ in the region $\mathcal{B}_{\delta}$ we next derive the asymptotics for $\psi^{(1)}$ in full spacetime $\mathcal{M}$. We first obtain the asymptotics in the region $\mathcal{B}_{\delta}$ by splitting it in the following two sub-regions:
\[\mathcal{B}_{\delta}=\mathcal{B}_{\delta}^{(1)}\cup \mathcal{B}_{\delta}^{(2)}, \]
where 
\[ \mathcal{B}_{\delta}^{(1)}=  \mathcal{B}_{\delta}\cap \left\{r\geq Cv \right\}  \]
and 
\[  \mathcal{B}_{\delta}^{(2)}= \mathcal{B}_{\delta}\cap\left\{r\leq Cv \right\}, \]
for an appropriate constant $C$. For  the technical details regarding this splitting see the proof of Proposition \ref{prop:psi_lower}.

It is immediate to obtain the asymptotics for $\psi^{(1)}$ in the region $\mathcal{B}_{\delta}^{(1)}$ by simply dividing by $r$ the main estimate for $\phi^{(1)}$. 

To obtain the asymptotics in the region $\mathcal{B}_{\delta}^{(2)}$ we again divide by $r$ the main estimate for $\phi^{(1)}$ and this time use that 
\[r\sim v-u\geq v^{\delta}. \]
The above combined with the fact that $u\sim v$ in $\mathcal{B}_{\delta}^{(2)}$ (in view of $r\leq Cv$) yield the following asymptotics in the whole region $\mathcal{B}_{\delta}$:
\begin{equation}\label{price_0_v1intro}
\psi^{(1)} (u,v) \sim_{\text{asym}}  \frac{4I_0 [\psi^{(1)} ]}{(u+1)v},
\end{equation}
asymptotically as $u\rightarrow \infty$.

It remains to obtain the asymptotics for $\psi^{(1)}$ in the complement of the region $\mathcal{B}_{\delta}$. We show that the leading first order term in the time-asymptotic expansion of $\psi^{(1)}$ originates from the analogous term along the curve $\gamma_{\delta}$ (which in turn originates from the non-vanishing of the Nemwan--Penrose constant and specifically the bounds on the rescaled derivative $v^{2}\partial_{v}(r\psi^{(1)}$). 

We next consider the region $\left\{r\geq R \right\}$. By integrating in the null direction $\partial_{v}$, tangential to the hypersurface $\Sigma_{\tau}$, we obtain
\begin{equation}
\begin{split}
|\psi^{(1)}(u,v)-\psi^{(1)}(u,v_{\gamma_{\delta}}(u))|=&\:\left|\int_v^{v_{\gamma_{\delta}}(u)}\partial_v\psi^{(1)}(u,v')\,dv'\right|\\
\leq &\: \int_v^{v_{\gamma_{\delta}}(u)}r^{-\frac{1}{2}}\cdot r^{\frac{1}{2}}|\partial_v\psi^{(1)} |(u,v')\,dv' \\
 \leq & \left\| r^{\frac{1}{2}}\partial_v\psi^{(1)}  \right\|_{L^{\infty}(\Sigma_{\tau})} \cdot  \int_v^{v_{\gamma_{\delta}}(u)}r^{-\frac{1}{2}}(v')\,dv' \\
\end{split}
\label{introrestimate}
\end{equation}
Using that $dv\sim dr$ along $\Sigma_{\tau}$ we can bound the integral on the right hand side by 
\[r^{1/2}(u,v_{\gamma_{\delta}}(u))=  v_{\gamma_{\delta}}^{\delta/2}(u)\sim u^{\delta/2}.\]
For the weighted derivative we have the bound
\begin{equation}
\left| r^{\frac{1}{2}}\partial_v\psi^{(1)} \right| \leq C\cdot \sqrt{E_{\epsilon}[\psi^{(1)}]}\cdot \frac{1}{u^{5/2-\epsilon}}. 
\label{intropartialb}
\end{equation}
Note that this is an almost-sharp bound for the rescaled derivative which holds for solutions to the wave equation with non-vanishing Newman--Penrose constant. The techniques of \cite{paper1} in conjuction with almost-sharp decay estimates for the stationary $T$ derivatives of the scalar field and an elliptic estimates presented in this paper are the key ingredients for showing \eqref{intropartialb}.

We conclude, therefore, that the right hand side of \eqref{introrestimate} decays like 
\[\frac{1}{u^{\frac{5}{2}-\frac{\delta}{2}-\epsilon}}. \]
Note that \textit{if we were to take $\delta=1$ then we would get very slow decay which would not be sufficient to yield asymptotics for $\psi^{(1)}$}. For this reason, it is of paramount importance that we work with the regions $\mathcal{B}_{\alpha}$, bounded by the curves $\gamma_{\alpha}$, with $\alpha<1$.

Using now that $v\leq Cu$, that $\frac{5}{2}-\frac{\delta}{2}-\epsilon>2$ and the asymptotic estimate \eqref{price_0_v1intro} for $\psi^{(1)}(u,v_{\gamma_{\delta}}(u)$ we obtain that 
\begin{equation}\label{price_0_v2intro}
\psi^{(1)} (u,v) \sim_{\text{asym}}  \frac{4I_0 [\psi^{(1)} ]}{(u+1)v},
\end{equation}
asymptotically as $u\rightarrow \infty$ in the region $\left\{ r\geq R\right\}$. The asymptotic estimate \eqref{price_0_v2intro} can be extended to the region $\left\{ r\leq R\right\}$ by using once again almost-sharp bounds for the ``radial'' derivative tangential to the hypersurface $\Sigma_{\tau}$. Note that this latter step makes use of the commuted redshift estimate first obtained in \cite{lecturesMD}. We obtain
\[ \psi^{(1)}\sim_{\text{asym}} \frac{4I_{0}[\psi^{(1)}]}{(\tau+1)^{2}}. \]

\medskip

\textbf{Step 7:  Asymptotics of $\psi $}

\smallskip

We next need to derive estimates for $\psi=T\psi^{(1)}$.
The first step in this direction is to commute the wave equation with $\partial_{v}$ and obtain (lower) bounds for the second order derivative $\partial_{v}\partial_{v}(r\psi^{(1)})$ by extending the techniques that were used to derive \eqref{intronpbound}. This leads to 
\[ \left| \partial_{v}\big( \partial_{v} (r\psi^{(1)}) \big) +4I_{0}[\psi^{(1)}]\cdot \frac{1}{v^{3}} \right| \leq C
\sqrt{E[\psi^{(1)}]}\cdot \frac{1}{v^{3+\eta}}  \]
everywhere in the region $\mathcal{B}_{\alpha_{1}}$, for some $\alpha_{1}\in (3/4,1)$ and for
some $\eta=\eta(\alpha_{1})$ for which $\eta(\alpha_{1})\rightarrow 1$ as $\alpha_{1} \rightarrow 1$.

Using the above estimate and the fact that the stationary Killing field $T$ satisfies $T\sim  \partial_{v}+\partial_{u}$ and the wave equation we obtain bounds for $\partial_{v}\big(T(r\psi^{(1)})\big)$ which by integration in the $v$-direction yields the following asymptotic estimate (see Proposition \ref{asymprecTk})
\[ T(r\psi^{(1)})\sim_{\text{asym}}  -2I_{0}[ \psi^{(1)} ]\cdot\left( \frac{1}{u^{2}}-\frac{1}{v^{2}} \right)    \]
in the region $\mathcal{B}_{\alpha_{1}}$, with $\alpha\in (7/9,1)$. The above estimate, in conjuction with the techniques used for the time integral $\psi^{(1)}$ and the almost-sharp bounds 
\eqref{eq:pointdecayNYpsi1} for the hyperboloidal derivative $r^{1/2}\partial_{\rho}T\psi^{(1)}$, where $\partial_{\rho}$ is radial and tangential to $\Sigma_{\tau}$, yields the following asymptotic estimate for $\psi$:
\begin{equation}
\psi=T\psi^{(1)} \sim_{\text{asym}}  -4 I_{0}[\psi^{(1)}]\cdot\left(1+\frac{u+1}{v} \right) \cdot\frac{1}{u^{2}\cdot v}
\label{introasypsi}
\end{equation}
if $r\geq R$, and 
\begin{equation}
\psi=T\psi^{(1)} \sim_{\text{asym}}  -8I_{0}[\psi^{(1)}]\cdot \frac{1}{(\tau+1)^{3}}.
\label{introasypsibounded}
\end{equation}

\subsection{The main theorems}
\label{themaintheorems}
In this section we list the main theorems that we show. We first introduce various definitions and conventions. 
\subsubsection{Notation}

We consider four-dimensional Lorentzian manifolds $(\mathcal{M},g)$ as defined in Section \ref{thespacemanif}. The  metric $g$ takes the form 
\begin{equation}
g=-D(r)dv^2+2dvdr+r^2(d\theta^2+\sin^2\theta \, d\varphi^2)
\label{intrometric}
\end{equation}
in ingoing Eddington--Finkelstein coordinates $(v,r,\theta,\varphi)$, or
\begin{equation*}
g=-D(r)dudv+r^2(d\theta^2+\sin^2\theta \, d\varphi^2)
\end{equation*}
in double null coordinates $(u,v,\theta,\varphi)$. We denote $T=\partial_{v}$ and $\partial_{r}$ to be taken with respect to the system $(v,r,\theta,\varphi)$. Note that $T$ is a stationary Killing field. From now on, unless stated otherwise, $\partial_{v}$ will denote the null vector field taken with respect to the double null coordinate system  $(u,v,\theta,\varphi)$. See Section \ref{thespacemanif} for details regarding the range of the coordinate charts, the behaviour of the metric component $D(r)$ and the precise definitions of the various vector fields. The spacelike-null hypersurfaces $\Sigma_{\tau}$ are defined in Section \ref{sec:foliations}. 
 
 We study the Cauchy problem for the wave equation \eqref{waveequation} on such backgrounds (see Section \ref{sec:thecauchy}).  We require that solutions $\psi$ to the wave equation  \eqref{waveequation} on such backgrounds satisfy the assumptions of Section \ref{sec:waveassm}.  
 
 For an introduction to the vector field method and the definition of energy currents $J^{V}[\psi]$ associated to a vector field $V$ see Section \ref{sec:thecauchy}. 
 
 The decomposition of $\psi$ in angular frequencies is given in Section \ref{sphericaldecompo}.

 The Newman--Penrose constant $I_{0}[\psi]$ is defined in Section $\ref{npsection}$ and the time-inverted Newman--Penrose constants $I^{(k)}_{0}[\psi]$ of $k$-order are defined in Section \ref{timeinvertednpsection}. The time integral $\psi^{(1)}$ of (the spherical mean of) $\psi$ is also defined and constructed in Section \ref{npsection}.

Finally, the $r$-weighted (higher-order) initial data energy norms $$E^{\epsilon}_{0,I_0\neq 0;k}[\psi],\   E^{\epsilon}_{0,I_0=0;k}[\psi],\  \widetilde{E}^{\epsilon}_{0,I_0\neq 0;k}[\psi],\   \widetilde{E}^{\epsilon}_{0,I_0=0;k}[\psi], \  E^{\epsilon}_{1;k}[\psi], \  E^{\epsilon}_{2;k}[\psi],$$ with $k\in \N_0$ and $\epsilon>0$,  are defined in Appendix \ref{apx:energynorms}.

\subsubsection{Late-time asymptotics for solutions to the wave equation with vanishing Newman--Penrose constant}

The next theorem derives the precise late-time asymptotics for solutions to the wave equation with data for which the Newman--Penrose constant vanishes (including in particular the case of compactly supported initial data).

\begin{theorem}\textbf{(Late-time asymptotics for $\psi$ with vanishing Newman--Penrose constant)}
Let $\psi$ be a solution to the wave equation \eqref{waveequation} on the class of spacetimes $(\mathcal{M},g)$, where $g$ is as given by \eqref{intrometric}, satisfying the geometric assumptions of Section \ref{sec:waveassm}. We consider the harmonic decomposition introduced in Section \ref{sphericaldecompo}
\[\psi=\psi_{0}+\psi_{\ell=1}+\psi_{\ell\geq 2} \]
and assume that the following hold 
\begin{equation*}
\begin{split}
v^{3}\partial_v(r\psi_{0})(0,v)=\lim_{v\to \infty}v^{3}\partial_v(r\psi_{0})(0,v) +O&(v^{-\beta}),\\
E^{\epsilon}_{1}[\psi]=\widetilde{E}_{0,I_0=0;1}^{\epsilon}[\psi_{0}]+ \  E^{\epsilon}_{1;1}[\psi_{\ell=1}]+ \ E^{\epsilon}_{2;1}[\psi_{\ell\geq 2}]<&\infty,   \\
\int_{\Sigma_0}J^N[N^2\psi]\cdot n_0\,d\mu_0<&\infty,   \\
\lim_{r \to \infty }\sum_{|l|\leq 6}\int_{\s^2}(r\Omega^l\psi_{\ell\geq 2})^2\,d\omega\big|_{u'=0}<&\:\infty,\\
\lim_{r \to \infty }\sum_{|l|\leq 4}\int_{\s^2}(r^2\partial_r(r\Omega^l\psi_{\ell\geq 2}))^2\,d\omega\big|_{u'=0}<&\:\infty,\\
\lim_{r \to \infty }\sum_{|l|\leq 2}\int_{\s^2}\left((r^2\partial_r)^2(r\Omega^l\psi_{\ell\geq 2})\right)^2\,d\omega\big|_{u'=0}<&\:\infty,\\
\lim_{r \to \infty }\int_{\s^2}r^{4}\Bigg(\partial_r\Big((r^2\partial_r)^2(r\psi_{\ell\geq 2})\Big)\Bigg)^2\,d\omega\big|_{u'=0}<&\infty.
\end{split}
\end{equation*}
Then the following estimate holds for all $(\tau,r) \in \mathcal{M}\cap\{r\leq R\}$:
\begin{equation}
\begin{split}
\Bigg|\psi& (\tau,r) +8\frac{I_0^{(1)} [\psi ]}{(\tau+1)^{3}}\Bigg| \\
\leq&\: C\left(\sqrt{ E^{\epsilon}_{1}[\psi]}+I_0^{(1)}[\psi]+P_{I_0,\beta;1}[\psi^{(1)}]\right)(\tau+1)^{-3-\epsilon}
\end{split}
\label{intromaintheo1eq}
\end{equation}
and the following estimate holds for all $(u,v)\in \mathcal{M}\cap\{r\geq R\}$: 
\begin{equation}
\begin{split}
\Bigg|\psi& (u,v) +4\frac{I_0^{(1)} [\psi ]}{(u+1)^2v}\left(1+\frac{u}{v}\right)\Bigg|\leq \\
 &\: C\left(\sqrt{ E^{\epsilon}_{1}[\psi]} +I_0^{(1)}[\psi]+P_{I_0,\beta;1}[\psi^{(1)}]\right)(u+1)^{-2-\epsilon}v^{-1}
 \end{split}
 \label{intromaintheo2eq}
\end{equation}
where 
\begin{enumerate}
\item $0<\epsilon<\min\left\{\epsilon_{0},\beta\right\}$ for some $\epsilon_{0}=\epsilon_{0}(\Sigma,D,R)$,

\item $C=C(D,\Sigma,R,\epsilon)$,

\item $I_{0}^{(1)}[\psi] $ is the time-inverted Newman--Penrose constant of $\psi$ (see Definition \ref{timeinvertedNP})
and
\item $\psi^{(1)}$ is the time integral of $\psi_{0}=\int_{\mathbb{S}^{2}}\psi$ (see Section \ref{sectiontimeintegral}) and $P_{I_0,\beta;1}[\psi^{(1)}]$ is as defined in \eqref{def:PI0k}.
\end{enumerate}
 \label{thm:asmpsinpn011}
\end{theorem}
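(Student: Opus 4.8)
The plan is to reduce the full problem to a one-dimensional statement about the spherical mean $\psi_0=\frac{1}{4\pi}\int_{\s^2}\psi$, to propagate the conservation law encoded in the Newman--Penrose constant inward from null infinity, and finally to recover the asymptotics of $\psi$ by differentiating those of its time integral. First I would invoke the companion paper \cite{paper1}: the almost-sharp decay rates there give $|\psi_{\ell\geq 1}|\lesssim \tau^{-7/2+\epsilon}$, which is strictly faster than the conjectured $\tau^{-3}$ leading order, so the higher angular modes contribute only to the error and it suffices to analyse $\psi_0$. For data with $I_0[\psi_0]=0$ (including compactly supported data, by the domain of dependence property) the conservation law $\lim_{r\to\infty}r^2\partial_r(r\psi_0)=I_0[\psi_0]$ carries no information at leading order. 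The remedy is to pass to the time integral $\psi^{(1)}$, the unique regular solution with $T\psi^{(1)}=\psi_0$ and finite limits $\lim_{r\to\infty}r\psi^{(1)}$, $\lim_{r\to\infty}r^2\partial_r\psi^{(1)}$; its Newman--Penrose constant $I_0[\psi^{(1)}]=I_0^{(1)}[\psi]$ is generically nonzero and given explicitly by \eqref{tinpconstaintro}. All subsequent asymptotics of $\psi_0$ will be extracted from those of $\psi^{(1)}$ by applying $T$.

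The heart of the argument is the near-infinity bound \eqref{intronpbound}: $|\partial_v(r\psi^{(1)})-2I_0[\psi^{(1)}]/v^2|\lesssim \sqrt{E}\,v^{-2-\eta}$ throughout the region $\mathcal{B}_\alpha=\{v-u\geq v^\alpha\}\cap\{r\geq R\}$ with $\tfrac{2}{3}<\alpha<1$, where $\eta(\alpha)\to 1$ as $\alpha\to 1$. I would derive this by integrating the wave equation for the radiation field $\phi^{(1)}=r\psi^{(1)}$ in the $u$-direction away from null infinity, using that the rescaled derivative $v^2\partial_v\phi^{(1)}$ limits to the conserved constant $2I_0[\psi^{(1)}]$ on $\mathcal{I}$, and controlling the resulting error terms by the weighted energies and the Morawetz/integrated-local-energy-decay assumption. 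Integrating \eqref{intronpbound} in $v$ from the curve $\gamma_\alpha$, and bounding the boundary term via the almost-sharp estimate $|r^{1/2}\psi^{(1)}|\lesssim \sqrt{E_\epsilon}\,u^{-3/2+\epsilon}$ from \cite{paper1} (crucially permitting a nonvanishing Newman--Penrose constant), then yields the radiation-field asymptotics $r\psi^{(1)}|_{\mathcal{I}}\sim 2I_0[\psi^{(1)}]/(u+1)$, together with its quantitative version throughout a neighbourhood $\mathcal{B}_\delta$ of infinity.

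To obtain $\psi^{(1)}$ everywhere I would divide the radiation-field estimate by $r$: in $\mathcal{B}_\delta$ this directly gives $\psi^{(1)}\sim 4I_0[\psi^{(1)}]/((u+1)v)$, and to reach the interior $\{r\leq R\}$ I would integrate $\partial_v\psi^{(1)}$ inward along $\Sigma_\tau$ from $\gamma_\delta$, using the weighted bound $|r^{1/2}\partial_v\psi^{(1)}|\lesssim \sqrt{E_\epsilon}\,u^{-5/2+\epsilon}$ together with the commuted redshift estimate of \cite{lecturesMD}; the rate $\tfrac{5}{2}-\tfrac{\delta}{2}-\epsilon>2$ is what keeps the boundary term subleading, giving $\psi^{(1)}\sim 4I_0[\psi^{(1)}]/(\tau+1)^2$. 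Finally, to pass from $\psi^{(1)}$ to $\psi=T\psi^{(1)}$ I would commute the wave equation with $\partial_v$ and repeat the near-infinity argument to control $\partial_v^2(r\psi^{(1)})$, obtaining $|\partial_v^2(r\psi^{(1)})+4I_0[\psi^{(1)}]/v^3|\lesssim \sqrt{E}\,v^{-3-\eta}$ in a region $\mathcal{B}_{\alpha_1}$; writing $T\sim\partial_v+\partial_u$ and using the wave equation to express the $\partial_u$-component in terms of controlled quantities, then integrating, gives $T(r\psi^{(1)})\sim -2I_0[\psi^{(1)}](u^{-2}-v^{-2})$ and, after dividing by $r$ and propagating inward as above, the stated estimates \eqref{intromaintheo1eq}--\eqref{intromaintheo2eq}.

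The main obstacle, I expect, is the propagation estimate \eqref{intronpbound} and the correct choice of the barrier curves $\gamma_\alpha$. One cannot work directly on $\mathcal{I}$ or with $\alpha=1$: the boundary contribution along $\gamma_\alpha$ then decays only like $u^{-1}$, which is too slow to be subleading relative to the $(u+1)^{-1}$ radiation-field tail, and the subsequent inward integration degenerates. Choosing $\alpha<1$ strictly (but close to $1$) buys the extra power $\eta(\alpha)>0$ that renders every error term subleading, at the cost of tracking the $\alpha$- and $\delta$-dependence of all constants and verifying that the weighted energies controlling the errors are finite under the hypotheses of the theorem. A secondary difficulty is the transition from $\psi^{(1)}$ to $\psi$: extracting the sharp $\tau^{-3}$ coefficient requires the second-order estimate for $\partial_v^2(r\psi^{(1)})$ and a careful use of the wave equation to handle the $\partial_u$-part of $T$, since naive differentiation of the $\psi^{(1)}$-asymptotics would lose the constant.
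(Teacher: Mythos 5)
Your proposal is correct and follows essentially the same route as the paper: spherical-harmonic reduction to $\psi_0$ via the almost-sharp $\tau^{-7/2+\epsilon}$ decay of $\psi_{\ell\geq 1}$, construction of the time integral $\psi^{(1)}$ with generically nonzero Newman--Penrose constant $I_0^{(1)}[\psi]$, propagation of the bound $|\partial_v(r\psi^{(1)})-2I_0^{(1)}[\psi]v^{-2}|\lesssim v^{-2-\eta}$ into the regions $\mathcal{B}_\alpha$ with $\alpha<1$, integration in $v$ from $\gamma_\alpha$ with the $r^{1/2}\psi^{(1)}$ boundary bound, inward propagation using $r^{1/2}\partial_v\psi^{(1)}$ and the commuted redshift estimate, and finally recovery of $\psi=T\psi^{(1)}$ via the second-order $\partial_v^2(r\psi^{(1)})$ estimate and the wave equation (this is precisely the chain of Propositions \ref{prop:lower1}, \ref{cor1}, \ref{asymprecTk}, \ref{prop:psi_lower}, \ref{prop:tkpsi_lower} applied to $\psi^{(1)}$, together with Propositions \ref{prop:tinverseconstr_k} and \ref{propositiontransition}). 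Your identification of the key obstruction — that taking $\alpha=1$ makes the $\gamma_\alpha$ boundary term and the inward integration too lossy — matches the paper's own rationale for working with the barrier curves $\gamma_\alpha$, $\alpha<1$.
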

Theorem \ref{thm:asmpsinpn011} is proved in Sections \ref{pointwidec1} and \ref{asymptoticspsi2}. 
We remark that the assumptions of the Theorem \ref{thm:asmpsinpn011} imply that the norms of the initial data on the right hand side of estimates \eqref{intromaintheo1eq} and \eqref{intromaintheo2eq} are finite. See Section \ref{estiamtefortimeinte} and Corollary \ref{cor:estPnorms}.

\begin{theorem}\textbf{(Late-time asymptotics of $T^{k}\psi$ with vanishing Newman--Penrose constant)} Let $k\in\mathbb{N}^{*}$. Under the assumptions of Theorem \ref{thm:asmpsinpn011} and the additional assumptions on $\psi$:
\begin{equation*}
\begin{split}
v^{3}\partial_v(r\psi_{0})(0,v)=\lim_{v\to \infty}v^{3}\partial_v(r\psi_{0})(0,v) +O_{k-1}&(v^{-\beta}),\\
E^{\epsilon}_{k}[\psi]=\widetilde{E}_{0,I_0=0;k}^{\epsilon}[\psi_{0}]+E^{\epsilon}_{1;k}[\psi_{\ell=1}]+ E^{\epsilon}_{2;k}[\psi_{\ell\geq 2}]<&\: \infty, \\
\lim_{r \to \infty }\sum_{|l|\leq 4+2k}\int_{\s^2}(\Omega^l\phi_{\ell\geq 2})^2\,d\omega\big|_{u'=0}<&\:\infty,\\
\lim_{r \to \infty }\sum_{|l|\leq 2+2k}\int_{\s^2}(r^2\partial_r(\Omega^l\phi_{\ell\geq 2}))^2\,d\omega\big|_{u'=0}<&\:\infty,\\
\lim_{r \to \infty }\sum_{|l|\leq 2k}\int_{\s^2}\left((r^2\partial_r)^2(\Omega^l\phi_{\ell\geq 2})\right)^2\,d\omega\big|_{u'=0}<&\:\infty,\\
\lim_{r \to \infty }\sum_{|l|\leq 2k-2s}\int_{\s^2}r^{2s+2}\Bigg(\partial_r^s\Big((r^2\partial_r)^2(\Omega^l\phi_{\ell\geq 2}\Big)\Bigg)^2\,d\omega\big|_{u'=0}<&\infty,
\end{split}
\end{equation*}
for each $1\leq s\leq k$, the following estimate holds in $\mathcal{M}\cap\{r\leq R\}$:
\begin{equation}\label{np0price_0_v2intro}
\begin{split}
\Bigg| T^{k-1}\psi& (\tau,r) -4(-1)^k(k+1)!\cdot \frac{I_0^{(1)} [\psi ]}{(\tau+1)^{k+2}}\Bigg| \\
\leq&\: C\left(\sqrt{E^{\epsilon}_{k}[\psi]}+I_0^{(1)}[\psi]+P_{I_0,\beta ;k}[\psi^{(1)}]\right)(\tau+1)^{-k-2-\epsilon}.
\end{split}
\end{equation}
and the following estimate holds in $\mathcal{M}\cap \left\{r\geq R \right\}$:
\begin{equation}
\begin{split}
\Bigg| T^{k-1}\psi & (u,v) -4(-1)^k k!\cdot \frac{I_0^{(1)} [\psi ]}{(u+1)^{k+1}v}\left(1+\sum_{j=1}^k\left(\frac{u}{v}\right)^j\right)\Bigg |\\
 \leq &\: C\left(\sqrt{E^{\epsilon}_{k}[\psi]}+I_0^{(1)}[\psi]+P_{I_0,\beta ;k}[\psi^{(1)}]\right)(u+1)^{-k-1-\epsilon}v^{-1},
 \end{split}
\label{npintro1}
\end{equation}
 \label{thm:asmpsinpn0rfo00}
\end{theorem}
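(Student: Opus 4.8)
The plan is to prove the two estimates by induction on $k$, the base case $k=1$ being precisely Theorem~\ref{thm:asmpsinpn011} (indeed $4(-1)^{1}2!=-8$ and $4(-1)^{1}1!=-4$, so \eqref{np0price_0_v2intro}--\eqref{npintro1} reduce to \eqref{intromaintheo1eq}--\eqref{intromaintheo2eq}). Since $T$ is Killing and $T\psi^{(1)}=\psi_{0}$, we have $T^{k-1}\psi=T^{k}\psi^{(1)}+T^{k-1}\psi_{\ell\geq1}$. The first reduction is to discard the higher angular modes: applying the almost-sharp decay estimates of \cite{paper1} to $T^{j}\psi$ together with the $\ell\geq 2$ initial-data assumptions displayed in the statement (which are designed to compensate exactly the regularity lost under $k$ commutations and which encode the extra $\tau^{-1}$ gained per $T$-derivative), one obtains $|T^{k-1}\psi_{\ell\geq 1}|\lesssim (\tau+1)^{-k-\frac52+\epsilon}$, strictly faster than the claimed leading rates $(\tau+1)^{-k-2}$ and $(u+1)^{-k-1}v^{-1}$. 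It therefore suffices to establish the asymptotics of $T^{k}\psi^{(1)}$, where the essential input is that the time integral $\psi^{(1)}$ carries a generically \emph{non-vanishing} Newman--Penrose constant $I_{0}[\psi^{(1)}]=I_{0}^{(1)}[\psi]$ to supply the leading coefficient.

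The heart of the matter is a sharp control of the high-order $\partial_{v}$-derivatives of the radiation field $\phi^{(1)}=r\psi^{(1)}$ near infinity. Extending Proposition~\ref{prop:lower1} (the case $j=0$) and the second-order estimate of Step~7 (the case $j=1$), I would commute the wave equation with $\partial_{v}$ a total of $k+1$ times and prove by induction on $j$ that, in a region $\mathcal{B}_{\alpha}$ with $\alpha=\alpha(k)<1$ sufficiently close to $1$,
\[ \left|\partial_{v}^{\,j+1}\phi^{(1)}-2(-1)^{j}(j+1)!\,I_{0}[\psi^{(1)}]\cdot \frac{1}{v^{\,j+2}}\right|\leq C\sqrt{E[\psi^{(1)}]}\cdot \frac{1}{v^{\,j+2+\eta}},\qquad 0\leq j\leq k, \]
with $\eta=\eta(\alpha)\to 1$ as $\alpha\to 1$; the coefficient is forced by repeatedly $v$-differentiating the base identity $\partial_{v}\phi^{(1)}\sim 2 I_{0}[\psi^{(1)}]v^{-2}$ of \eqref{intronpbound}, and at each order the commutator terms generated by the spherically symmetric potential $V$ (which decays like $r^{-3}$ at infinity) and its derivatives are absorbed into the remainder using the inductive hypothesis and the asymptotic-flatness rate of $D$.

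With these bounds I would pass from $\partial_{v}$- to $T$-derivatives. Because $T\sim\partial_{u}+\partial_{v}$ and the wave equation reads $\partial_{u}\partial_{v}\phi^{(1)}=-V\phi^{(1)}$, one has $\partial_{v}(T^{k}\phi^{(1)})=\partial_{v}^{\,k+1}\phi^{(1)}+(\text{terms carrying a factor }V)$, the $V$-terms being subleading near infinity. Integrating in $v$ from the curve $\gamma_{\alpha}$ exactly as in \eqref{introvinte}--\eqref{intropartialv} — with the boundary contribution on $\gamma_{\alpha}$ controlled by the almost-sharp $r^{1/2}$-weighted bounds of \cite{paper1} as in \eqref{introboundongamma} — the fundamental theorem of calculus yields the clean two-term profile
\[ T^{k}\phi^{(1)}(u,v)\sim_{\text{asym}} 2(-1)^{k}k!\,I_{0}[\psi^{(1)}]\left(\frac{1}{(u+1)^{k+1}}-\frac{1}{v^{k+1}}\right), \]
the $u$-dependence arising from the lower limit $v_{\gamma_{\alpha}}(u)\sim u$. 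Dividing by $r\sim(v-u)/2$ and using the geometric identity $\tfrac{v-u}{v}\sum_{j=0}^{k}(\tfrac{u+1}{v})^{j}\sim 1-(\tfrac{u+1}{v})^{k+1}$ (which is where the combinatorial factor $\binom{k}{j}(k-j)!\,j!=k!$ effectively enters) recovers the $(k+1)$-term sum of \eqref{npintro1}, while letting $v\to\infty$ gives the radiation-field asymptotics $rT^{k-1}\psi|_{\mathcal{I}}\sim 2(-1)^{k}k!\,I_{0}^{(1)}[\psi]\,(u+1)^{-(k+1)}$.

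Finally I would propagate the asymptotics across all of spacetime following Steps~6--7. Integrating $\partial_{v}(T^{k}\psi^{(1)})$, estimated through the $T^{k}$-commuted version of the weighted almost-sharp bound \eqref{intropartialb}, inward from $\gamma_{\delta}$ extends the estimate to all of $\{r\geq R\}$ and gives \eqref{npintro1}; here the choice $\delta<1$ is essential, since it makes the boundary error $\sim u^{-(k+\frac52-\frac{\delta}{2}-\epsilon)}$ strictly subleading against $(u+1)^{-k-1}v^{-1}\gtrsim u^{-k-2}$. Then the commuted redshift estimate of \cite{lecturesMD} together with almost-sharp radial-derivative bounds carry the estimate across $\{r\leq R\}$, where $u\sim v\sim\tau$ forces $\sum_{j=0}^{k}(u/v)^{j}\to k+1$, so that multiplying the prefactor $4(-1)^{k}k!$ by this $k+1$ produces the announced coefficient $4(-1)^{k}(k+1)!\,I_{0}^{(1)}[\psi](\tau+1)^{-k-2}$ of \eqref{np0price_0_v2intro}. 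The main obstacle I expect lies in the high-order commuted estimates near infinity: propagating the \emph{sharp} subleading exponent through all $k+1$ commutations with $\partial_{v}$ and the ensuing $v$-integrations, since each commutation reinjects the potential $V$ and its derivatives into the transport equation and simultaneously costs angular regularity, so one must verify that these inhomogeneities degrade neither the power of $v$ (resp. $u$) in the leading term nor the gain $\eta>0$ in the remainder — precisely the role played by the $k$-dependent weighted initial-data norms, the exponent $\alpha$ close to $1$, and the growth of the error constant through $P_{I_{0},\beta;k}[\psi^{(1)}]$.
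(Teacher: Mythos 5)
Your proposal is correct and follows essentially the same route as the paper: reduce to the spherical mean via the almost-sharp decay of $\psi_{\ell=1}$, $\psi_{\ell\geq 2}$, pass to the time integral $\psi^{(1)}$ with $I_0[\psi^{(1)}]=I_0^{(1)}[\psi]\neq 0$, prove the commuted estimates for $\partial_v^{j+1}\phi^{(1)}$ in $\mathcal{B}_{\alpha}$ by induction (the paper's Proposition \ref{prop:tklower}), convert to $\partial_v(T^k\phi^{(1)})$ via the commuted wave equation (Proposition \ref{cor:estpartialvTkphi}), integrate from $\gamma_{\alpha_k}$ (Proposition \ref{asymprecTk}), and propagate globally by dividing by $r$ with the identity $v^{k+1}-(u+1)^{k+1}=(v-u)\sum_{j}(u+1)^jv^{k-j}$ and the weighted $Y$-derivative and redshift estimates (Proposition \ref{prop:tkpsi_lower}). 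The steps, the regions, the boundary-term bookkeeping on $\gamma_{\delta}$, and the origin of the coefficients $4(-1)^k k!$ and $4(-1)^k(k+1)!$ all match the paper's argument.
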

Theorem \ref{thm:asmpsinpn0rfo00} is proved in Sections \ref{pointwidec1} and \ref{asymptoticspsi2}. 

\begin{remark} 
Note that the leading-order term in the asymptotics of $T^{k-1}\psi$ that appears on the left-hand side 
of the estimate \eqref{np0price_0_v2intro} satisfies
\begin{equation*}
4(-1)^k(k+1)!\cdot \frac{I_{0}^{(1)} [\psi ]}{(\tau+1)^{k+2}}=T^{k-1}\left(\frac{-8 I_0^{(1)} [\psi]}{(1+\tau)^3}\right).
\end{equation*}
Similarly, the leading-order term in the asymptotics of $T^{k-1}\psi$ in the estimate \eqref{npintro1} satisfies
\begin{equation*}
4(-1)^k k!\cdot \frac{I_0^{(1)}  [\psi ]}{(u+1)^{k+1}v}\left(1+\sum_{j=1}^k\left(\frac{u}{v}\right)^j\right)=T^{k-1}\Bigg(\frac{-4I_0^{(1)} [\psi]}{(u+1)^{2}v}\cdot\left(1+\frac{u}{v}\right)\Bigg).
\end{equation*}

Hence, Theorem \ref{thm:asmpsinpn0rfo00} shows that the leading-order term in the asymptotics of $T^k\psi$ can be obtained by taking $k$ $T$-derivatives of the leading-order term in the asymptotics of $\psi$ from Theorem \ref{thm:asmpsinpn011}.
\end{remark}

As an corollary, we obtain the following bounds for the energy flux through the event horizon in the black hole case.
\begin{corollary} \textbf{(Asymptotics for the energy flux)}
Under the assumptions of Theorem \ref{thm:asmpsinpn0rfo0}, the energy flux through the event horizon $\mathcal{H}$ satisfies the following asymptotic bound
\[ \int_{\mathcal{H}\cap\left\{\tau\geq \tau_{\infty} \right\}}J^{T}[\psi]\cdot n_{\mathcal{H}}  \sim_{asym} \ \ \frac{I_{0}^{(1)}[\psi]}{\tau_{\infty}^{7}  }   \]
as $\tau_{\infty}\rightarrow \infty$.
\label{cormaintheore1}
\end{corollary}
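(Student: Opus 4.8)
The plan is to reduce the flux integral to a one–dimensional integral of $(T\psi)^2$ along $\mathcal{H}$ and then insert the pointwise late-time asymptotics for $T\psi$ furnished by Theorem~\ref{thm:asmpsinpn0rfo00}. The starting observation is that on the event horizon $\mathcal{H}=\{r=r_+\}$, where $r_+$ is the (simple) root of $D$, the stationary field $T=\partial_v$ is both tangent to and null-normal to $\mathcal{H}$: indeed $g(\partial_v,\partial_v)=-D(r_+)=0$, and from the inverse metric $\nabla r=\partial_v+D\,\partial_r$ reduces to $\partial_v$ on $\{D=0\}$. Contracting the energy–momentum tensor twice with $T$ therefore annihilates the metric term (as $T$ is null on $\mathcal{H}$), leaving the clean identity
\[
J^{T}[\psi]\cdot n_{\mathcal{H}}=T_{\mu\nu}[\psi]\,T^{\mu}T^{\nu}=(T\psi)^2 \qquad \text{on }\mathcal{H},
\]
so that
\[
\int_{\mathcal{H}\cap\{\tau\geq \tau_{\infty}\}}J^{T}[\psi]\cdot n_{\mathcal{H}}
= r_+^2\int_{\{\tau\geq \tau_\infty\}}\int_{\s^2}(T\psi)^2\big|_{\mathcal{H}}\,d\omega\,dv .
\]
The whole problem thus reduces to the behaviour of $\int_{\s^2}(T\psi)^2$ along $\mathcal{H}$ as $\tau_\infty\to\infty$.

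Next I would apply Theorem~\ref{thm:asmpsinpn0rfo00} with $k=2$. Since $r_+\leq R$, the horizon lies in the region $\{r\leq R\}$ where that theorem applies, and on $\mathcal{H}$ the time function $\tau$ is comparable to (in fact differs by a constant from) the parameter $v$. Using $4(-1)^{2}(2+1)!=24$ and the power $(\tau+1)^{-(k+2)}=(\tau+1)^{-4}$, the theorem gives, pointwise and uniformly over the sphere,
\[
T\psi\big|_{\mathcal{H}}(v,\omega)=24\,\frac{I_0^{(1)}[\psi]}{(v+1)^4}+\mathrm{Err}(v,\omega),
\qquad |\mathrm{Err}(v,\omega)|\leq C\,(v+1)^{-4-\epsilon},
\]
where the spherically symmetric leading term is exactly $-8I_0^{(1)}[\psi]$ differentiated once in $T$, and the $\ell\geq 1$ content of $T\psi$ is absorbed into the (sphere-independent) error bound. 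Squaring and integrating over $\s^2$, and controlling the cross term by $\int_{\s^2}|\mathrm{Err}|\leq C(v+1)^{-4-\epsilon}$ and the quadratic remainder by $\int_{\s^2}\mathrm{Err}^2\leq C(v+1)^{-8-2\epsilon}$, yields
\[
\int_{\s^2}(T\psi)^2\big|_{\mathcal{H}}\,d\omega
=4\pi\,\frac{(24)^2\,(I_0^{(1)}[\psi])^2}{(v+1)^8}
+O\big((v+1)^{-8-\epsilon}\big).
\]

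Finally I integrate in $v$ from $\tau_\infty$ to $\infty$. The leading term integrates to $\tfrac{1}{7}\cdot 4\pi(24)^2(I_0^{(1)}[\psi])^2(\tau_\infty+1)^{-7}$, whereas the $O((v+1)^{-8-\epsilon})$ remainder integrates to $O(\tau_\infty^{-7-\epsilon})$, which is genuinely lower order precisely because $\epsilon>0$. Restoring the geometric constant $r_+^2$ gives
\[
\int_{\mathcal{H}\cap\{\tau\geq \tau_{\infty}\}}J^{T}[\psi]\cdot n_{\mathcal{H}}
=\frac{2304\pi r_+^2}{7}\,\big(I_0^{(1)}[\psi]\big)^2\,(\tau_\infty+1)^{-7}\big(1+O(\tau_\infty^{-\epsilon})\big),
\]
which is the asserted $\tau_\infty^{-7}$ decay law, with leading coefficient proportional to $(I_0^{(1)}[\psi])^2$ (the flux being quadratic in $\psi$). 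The only genuinely delicate point is the bookkeeping in passing from the pointwise-in-$(v,\omega)$ expansion to the integrated flux: one must use that the error in Theorem~\ref{thm:asmpsinpn0rfo00} is \emph{uniform over the sphere}, so that squaring does not destroy the $\epsilon$-gain, and one must pin down the precise affine relation between $\tau$ and $v$ along $\mathcal{H}$ to fix the numerical constant. The decay power $\tau_\infty^{-7}$ and the proportionality to $(I_0^{(1)}[\psi])^2$, however, are robust to these choices and follow directly from the $k=2$ horizon asymptotics.
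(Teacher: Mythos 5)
Your proof is correct and follows the derivation the paper intends: the corollary is stated without a separate proof precisely because it follows by inserting the horizon asymptotics of $T\psi$ from Theorem \ref{thm:asmpsinpn0rfo00} (with $k=2$, i.e.\ $T\psi|_{\mathcal{H}}\approx 24\,I_0^{(1)}[\psi]\,(v+1)^{-4}$ uniformly on $\{r\leq R\}$) into the identity $J^T[\psi]\cdot n_{\mathcal{H}}=(T\psi)^2$, valid since $T$ is null on $\mathcal{H}$, and then integrating in $v$ exactly as you do. Note that your computation also correctly produces a leading coefficient proportional to $\bigl(I_0^{(1)}[\psi]\bigr)^2$ rather than $I_0^{(1)}[\psi]$ --- as it must, the flux being quadratic and nonnegative --- so the single power appearing in the corollary's statement should be read as loose $\sim_{asym}$ bookkeeping rather than a discrepancy with your result.
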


We also determine the late-time polynomial tails of the Friedlander radiation field $T^{k}(r\psi)$.

\begin{theorem}\textbf{(Late-time asymptotics of the radiation field $T^{k}(r\psi)$ with vanishing Newman--Penrose constant)}  Under the assumptions of Theorem \ref{thm:asmpsinpn0rfo00} we have the following estimate along null infinity:
\begin{equation*}
\begin{split}
&|T^{k-1} (r\psi) (u,\infty)-(-1)^{k} k! \cdot 2I_0^{(1)}  [\psi] (u+1)^{-k-1}|\\ \: &\ \ \ \ \ \ \ \ \ \ \leq C\left(\sqrt{E^{\epsilon}_{0,I_{0}=0;k-1}[\psi]+ E^{\epsilon}_{1;k-1}[\psi_{\ell=1}] +E^{\epsilon}_{2;k-1}[\psi_{\ell\geq 2}]} +I_0[\psi^{(1)}]\right)(u+1)^{-1-k+\epsilon}
\\&\ \ \ \ \ \ \ \ \ \ \ \ \ \ \ +C\cdot P_{I_{0},\beta; k}  [\psi^{(1)}]\cdot (u+1)^{-1-k-\beta}.
\end{split}
\end{equation*}

 \label{thm:asmpsinpn0rfo0}
\end{theorem}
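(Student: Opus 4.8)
The plan is to identify the radiation field with the limit of an outgoing derivative integrated to null infinity, and to import the interior bounds already established in Steps 4--7. Writing $\phi^{(1)}=r\psi^{(1)}$ for the radiation field of the time integral, we have $T^{k-1}(r\psi)=T^{k-1}(T\phi^{(1)})=T^{k}\phi^{(1)}$ and $I_0^{(1)}[\psi]=I_0[\psi^{(1)}]$, so the quantity to compute is $\lim_{v\to\infty}T^{k}\phi^{(1)}(u,v)$. The base case $k=1$ is exactly the radiation-field statement $r\psi|_{\mathcal I}\sim -2I_0^{(1)}[\psi](u+1)^{-2}$ obtained in Steps~5 and 7; the general case is handled by an induction on $k$ that upgrades the outgoing-derivative asymptotics to all orders.

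First I would establish precise pointwise asymptotics for $\partial_v\big(T^{k}\phi^{(1)}\big)$ throughout the region $\mathcal B_\alpha$. Starting from the first- and second-order bounds \eqref{intronpbound} and the $\partial_v\partial_v(r\psi^{(1)})$ estimate of Step~7, one commutes the wave equation repeatedly with $\partial_v$ to control $\partial_v^{\,j}\phi^{(1)}$, and then converts outgoing derivatives into $T$-derivatives through $T\sim\partial_v+\partial_u$, using the wave equation to trade each mixed derivative $\partial_u\partial_v\phi^{(1)}=-V\phi^{(1)}$ for the potential term $V\sim M r^{-3}$, which carries strictly faster $r$-decay and is therefore subleading at every stage of the induction. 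This yields a bound of the schematic form
\begin{equation*}
\Big|\partial_v\big(T^{k}\phi^{(1)}\big)(u,v)-(-1)^{k}\,2\,(k+1)!\,I_0[\psi^{(1)}]\,v^{-(k+2)}\Big|\leq C\sqrt{E}\;v^{-(k+2)-\eta}
\end{equation*}
in $\mathcal B_\alpha$, with $\eta=\eta(\alpha)\to 1$ as $\alpha\to 1$, together with the $P_{I_0,\beta;k}$-weighted correction induced by the $O_{k-1}(v^{-\beta})$ assumption on the initial Newman--Penrose scalar.

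Next I would integrate in $v$ from the curve $\gamma_\alpha$ out to null infinity,
\begin{equation*}
T^{k}\phi^{(1)}(u,\infty)=T^{k}\phi^{(1)}\big(u,v_{\gamma_\alpha}(u)\big)+\int_{v_{\gamma_\alpha}(u)}^{\infty}\partial_v\big(T^{k}\phi^{(1)}\big)(u,v')\,dv'.
\end{equation*}
Integrating the leading term against $(v')^{-(k+2)}$ from $v_{\gamma_\alpha}(u)\sim u$ to $\infty$ gives $\tfrac{1}{k+1}(-1)^{k}2(k+1)!\,I_0[\psi^{(1)}](u+1)^{-(k+1)}=(-1)^{k}k!\cdot 2I_0^{(1)}[\psi](u+1)^{-k-1}$, which is exactly the claimed leading coefficient. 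The boundary term $T^{k}\phi^{(1)}\big(u,v_{\gamma_\alpha}(u)\big)$ is estimated by the almost-sharp weighted decay of \cite{paper1} as in \eqref{introboundongamma}: since $r\sim u^{\alpha}$ along $\gamma_\alpha$, factoring $r^{1/2}\cdot r^{1/2}(\cdots)$ produces a bound of order $\sqrt{E}\,u^{-(k+1)+\epsilon}$ as $\alpha\to 1$, which is precisely the dominant error $(u+1)^{-1-k+\epsilon}$. The remainder of the $v'$-integral contributes $u^{-(k+1)-\eta}$ with $\eta\to 1$ and is therefore faster, while the rate-$\beta$ assumption contributes the final $P_{I_0,\beta;k}[\psi^{(1)}]\,(u+1)^{-1-k-\beta}$ term.

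The hard part is the first step: propagating the precise outgoing-derivative asymptotics uniformly up to and including null infinity, to all orders in $k$. Two points require care. First, the conversion between $\partial_v$- and $T$-derivatives generates, through the commuted wave equation, a cascade of potential-weighted lower-order terms, and one must verify at each order of the induction that these do not contaminate the leading coefficient $(-1)^{k}2(k+1)!\,I_0[\psi^{(1)}]$ but are genuinely absorbed into the $v^{-(k+2)-\eta}$ remainder. Second, the inductive estimates must hold throughout $\mathcal B_\alpha$, which forces the strict inequality $\alpha<1$ (as emphasised in Step~6): only then are the $r$-weights along the integration contour favourable enough that the boundary contribution on $\gamma_\alpha$ is subleading and the $\epsilon$-loss is confined to the stated error. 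Once these asymptotics are in place, the integration and the bookkeeping of the initial-data norms are routine.
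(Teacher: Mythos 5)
Your proposal follows essentially the same route as the paper's proof: reduce to the time integral via $T^{k-1}(r\psi_0)=T^{k}(r\psi^{(1)})$ and $I_0^{(1)}[\psi]=I_0[\psi^{(1)}]$, derive the asymptotics of $\partial_v\big(T^{k}\phi^{(1)}\big)$ in $\mathcal{B}_{\alpha}$ by commuting the wave equation with $\partial_v$ and trading mixed derivatives for the $O(r^{-3})$ potential term, then integrate in $v$ from $\gamma_{\alpha}$ with the boundary term controlled by almost-sharp $r^{1/2}$-weighted decay — this is exactly the chain of Propositions \ref{prop:tklower}, \ref{cor:estpartialvTkphi} and \ref{asymprecTk}, applied to $\psi^{(1)}$ in Section \ref{sec:asmradfieldzeroNP}. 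The only point you leave implicit is that the time integral exists only for the spherical mean $\psi_0$, so one must first split $\psi=\psi_0+\psi_{\ell=1}+\psi_{\ell\geq 2}$ and absorb the higher modes into the $(u+1)^{-1-k+\epsilon}$ error using their faster pointwise decay, which is precisely why the norms $E^{\epsilon}_{1;k-1}[\psi_{\ell=1}]$ and $E^{\epsilon}_{2;k-1}[\psi_{\ell\geq 2}]$ appear in the statement.
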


The next theorem obtains the precise leading order term in the late-time asymptotics for spherically symmetric solutions to the wave equation, even for vanishing constant $I_{0}^{(1)}[\psi]$.

\begin{theorem}\label{prop:tkpsi_lower1}\textbf{(Asymptotics for spherically symmetric solutions)}
Let $\psi$ be a spherically symmetric solution to the wave equation \eqref{waveequation} on the class of spacetimes $(\mathcal{M},g)$, where $g$ is given by \eqref{intrometric}. Let $n\in\mathbb{N}$ and assume the following additional asymptotics for the metric component $D$:
\begin{equation*}
D(r)=1-2Mr^{-1}+\sum_{m=0}^{n-1}d_mr^{-m-1}+O_{3+n}(r^{-n-\beta}),
\end{equation*}
where $d_m\in \mathbb{R}$ for $m=0,1,\cdots, n-1$. We assume that $\psi$ satisfies 
 the geometric assumptions of Section \ref{sec:waveassm}
and  that $$\widetilde{E}^{\epsilon}_{I_0= 0;n}[\psi]+\int_{\Sigma_0}J^N[N^2\psi]\cdot n_0<\infty$$ for some $\epsilon>0$. We assume moreover that
\begin{align*}
r^2\partial_r(r\psi)\left.\!\right|_{\Sigma_{0}}
=\sum_{m=1}^{n}p_{m}r^{-m}+O_{k}(r^{-n-\beta}),
\end{align*}
where $p_m\in \R$  for $m=1,\cdots, N$ and $\beta>\epsilon$.

Let $k\leq n$. If  
\begin{equation*}
I_0^{(0)}[\psi]=\ldots=I^{(k-1)}[\psi]=0,
\end{equation*}
then we have that for all $(u,v)\in \mathcal{R}\cap\{r\geq R\}$ we can estimate
\begin{equation*}
\begin{split}
\Bigg|\psi& (u,v) -4(-1)^k k!\cdot \frac{I_0^{(k)} [\psi ]}{(u+1)^{k+1}v}\left(1+\sum_{j=1}^k\left(\frac{u}{v}\right)^j\right)\Bigg|\\
 \leq&\: C\left(\sqrt{\widetilde{E}^{\epsilon}_{0,I_0\neq 0;k+1}[\psi^{(k)}]}+I_0^{(k)}[\psi]+P_{I_0,\beta;k}[\psi^{(k)}]\right)(u+1)^{-k-1-\epsilon}v^{-1},
 \end{split}
\end{equation*}
where
\begin{enumerate}
\item  $C=C(D,\Sigma,R,k,\epsilon)>0$  is a constant,
\item $\psi^{(k)}$ is the $k^{\text{th}}$ time integral of $\psi$,
\item $I_{0}^{(k)}[\psi] $ is the $k^{\text{th}}$-order time-inverted Newman--Penrose constant of $\psi$ and
\item $P_{I_0,\beta;k}[\psi^{(k)}]$ is defined in \eqref{def:PI0k}.
\end{enumerate}

Furthermore, we can estimate in $\mathcal{R}\cap\{r\leq R\}$:
\begin{equation}\label{np0price_0_v2}
\begin{split}
\Bigg|\psi& (\tau,\rho) -4(-1)^k(k+1)!\cdot \frac{I_0^{(k)} [\psi ]}{(\tau+1)^{k+2}}\Bigg| \\
\leq&\: C\left(\sqrt{\widetilde{E}^{\epsilon}_{0,I_0\neq 0;k+1}[\psi^{(k)}]}+I_0^{(k)}[\psi]+P_{I_0,\beta;k}[\psi^{(k)}]\right)(\tau+1)^{-k-2-\epsilon}.
\end{split}
\end{equation}
\end{theorem}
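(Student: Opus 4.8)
The plan is to prove the theorem by reducing it, via iterated time-integration, to the already-established asymptotics for solutions with \emph{non-vanishing} Newman--Penrose constant. The key object is the $k$-th time integral $\psi^{(k)}$ of $\psi$, whose ordinary Newman--Penrose constant equals the $k$-th order time-inverted constant, $I_0[\psi^{(k)}]=I_0^{(k)}[\psi]$. Since $\psi=T^k\psi^{(k)}$, the late-time asymptotics of $\psi$ will be obtained by applying the non-vanishing-N--P machinery (implemented in Propositions \ref{prop:lower1}, \ref{cor1}, \ref{prop:psi_lower} and \ref{asymprecTk}) to $\psi^{(k)}$ together with its $T$-derivatives up to order $k$, and then identifying the leading-order profile of $T^k\psi^{(k)}$. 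Because $\psi$ is spherically symmetric, the time-integral construction of Section \ref{npsection}, which is defined on the spherical mean, applies directly to $\psi$ itself, and no angular decomposition is needed.

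First I would carry out the construction of $\psi^{(k)}$. Setting $\psi^{(0)}=\psi$, I build $\psi^{(j)}$ inductively as the time integral of $\psi^{(j-1)}$, exactly as in Section \ref{timeinvertednpsection}. The construction of each successive time integral requires the vanishing of the Newman--Penrose constant of the previous one, i.e. $I_0[\psi^{(j-1)}]=I_0^{(j-1)}[\psi]=0$; this is precisely guaranteed by the hypothesis $I_0^{(0)}[\psi]=\cdots=I_0^{(k-1)}[\psi]=0$ for all $j\leq k$. At each step the integrability/smoothness condition on $\lim_{r\to\infty}r^2\partial_r\psi^{(j)}|_{\Sigma_0}$ (as in Step~3 of Section \ref{prevtech}) must be verified, which is where the refined metric expansion $D(r)=1-2Mr^{-1}+\sum_{m=0}^{n-1}d_m r^{-m-1}+O_{3+n}(r^{-n-\beta})$ and the data expansion $r^2\partial_r(r\psi)|_{\Sigma_0}=\sum_{m=1}^n p_m r^{-m}+O_k(r^{-n-\beta})$ enter: each time-integration probes one further order of the asymptotic expansion, so $n\geq k$ terms suffice to propagate an analogous expansion to $\psi^{(k)}$ and to guarantee finiteness of the weighted higher-order norm $\widetilde{E}^\epsilon_{0,I_0\neq0;k+1}[\psi^{(k)}]$ and of the data quantity $P_{I_0,\beta;k}[\psi^{(k)}]$ appearing on the right-hand side.

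With $\psi^{(k)}$ in hand, I would apply the non-vanishing-N--P asymptotics to $\psi^{(k)}$, obtaining in $\{r\geq R\}$ the profile $4I_0^{(k)}[\psi](1+u/v)\,((u+1)v)^{-1}$ and in $\{r\leq R\}$ the profile $4I_0^{(k)}[\psi](\tau+1)^{-2}$, with quantitative errors controlled by $\sqrt{\widetilde{E}^\epsilon_{0,I_0\neq0;k+1}[\psi^{(k)}]}$, $I_0^{(k)}[\psi]$ and $P_{I_0,\beta;k}[\psi^{(k)}]$. Crucially, this machinery (via commuting the wave equation with $\partial_v$ and iterating the argument of Step~7) delivers the asymptotics not only for $\psi^{(k)}$ but simultaneously for all $T^j\psi^{(k)}$, $0\le j\le k$, with error rates improved by a factor $\tau^{-j}$. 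Taking $j=k$ identifies $\psi=T^k\psi^{(k)}$, and the leading-order coefficient follows from the elementary computation of $T^k$ applied to the profiles: on $\{r\leq R\}$, where $T\sim\partial_\tau$, one has $T^k(\tau+1)^{-2}=(-1)^k(k+1)!(\tau+1)^{-k-2}$, producing the factor $4(-1)^k(k+1)!$; on $\{r\geq R\}$, where $T\sim\partial_u+\partial_v$, the analogous computation on $(1+u/v)((u+1)v)^{-1}$ yields the factor $4(-1)^k k!$ together with the polynomial $1+\sum_{j=1}^k(u/v)^j$, in exact agreement with the pattern already recorded in the Remark following Theorem \ref{thm:asmpsinpn0rfo00}.

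The \textbf{main obstacle} is the first step: rigorously constructing the higher-order time integrals and tracking how the asymptotic data expansions propagate through $k$ successive inversions of $T$. Each time-integration degrades the behaviour towards null infinity and forces one to verify the global-smoothness (integrability) condition at every stage, as well as to show that the weighted, $T^k$-commuted energy norms of $\psi^{(k)}$ remain finite --- this is exactly what the hypotheses on $D$ and on $r^2\partial_r(r\psi)|_{\Sigma_0}$ are tailored to ensure. The remaining analytic input, namely the almost-sharp decay estimates for the $T$-derivatives from \cite{paper1} and the $\partial_v$-commuted lower bounds on the Newman--Penrose scalar, is supplied by the non-vanishing-N--P arguments already used for the base cases $k=0,1$; the genuinely new content here is organizational --- showing that these arguments apply verbatim to $\psi^{(k)}$ once its construction and the finiteness of its norms are secured.
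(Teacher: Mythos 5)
Your proposal is correct and follows essentially the same route as the paper: the paper proves this theorem exactly by constructing the iterated time integrals $\psi^{(k)}$ via Proposition \ref{prop:tinverseconstr_k} (using the refined expansions of $D$ and of $r^2\partial_r(r\psi)|_{\Sigma_0}$), securing finiteness of the relevant norms via Corollary \ref{cor:estPnorms} and Proposition \ref{propositiontransition}, and then applying the non-vanishing Newman--Penrose asymptotics for $T^k$-derivatives (Proposition \ref{prop:tkpsi_lower}) to $\psi^{(k)}$, identifying $\psi=T^k\psi^{(k)}$ and $I_0[\psi^{(k)}]=I_0^{(k)}[\psi]$. Your computation of the leading coefficients $4(-1)^k(k+1)!$ in $\{r\leq R\}$ and $4(-1)^k k!$ with the polynomial $1+\sum_{j=1}^k(u/v)^j$ in $\{r\geq R\}$ matches the paper's.
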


Theorem \ref{prop:tkpsi_lower1} is proved in Section \ref{asymptoticspsi2}.

\subsubsection{Late-time asymptotics for solutions to the wave equation with non-vanishing Newman--Penrose constant}

We next consider initial data for $\psi$, such that $I_0[\psi]\neq 0$ and prove the existence and precise form of the corresponding late-time polynomial tails.

\begin{theorem}\textbf{(Late-time asymptotics of $T^{k}\psi$ with non-vanishing Newman--Penrose constant)}
\label{thm:asmpsinpn0rf2}
Let $\psi$ be a solution to the wave equation \eqref{waveequation} on the class of spacetimes $(\mathcal{M},g)$, where $g$ is as given by \eqref{intrometric}, satisfying the geometric assumptions of Section \ref{sec:waveassm}. We consider the harmonic decomposition introduced in Section \ref{sphericaldecompo}
\[\psi=\psi_{0}+\psi_{\ell=1}+\psi_{\ell\geq 2} \]
and assume that the following hold  
\[ I_{0}[\psi]\neq 0,  \]
\[\widetilde{E}^{\epsilon}_{0,I_0\neq0;k+1}[\psi_{0}]< \infty,\] and
\begin{equation*}
P_{I_0,\beta;k}[\psi_{0}]<\infty,
\end{equation*}
where $P_{I_{0},\beta;k}[\psi_{0}]$ is defined in \eqref{def:PI0k},  for some $\epsilon, \beta>0$. Then we have that for all $(u,v)\in \mathcal{R}\cap\{r\geq R\}$:\begin{equation}\label{price_0_vintroo}
\begin{split}
\Bigg|T^k\psi& (u,v) -4(-1)^k k!\cdot \frac{I_0 [\psi ]}{(u+1)^{k+1}v}\left(1+\sum_{j=1}^k\left(\frac{u+1}{v}\right)^j\right)\Bigg|\\
 \leq&\: C\left(\sqrt{\widetilde{E}^{\epsilon}_{0,I_0\neq 0;k+1}[\psi_{0}]+E^{\epsilon}_{1;k+1}[\psi_{\ell=1}]+E^{\epsilon}_{2;k+1}[\psi_{\ell\geq 2}]}+I_0[\psi]\right)(u+1)^{-k-1-\epsilon}v^{-1}\\ 
 & +P_{I_0,\beta;k}[\psi_{0}]\cdot (u+1)^{-k-1-\beta}v^{-1},
 \end{split}
\end{equation}
where $C=C(D,\Sigma,R,k,\epsilon)>0$ is a constant.

Furthermore, we can estimate in $\mathcal{R}\cap\{r\leq R\}$:
\begin{equation}\label{price_0_v2}
\begin{split}
\Bigg|T^k\psi& (\tau,\rho) -4(-1)^k(k+1)!\cdot \frac{I_0 [\psi ]}{(\tau+1)^{k+2}}\Bigg| \\
\leq&\: C\left(\sqrt{\widetilde{E}^{\epsilon}_{0,I_0\neq 0;k+1}[\psi_{0}]+E^{\epsilon}_{1;k+1}[\psi_{\ell=1}]+E^{\epsilon}_{2;k+1}[\psi_{\ell\geq 2}]}+I_0[\psi]\right)(\tau+1)^{-k-2-\epsilon}\\
&+ P_{I_0,\beta;k}[\psi_{0}]\cdot (\tau+1)^{-k-2-\beta},
\end{split}
\end{equation}
where $C=C(D,\Sigma,R,k,\epsilon)>0$ is a constant.

\end{theorem}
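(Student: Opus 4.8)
The plan is to prove the non-vanishing Newman--Penrose case \emph{directly}, without passing to a time integral, since the hypothesis $I_0[\psi]\neq 0$ already supplies at the level of $\psi$ itself the conserved quantity that the vanishing case had to manufacture via $\psi^{(1)}$. First I would invoke the spherical-mean decomposition $\psi=\psi_0+\psi_{\ell=1}+\psi_{\ell\geq 2}$: because the Newman--Penrose constant is an $\ell=0$ object and the higher angular modes decay strictly faster than the leading $u^{-k-1}v^{-1}$ (resp.\ $\tau^{-k-2}$) profile — quantitatively controlled by $E^{\epsilon}_{1;k+1}[\psi_{\ell=1}]$ and $E^{\epsilon}_{2;k+1}[\psi_{\ell\geq 2}]$ through the almost-sharp estimates of \cite{paper1} — the entire contribution of $\psi_{\ell\geq 1}$ may be relegated to the error terms on the right-hand sides of \eqref{price_0_vintroo} and \eqref{price_0_v2}. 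It then suffices to prove both expansions for $\psi_0$, whose radiation field $\phi=r\psi_0$ solves the radial transport equation $\partial_u\partial_v\phi=-\tfrac14 DD' r^{-1}\phi$.

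The core is a hierarchy of transport estimates in the region $\mathcal{B}_{\alpha}$ to the right of $\gamma_{\alpha}$. At the base level this is the analog of Proposition~\ref{prop:lower1} applied directly to $\psi_0$: the conserved quantity $\lim_{v\to\infty}r^2\partial_r(r\psi_0)=I_0[\psi]$ is propagated inward by integrating the equation satisfied by the rescaled null derivative $r^2\partial_v\phi$ from null infinity and controlling the error integrals by the pointwise decay of $\phi$, yielding $\bigl|\partial_v(r\psi_0)-2I_0[\psi]\,v^{-2}\bigr|\leq C\sqrt{\widetilde{E}^{\epsilon}_{0,I_0\neq 0;1}[\psi_0]}\,v^{-2-\eta}$ in $\mathcal{B}_{\alpha}$, with $\eta=\eta(\alpha)\to 1$ as $\alpha\to 1$, exactly as in \eqref{intronpbound}. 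To reach $T^k\psi$ I would commute the equation $j$ times with $\partial_v$ for $1\leq j\leq k+1$, producing $\partial_u(\partial_v^{\,j}\phi)=-\tfrac14\partial_v^{\,j-1}(DD'r^{-1}\phi)$, whose sources are subleading in the hierarchy already established; integration from infinity gives $\partial_v^{\,j}\phi\sim 2(-1)^{j-1}j!\,I_0[\psi]\,v^{-j-1}$, the case $j=2$ being the bound $\partial_v^2\phi\sim -4I_0[\psi]v^{-3}$ appearing in Step~7. The passage to $T$-derivatives rests on $T\sim\partial_u+\partial_v$ and the wave equation: writing $\partial_v(T^k\phi)$ and repeatedly eliminating $\partial_u\partial_v$ in favor of the fast-decaying potential $DD'r^{-1}\phi$ shows that $\partial_v(T^k\phi)$ is governed to leading order by $\partial_v^{\,k+1}\phi\sim 2(-1)^k(k+1)!\,I_0[\psi]\,v^{-k-2}$.

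Having controlled $\partial_v(T^k\phi)$ in $\mathcal{B}_{\alpha}$, I would integrate in $v$ from the curve $\gamma_{\delta}$ exactly as in \eqref{introvinte}--\eqref{intropartialv}: the boundary term on $\gamma_{\delta}$ is handled by the almost-sharp decay $|r^{1/2}\psi_0|\lesssim u^{-3/2+\epsilon}$ of \cite{paper1}, which along $r\sim u^{\delta}$ as in \eqref{introboundongamma} is faster-decaying than the main term, while integrating the $v^{-k-2}$ profile reproduces the explicit coefficient $4(-1)^k k!\,I_0[\psi]$ and, after tracking the boundary contributions at each level of the hierarchy, the finite geometric sum $1+\sum_{j=1}^k((u+1)/v)^j$. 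Dividing by $r$ and splitting $\mathcal{B}_{\delta}$ into $\{r\geq Cv\}$ and $\{r\leq Cv\}$ transfers this to $\psi_0$ and yields \eqref{price_0_vintroo} in $\{r\geq R\}$. Finally, \eqref{price_0_v2} in $\{r\leq R\}$ follows by integrating inward along $\Sigma_\tau$ from $\gamma_{\delta}$ an almost-sharp bound for the radial derivative $r^{1/2}\partial_\rho T^k\psi_0$ of the type \eqref{intropartialb}, as in \eqref{introrestimate}, and invoking the commuted redshift estimate of \cite{lecturesMD} to cross $\{r=R\}$; there $u\sim v\sim\tau$ collapses the geometric sum to the factor $(k+1)$, converting $k!$ into $(k+1)!$ and $u^{-k-1}v^{-1}$ into $\tau^{-k-2}$.

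The main obstacle is the transport estimate in $\mathcal{B}_{\alpha}$ and, above all, its higher-order commuted versions. The source $DD'r^{-1}\phi\sim r^{-3}\phi$ must be integrated against decay for $\phi$ that is only almost-sharp, and near $\gamma_{\alpha}$, where $r\sim v^{\alpha}$, this decay is weakest; the error weight $v^{-2-\eta}$ survives only because $\alpha$ is taken strictly below $1$, which is precisely why one cannot integrate up to null infinity ($\alpha=1$) directly. For $T^k\psi$ the commutators generate sources carrying $k$ additional derivatives of $\phi$ and of the metric coefficients — hence the need for the asymptotic flatness of $D$ assumed in Section~\ref{sec:waveassm}, the norm $\widetilde{E}^{\epsilon}_{0,I_0\neq 0;k+1}[\psi_0]$, and the auxiliary quantity $P_{I_0,\beta;k}[\psi_0]$ — and each must be shown subleading uniformly in $\mathcal{B}_{\alpha}$. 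Keeping the thresholds $\eta(\alpha_j)$ bounded away from the critical value simultaneously across the whole hierarchy $1\leq j\leq k+1$ is the delicate bookkeeping on which the sharpness of the final rates depends.
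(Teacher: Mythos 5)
Your overall architecture is the paper's own: spherical-mean decomposition with the higher modes absorbed into the error via the almost-sharp estimates of Section \ref{gdeforpsi101}, a transport estimate for $v^2\partial_v\phi$ in $\mathcal{B}_{\alpha}$, a $\partial_v$-commuted hierarchy converted into asymptotics for $\partial_v(T^k\phi)$ through the wave equation, integration in $v$ from $\gamma_{\delta}$ with the boundary term controlled by almost-sharp decay, division by $r$ with a region splitting, and the global extension inward using the tangential-derivative decay of Section \ref{improvedhyperboloidaldecay} together with the redshift; this is precisely the chain of Propositions \ref{prop:lower1}, \ref{prop:tklower}--\ref{asymprecTk} and \ref{prop:psi_lower}--\ref{prop:tkpsi_lower} proved in Section \ref{sec:asympsinonzeronp}.

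There is, however, one step which, as you describe it, would fail: the base estimate and its commuted versions are \emph{not} obtained by ``integrating the equation satisfied by $r^2\partial_v\phi$ from null infinity''. By \eqref{eqphi}, the quantity $v^2\partial_v\phi$ satisfies a transport equation in the $u$-direction, $\partial_u\left(v^2\partial_v\phi\right)=-v^2\tfrac{DD'}{4r}\phi$, and the paper integrates it \emph{forward in $u$ from the initial hypersurface} $\Sigma_0$, where the hypothesis $P_{I_0,\beta;k}[\psi_0]<\infty$ prescribes the $v$-asymptotics of $\partial_v^{j+1}\phi(0,v)$ at every level $j\leq k$ of the hierarchy; this is the sole source of the $v^{-\beta}$ (and ultimately $(u+1)^{-k-1-\beta}$) error terms. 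Integrating inward from $\mathcal{I}^+$ along constant-$u$ lines instead would require controlling $\partial_v\left(v^2\partial_v\phi\right)$, i.e.\ one more $\partial_v$-derivative than the one being estimated --- an inverted hierarchy that never closes --- and the mere conservation of $I_0[\psi]$ carries no rate, so the data norm would have nowhere to enter; consistently with this misreading, your stated base estimate omits the term $P_{I_0,\beta}[\psi_0]\cdot v^{-\beta}$ that appears in \eqref{eq:lower1}, and you misattribute $P_{I_0,\beta;k}$ to the control of commutator sources rather than to seeding the transport at the data. A second, more minor, misattribution: the geometric sum $1+\sum_{j=1}^k\left(\frac{u+1}{v}\right)^j$ does not come from boundary contributions of the hierarchy; it is pure algebra, arising from $v^{k+1}-(u+1)^{k+1}=(v-u)\sum_{j=0}^{k}(u+1)^jv^{k-j}$ when the profile $2(-1)^k k!\,I_0[\psi]\left((u+1)^{-k-1}-v^{-k-1}\right)$ for $T^k\phi$ is divided by $r\sim\tfrac12(v-u)$. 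With the direction of integration corrected and the data norm inserted where it belongs, your outline coincides with the paper's proof.
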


Theorem \ref{thm:asmpsinpn0rf2} is proved in Section \ref{sec:asympsinonzeronp}.

\begin{theorem}\textbf{(Late-time asymptotics of $T^{k}(r\psi)$ with non-vanishing Newman--Penrose constant)}
\label{thm:asmphinpn0}
Under the assumptions of Theorem \ref{thm:asmpsinpn0rf2} we have  
along null infinity 
\begin{equation*}
\begin{split}
|T^{k} (r\psi) (u,\infty)&-(-1)^{k} k! \cdot 2I_0 [\psi ] (u+1)^{-k-1}| \\ \leq&\: C\left(\sqrt{E^{\epsilon}_{I_0\neq 0;k}[\psi_{0}]+E^{\epsilon}_{1;k}[\psi_{\ell=1}]+E^{\epsilon}_{1;k}[\psi_{\ell\geq 2}]}+I_0[\psi]\right)(u+1)^{-1-k+\epsilon}\\
&+C\cdot P_{I_{0},\beta; k}  [\psi]\cdot (u+1)^{-1-k-\beta}.
\end{split}
\end{equation*}

\end{theorem}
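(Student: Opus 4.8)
The plan is to deduce the radiation-field asymptotics from the bulk estimates of Theorem \ref{thm:asmpsinpn0rf2}, or rather from the commuted Newman--Penrose bounds that underlie its proof. First I would use the harmonic decomposition $\psi=\psi_0+\psi_{\ell=1}+\psi_{\ell\geq 2}$ and observe that only the spherical mean contributes to the leading order: along $\mathcal{I}$ the radiation fields $T^k(r\psi_{\ell=1})$ and $T^k(r\psi_{\ell\geq 2})$ decay strictly faster than $(u+1)^{-1-k}$, by the almost-sharp estimates of \cite{paper1} controlled by $E^{\epsilon}_{1;k}[\psi_{\ell=1}]$ and $E^{\epsilon}_{2;k}[\psi_{\ell\geq 2}]$, and are therefore absorbed into the error term. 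Writing $\phi_0=r\psi_0$, it then suffices to compute $\lim_{v\to\infty}T^k\phi_0(u,v)$ with quantitative remainders.

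The core of the argument is a commuted version of the Newman--Penrose estimate behind Proposition \ref{prop:lower1}. Commuting the spherically symmetric wave equation repeatedly with $\partial_v$ and propagating the conservation of $I_0[\psi]$ produces the hierarchy
\[\left|\partial_v^{j+1}\phi_0(u,v)-2(-1)^j(j+1)!\,I_0[\psi]\,v^{-(j+2)}\right|\leq C\sqrt{E^{\epsilon}_{I_0\neq 0;k}[\psi_0]}\,v^{-(j+2)-\eta}+C\,P_{I_0,\beta;k}[\psi_0]\,v^{-(j+2)-\beta}\]
valid in $\mathcal{B}_{\alpha}$ for $\eta=\eta(\alpha)$ with $\eta(\alpha)\to 1$ as $\alpha\to 1$; the case $j=0$ is precisely the bound of Proposition \ref{prop:lower1} and $j=1$ is the second-order bound used in Step 7, while the $P_{I_0,\beta;k}[\psi_0]$ term tracks the $O(v^{-\beta})$ correction in the assumed expansion of $v^3\partial_v(r\psi_0)(0,v)$ through the quantity \eqref{def:PI0k}. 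I would then convert $T$-derivatives into $v$-derivatives using $T=\partial_u+\partial_v$ together with the radial wave equation $\partial_u\partial_v\phi_0=\mathcal{V}\phi_0$, whose potential satisfies $\mathcal{V}=O(r^{-3})$. Since $r\gtrsim v^{\alpha}$ in $\mathcal{B}_{\alpha}$ with $\alpha$ sufficiently close to $1$ (depending on $k$), every $\partial_u$ produced this way is replaced by $\mathcal{V}\phi_0$ and its $\partial_v$-derivatives and is strictly subdominant, so that
\[\left|\partial_v(T^k\phi_0)(u,v)-2(-1)^k(k+1)!\,I_0[\psi]\,v^{-(k+2)}\right|\leq C\sqrt{E^{\epsilon}_{I_0\neq 0;k}[\psi_0]}\,v^{-(k+2)-\eta}+C\,P_{I_0,\beta;k}[\psi_0]\,v^{-(k+2)-\beta}\]
throughout $\mathcal{B}_{\alpha}$.

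With this estimate in hand the conclusion follows by integration in $v$ exactly as in Step 5, along the lines of Proposition \ref{cor1}. Integrating from $v_{\gamma_{\alpha}}(u)$ to $v=\infty$ I obtain
\[T^k\phi_0\big|_{\mathcal{I}}(u)=T^k\phi_0\big|_{\gamma_{\alpha}}(u)+\int_{v_{\gamma_{\alpha}}(u)}^{\infty}\partial_v(T^k\phi_0)(u,v')\,dv',\]
and the main contribution $\int_{v_{\gamma_{\alpha}}(u)}^{\infty}2(-1)^k(k+1)!\,I_0[\psi]\,(v')^{-(k+2)}\,dv'=2(-1)^k k!\,I_0[\psi]\,v_{\gamma_{\alpha}}(u)^{-(k+1)}$ equals the claimed leading term $(-1)^k k!\cdot 2I_0[\psi]\,(u+1)^{-k-1}$ up to lower order, since $v_{\gamma_{\alpha}}(u)=u+v_{\gamma_{\alpha}}(u)^{\alpha}\sim u+1$. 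The boundary term is controlled by writing $|T^k\phi_0|=r^{1/2}\cdot r^{1/2}|T^k\psi_0|$ and invoking the almost-sharp decay of $r^{1/2}T^k\psi_0$ from \cite{paper1}; as $r\sim u^{\alpha}$ with $\alpha<1$ along $\gamma_{\alpha}$, this yields a rate strictly faster than $(u+1)^{-1-k}$, hence absorbable into $(u+1)^{-1-k+\epsilon}$. Collecting the integral remainder, of size $u^{-(k+1)-\eta}$ and sharpened to $(u+1)^{-1-k+\epsilon}$ by taking $\alpha$ close to $1$, the boundary contribution, and the $P_{I_0,\beta;k}[\psi_0]$ term at rate $(u+1)^{-1-k-\beta}$ gives the stated inequality.

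The main obstacle is the commuted Newman--Penrose hierarchy of the second paragraph: pinning down the precise leading constant $2(-1)^k(k+1)!\,I_0[\psi]$ for $\partial_v^{k+1}\phi_0$ with a uniform remainder $v^{-(k+2)-\eta}$ requires carefully propagating the conservation of $I_0[\psi]$ through each $\partial_v$-commutation and checking that the potential-generated terms, the initial-data correction terms, and the coupling to the higher angular modes all remain subdominant throughout $\mathcal{B}_{\alpha}$. Controlling these uniformly in $k$ — rather than only for fixed low order as in Steps 4 and 7, which forces $\alpha$ progressively closer to $1$ and $\eta(\alpha)$ correspondingly closer to $1$ — is the technical heart of the argument.
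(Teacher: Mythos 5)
Your proposal is correct and follows essentially the same route as the paper: the paper proves this theorem in Section \ref{rftkhiasy} via exactly your three steps — the commuted $\partial_v^{k+1}\phi$ hierarchy in $\mathcal{B}_{\alpha_k}$ (Proposition \ref{prop:tklower}), conversion of $T$-derivatives to $\partial_v$-derivatives through the wave equation with $O(r^{-3-s})$ potential terms treated as subdominant (Proposition \ref{cor:estpartialvTkphi}, equation \eqref{eq:wave_null_k}), and integration in $v$ from $\gamma_{\alpha_k}$ with the boundary term controlled by the almost-sharp $r^{1/2}$-weighted decay (Proposition \ref{asymprecTk}), with $\alpha_k\in[\tfrac{2k+5}{2k+7},1)$ chosen $k$-dependently exactly as you anticipate. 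The higher angular modes are likewise absorbed into the error term via the estimates of Section \ref{gdeforpsi101}, so your treatment of the decomposition matches the paper as well.
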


Theorem \ref{thm:asmphinpn0} is proved in Section \ref{rftkhiasy}.

\subsection{Relation with the work of Luk--Oh}
\label{lukoh}
In \cite{Luk2015} Luk--Oh establish the blow-up of a weighted energy flux along the event horizon of sub-extremal Reissner--Nordstr\"om spacetimes for generic spherically symmetric initial data via a contradiction argument which uses fundamentally the construction of compactly supported spherically symmetric solutions
 to the wave equation satisfying
\begin{equation}
\label{eq:lukohL}
\mathfrak{L}=\lim_{v\to \infty} 2r^3\partial_v(r\psi)(0,v)-M\int_{0}^{\infty}\lim_{v\to \infty}r\psi(u,v)\,du\neq 0,
\end{equation}
with $(u,v)$ double null coordinates. Using the linearity of the wave equation it therefore follows that $\mathfrak{L}\neq 0$ for \emph{generic} (compactly supported) initial data. 

Using the theory we develop in Section \ref{sec:consttinvphi} we conclude that, under the assumptions of Luk and Oh,  the time integral $\psi^{(1)}$ of the spherical mean of $\psi$, which satisfies $T\psi^{(1)}=\int_{\mathbb{S}^{2}}\psi$, is well-defined. Using then that $\psi^{(1)}$ is a solution to the wave equation, one can easily show that
\begin{equation*}
\mathfrak{L}=I_0[\psi^{(1)}]=I_0^{(1)}[\psi].
\end{equation*}

By involving the time integral of $\psi$ we have therefore provided an \emph{alternative} expression for $\mathfrak{L}$ as the time-inverted Newman--Penrose constant $I_0^{(1)}[\psi]$ (see Definition \ref{timeinvertedNP}), \emph{which depends only on initial data for $\psi$, rather than on the solution globally as in \eqref{eq:lukohL}.}

Hence, the origin of the \emph{pointwise} lower bounds that follow from the precise asymptotics in the present paper is the \emph{same} as the origin of the blow-up of the weighted energy fluxes in \cite{Luk2015}; namely, the non-vanishing of the time-inverted Newman--Penrose constant.

The quantity $\mathfrak{L}$ also plays an important role in understanding strong cosmic censorship for the spherically symmetric Einstein--Maxwell-scalar field system; see \cite{Luk2016a,Luk2016b}.

\subsection{Outline}
The class of Lorentzian metrics considered in this work is presented in Section \ref{sec:geomassm}. The Newman--Penrose constant, which, as is shown in this paper, plays a fundamental role in determining the  leading order term in the late-time asymptotic expansion for solutions to the wave equation, is presented in Section \ref{npsection}. Global almost-sharp decay estimates for $\psi-{\psi}_{0}$, where ${\psi}_{0}$ is the spherical mean, are presented in Section \ref{gdeforpsi101}. Almost-sharp decay estimates for the spherical mean ${\psi}_{0}$ are derived in Section \ref{gdeforpsi1} for initial data with non-vanishing Newman--Penrose constant and in Section \ref{gdeforpsi2} for initial data with vanishing Newman--Penrose constant. Global estimates for the radial derivative tangential to hyperboloidal hypersurfaces are obtained in Section \ref{improvedhyperboloidaldecay}. The latter estimates are crucial for obtaining the late-time asymptotics in the entire spacetime. The late-time asymptotics for solutions to the wave equation with non-vanishing Newman--Penrose constant are obtained in Section \ref{sec:asymradfieldcaseI}. Finally, in Section \ref{sec:asympsi2} we derive the asympotics for solutions to the wave equation with vanishing Newman--Penrose constant, including as a special case the case of compactly supported initial data. The results of Section \ref{sec:asympsi2} rely on the analysis of the non-vanishing Newman--Penrose constant case in Section \ref{sec:asymradfieldcaseI} and the inversion of the time-translation operator $T$ accomplished in Section \ref{sec:consttinvphi}.

\subsection{Acknowledgments}

We would like to thank Mihalis Dafermos, Georgios Moschidis, Jonathan Luk  and Sung-Jin  Oh for several insightful discussions. The second author (S.A.) acknowledges support through NSF grant DMS-1600643 and a Sloan Research Fellowship. The third author (D.G.) acknowledges support by the European Research Council grant no. ERC-2011-StG 279363-HiDGR.

\section{The geometric setting}
\label{sec:geomassm}

\subsection{The spacetime manifold}
\label{thespacemanif}
In this section, we define precisely the stationary, spherically symmetric asymptotically flat Lorentzian manifolds(-with-boundary) $(\mathcal{M},g)$ on which we study the wave equation \eqref{waveequation}. We will look at two different cases.
\subsubsection{Case I: $r_{\rm min}=r_+>0$}
Let $R>0$. In the first case, we let $D$ be a smooth function $D:[r_+,\infty)\to \R$, with $0<r_+<R$, such that $D(r)>0$ for $r\in (r_+,\infty)$, $D(r_+)=0$ and $D'(r_+)\neq 0$.

Note that these assumptions imply in particular that there exists a smooth function $d: [r_{+},\infty)\to \R$, such that $d(r)>0$ and
\begin{equation}
\label{eq:Dvanish1storder}
D(r)=(r-r_+)d(r).
\end{equation}

Furthermore, we assume that
\begin{equation}
\label{asm:asympD}
D(r)=1-\frac{2M}{r}+O_3(r^{-1-\beta}),
\end{equation}
for some $M\geq 0$ and $\beta>0$. Here, we have applied big O notation that is introduced in Section \ref{sec:addnotconv}.

We define the manifold-with-boundary $\mathcal{M}_+$: $$\mathcal{M}_+=\R \times [r_+,\infty)\times   \s^2,$$ such that $\mathcal{M}_+$ is covered by the coordinate chart $(v,r,\theta,\varphi)$ (with the usual degeneration of standard spherical polar coordinates on the unit round sphere $\s^2$), with $v\in \R$, $r\in [r_+\infty)$, $\theta\in (0,\pi)$, $\varphi\in (0,2\pi)$. We equip $\mathcal{M}_+$ with the metric
\begin{equation}
\label{metricvrcoords}
g=-D(r)dv^2+2dvdr+r^2(d\theta^2+\sin^2\theta d\varphi).
\end{equation}

The vector field $\partial_v$ is a Killing vector field that is timelike in the region $\{r_+<r<\infty\}$ and null along $\{r=r_+\}$. We will denote it by $T$.

The boundary $\mathcal{H}^+=\{(v,r,\theta,\varphi)\,:\,r=r_+\}$ is a null hypersurface, which from now on will be called the \emph{future event horizon} of the spacetime.

Let
\begin{equation*}
u=v-2r_*,
\end{equation*}
where $r_*$ is defined as
\begin{equation}
\label{eq:defrstar}
r_*=R+\int_R^r D^{-1}(r')\,dr'.
\end{equation}

Then the coordinate chart $(u,r,\theta,\varphi)$ covers $\mathcal{M}_+\setminus \mathcal{H}^+$, with $u\in \R$, $r\in (r_+,\infty)$, $\theta\in (0,\pi)$ and $\varphi\in (0,2\pi)$.

By the assumptions on $\mathcal{M}_+$ and $D$ above, we can also consider the extended manifold-with-boundary $\mathcal{M}_-$, which is defined as $$\mathcal{M}_-= \R \times [r_+,\infty) \times \s^2,$$ such that $\mathcal{M}_-$ is covered by the coordinate chart $(u,r,\theta,\varphi)$, with $u\in \R$, $r\in [r_+\infty)$, $\theta\in (0,\pi)$, $\varphi\in (0,2\pi)$ and $\mathcal{M}_-$ is equipped with the metric
\begin{equation}
\label{metricurcoords}
g=-D(r)du^2-2dudr+r^2(d\theta^2+\sin^2\theta d\varphi).
\end{equation}
In these coordinates $T=\partial_u$.

The boundary $\mathcal{H}^-=\{(u,r,\theta,\varphi)\,:\,r=r_+\}$ is a null hypersurface. We will refer to $\mathcal{H}^-$ as the \emph{past event horizon} of the spacetime.

We will denote $$\mathcal{M}=\mathcal{M}_+\cup \mathcal{M}_-=\mathcal{M}_+\cup \mathcal{H}^-=\mathcal{M}_-\cup \mathcal{H}^-.$$ See Figure \ref{fig:fullfoliations2} for the corresponding Penrose diagram. The minimum value of $r$ on $\mathcal{M}$ is denoted as $r_{\rm min}$, so $r_{\rm min}=r_+$ in this case. In this paper, we will only be concerned with $\mathcal{M}_+$.

It will also be useful to cover the manifold $\mathcal{M}_+\cap \mathcal{M}_-$ with \emph{double-null} coordinates $(u,v,\theta,\varphi)$. The metric then takes on the form:
\begin{equation}
\label{metricuvcoords}
g=-D(r)dudv+r^2(d\theta^2+\sin^2\theta d\varphi).
\end{equation}
Let us denote the double-null coordinate vector fields as follows:
\begin{align*}
L=&\:\partial_v,\\
\underline{L}=&\:\partial_u.
\end{align*}
We have that
\begin{equation*}
T=L+\underline{L}.
\end{equation*}

We can express with respect to $(v,r,\theta,\varphi)$ coordinates
\begin{equation}
\underline{L}=-\frac{1}{2}D\partial_r.
\label{relationlr0}
\end{equation}
Similarly, with respect to $(u,r,\theta,\varphi)$ coordinates we have that
\begin{equation}
L=\frac{1}{2}D\partial_r.
\label{relationlr}
\end{equation}

The domains of outer communication of sub-extremal Reissner--Nordstr\"om black hole spacetimes, for which
\begin{equation*}
D(r)=1-\frac{2M}{r}+\frac{e^2}{r^2},
\end{equation*}
where $|e|<M$ is a constant, are examples of spacetime regions satisfying the above assumptions on $\mathcal{M}$ in the case where $r_{\rm min}=r_+>0$. \footnote{Note that in sub-extremal Reissner--Nordstr\"om $\mathcal{M}$ can be further extended to include a bifurcation sphere, a 2-sphere that connects $\mathcal{H}^+$ and $\mathcal{H}^-$.}
\subsubsection{Case II: $r_{\rm min}=0$}

As a second case, we consider a smooth function $D:[0,\infty)\to \R$, such that $D(r)\geq d_D$, for some constant $d_D>0$. We moreover assume the same asymptotic behaviour as in \eqref{asm:asympD}.

We now consider the manifold $\mathcal{M}=\R\times \R^3$, and the submanifold $$\mathring{\mathcal{M}}=\mathcal{M}\setminus \{\R\times \{0\}\}=\R\times (0,\infty)\times \s^2,$$ which is covered by the coordinate chart $(v,r,\theta,\varphi)$, with $v\in \R$, $r\in (0,\infty)$, $\theta\in (0,\pi)$, $\varphi\in (0,2\pi)$.

We equip $\mathring{\mathcal{M}}$ with the metric $g$ given by the expression \eqref{metricvrcoords}. Alternatively, we can cover $\mathring{\mathcal{M}}$ with the coordinate chart $(u,r,\theta,\varphi)$, where $u=v-2r_*$ and $r_*$ is given by \eqref{eq:defrstar}. Then, $u\in \R$, $r\in (0,\infty)$, $\theta\in (0,\pi)$, $\varphi\in (0,2\pi)$. In these coordinates, the metric is given by the expression \eqref{metricurcoords}. We can also cover $\mathring{\mathcal{M}}$ with $(u,v,\theta,\varphi)$ coordinates, where $u,v\in \R$, $\theta\in (0,\pi)$ and $\varphi\in (0,2\pi)$. The metric is then given by the expression \eqref{metricuvcoords}.

Note that $g$ can be extended to the entire manifold $\mathcal{M}$ after a suitable coordinate change. See Figure \ref{fig:fullfoliations1} for the corresponding Penrose diagram. The infimum of $r$ on $\mathring{\mathcal{M}}$ is denoted as $r_{\rm min}$, so $r_{\rm min}=0$ in this case.

The Minkowski spacetime, for which $D(r)=1$ is an example of a spacetime satisfying the above assumptions on $\mathcal{M}$ in the case where $r_{\rm min}=0$.

\subsection{Foliations}
\label{sec:foliations}

Let $\mathcal{N}_{u'}=\{(u,r,\theta,\varphi)\,:\, u=u',\: r\geq R\}$ be an outgoing null hypersurface. We define $$\mathcal{A}=\bigcup_{u\in[0,\infty)}\mathcal{N}_{u}$$ and $$\mathcal{A}^{u_2}_{u_1}=\bigcup_{u\in [u_1,u_2]}\mathcal{N}_{u}.$$ Note that $\mathcal{A}^{u_2}_{u_1} \subset \mathcal{A}$ for all $0\leq u_1<u_2<\infty$.

Let $h_{\Sigma}: [r_{\rm min},R] \to \R$ be a smooth, positive function. We define the hypersurface $\Sigma$ as follows:
\begin{equation*}
\Sigma_0=\{(v,r,\theta,\varphi)\,;\, v=v_{\Sigma}(r), r\leq R\}\cup \mathcal{N}_0,
\end{equation*}
where $v_{\Sigma}(r)$ is the smooth function satisfying:
\begin{align*}
\frac{dv_{\Sigma}}{dr}=&\:h_{\Sigma},\\
v_{\Sigma}(R)=&\:2r_*(R).
\end{align*}
By construction, $(v,v_{\Sigma}(R),\theta,\varphi)\in \mathcal{N}_0$. See Figure \ref{fig:fullfoliations1} and \ref{fig:fullfoliations2} for Penrose diagrams of the two cases of $\mathcal{M}$ ($r_{\rm min}=r_+$ and $r_{\rm min}=0$) with the hypersurface $\Sigma_0$. Without loss of generality, we may assume that there exists a $v_0>0$ such that $$\min_{r\in [r_{\rm min},R]} v_{\Sigma}\geq v_0>0.$$

Let $\tau$ be a smooth function on $J^+(\Sigma_0)$, such that $\tau|_{{\Sigma_0}}=0$, and $T(\tau)=1$. In $\mathcal{A}$ we have that $\tau=u$.

Let us moreover introduce the notation $\mathcal{I}_{v'}(\tau_1,\tau_2) \doteq \{v=v',\:\tau_1\leq u\leq \tau_2\}$ for ingoing null segments in $\mathcal{A}^{\tau_2}_{\tau_1}$.  We denote
\begin{equation*}
\mathcal{R}=J^+(\Sigma_0)=\bigcup_{\tau\in [0,\infty)}\Sigma_{\tau}.
\end{equation*}

We can also consider the coordinate chart $(\tau,\rho,\theta,\varphi)$ in $\mathcal{R}\cap\{r\leq R\}$, where $\rho=r|_{\Sigma_0}$. Then we can express:
\begin{align*}
\partial_{\tau}=&\:T,\\
\partial_{\rho}=&\:\partial_r+h_{\Sigma}\partial_v\\
=&\:-2D^{-1}\underline{L}+h_{\Sigma}T.
\end{align*}

We will next construct a hyperboloidal spacelike hypersurface terminating at null infinity. Let us first consider a vector field of the form $$Y=\partial_r+h_{\mathcal{S}}(r)\partial_v,$$ with $h_{\mathcal{S}}: [r_{\rm min},\infty) \to \R$ a smooth function that satisfies the following properties:
\begin{align*}
\frac{1}{\max_{r_{\rm min}\:\leq r\leq R}D(r)}&\leq h_{\mathcal{S}}(r)< \frac{2}{D(r)}\quad \textnormal{if}\: r\leq R,\\
0<\frac{2}{D(r)}-h_{\mathcal{S}}(r)&=O_1(r^{-1-\eta})\quad \textnormal{if}\: r>R,
\end{align*}
for some $\eta>0$. As a consequence, $$g(Y,Y)=h_{\mathcal{S}}(r)(2-h_{\mathcal{S}}(r)D(r))>0$$ for all $r\in [r_{\min},\infty)$, so $Y$ is a spacelike vector field.

The corresponding integral curves $\gamma_Y \subset \mathcal{M}$ of $Y$ can be parametrised by $r$, such that $\gamma_Y: [r_{\rm min},\infty)\to \mathcal{R}$, with
\begin{equation*}
\gamma_Y(r)=(v_{\mathcal{S}}(r),r,\theta_0,\varphi_0),
\end{equation*}
in $(v,r,\theta,\varphi)$ coordinates, with $\theta_0,\varphi_0$ fixed, $\frac{dv_{\mathcal{S}}}{dr}=h_{\mathcal{S}}$ and $v_{\mathcal{S}}(R)=v_{1}$. Note that since $v$ increases along $r=R$ towards the future, if we choose $v_{1}$ sufficiently large then we can guarantee that $\gamma_{Y}(R)\in\mathcal{R}$.  

Let's now consider the region where $r_{\rm min}\leq r\leq R$. Then, 
\begin{equation*}
v_{\mathcal{S}}(r)=v_1-\int_{r}^R h_{\mathcal{S}}(r')\,dr'.
\end{equation*}
By choosing $v_1$ suitably large depending on $R$ and $h_{\mathcal{S}}$, we can ensure that $v_{\mathcal{S}}(r)\geq v_{\Sigma}(r)$ for all $r_{\rm min}\leq r\leq R$. This implies that  $\gamma_{Y}(r)\in \mathcal{R}$ for all $r_{\rm min}\leq r\leq R$.

Let's now consider the region where $r \geq  R$.
By using that $u=v-2r_*$, and expressing $\gamma_Y$ in $(u,r,\theta,\varphi)$ coordinates, we obtain:
\begin{equation*}
\gamma_Y(r)=(u_{\mathcal{S}}(r)=v_{\mathcal{S}}(r)-2r_*(r),r,\theta_0,\varphi_0),
\end{equation*}
with $\frac{du_{\mathcal{S}}}{dr}=h_{\mathcal{S}}-\frac{2}{D}$. Therefore, $\frac{du_{\mathcal{S}}}{dr}<0$ and 
\begin{equation*}
\begin{split}
\left|u_{\mathcal{S}}(r)-u_{\mathcal{S}}(R)\right|=&\:\left|\int_{R}^r h_{\mathcal{S}}(r')-\frac{2}{D(r')}\,dr'\right|\\
\leq &\: C_Y (R^{-\eta}-r^{-\eta})\leq C_Y R^{-\eta},
\end{split}
\end{equation*}
for some constant $C_Y>0$, depending on the choice of $h_{\mathcal{S}}$. For $v_0$ suitably large depending on $C_Y$ and $R$, we therefore also have that $u_{\mathcal{S}}(r)>0$, which implies that $\gamma_{Y}(r)\in \mathcal{R}$ for $r\geq R$.

We can now define the \emph{spacelike hyperboloidal hypersurface} $\mathcal{S}_0$ as
\begin{equation*}
\mathcal{S}_0=\{(v,r,\theta,\varphi)\,:\, v=v_{\mathcal{S}}(r),\: r\in [r_{\rm min},\infty)\},
\end{equation*}
and we have that $\mathcal{S}_0\subset \mathcal{R}$.

We moreover define a \emph{hyperboloidal} foliation of a subset of $\mathcal{R}$, with leaves denoted by $\mathcal{S}_{\widetilde{\tau}}$, by flowing $\mathcal{S}_0$ along the integral curves of $T$. We have that

\begin{equation*}
J^+(\mathcal{S}_0)=\bigcup_{\widetilde{\tau}\in[0,\infty)}\mathcal{S}_{\widetilde{\tau}}.
\end{equation*}

We can cover $J^+(\mathcal{S}_0)$ by coordinates $(\widetilde{\tau},\widetilde{\rho},\theta,\varphi)$, with $\widetilde{\rho}=r|_{\mathcal{S}_0}$ and $\partial_{\widetilde{\rho}}= Y$. By construction, there exists a $\tau_0=\tau_0(D, R,\mathcal{S}_0,\Sigma_0)>0$ such that
\begin{equation}
\label{eq:comptimes}
\tau-\tau_0\leq \widetilde{\tau}\leq \tau+\tau_0.
\end{equation}

 \begin{figure}[H]
\centering
\scalebox{0.55}{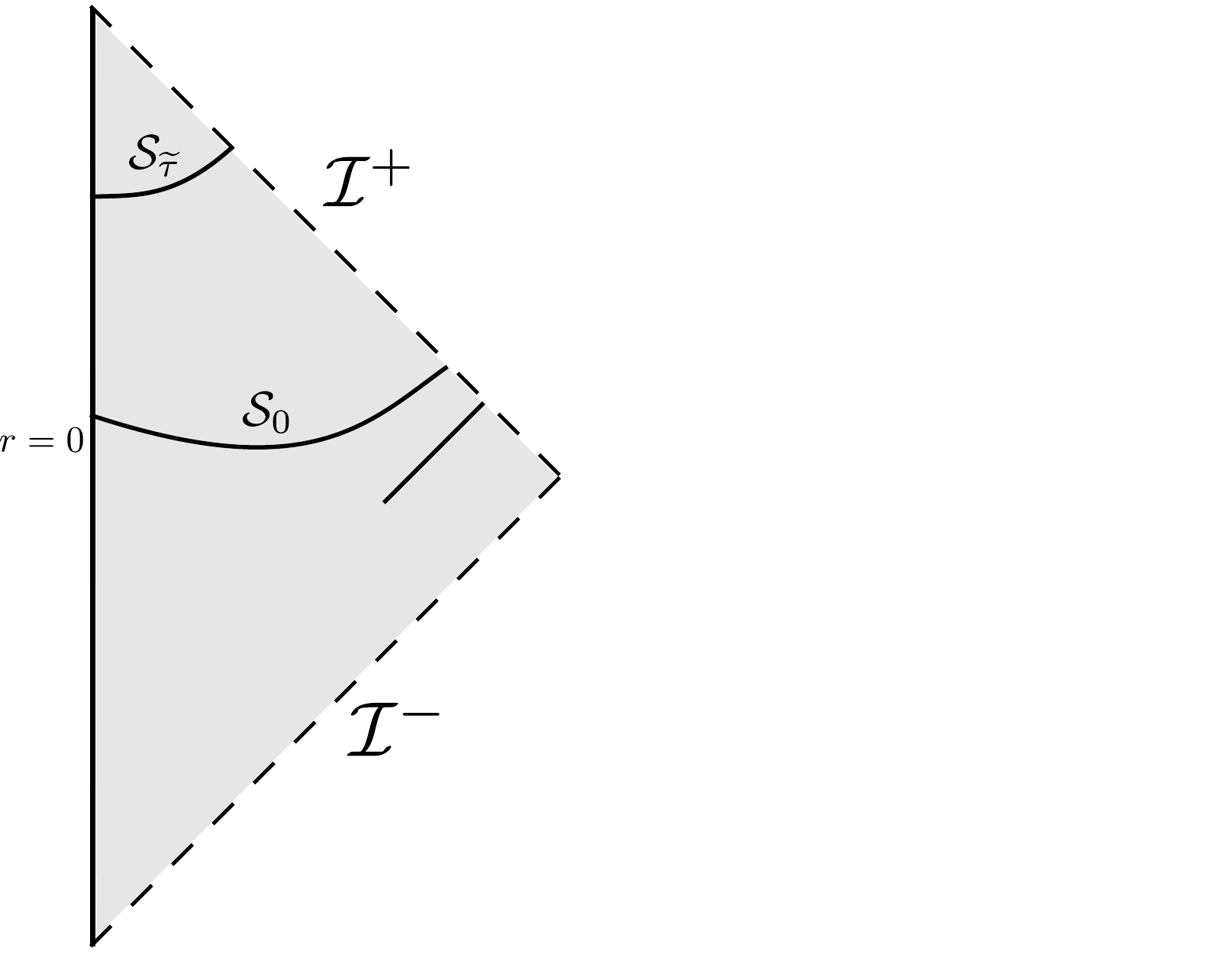}
\caption{\label{fig:fullfoliations1}A Penrose diagram of $\mathcal{M}$ in the case $r_{\rm min}=0$.}
\end{figure}

\begin{figure}[H]
\centering
\scalebox{0.45}{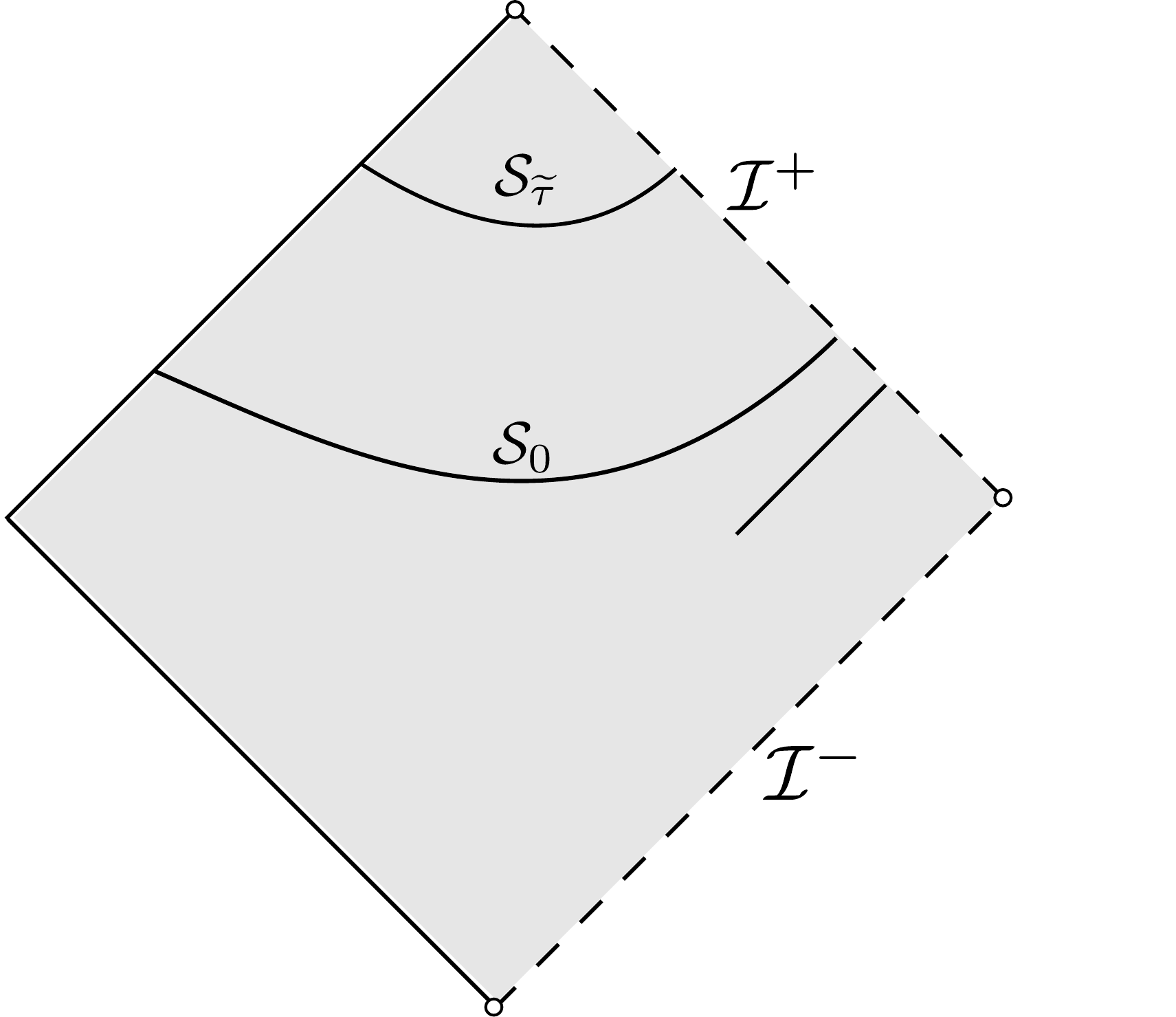}
\caption{\label{fig:fullfoliations2}Penrose diagram of $\mathcal{M}$ in the case $r_{\rm min}=r_+$.}
\end{figure}

\subsubsection{Additional notational conventions}
\label{sec:addnotconv}
In the remainder of the paper we will occasionally use the notation
\begin{equation*}
f \lesssim A,
\end{equation*}
for a positive function $f$ and a constant $A$, to indicate that there exists a \emph{uniform} constant $C>0$, depending only $D,R,\Sigma_0$ and $\mathcal{S}_0$, such that
\begin{equation*}
f\leq C\cdot A.
\end{equation*}

Similarly, we will use the notation
\begin{equation*}
f \sim A
\end{equation*}
to indicate that there exists uniform constants $C>c>0$ such that
\begin{equation*}
c\cdot A \leq f\leq C\cdot A.
\end{equation*}

We will frequently apply ``big O'' notation. We denote with $O_k(r^{-\beta})$, where $\beta \in \R$, a $C^k$ function $f:[r_{\rm min},\infty)\to \R$ that satisfies the following property: for all $0\leq j \leq k$, there exist uniform constants $C_j>0$, such that
\begin{equation*}
\left|\frac{d^jf}{dr^j}\right|\leq C_j r^{-\beta-j}.
\end{equation*}

We will also apply `little O'' notation. We denote with $o_k(r^{-\beta})$, where $\beta \in \R$, a $C^k$ function $f:[r_{\rm min},\infty)\to \R$ that satisfies the following property: for all $0\leq j \leq k$, there exist uniform constants $C_j>0$, such that
\begin{equation*}
\lim_{r\to \infty}r^{\beta+j}\left|\frac{d^jf}{dr^j}\right|=0.
\end{equation*}

We will denote by $d\mu_{\Sigma_{\tau}}$ the induced volume form on $\Sigma_{\tau}$ and by $d\mu_{\mathcal{S}_{\widetilde{\tau}}}$ the induced volume form on $\mathcal{S}_{\widetilde{\tau}}$, where we take $d\mu_{\Sigma_{\tau}}|_{\mathcal{N}_{\tau}}=r^2d\omega dr$ along $\mathcal{N}_{\tau}$. Here, $d\omega=\sin \theta\,d\theta d\varphi$ is the volume form on the round unit sphere in spherical coordinates.

We will furthermore use $n_{\Sigma_{\tau}}$ with the shorthand notation $n_{\tau}$ to denote the future-directed normal along $\Sigma_{\tau}$, with $g(n_{\tau},n_{\tau})|_{\Sigma_{\tau}\setminus \mathcal{N}_{\tau}}=-1$ and $n_{\tau}|_{\mathcal{N}_{\tau}}={L}$. We use $n_{\mathcal{S}_{\widetilde{\tau}}}$ with the shorthand notation $n_{\widetilde{\tau}}$ to denote the future-directed unit normal along $\mathcal{S}_{\widetilde{\tau}}$.

\subsection{The Cauchy problem for the wave equation}
\label{sec:thecauchy}

We  study the Cauchy problem for the wave equation on the spacetime region $(\mathcal{R},g)$ defined in Section \ref{sec:foliations}. The following result provides a global existence and uniqueness statement for the this problem. 
\begin{theorem}
\label{thm:extuniq}
Let $\Psi\in C^{\infty}({\Sigma_0})$, $\Psi'\in C^{\infty}({\Sigma_0}\setminus \mathcal{N}_0)$. Then there exists a unique smooth function $\psi: \mathcal{R} \to \R$ satisfying
\begin{align*}
 \square_g \psi&=0,
 \end{align*}
with initial data
\begin{align*}
\psi|_{{\Sigma_0}}&=\Psi,\\
n_{{\Sigma}_0}(\psi)|_{{\Sigma}_0\setminus \mathcal{N}_0}&=\Psi'.
\end{align*}
\end{theorem}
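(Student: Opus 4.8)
The plan is to establish uniqueness, local existence, and global existence in turn, exploiting the fact that the metric \eqref{metricvrcoords} is smooth up to and including the horizon $\mathcal{H}^+$ in the $(v,r,\theta,\varphi)$ chart in Case I (and extends smoothly across the axis $r=0$ in Case II), so that no coordinate degeneracy obstructs the analysis at $r=r_{\rm min}$.

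\textbf{Uniqueness.} I would argue by energy estimates. Given two solutions with identical data, their difference $\psi$ solves $\square_g\psi=0$ with $\psi|_{\Sigma_0}=0$ and $n_{\Sigma_0}(\psi)|_{\Sigma_0\setminus\mathcal{N}_0}=0$; since $\psi$ vanishes along the entire null portion $\mathcal{N}_0$, every derivative of $\psi$ tangential to $\mathcal{N}_0$ also vanishes there. Choosing a globally causal, $T$-invariant vector field $N$ (timelike in the interior and transversal to $\mathcal{H}^+$, i.e.\ the red-shift field), I would apply the divergence theorem to the current $J^N[\psi]$ over the region $\mathcal{R}\cap J^-(\Sigma_\tau)$. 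The flux through $\Sigma_0$ vanishes (on the spacelike piece the full $1$-jet of $\psi$ is zero, and on the null piece $\mathcal{N}_0$ the tangential derivatives $L\psi$ and $\snabla\psi$ vanish), the flux through $\mathcal{H}^+$ carries a favourable sign, and the bulk term $\int \mathrm{div}\,J^N[\psi]$ is controlled by the energy density because $\square_g\psi=0$. A Grönwall argument then forces the coercive energy on each $\Sigma_\tau$ to vanish, whence $\psi\equiv 0$.

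\textbf{Local existence.} Here I would treat the two characters of $\Sigma_0$ separately. In the region $\{r\leq R\}$, where $\Sigma_0$ is spacelike, the data $(\Psi,\Psi')$ pose a standard Cauchy problem for a linear wave equation with smooth coefficients, which is locally well-posed by the classical theory for second-order hyperbolic equations (equivalently, by reduction to a symmetric hyperbolic first-order system). In the region $\{r\geq R\}$, where $\Sigma_0=\mathcal{N}_0$ is the outgoing null cone $\{u=0\}$, one faces a characteristic (Goursat) problem: setting $\phi=r\psi$ and working in double-null coordinates reduces \eqref{waveequation} to an equation of the schematic form $\partial_u\partial_v\phi=(\text{lower order})$, whose characteristic initial value problem is well-posed once data are prescribed on two transverse null hypersurfaces. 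The second null datum is furnished by the interior Cauchy solution along the ingoing cone emanating from the corner $\{r=R\}\cap\Sigma_0$, and the two data sets agree at the corner because both are restrictions of the single smooth function $\Psi$.

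\textbf{Global existence and the main obstacle.} With local solvability in hand, I would run a continuity argument in $\tau$: let $\tau^\ast$ be the supremum of times up to which a smooth solution exists, note $\tau^\ast>0$, and suppose for contradiction $\tau^\ast<\infty$. The energy estimates of the uniqueness step, now applied with controlled right-hand sides and commuted with $T$ and the red-shift field to all orders, furnish uniform bounds on the full jet of $\psi$ on $\{\tau<\tau^\ast\}$; restarting local existence from slices $\Sigma_\tau$ with $\tau$ near $\tau^\ast$ then extends the solution past $\tau^\ast$, a contradiction, so $\tau^\ast=\infty$. The non-compactness of $\mathcal{R}$ towards null infinity is no obstruction, since finite propagation speed localises the construction: the value at any $p\in\mathcal{R}$ depends only on the data in the set $J^-(p)\cap\Sigma_0$, which is bounded in $r$. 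I expect the genuine difficulty to lie in the \emph{matching at the corner} $r=R$ — securing a smooth, not merely continuous, join between the interior Cauchy evolution and the exterior Goursat evolution — which rests on the compatibility conditions at the corner; these hold precisely because $\Psi$, $\Psi'$ and the geometry are all smooth there.
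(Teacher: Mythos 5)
The paper itself offers no proof of Theorem \ref{thm:extuniq}: it is stated in Section \ref{sec:thecauchy} as a standard global well-posedness fact, so there is no argument of the authors' to measure yours against. Your overall architecture — uniqueness from energy estimates with a causal, $T$-invariant multiplier; local existence by splitting $\Sigma_0$ into its spacelike part (classical Cauchy problem) and $\mathcal{N}_0$ (characteristic/Goursat problem, fed by the trace of the interior solution on the ingoing null cone $\{v=v_{\Sigma}(R)\}$ from the corner); globalization by finite speed of propagation — is indeed the standard route to such a statement, and the uniqueness and globalization steps are sound. (For uniqueness you should really run the estimate in truncated past cones rather than on the global slices $\Sigma_\tau$, since a smooth solution need not a priori have finite energy near infinity; but your own finite-propagation-speed remark supplies exactly this localization.)

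The genuine gap is your final claim, that the corner compatibility conditions ``hold precisely because $\Psi$, $\Psi'$ and the geometry are all smooth there.'' They do not. Already in $1+1$ Minkowski space, with data $\psi=\Psi_s$, $\partial_t\psi=\Psi'$ on $\{t=0,\ x\leq R\}$ and $\psi=\Psi_n$ on the outgoing null ray $\{t=x-R,\ x\geq R\}$, writing $\psi=f(t-x)+g(t+x)$ one finds $g'(v)=\tfrac{1}{2}\left(\Psi_s'(v)+\Psi'(v)\right)$ for $v<R$ from the spacelike data and $g'(v)=\Psi_n'(v)$ for $v>R$ from the null data; unless
\begin{equation*}
\Psi_n'(R)=\tfrac{1}{2}\left(\Psi_s'(R)+\Psi'(R)\right),
\end{equation*}
the derivative $\partial_v\psi$ jumps across the ingoing characteristic $\{t+x=R\}$, and there is an analogous condition at every order. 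Smoothness of $\Psi$ on each piece of $\Sigma_0$ and of $\Psi'$ imposes no relation whatsoever between $\Psi'$ and the $v$-derivatives of $\Psi|_{\mathcal{N}_0}$ at the corner sphere, because the two pieces of $\Sigma_0$ have distinct tangent planes there, so the null datum's transversal derivative and the spacelike $1$-jet independently prescribe the same quantity. Transcribed to the present setting (with $\phi=r\psi$ and the lower-order terms treated perturbatively), this shows that for generic $(\Psi,\Psi')$ satisfying only the stated hypotheses, the function you construct is smooth on either side of, but not across, the ingoing null hypersurface $\{v=v_{\Sigma}(R),\ u\geq 0\}$ emanating from the corner — hence is not a smooth solution on $\mathcal{R}$ at all. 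So either the theorem must be read as implicitly assuming the (infinitely many) corner compatibility conditions — presumably what the authors intend by stating it without proof — or its conclusion must be weakened to smoothness away from that null cone. In either case your proof cannot close as written: the step ``smooth data $\Rightarrow$ compatibility'' is false, and this is precisely the point your sketch needed to address rather than assert.
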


We  always assume that $\Psi\to 0$ as $r\to \infty$.

The stress-energy tensor  is a symmetric 2-tensor, with components
\begin{equation*}
\mathbf{T}_{\alpha \beta}[f]=\partial_{\alpha}f \partial_{\beta}f-\frac{1}{2}{g}_{\alpha \beta} (g^{-1})^{\kappa \lambda}\partial_{\kappa}f \partial_{\lambda} f,
\end{equation*}
with respect to a coordinate basis. Here $f$ is a function $f:\mathcal{R}\to\R$. Note that $\mathbf{T}_{\alpha \beta}[\psi] $ is divergence free for all solutions $\psi$ to the wave equation.

The energy current $J^V[f]$ is defined as follows
\begin{equation*}
J^V[f]\doteq \mathbf{T}(V,\cdot),
\end{equation*}
where $V$ is a vector field.

We will also write
\begin{equation*}
\textnormal{div}\,J^V[f]=K^V[f]+\mathcal{E}^V[f],
\end{equation*}
where
\begin{align}
\label{def:KV}
K^V[f] \doteq&\: \mathbf{T}^{\alpha \beta}\nabla_{\alpha}V_{\beta},\\
\label{def:EV}
\mathcal{E}^V[f]\doteq&\: V(f)\square_g f.
\end{align}
Clearly, $\mathcal{E}^V[\psi]=0$ for solutions $\psi$ to (\ref{waveequation}).

\subsection{Assumptions for the wave equation}
\label{sec:waveassm}

We assume that the spacetime metric $g$ is such that the following assumptions on the wave operator are satisfied. 

\subsubsection{Energy boundedness for scalar waves}
\label{sec:asmebound}
Let $N$ be a  time-invariant strictly timelike vector field $N$, such that $N=T$ in $\mathcal{A}$. A possible construction for $N$ is the following:  In the $r_{\rm min}=0$ case, we can simply take $N=T$, whereas in the $r_{\rm min}=r_+>0$ case, we can define $N$ such that $N=T-Y$ for $r_+\leq r\leq r_0$ and $N=T$ for $r\geq r_1$, with $r_+<r_0<r_1$, by using a cut-off function.

 We will assume the following energy boundedness statement for the wave equation:   There exists a constant $C=C(D,R,\Sigma)>0$, such that for all $v$
\begin{equation}
\label{ass:ebound}
\int_{\Sigma_{\tau}}J^N[\psi]\cdot n_{\tau}\,d\mu_{\Sigma_{\tau}}+\int_{\mathcal{I}_v(0,\tau)}J^N[\psi]\cdot \underline{L}\: r^2d\omega du\leq C \int_{\Sigma_0}J^N[\psi]\cdot n_{\Sigma_0}\,d\mu_{\Sigma_0},
\end{equation}
assumming that 
 \begin{equation*}
 \int_{\Sigma_0}J^N[\psi]\cdot n_{\Sigma_0}\,d\mu_{\Sigma_0}<\infty.
 \end{equation*}

\subsubsection{Morawetz estimates for scalar waves}
We assume that the following Morawetz estimate holds: there exists a constant $C=C(D,R,\Sigma)>0$, such that for all $0<\tau_1<\tau_2$
\begin{equation}
\label{ass:morawetz}
\int_{\tau_1}^{\tau_2}\left(\int_{\Sigma_{\tau}\setminus \mathcal{N}_{\tau}}J^N[ \psi]\cdot n_{\tau}\,d\mu_{\Sigma_{\tau}}\right)\,d\tau\leq C \int_{\Sigma_{\tau_1}}J^N[\psi]\cdot n_{\tau_1}+J^N[T\psi]\cdot n_{\tau_1}\,d\mu_{\Sigma_{\tau_1}}.
\end{equation}
We  moreover assume a spatially local Morawetz estimate in $\mathcal{A}$ \emph{without}  loss of derivatives on the right hand side:
\begin{equation}
\label{ass:morawetzlocal}
\int_{\tau_1}^{\tau_2}\left(\int_{\mathcal{N}_{\tau}\cap \{\widetilde{R}\leq r\leq \widetilde{R}+1\}}J^T[ \partial^{\alpha}\psi]\cdot n_{\tau}\,d\mu_{\Sigma_{\tau}}\right)\,d\tau\leq C_{\alpha} \sum_{k\leq |\alpha|}\int_{{\Sigma}_{\tau_1}}J^N[T^{k}\psi]\cdot n_{\tau_1},
\end{equation}
for $|\alpha|\geq 0$ and $\widetilde{R}\geq R$, where $C_{\alpha}=C_{\alpha}(D,R,\Sigma,\widetilde{R},\alpha)>0$.

The Morawetz estimates above have in particular been proved for Schwarzschild and more generally for  sub-extremal Reissner--Nordstr\"om spacetimes  in \cite{redshift,blu1}.

\subsection{The elliptic decomposition $\psi=\psi_{0}+\psi_{\ell=1}+\psi_{\ell\geq 2}$}
\label{sphericaldecompo}
In this section, we will introduce a decomposition of solutions $\psi$ to \eqref{waveequation} into three terms, $\psi_0$, $\psi_{\ell=1}$ and $\psi_{\ell\geq 2}$, by employing spherical harmonic modes, the eigenfunctions of the Laplacian on $\s^2$.
\label{sphericaldecompo}
\begin{definition}
\label{def:sphharmmode}
For any suitably regular function $f:\s^2\to \R$, we can decompose
\begin{equation*}
f=\sum_{\ell'=0}^{\infty} f_{\ell=\ell'},
\end{equation*}
where $f_{\ell=\ell'}: \s^2\to \R$ are eigenfunctions of $\slashed{\Delta}_{\s^2}$, the Laplacian on $\s^2$, which are orthogonal with respect to the $L^2$ norm on $\s^2$ and the eigenvalue corresponding to $f_{\ell=\ell'}$ is $-\ell'(\ell'+1)$. They are called the $\ell'$-th spherical harmonic modes.
\end{definition}

In particular,
\begin{equation*}
f_0=\frac{1}{4\pi}\int_{\s^2}f(\theta',\varphi')\,d\omega'.
\end{equation*}
The eigenfunctions $f_{\ell=\ell'}$ are moreover orthogonal with respect to the $L^2$ norm on $\s^2$.

We can use the above decomposition to the restrictions of solutions $\psi: \mathcal{R}\to \R$ to \eqref{waveequation} in $(v,r,\theta,\varphi)$ coordinates to $\s^2$ in order to obtain:
\begin{equation*}
\psi(v,r,\theta,\varphi)=\sum_{\ell'=0}^{\infty} \psi_{\ell=\ell'}(v,r,\theta,\varphi),
\end{equation*}
with
\begin{equation*}
\slashed{\Delta}_{\s^2}\psi_{\ell=\ell'}=-\ell'(\ell'+1)\psi_{\ell=\ell'}.
\end{equation*}

Due to the spherical symmetry (or warped product structure) of the metric $g$, we have that for all $\ell'\geq 0$
\begin{equation*}
\square_g\psi_{\ell=\ell'}=0,
\end{equation*}
so we can perform estimates separately for each solution $\psi_{\ell=\ell'}$ to \eqref{waveequation}.

In this paper it sufficient to split $\psi$ only into three parts:
\begin{equation*}
\psi=\psi_0+\psi_{\ell=1}+\psi_{\ell\geq 2},
\end{equation*}
where
\begin{equation*}
\psi_{\ell\geq 2}=\sum_{\ell'=2}^{\infty} \psi_{\ell=\ell'}.
\end{equation*}
We moreover denote
\begin{equation*}
\psi_1=\psi-\psi_0=\psi_{\ell=1}+\psi_{\ell\geq 2}.
\end{equation*}

\section{The Newman--Penrose constant $I_{0}$ at null infinity}
\label{npsection}

Let $\psi$ be a solution of the wave equation \eqref{waveequation}  emanating from initial data given as in Theorem \ref{thm:extuniq} on a spacetime $(\mathcal{M}, g)$ that satisfies the geometric assumptions from Section \ref{sec:geomassm}.

We  define $I_{0}[\psi](u)$ to be a function on null infinity given by 
\begin{equation}\label{np1_constant}
I_{0}[\psi](u):=\frac{1}{4\pi}\lim_{r\to \infty}\int_{\mathbb{S}^{2}} r^2 \partial_r ( r\psi )( u,r,\omega )\, d\omega.
\end{equation}
In fact, it turns out (see, for example, \cite{paper1}) that $I_{0}[\psi](u)$ is independent of $u$. 
This motivates the following definition
\begin{definition}
The \emph{first Newman--Penrose constant} $I_0[\psi]$ corresponding to a solution $\psi$ to \eqref{waveequation}  is defined as the (unique) value of the function $I_{0}[\psi](u)$ on null infinity. 
\end{definition}

If we consider solutions of \eqref{waveequation} of the form $T\psi$, where $\psi$ itself is a solution to \eqref{waveequation}, the corresponding first Newman--Penrose constant actually \emph{vanishes}.
\begin{proposition}
Let $\psi$ be a solution to the wave equation such that 
\begin{equation}
\label{ass:NpTvanishes}
\frac{1}{4\pi}\int_{\mathbb{S}^{2}}\partial_r(r\psi)(0,r,\omega)\, d\omega=\frac{I_0[\psi]}{r^2}+o_1(r^{-2}). 
\end{equation}
Then
\begin{equation}
\label{nptpsi}
I_{0}[T\psi]=0.
\end{equation}
\end{proposition}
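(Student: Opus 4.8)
The plan is to reduce everything to the spherical mean and then to trade the transverse ($\partial_u$) derivative that appears in $I_0[T\psi]$ for tangential ($\partial_r$) derivatives along the outgoing cone $\{u=0\}$ by means of the wave equation, after which the hypothesis \eqref{ass:NpTvanishes} closes the argument. Since the angular integral in \eqref{np1_constant} annihilates all modes with $\ell\geq 1$, only the spherical mean $\psi_0$ enters, and it is itself a solution of \eqref{waveequation}. Writing $\phi_0=r\psi_0$ and working in the $(u,r,\theta,\varphi)$ chart of \eqref{metricurcoords}, in which $T=\partial_u$, we have $I_0[\psi](u)=\lim_{r\to\infty}r^2\partial_r\phi_0(u,r)$ and, using that $\partial_u$ commutes with $\partial_r$ and with multiplication by $r$,
\[ I_0[T\psi](u)=\lim_{r\to\infty}r^2\partial_r\partial_u\phi_0(u,r). \]
Because $I_0[\,\cdot\,](u)$ is independent of $u$, it suffices to evaluate this limit at $u=0$, where the data are prescribed.

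First I would record the spherically symmetric form of \eqref{waveequation}. A direct computation with \eqref{metricurcoords} gives
\[ \partial_u\partial_r\phi_0=\tfrac12\,\partial_r\!\left(D\,\partial_r\phi_0\right)-\frac{D'}{2r}\,\phi_0, \]
which is the crucial identity: it expresses the transverse--tangential derivative $\partial_u\partial_r\phi_0$ purely in terms of quantities tangential to $\{u=0\}$, i.e.\ in terms of $\phi_0$, $\partial_r\phi_0$ and $\partial_r^2\phi_0$ along the initial cone. Multiplying by $r^2$,
\[ r^2\partial_r\partial_u\phi_0=\frac{r^2}{2}\,\partial_r\!\left(D\,\partial_r\phi_0\right)-\frac{rD'}{2}\,\phi_0 . \]

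Next I would insert the asymptotics. The hypothesis \eqref{ass:NpTvanishes} reads $\partial_r\phi_0(0,r)=I_0[\psi]\,r^{-2}+o_1(r^{-2})$; by the definition of the $o_1$ notation this yields both $\partial_r\phi_0=I_0[\psi]\,r^{-2}+o(r^{-2})$ and, differentiating, $\partial_r^2\phi_0=-2I_0[\psi]\,r^{-3}+o(r^{-3})$. Together with \eqref{asm:asympD}, which gives $D=1+O(r^{-1})$, $D'=2Mr^{-2}+O(r^{-2-\beta})$ and hence $rD'\to 0$, each term on the right-hand side tends to $0$: expanding $\tfrac{r^2}{2}(D'\partial_r\phi_0+D\,\partial_r^2\phi_0)$ one finds a leading behaviour $O(r^{-1})$, while $\tfrac{rD'}{2}\phi_0\to 0$ since $rD'\to 0$ and $\phi_0(0,r)$ is bounded (its $r$-derivative being integrable at infinity). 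Hence $\lim_{r\to\infty}r^2\partial_r\partial_u\phi_0(0,r)=0$, which is exactly $I_0[T\psi]=0$.

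The main obstacle is ensuring enough decay of the data to control the second tangential derivative $\partial_r^2\phi_0$: the zeroth-order statement $\partial_r\phi_0=I_0[\psi]r^{-2}+o(r^{-2})$ alone would not suffice, and it is precisely the first-order ($o_1$) hypothesis in \eqref{ass:NpTvanishes} that furnishes the required bound on $\partial_r^2\phi_0$. One could alternatively argue by differentiating the relation $I_0[\psi]=\lim_{r\to\infty}r^2\partial_r\phi_0(u,r)$ in $u$ and interchanging the limit with $\partial_u$, but justifying that interchange requires uniform-in-$u$ estimates that are essentially equivalent to the computation above; routing the argument through the wave equation at the single slice $u=0$ avoids this and keeps the dependence on the data explicit.
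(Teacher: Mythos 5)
Your proof is correct and follows essentially the same route as the paper: reduce to the spherical mean $\phi_0=r\psi_0$, use the $u$-independence of $I_0[T\psi]$ to work on the initial cone $\{u=0\}$, rewrite $r^2\partial_r\partial_u\phi_0$ via the spherically symmetric wave equation purely in terms of tangential data, and then kill each term using \eqref{ass:NpTvanishes} together with its differentiated ($o_1$) consequence $\partial_r^2\phi_0=-2I_0[\psi]r^{-3}+o(r^{-3})$ and the asymptotics \eqref{asm:asympD}. The only discrepancy is the sign of the zeroth-order term—you obtain $-\tfrac{D'}{2r}\phi_0$, whereas the paper's displayed identity carries $+rD'\phi_0$ after multiplying by $2r^2$—but your sign is the correct one (the paper's appears to be a typo), and since that term vanishes in the limit either way, nothing in the conclusion is affected.
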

\begin{proof}
Since $I_{0}[T\psi]$ is independent of $u$, it suffice to show that $I_{0}[T\psi]$ vanishes for $u=0$. Since $\psi$ satisfies the wave equation, the spherically symmetric part  $\phi_{0}= \frac{1}{4\pi}r\psi_{0}=r\int_{\mathbb{S}^{2}}\psi\,d\omega$ satisfies the following equation:
\begin{equation*}
2r^2\partial_r\partial_u\phi_{0}=Dr^2\partial_r^2(r\psi_{0})+D'r^2\partial_r(r\psi_{0})+rD'\phi_{0}.
\end{equation*}
Since $I_0[\psi]<\infty$ by \eqref{ass:NpTvanishes}, the second and third terms on the right-hand side vanish in the limit $r\to \infty$, using the asymptotic properties of $D$ from Section \ref{sec:geomassm}. The first term on the right-hand side vanishes along $\left\{u=0\right\}$ in the limit as $r\rightarrow \infty$, because by taking the $r$-derivative of both sides of \eqref{ass:NpTvanishes} we obtain
\begin{equation*}
\partial_r^2(r\psi_{0})(0,r)=-2\frac{I_0[\psi]}{r^3}+o(r^{-3}).
\end{equation*}
We conclude that
\begin{equation*}
\lim_{r\to \infty}r^2\partial_r T\phi_{0}(0,r)=0.
\end{equation*}
\end{proof}

\section{Global decay estimates for $\psi-\frac{1}{4\pi}\int_{\mathbb{S}^{2}}\psi$}
\label{gdeforpsi101}

We will give an overview of several energy decay and pointwise decay statements that are crucial to the late-time asymptotics results in the subsequent sections. The corresponding proofs are very similar to those in \cite{paper1}, but the use of some additional $r^p$-weighted hierarchies from \cite{paper1}.

In this section, $\psi$ will always denote a solution to \eqref{waveequation} emanating from initial data as in Theorem \ref{thm:extuniq}. 

We will moreover make use of the following energy norms on the initial data:
\begin{equation*}
E_{0, I_0 \neq 0;k}^{\epsilon}[\psi_0], E_{0, I_0 = 0;k}^{\epsilon}[\psi_0], E_{1;k}^{\epsilon}[\psi_{\ell=1}], E_{2;k}[\psi_{\ell\geq 2}],
\end{equation*}
with $k\in \N_0$. These are defined in Appendix \ref{apx:energynorms}.

\subsection{Energy decay}
\label{endehigna}
First, we will obtain $\tau$-decay estimate for suitably $r$-weighted (higher-order) energy norms along $\Sigma_{\tau}$.

\begin{proposition}[{Energy decay for $T^k\psi_{\ell=1}$}]\label{decl1}
Let $k\in \N_0$. Under the following assumption on the asymptotics of $D$:
\begin{equation*}
D(r)=1-\frac{2M}{r}+O_{3+k}(r^{-1-\beta}),
\end{equation*}
together with the assumption $E^{\epsilon}_{1;k}[\psi_{\ell=1}]<\infty$, for some $\epsilon\in (0,1)$, there exists a constant $C \doteq C(D,R,\epsilon,k)$ such that for all $\tau\geq 0$
\begin{equation}
\label{eq:edecayl1b}
\begin{split}
\int_{\Sigma_{\tau}}J^N[T^k\psi_{\ell=1}]\cdot n_{\tau}\,d\mu_{\tau}\leq&\: CE^{\epsilon}_{1;k}[\psi_{\ell=1}](1+\tau)^{-6-2k+\epsilon}.
\end{split}
\end{equation}
\end{proposition}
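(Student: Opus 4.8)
The plan is to follow the $r^p$-weighted vector field method of \cite{paper1}, specialised to the $\ell=1$ mode and to the extended (higher-weight) hierarchies that the improved angular decay permits. Throughout I would work with the radiation field $\phi=r\psi_{\ell=1}$ in double null coordinates, which, using $\slashed{\Delta}_{\s^2}\psi_{\ell=1}=-2\psi_{\ell=1}$, satisfies a wave equation of the form $\partial_u\partial_v\phi=-V\phi$ with potential $V=\tfrac{D}{4}\left(\tfrac{2}{r^2}+\tfrac{D'}{r}\right)$, which is positive for large $r$. Since $T$ is a Killing field preserving the $\ell=1$ spherical harmonic support, each $T^k\psi_{\ell=1}$ is again a solution of $\square_g(\cdot)=0$ supported on $\ell=1$; hence the whole family $\{T^k\phi\}_{k\geq 0}$ obeys the same equation, and I may run the hierarchy for $\phi$ and then transfer the extra $\tau$-decay to the $T$-commuted quantities by induction on $k$.

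First I would establish the $r^p$-weighted energy hierarchy for $\phi$ in the far region $\{r\geq R\}$. Multiplying the equation by $r^p\partial_v\phi$ and integrating over $\mathcal{A}^{\tau_2}_{\tau_1}$, the favourable sign and the $r^{-2}$ decay of the $\ell=1$ potential term produce a coercive spacetime term that, together with the commutation scheme of \cite{paper1} (commuting with weighted powers of $\partial_v$, which is productive precisely because the angular eigenvalue is nonzero), extends the admissible range of weights $p$ well beyond the range $p\in[0,2)$ available for the spherical mean. The asymptotic expansion $D=1-2Mr^{-1}+O_{3+k}(r^{-1-\beta})$ is used to control all error terms generated by the commutations and by the non-Schwarzschildean corrections to $D$, showing they are of lower order and absorbable, so that the hierarchy of flux estimates closes up to a top weight $p_{\max}$ producing the base rate. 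Coupling the far-region estimates to the bounded region $\{r\leq R\}$ — including the redshift and non-degenerate energy near the horizon in Case I — is carried out through the energy boundedness assumption \eqref{ass:ebound} and the Morawetz estimates \eqref{ass:morawetz} and \eqref{ass:morawetzlocal}, the latter being used without loss of derivatives.

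Second, I would convert the hierarchy into polynomial decay via the standard mean-value (dyadic pigeonhole) argument in $\tau$: each consecutive inequality in the hierarchy trades one unit of weight for one power of $(1+\tau)^{-1}$ in the decay of the lower-weighted flux, so that summing the full $\ell=1$ hierarchy yields the base estimate $\int_{\Sigma_\tau}J^N[\psi_{\ell=1}]\cdot n_\tau\,d\mu_\tau\lesssim E^{\epsilon}_{1;0}[\psi_{\ell=1}]\,(1+\tau)^{-6+\epsilon}$, the loss of $\epsilon$ arising from the endpoint of the hierarchy. For the higher-order statement I would argue inductively: the Morawetz estimate \eqref{ass:morawetz} controls a $\tau$-integral of the energy of $T^k\psi_{\ell=1}$ by the energies of $T^k\psi_{\ell=1}$ and $T^{k+1}\psi_{\ell=1}$ on an earlier slice, and combining this with a pigeonhole argument shows that each additional $T$-commutation improves the decay by two powers of $(1+\tau)^{-1}$. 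Combined with the base rate this produces $(1+\tau)^{-6-2k+\epsilon}$ with the claimed dependence on $E^{\epsilon}_{1;k}[\psi_{\ell=1}]$.

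The main obstacle I anticipate is the rigorous closing of the extended $r^p$ hierarchy for $\ell=1$: one must track precisely how far the weight $p$ can be pushed as a function of the angular eigenvalue, control the zeroth-order error terms generated by the weighted commutations and by the subleading part of $D$ (the $O_{3+k}(r^{-1-\beta})$ terms, whose treatment requires exactly the assumed number of derivatives on $D$), and ensure that no derivative loss occurs when transferring information between the $r^p$-hierarchy region and the timelike/horizon region through the local Morawetz estimate. This is the technical heart of the argument and is where the correspondence with \cite{paper1} is used most heavily; once these far-region estimates are secured, the pigeonhole and $T$-induction steps are routine.
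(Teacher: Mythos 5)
Your treatment of the base case $k=0$ is essentially the paper's route: the paper likewise runs the $r^p$-hierarchies of \cite{paper1} for the commuted quantity $r(r-M)\partial_r(r\psi)$ (your ``weighted $\partial_v$'' commutation), with the restriction to $\ell=1$ buying the additional admissible weights $p=3-\epsilon$ and $p=4-\epsilon$, and the combined length of the base and commuted hierarchies producing the rate $(1+\tau)^{-6+\epsilon}$.

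Your mechanism for the higher-order statement, however, is a genuine gap. You claim that the Morawetz estimate \eqref{ass:morawetz} together with a pigeonhole argument yields two extra powers of $(1+\tau)^{-1}$ per $T$-commutation; this fails for two reasons. First, \eqref{ass:morawetz} controls only the spacetime integral of the energy restricted to $\Sigma_{\tau}\setminus\mathcal{N}_{\tau}$, i.e.\ the bounded region $\{r\leq R\}$, so a pigeonhole argument improves only the \emph{local} energy, by at most \emph{one} power, and only along a dyadic sequence of times; the quantity in \eqref{eq:edecayl1b} is the full $N$-energy including the flux through $\mathcal{N}_{\tau}$, and the upgrade from dyadic times to all times via the boundedness assumption \eqref{ass:ebound} requires control of the \emph{full} energy at those dyadic times, which the pigeonhole does not supply. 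Second, and more fundamentally, the far-region decay rate is capped by the length of the available $r^p$-hierarchy: since $T^k\psi_{\ell=1}$ is itself an $\ell=1$ solution of \eqref{waveequation}, re-running the same hierarchy for it merely reproduces $(1+\tau)^{-6+\epsilon}$, with no gain in $k$. The gain of $2k$ powers must come from showing that the $r$-weighted fluxes entering the top of the hierarchy, evaluated on $T^k\psi_{\ell=1}$ at late times, themselves decay; this is exactly what the paper obtains by commuting $\square_g$ with $\partial_r^k$ \emph{in addition to} $r(r-M)\partial_r$, since the hierarchies for these higher commuted quantities control weighted fluxes of $\partial_r$-derivatives, which are then converted into decay of $T^k$-energies through the wave equation. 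Without this additional commutation your induction cannot close.
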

\begin{proof}
A proof proceeds similarly to the $\ell\geq 1$ part of the proof of Theorem 1.8 of \cite{paper1}. We can use the hierarchy for $r(r-M)\partial_r(r\psi)$ in Proposition 4.2 of \cite{paper1} with $p=3-\epsilon$, $p=2-\epsilon$ and $p=1-\epsilon$ because of the restrict to $\ell=1$. We then obtain the same decay rate as in Theorem 1.8 of \cite{paper1}, but with fewer commutations in the energy norms, yet with higher powers of $r$ appearing in the energy norms. However, we can also include the $p=4-\epsilon$ estimate of the hierarchy in Proposition 4.2 of \cite{paper1}, which results in an extra power in the energy decay rate (and additional $r$-weighted terms in the energy norms). Obtaining the energy decay estimates for $T^k$ then proceeds analogously to the corresponding part in the proof of Theorem 1.8 of \cite{paper1}, by considering additional hierarchies that arise from commuting $\square_g$ with $\partial_r^k$, in addition to the commutation with $r(r-M)\partial_r$.
\end{proof}

\begin{proposition}[{Energy decay for $T^k\psi_{\ell\geq 2}$}]\label{decl2T}
Let $k\in \N_0$. Under the following assumption on the asymptotics of $D$:
\begin{equation*}
D(r)=1-\frac{2M}{r}+O_{3+k}(r^{-1-\beta}),
\end{equation*}
together with $E^{\epsilon}_{1;k}[\psi_{\ell\geq 2}]<\infty$, for some $\epsilon\in (0,1)$, and the following additional pointwise assumptions on the initial asymptotics of $\psi$:
\begin{align*}
\lim_{r \to \infty }\sum_{|l|\leq 4+2k}\int_{\s^2}(\Omega^l\phi_{\ell\geq 2})^2\,d\omega\big|_{u'=0}<&\:\infty,\\
\lim_{r \to \infty }\sum_{|l|\leq 2+2k}\int_{\s^2}(r^2\partial_r(\Omega^l\phi_{\ell\geq 2}))^2\,d\omega\big|_{u'=0}<&\:\infty,\\
\lim_{r \to \infty }\sum_{|l|\leq 2k}\int_{\s^2}\left((r^2\partial_r)^2(\Omega^l\phi_{\ell\geq 2})\right)^2\,d\omega\big|_{u'=0}<&\:\infty,
\end{align*}
and
\begin{equation*}
\lim_{r \to \infty }\sum_{|l|\leq 2k-2s}\int_{\s^2}r^{2s+2}\Bigg(\partial_r^s\Big((r^2\partial_r)^2(\Omega^l\phi_{\ell\geq 2}\Big)\Bigg)^2\,d\omega\big|_{u'=0}<\infty,
\end{equation*}
for each $1\leq s\leq k$, there exists a constant $C \doteq C(D,R,\epsilon,k)$ such that for all $\tau\geq 0$
\begin{equation}
\label{eq:edecaylgeq2T}
\begin{split}
\int_{\Sigma_{\tau}}J^N[T^k\psi_{\ell\geq 2}]\cdot n_{\tau}\,d\mu_{\tau}\leq&\: CE^{\epsilon}_{2;k}[\psi_{\ell\geq 2}](1+{\tau})^{-6-2k+\epsilon}.
\end{split}
\end{equation}
\end{proposition}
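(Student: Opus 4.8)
The plan is to mirror the proof of the sister statement, Proposition~\ref{decl1}, but now exploiting the strictly larger angular eigenvalue available on the $\ell\geq 2$ part. First I would reduce to the radiation field $\phi_{\ell\geq 2}=r\psi_{\ell\geq 2}$, using that $\square_g$ commutes both with the projection onto $\ell\geq 2$ and with the rotation vector fields $\Omega$ (by spherical symmetry of $g$), so that each $\Omega^l\psi_{\ell\geq 2}$ and each $T^j\psi_{\ell\geq 2}$ is again a solution to \eqref{waveequation} supported on $\ell\geq 2$. On this projection the angular term in the equation for $\phi_{\ell\geq 2}$ carries the eigenvalue $-\ell(\ell+1)$ with $\ell(\ell+1)\geq 6$; this strictly favourable coefficient is exactly what allows the $r^p$-weighted hierarchy of \cite{paper1} to be run at the top weight $p=4-\epsilon$, and not merely at the lower weights forced by $\ell=0$, uniformly over all $\ell\geq 2$.

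The engine of the argument is then a pair of $r^p$-weighted hierarchies, as in \cite{paper1} and in the proof of Proposition~\ref{decl1}. First I would run the hierarchy for $\phi_{\ell\geq 2}$ itself, multiplying the transport equation for $L\phi_{\ell\geq 2}$ by $r^p\,L\phi_{\ell\geq 2}$ and integrating over the region bounded by $\Sigma_{\tau_1}$, $\Sigma_{\tau_2}$ and $\mathcal{N}_{\tau}$; the $\ell\geq 2$ angular term renders the bulk coercive and lets the weight range up to $p=4-\epsilon$. Second, following \cite{paper1}, I would commute with the weighted radial derivative $r(r-M)\partial_r$ and run a further hierarchy for $r(r-M)\partial_r\phi_{\ell\geq 2}$ at weights $p=1-\epsilon,2-\epsilon,3-\epsilon$. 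Coupling both hierarchies to the energy boundedness \eqref{ass:ebound} and the Morawetz estimates \eqref{ass:morawetz}--\eqref{ass:morawetzlocal}, a dyadic pigeonhole across the foliation $\{\Sigma_\tau\}$ converts boundedness of the top-weight fluxes into polynomial decay of the non-degenerate $J^N$-energy; it is the combination of the two hierarchies that upgrades the bare $\tau^{-(4-\epsilon)}$ rate to the stated $(1+\tau)^{-6+\epsilon}$ at $k=0$. The $-2k$ improvement then follows by commuting with the Killing field $T$: since $T^k\psi_{\ell\geq 2}$ solves \eqref{waveequation}, each commutation with $T$ contributes two further powers of $\tau$-decay to the energy, exactly as in the $T^k$-part of the proof underlying Proposition~\ref{decl1}.

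Two ingredients particular to $\ell\geq 2$ deserve emphasis, and they are where the main difficulty lies. To close the flux in the \emph{non-degenerate} norm $J^N$ one must convert the angularly-degenerate fluxes produced by the hierarchy into genuine energy; this is done by commuting with the angular momentum operators $\Omega^l$ up to the orders appearing in the hypotheses and invoking elliptic estimates on the spheres $\s^2$, which is precisely why the weighted $\Omega^l$-norms enter the definition of $E^\epsilon_{2;k}[\psi_{\ell\geq 2}]$. The genuinely hard part, however, will be closing the top of the hierarchy in the presence of the metric corrections $D(r)=1-2Mr^{-1}+O_{3+k}(r^{-1-\beta})$ and of the commutator terms generated by the higher-order $\partial_r^s$ (equivalently $r^2\partial_r$) commutations: these produce lower-order contributions sitting at the borderline $r$-weight where the $r^p$-estimate would otherwise fail to close. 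The resolution is to absorb them using the extra decay supplied by the $\ell\geq 2$ angular term together with a choice of $\epsilon$ small relative to $\beta$, while book-keeping carefully which of the weighted initial-data limits on $\Omega^l\phi_{\ell\geq 2}$, $r^2\partial_r(\Omega^l\phi_{\ell\geq 2})$, $(r^2\partial_r)^2(\Omega^l\phi_{\ell\geq 2})$ and the top-order $\partial_r^s(\cdots)$ quantities controls each boundary flux on the initial cone $\mathcal{N}_0$ when the data are not compactly supported. This matching of error terms to the stated hypotheses is the principal technical, rather than conceptual, obstacle, and is where the argument departs from the compactly-supported and $\ell=1$ situations.
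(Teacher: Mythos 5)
Your overall scaffolding (reduction to the radiation field, nested $r^p$-hierarchies combined with the boundedness and Morawetz assumptions, a pigeonhole argument along the foliation, $T$-commutation for the $-2k$ gain, and angular commutation plus elliptic estimates to recover the non-degenerate energy) is the standard framework and matches the paper's. But the central mechanism you propose is wrong, and it is exactly the step where the restriction to $\ell\geq 2$ must enter. You claim that the eigenvalue bound $\ell(\ell+1)\geq 6$ makes the angular bulk term in the \emph{uncommuted} $r^p$-estimate coercive and thereby allows the weight to reach $p=4-\epsilon$ (and $p=3-\epsilon$ after one commutation with $r(r-M)\partial_r$). The sign works the other way. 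Multiplying $\partial_u\partial_v\phi=\frac{D}{4r^2}\slashed{\Delta}_{\s^2}\phi-\frac{DD'}{4r}\phi$ by $r^p\partial_v\phi$ and integrating by parts in $v$, the angular bulk term carries a coefficient proportional to $(2-p)r^{p-3}$: it is favourable only for $p\leq 2$, and for $p>2$ it appears with the bad sign multiplied by $\ell(\ell+1)$, so it gets \emph{worse}, not better, as $\ell$ grows. Absorbing it into the good bulk $\sim p\,r^{p-1}(\partial_v\phi)^2$ via a Hardy inequality would require roughly $\ell(\ell+1)\lesssim \tfrac{1}{4}p(p-2)<2$ for $p<4$, which fails for every $\ell\geq 1$. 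Indeed it is the $\ell=0$ part, not $\ell\geq 2$, for which the uncommuted hierarchy extends past $p=2$ (there the only potential is $O(r^{-3})$, whence the weights $3-\epsilon$, resp.\ $5-\epsilon$, in $E^{\epsilon}_{0,I_0\neq 0;k}$ and $E^{\epsilon}_{0,I_0=0;k}$); your phrase ``the lower weights forced by $\ell=0$'' is backwards. The same computation kills your once-commuted hierarchy at $p=3-\epsilon$: after one commutation the effective potential is proportional to $\ell(\ell+1)-2$, which vanishes for $\ell=1$ (this is precisely why Proposition \ref{decl1} can run $\widetilde{\Phi}$ up to $p=4-\epsilon$) but is $\geq 4$ for $\ell\geq 2$, capping that hierarchy at $p\leq 2$ as well. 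Your bookkeeping also signals the confusion: hierarchies of lengths $4-\epsilon$ and $3-\epsilon$ would add up to $\tau^{-7+2\epsilon}$, overshooting the stated $\tau^{-6+\epsilon}$.

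The paper's mechanism is different: one commutes \emph{twice} with $r^2\partial_r$ and runs three nested hierarchies, each of length (almost) $2$, for $\phi$, for $\Phi=r^2\partial_r\phi$, and for $\Phi_{(2)}=(r^2\partial_r)^2\phi$. After two commutations the effective potential is proportional to $\ell(\ell+1)-6$, which is $\geq 0$ exactly when $\ell\geq 2$; this is what permits the additional $p=2-\epsilon$ estimate in the top, twice-commuted hierarchy (the ``additional power'' the paper imports from Theorem 1.4 of \cite{paper1}), giving $2+2+(2-\epsilon)=6-\epsilon$ powers of energy decay, with the $T^k$-commutation supplying the remaining $-2k$. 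This is also why the hypotheses of the proposition and the norm $E^{\epsilon}_{2;k}[\psi_{\ell\geq 2}]$ are phrased in terms of $(r^2\partial_r)^2(\Omega^l\phi_{\ell\geq 2})$ and $\partial_r^s\big((r^2\partial_r)^2(\Omega^l\phi_{\ell\geq 2})\big)$: these control the boundary fluxes of the twice-commuted hierarchy, an object your argument never introduces.
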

\begin{proof}
A proof proceeds similarly to the $\ell\geq 1$ part of the proof of Theorem 1.8 of \cite{paper1}. However, because we are restricting $\ell\geq 2$, we can consider additionally the estimate $p=2-\epsilon$ in the hierarchy in Theorem 1.4, resulting in an additional power in the decay rates compared to  Theorem 1.8 of \cite{paper1}. Obtaining the energy decay rates for $T^k\psi$ proceeds almost identically to the corresponding part in the proof of Theorem 1.8 of \cite{paper1}.
\end{proof}
\subsection{Pointwise decay}
\label{pointwidec1}
We use the energy decay estimates from Section \ref{endehigna} to obtain pointwise decay estimates for $\psi$. For convenience, we consider the coordinates $(\widetilde{\tau},\widetilde{\rho},\theta,\varphi)$ corresponding to the foliation by spacelike hypersurfaces $\mathcal{S}_{\widetilde{\tau}}$.

\begin{proposition}[Pointwise decay estimates for $\psi$]
\label{prop:pointdecpsi}
We decompose
\begin{equation*}
\psi_{1}=\psi_{\ell=1}+\psi_{\ell \geq 2}.
\end{equation*}
Let $k\in \N_0$ and assume that
\begin{equation*}
D(r)=1-\frac{2M}{r}+O_{3+k}(r^{-1-\beta})
\end{equation*}
and moreover that $E^{\epsilon}_{1;k+1}[\psi_{\ell=1}]+E^{\epsilon}_{2;k+1}[\psi_{\ell\geq 2}]<\infty$, for some $\epsilon\in (0,1)$.
\begin{itemize}
\item[$\mathbf{1)}$]
For $\psi_{\ell=1}$ we have the following estimates. There exists a constant $C \doteq C(D,R,k,\epsilon)$ such that the following estimates hold for all $\widetilde{\tau}\geq 0$:
\begin{align}
\label{eq:pointrtkpsi1b1}
|T^k\psi_{\ell=1}|(\widetilde{\tau},\widetilde{\rho},\theta,\varphi)\leq C (1+\widetilde{\tau})^{-\frac{7}{2}-k+\epsilon}\sqrt{E^{\epsilon}_{1;k+1}[\psi_{\ell=1}]} , \\
\label{eq:pointrtkpsi1b2}
\sqrt{\widetilde{\rho}+1}\cdot |T^k\psi_{\ell=1}|(\widetilde{\tau},\widetilde{\rho},\theta,\varphi)\leq C (1+\widetilde{\tau})^{-3-k+\epsilon}\sqrt{E^{\epsilon}_{1;k}[\psi_{\ell=1}]} , \\
\label{eq:pointrtkpsi1b3}
(\widetilde{\rho}+1)\cdot |T^k\psi_{\ell=1}|(\widetilde{\tau},\widetilde{\rho},\theta,\varphi)\leq C (1+\widetilde{\tau})^{-\frac{5}{2}-k+\epsilon}\sqrt{E^{\epsilon}_{1;k}[\psi_{\ell=1}]}.
\end{align}

\item[$\mathbf{2)}$]
Consider $\psi_{\ell\geq 2}$ and assume the following additional pointwise initial assumptions:
\begin{align*}
\lim_{r \to \infty }\sum_{|l|\leq 4+2k}\int_{\s^2}(\Omega^l\phi_{\ell\geq 2})^2\,d\omega\big|_{u'=0}<&\:\infty,\\
\lim_{r \to \infty }\sum_{|l|\leq 2+2k}\int_{\s^2}(r^2\partial_r(\Omega^l\phi_{\ell\geq 2}))^2\,d\omega\big|_{u'=0}<&\:\infty,\\
\lim_{r \to \infty }\sum_{|l|\leq 2k}\int_{\s^2}\left((r^2\partial_r)^2(\Omega^l\phi_{\ell\geq 2})\right)^2\,d\omega\big|_{u'=0}<&\:\infty,
\end{align*}
and
\begin{equation*}
\lim_{r \to \infty }\sum_{|l|\leq 2k-2s}\int_{\s^2}r^{2s+2}\Bigg(\partial_r^s\Big((r^2\partial_r)^2(\Omega^l\phi_{\ell\geq 2}\Big)\Bigg)^2\,d\omega\big|_{u'=0}<\infty,
\end{equation*}for each $1\leq s\leq k$.

Then there exists a constant $C=C(D,R,k,\epsilon)>0$ such that for all $\widetilde{\tau}\geq 0$:
\begin{align}
\label{eq:pointrtkpsi21}
|T^k\psi_{\ell\geq 2}|(\widetilde{\tau},\widetilde{\rho},\theta,\varphi)\leq C (1+\widetilde{\tau})^{-\frac{7}{2}-k+\epsilon}\sqrt{E^{\epsilon}_{2;k+1}[\psi_{\ell\geq 2}]} , \\
\label{eq:pointrtkpsi22}
\sqrt{\widetilde{\rho}+1}\cdot |T^k\psi_{\ell\geq 2}|(\widetilde{\tau},\widetilde{\rho},\theta,\varphi)\leq C (1+\widetilde{\tau})^{-3-k+\epsilon}\sqrt{E^{\epsilon}_{2;k}[\psi_{\ell\geq 2}]} , \\
\label{eq:pointrtkpsi23}
(\widetilde{\rho}+1) \cdot |T^k\psi_{\ell\geq 2}|(\widetilde{\tau},\widetilde{\rho},\theta,\varphi)\leq C (1+\widetilde{\tau})^{-\frac{5}{2}-k+\epsilon}\sqrt{E^{\epsilon}_{2;k}[\psi_{\ell\geq 2}]}.
\end{align}
\end{itemize}
\end{proposition}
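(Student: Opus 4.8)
The plan is to upgrade the energy decay estimates of Propositions \ref{decl1} and \ref{decl2T} to pointwise statements by a standard energy-to-pointwise argument on the spacelike hyperboloidal leaves $\mathcal{S}_{\widetilde{\tau}}$, combining Sobolev embedding on the spheres with radial integration that trades radial derivatives for $r$-weights. Since $T$ and the rotation vector fields $\Omega$ commute with $\square_g$, the functions $T^k\Omega^l\psi_{\ell=1}$ and $T^k\Omega^l\psi_{\ell\geq 2}$ are again solutions to \eqref{waveequation}, so the energy decay estimates apply to them verbatim; moreover, by \eqref{eq:comptimes} decay in $\tau$ may be freely exchanged for decay in $\widetilde{\tau}$ up to a fixed constant, so it suffices to produce the bounds with the flux decay rate $(1+\tau)^{-6-2k+\epsilon}$ inserted.

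First I would reduce the pointwise bound on a sphere to an $L^2$-bound on that sphere: by Sobolev embedding on $\s^2$ (which requires two angular derivatives, hence the commutations with $|l|\leq 2$),
\[|T^k\psi_{\ell=1}|^2(\widetilde{\tau},\widetilde{\rho},\omega)\lesssim \sum_{|l|\leq 2}\int_{\s^2}|T^k\Omega^l\psi_{\ell=1}|^2(\widetilde{\tau},\widetilde{\rho},\omega')\,d\omega',\]
and likewise for $\psi_{\ell\geq 2}$. For $\psi_{\ell=1}$ the angular derivatives are controlled by the function itself through the elliptic identity $\slashed{\Delta}_{\s^2}\psi_{\ell=1}=-2\psi_{\ell=1}$, so no data beyond the energy norms $E^{\epsilon}_{1;\bullet}[\psi_{\ell=1}]$ enters. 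For $\psi_{\ell\geq 2}$ the commutations with $\Omega^l$ and with $r^2\partial_r$ (used in the underlying $r^p$-hierarchy) are exactly what force the additional weighted pointwise assumptions on the data in part $\mathbf{2)}$: they guarantee that the relevant $r$-weighted quantities are finite initially and, together with the boundedness assumption \eqref{ass:ebound}, remain finite along the evolution.

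Second, I would bound the sphere-$L^2$ norm at a given radius by radial integration. For a weight exponent $a\in\{\tfrac12,1\}$, writing $g(r')=\int_{\s^2}|T^k\Omega^l\psi|^2\,d\omega'$, the identity $(\widetilde{\rho}+1)^{2a}g(\widetilde{\rho})=-\int_{\widetilde{\rho}}^{\infty}\partial_{r'}\big((r'+1)^{2a}g\big)\,dr'$ holds, with the boundary term at $r'=\infty$ vanishing precisely because of the finiteness of the $r\to\infty$ limits in the hypotheses. Expanding the derivative and applying Cauchy--Schwarz controls the right-hand side by an $r^p$-weighted flux of $T^k\Omega^l\psi$ through $\mathcal{S}_{\widetilde{\tau}}$, and the two values $a=\tfrac12,1$ select two different members of the $r^p$-hierarchy of Section \ref{endehigna}, producing the weights $(\widetilde{\rho}+1)^{1/2}$ and $(\widetilde{\rho}+1)$ with the rates $-3-k+\epsilon$ and $-\tfrac52-k+\epsilon$ after taking square roots. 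The unweighted estimates \eqref{eq:pointrtkpsi1b1} and \eqref{eq:pointrtkpsi21} are then obtained by a Gagliardo--Nirenberg-type interpolation between the $T^k$ and $T^{k+1}$ fluxes -- which is why one extra commutation and the norms $E^{\epsilon}_{1;k+1}$, $E^{\epsilon}_{2;k+1}$ appear -- yielding the geometric-mean rate $-\tfrac72-k+\epsilon$.

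The main obstacle I expect is the bookkeeping of $r$-weights: one must match the powers produced by the radial integration -- including the weights created when commuting through $\Omega^l$ and $r^2\partial_r$ -- against the precise $r^p$-weighted norms defined in Appendix \ref{apx:energynorms}, and, in the $\ell\geq 2$ case, verify that each additional pointwise assumption is exactly what is needed to kill the boundary term at null infinity and keep the weighted flux finite. The qualitative decay mechanism is identical to that of \cite{paper1}; the genuinely new input is the use of the extra $r^p$-weighted hierarchies, which is what permits the improved weights $(\widetilde{\rho}+1)^{1/2}$ and $(\widetilde{\rho}+1)$ to appear with the stated $\tau$-decay, and checking that these hierarchies close with the stated finite data norms is the one place where care is required.
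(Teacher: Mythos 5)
Your overall strategy is the same as the paper's: the paper proves this proposition by repeating the proofs of Theorems 1.9 and 1.10 of \cite{paper1} with the improved energy decay inputs of Propositions \ref{decl1} and \ref{decl2T}, and your ingredients --- Sobolev embedding on the spheres, radial integration against $r^p$-weighted fluxes, and the Gagliardo--Nirenberg-type interpolation between the $T^k$ and $T^{k+1}$ fluxes that produces the rate $-\tfrac72-k+\epsilon$ in \eqref{eq:pointrtkpsi1b1}, \eqref{eq:pointrtkpsi21} and explains the extra commutation in $E^{\epsilon}_{1;k+1}$, $E^{\epsilon}_{2;k+1}$ --- are exactly those of that argument, with the correct numerology.

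There is, however, one step that fails as written: the inward radial integration identity
\begin{equation*}
(\widetilde{\rho}+1)^{2a}g(\widetilde{\rho})=-\int_{\widetilde{\rho}}^{\infty}\partial_{r'}\big((r'+1)^{2a}g\big)\,dr', \qquad g(r')=\int_{\s^2}|T^k\Omega^l\psi|^2\,d\omega',
\end{equation*}
is valid only if $\lim_{r'\to\infty}(r'+1)^{2a}g(r')=0$, and you justify this by the ``finiteness of the $r\to\infty$ limits in the hypotheses''. Finiteness is not vanishing. For $a=\tfrac12$ the limit does vanish, since $(r'+1)\,g\sim r'^{-1}\int_{\s^2}(T^k\Omega^l\phi)^2\,d\omega'\to 0$; but for $a=1$, which is the case you need for \eqref{eq:pointrtkpsi1b3} and \eqref{eq:pointrtkpsi23}, one has $(r'+1)^{2}g\to\int_{\s^2}(T^k\Omega^l\phi)^2\,d\omega'$ evaluated at null infinity at retarded time $\widetilde{\tau}$, i.e.\ the square of the radiation field, which is generically \emph{nonzero} --- indeed the whole content of \eqref{eq:pointrtkpsi1b3} is that this quantity decays in $\widetilde{\tau}$, not that it vanishes. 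Note also that the extra hypotheses of part $\mathbf{2)}$ are finiteness conditions on the initial data at $u'=0$ only (they are inherited from Proposition \ref{decl2T}, where they are needed to run the extended $r^p$-hierarchy), so they cannot control a boundary term at null infinity at later times. The standard repair, and what the argument of \cite{paper1} actually does, is to integrate \emph{outward} from $r=R$: bound $\int_{\s^2}\phi^2\,d\omega\,(\widetilde{\tau},\rho)$ by $\int_{\s^2}\phi^2\,d\omega\,(\widetilde{\tau},R)$ plus $2\int_R^{\rho}\int_{\s^2}|\phi|\,|\partial_{\rho'}\phi|\,d\omega\,d\rho'$, control the anchor term at $r=R$ by the bounded-region estimate, apply Cauchy--Schwarz with the split $r^{-1-\delta}$ versus $r^{1+\delta}$, and absorb $\int r^{-1-\delta}\phi^2$ into the $r^p$-weighted fluxes via a Hardy inequality; no boundary term at infinity ever appears. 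With this modification your outline closes.
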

\begin{proof}
We obtain pointwise decay estimates by repeating the proof of Theorem 1.9 and 1.10 of \cite{paper1}, but using the energy decay statements of Proposition \ref{decl2T}, rather than the energy decay statements in Theorem 1.8 of \cite{paper1}.
\end{proof}

\section{Almost-sharp decay estimates for $\psi_{0}=\frac{1}{4\pi}\int_{\mathbb{S}^{2}}\psi$ I: The case $I_{0}\neq 0$ }
\label{gdeforpsi1}

Before we obtain precise late-time asymptotics for the spherical mean $\int_{\s^2}\psi\,d\omega$ in the case $I_0[\psi]\neq 0$ (Section \ref{sec:asymradfieldcaseI}) and the resulting sharp upper bound and lower bound decay estimates, we will first state several \emph{almost}-sharp upper bound decay estimates (i.e.\ sharp decay rates with an $\epsilon$ loss).

\subsection{Energy decay}
\label{endehigna1}

\begin{proposition}[{Energy decay for $T^k\psi_0$}]\label{prop:endec1T}
\hspace{1pt}\\
Let $k\in \N_0$ and assume that
\begin{equation*}
D(r)=1-\frac{2M}{r}+O_{3+k}(r^{-1-\beta}).
\end{equation*}

Assume moreover that $E_{0, I_0 \neq 0;k}^{\epsilon}[\psi_0]<\infty$, for some $\epsilon\in (0,1)$. Then there exists a constant $C \doteq C(D,R,\epsilon,k)$ such that for all $\tau\geq 0$
\begin{equation}\label{est:endec1T}
\int_{\Sigma_{\tau}} J^N [T^k\psi_0 ] \cdot n_{\tau} d\mu_{\Sigma_{\tau}} \leq C(1+\tau)^{-3-2k+\epsilon} E_{0, I_0 \neq 0;k}^{\epsilon}[\psi_0].
\end{equation}
\end{proposition}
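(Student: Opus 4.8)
The plan is to reduce the statement to a one-dimensional problem for the radiation field $\phi_0 = r\psi_0$ and then run a weighted energy hierarchy of the type developed in \cite{paper1}, the new feature being that the non-vanishing of $I_0[\psi_0]$ fixes the maximal admissible $r$-weight. First I would restrict to the spherically symmetric part and work with $\phi_0 = r\psi_0$, which, by the reduction displayed in Section~\ref{npsection}, satisfies a $1+1$-dimensional wave equation of the form $\partial_u\partial_v\phi_0 = V\phi_0$ in double-null coordinates, with potential $V = O_1(r^{-3})$ as a consequence of the asymptotics \eqref{asm:asympD} of $D$. Since $I_0[\psi_0]\neq 0$ we have $\lim_{r\to\infty}r^2\partial_r\phi_0 = I_0[\psi_0]$, so $\partial_v\phi_0 = \tfrac12 D\,\partial_r\phi_0$ decays exactly like $r^{-2}$ along the outgoing cone $\mathcal{N}_0$. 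Hence the weighted flux $\int_{\mathcal{N}_0} r^{p}(\partial_v\phi_0)^2\,dv$ is finite precisely for $p<3$, and in particular for $p=3-\epsilon$; this borderline finiteness is exactly what the norm $E^{\epsilon}_{0,I_0\neq 0;0}[\psi_0]$ is built to encode, and it is the reason the admissible range of weights is $p\le 3-\epsilon$ rather than the $p\le 2$ of the basic $r^p$-method.

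Next I would establish the $r^p$-weighted hierarchy for $\phi_0$ for $p\in(0,3-\epsilon]$: multiplying the equation by $r^{p}\partial_v\phi_0$ and integrating over the slab bounded by $\Sigma_{\tau_1}$, $\Sigma_{\tau_2}$ and the intervening null hypersurfaces yields, after discarding favourable terms and absorbing the error generated by $V$ (which is $r$-integrable for these $p$),
\[
\int_{\mathcal{N}_{\tau_2}} r^{p}(\partial_v\phi_0)^2\,dv + \int_{\tau_1}^{\tau_2}\!\int_{\mathcal{N}_{\tau}} r^{p-1}(\partial_v\phi_0)^2\,dv\,d\tau \lesssim \int_{\mathcal{N}_{\tau_1}} r^{p}(\partial_v\phi_0)^2\,dv + (\textnormal{lower-order fluxes}).
\]
Combining this hierarchy with the energy boundedness assumption \eqref{ass:ebound} and the Morawetz estimate \eqref{ass:morawetz} --- which controls the non-degenerate bulk in $\{r\le R\}$ and links the $p$-flux on $\mathcal{N}_\tau$ to the non-degenerate energy $\int_{\Sigma_\tau}J^N[\psi_0]\cdot n_\tau$ on the spacelike part --- and then iterating by the mean-value/pigeonhole argument of \cite{paper1} (stepping $p$ down from $3-\epsilon$ to $0$) gives the $k=0$ bound $\int_{\Sigma_\tau}J^N[\psi_0]\cdot n_\tau \lesssim (1+\tau)^{-3+\epsilon}E^{\epsilon}_{0,I_0\neq 0;0}[\psi_0]$.

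For $T^k\psi_0$ I would commute with the Killing field $T=\partial_v+\partial_u$ (so each $T^j\psi_0$ again solves \eqref{waveequation}) and additionally with the operators $r(r-M)\partial_r$ and $\partial_r^{\,j}$ exactly as in Proposition~4.2 of \cite{paper1}, generating the higher-order hierarchies; the Morawetz estimate \eqref{ass:morawetz} is then applied with a loss of one $T$-derivative on the right-hand side, and the extra $r$-weighted initial quantities appearing in $E^{\epsilon}_{0,I_0\neq 0;k}[\psi_0]$ are precisely those needed to seed these commuted hierarchies. Each application of the argument to a $T$-commuted variable gains a factor $(1+\tau)^{-2}$ in the energy decay rate, yielding the stated rate $(1+\tau)^{-3-2k+\epsilon}$, in the same manner as Theorem~1.8 of \cite{paper1}.

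The step I expect to be the main obstacle is the top of the hierarchy. Because $I_0[\psi_0]\neq 0$, the weight $p=3-\epsilon$ is genuinely borderline --- the flux $\int r^{3-\epsilon}(\partial_v\phi_0)^2$ only just converges --- so one must verify that the boundary terms and the $V$-generated error terms at this top weight are absorbed with no net loss beyond the prescribed $\epsilon$. A closely related difficulty is the bookkeeping of the precise $r$-powers produced by the successive $\partial_r$- and $T$-commutations, so that the right-hand side closes in terms of $E^{\epsilon}_{0,I_0\neq 0;k}[\psi_0]$ and nothing larger; this is exactly the point at which the $I_0\neq 0$ norm must carry the correct weights, in contrast to the faster-decaying $I_0=0$ case treated in Section~\ref{gdeforpsi2}.
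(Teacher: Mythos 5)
Your proposal is correct and takes essentially the same route as the paper: for this proposition the paper simply invokes Theorem 1.8 of \cite{paper1}, and your outline reconstructs exactly the mechanism of that cited proof --- the reduction to the $1+1$ equation for $\phi_0=r\psi_0$, the $r^p$-hierarchy pushed to $p=3-\epsilon$ (the range being capped by finiteness of the weighted flux precisely because $I_0[\psi_0]\neq 0$), the black-box boundedness/Morawetz inputs combined with the dyadic pigeonhole iteration, and the gain of two powers of $\tau$ per $T$-commutation seeded by the extra $r$-weighted terms in $E^{\epsilon}_{0,I_0\neq 0;k}[\psi_0]$. Nothing in your argument deviates in substance from the vector field approach of \cite{paper1} on which the paper relies.
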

\begin{proof}
A proof is contained in Theorem 1.8 of \cite{paper1}.
\end{proof}

\subsection{Pointwise decay}
\label{pointwidec11}

\begin{proposition}[Pointwise decay estimates for $\psi$]
\label{prop:pointdecpsi}
Let $k\in \N_0$ and assume that
\begin{equation*}
D(r)=1-\frac{2M}{r}+O_{3+k}(r^{-1-\beta}).
\end{equation*}

Assume moreover that $E_{0, I_0 \neq 0;k+1}^{\epsilon}[\psi_0]<\infty$, for some $\epsilon\in (0,1)$. Then there exists a constant $C \doteq C(D,R,\epsilon,k)$ such that for all $\tau\geq 0$
\begin{align}
\label{eq:pointrtkpsi01}
|T^k\psi_0|(\widetilde{\tau},\widetilde{\rho},\theta,\varphi)\leq C (1+\widetilde{\tau})^{-2-k+\epsilon}\sqrt{E^{\epsilon}_{0,I_0\neq0;k+1}[\psi_0]},\\
\label{eq:pointrtkpsi03}
\sqrt{\widetilde{\rho}+1}\cdot |T^k\psi_0|(\widetilde{\tau},\widetilde{\rho},\theta,\varphi)\leq C (1+\widetilde{\tau})^{-\frac{3}{2}-k+\epsilon}\sqrt{E^{\epsilon}_{0,I_0\neq0;k}[\psi_0]},\\
\label{eq:pointrtkpsi05}
\widetilde{\rho}\cdot |T^k\psi_0|(\widetilde{\tau},\widetilde{\rho},\theta,\varphi)\leq C (1+\widetilde{\tau})^{-1-k+\epsilon}\sqrt{E^{\epsilon}_{0,I_0\neq0;k}[\psi_0]}.
\end{align}

\end{proposition}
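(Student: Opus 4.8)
The plan is to derive all three pointwise bounds from the energy-decay statement of Proposition~\ref{prop:endec1T} by the same mechanism used for Theorems~1.9 and~1.10 of the companion paper \cite{paper1}. Since $\psi_0$ is spherically symmetric, no angular commutations enter and the whole argument collapses to one-dimensional estimates in the radial direction of the hyperboloidal slices $\mathcal{S}_{\widetilde{\tau}}$, combined with the $r^p$-weighted hierarchy near null infinity. A first reduction: because $T$ is Killing, $T^k\psi_0$ is again a spherically symmetric solution of \eqref{waveequation}, so it suffices to run the $k=0$ argument on $T^k\psi_0$ and feed in the improved rate $(1+\tau)^{-3-2k+\epsilon}$ from Proposition~\ref{prop:endec1T}. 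In particular I record $\|\partial(T^k\psi_0)\|_{L^2(\Sigma_{\tau})}\lesssim (1+\tau)^{-3/2-k+\epsilon/2}\sqrt{E^{\epsilon}_{0,I_0\neq 0;k}[\psi_0]}$, together with the $r^p$-weighted fluxes of the radiation field $\phi_0=r\psi_0$ that the hierarchy of \cite{paper1} controls along the outgoing cones $\mathcal{N}_{\tau}$; by \eqref{eq:comptimes} the $\tau$-decay transfers to $\widetilde{\tau}$-decay.

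In the near-infinity region $r\geq R$ I would work directly with $\phi_0=r\psi_0$, which for $I_0\neq 0$ has a finite, $\widetilde{\tau}$-decaying limit at null infinity. Using the fundamental theorem of calculus in $v$ along $\mathcal{N}_{\tau}$ and estimating the resulting integral by Cauchy--Schwarz against the weighted flux $\int_{\mathcal{N}_{\tau}}(\partial_v\phi_0)^2 r^{p-2}$ yields pointwise control of $\phi_0$, hence of $\widetilde{\rho}\,\psi_0$, of $\sqrt{\widetilde{\rho}+1}\,\psi_0$, and of $\psi_0$, with the temporal rates inherited from the hierarchy. The three $\widetilde{\rho}$-weighted versions \eqref{eq:pointrtkpsi01}--\eqref{eq:pointrtkpsi05} correspond to the three admissible choices of $p$, i.e.\ to how much of the available $r$-weight is spent improving the spatial falloff rather than the temporal decay.

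In the bounded region $r\leq R$ I would instead apply a one-dimensional Sobolev inequality on the compact interval $[r_{\rm min},R]$ along $\mathcal{S}_{\widetilde{\tau}}$, matched to the value at $r=R$ supplied by the near-infinity step. Because Sobolev on a fixed interval costs one derivative, this is exactly where the extra order $k+1$ appears in the unweighted estimate \eqref{eq:pointrtkpsi01} (the weighted estimates need only order $k$). The bound is then propagated up to and including the event horizon in the $r_{\rm min}=r_+$ case by invoking the non-degenerate, redshift-commuted energy of \cite{lecturesMD}, so that $J^N$ does not degenerate at $r=r_+$.

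The main obstacle is obtaining the \emph{sharp} (up to $\epsilon$) temporal rates in all three estimates at once, which requires carefully converting the $r$-weights of the $r^p$-hierarchy into $\widetilde{\tau}$-decay along $\mathcal{S}_{\widetilde{\tau}}$. The correct bookkeeping is that one unit of spatial weight $\widetilde{\rho}$ is exchanged for one unit of temporal decay, which is transparent in the wave zone where $\widetilde{\rho}\sim\widetilde{\tau}$ and so $\widetilde{\rho}\cdot(1+\widetilde{\tau})^{-2-k}\sim(1+\widetilde{\tau})^{-1-k}$; this matches the exponents $-2-k,\,-\tfrac32-k,\,-1-k$ against the weights $0,\tfrac12,1$. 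Executing this trade-off cleanly, interpolating between the near-infinity and bounded regions, and carrying the estimate to the horizon is precisely the content imported from Theorems~1.9 and~1.10 of \cite{paper1}.
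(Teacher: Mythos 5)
Your overall strategy coincides with the paper's: the paper's entire proof of this proposition is the citation ``A proof is contained in the proofs of Theorem 1.9 and 1.10 of \cite{paper1}'', i.e.\ one feeds the $I_0\neq 0$ energy decay of Proposition \ref{prop:endec1T} into the pointwise-decay machinery of the companion paper, exactly as you propose. Your treatment of the two weighted estimates \eqref{eq:pointrtkpsi03} and \eqref{eq:pointrtkpsi05} is also sound: the fundamental theorem of calculus along $\mathcal{S}_{\widetilde{\tau}}$ (resp.\ along $\mathcal{N}_{\tau}$ for the radiation field) plus Cauchy--Schwarz against the $J^N$-flux (resp.\ the $r^p$-weighted flux) reproduces the rates $-\tfrac{3}{2}-k+\epsilon$ and $-1-k+\epsilon$ with energy norms of order $k$ only.

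However, your reconstruction of the interior estimate \eqref{eq:pointrtkpsi01} has a genuine gap. A one-dimensional Sobolev inequality on $[r_{\rm min},R]$ matched to the boundary value at $r=R$ cannot give the rate $(1+\widetilde{\tau})^{-2-k+\epsilon}$: the slice energy decays like $(1+\tau)^{-3-2k+\epsilon}$ and the boundary value supplied by your near-infinity step (via \eqref{eq:pointrtkpsi03} at $\rho=R$) decays only like $(1+\tau)^{-\frac{3}{2}-k+\epsilon}$, so every term in your interior bound is of size $\tau^{-\frac{3}{2}-k+\epsilon}$, half a power short of the claim. The actual mechanism in Theorems 1.9 and 1.10 of \cite{paper1} is different: one uses the Morawetz estimate \eqref{ass:morawetz} to get a spacetime-integrated bound on the non-degenerate interior energy, extracts by the mean value theorem a good slice $\tau^*$ in each dyadic interval $[\tau,2\tau]$ on which the interior energy decays like $\tau^{-4-2k+\epsilon}$, and then propagates this to every $\tau'\in[\tau,2\tau]$ by the fundamental theorem of calculus in time, estimating $\int\!\!\int \partial\psi_0\,\partial T\psi_0$ by Cauchy--Schwarz between the $k$-th and $(k+1)$-th commuted Morawetz bulks (rates $\tau^{-3-2k+\epsilon}$ and $\tau^{-5-2k+\epsilon}$, whose geometric mean is $\tau^{-4-2k+\epsilon}$). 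It is this pairing --- not the derivative cost of the Sobolev embedding --- that is the true source of both the extra $T$-commutation (hence $E^{\epsilon}_{0,I_0\neq 0;k+1}$ rather than $E^{\epsilon}_{0,I_0\neq 0;k}$ in \eqref{eq:pointrtkpsi01}) and the improved exponent $-2-k+\epsilon$; the same upgraded interior energy is also what closes the estimate in the intermediate region $R\leq \rho\lesssim \tau$, where neither of your weighted bounds implies \eqref{eq:pointrtkpsi01}.
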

\begin{proof}
A proof is contained in the proofs of Theorem 1.9 and 1.10 of \cite{paper1}.
\end{proof}

\section{Almost-sharp decay estimates for $\psi_{0}=\frac{1}{4\pi}\int_{\mathbb{S}^{2}}\psi$ II: The case $I_{0}=0$ }
\label{gdeforpsi2}

Before we obtain precise late-time asymptotics for the spherical mean $\int_{\s^2}\psi\,d\omega$ in the case $I_0[\psi]=0$ (Section \ref{sec:asympsi2}) and the resulting sharp upper bound and lower bound decay estimates, we will first state several almost-sharp upper bound decay estimates.

\subsection{Energy decay}
\label{endehigna2}

\begin{proposition}[{Energy decay for $T^k\psi_0$}]\label{prop:endec1TI00}
\hspace{1pt}\\
Let $k\in \N_0$ and assume that
\begin{equation*}
D(r)=1-\frac{2M}{r}+O_{3+k}(r^{-1-\beta}).
\end{equation*}

Assume moreover that $E_{0, I_0 = 0;k}^{\epsilon}[\psi_0]<\infty$, for some $\epsilon\in (0,1)$. Then there exists a constant $C \doteq C(D,R,\epsilon,k)$ such that for all $u\geq 0$
\begin{equation}\label{est:endec2}
\int_{\Sigma_{\tau}} J^N [T^k\psi_0 ] \cdot n_{\tau} d\mu_{\Sigma_{\tau}} \leq C(1+\tau)^{-5-2k+\epsilon} E_{0, I_0=0;k}^{\epsilon}[\psi_0].
\end{equation}
\end{proposition}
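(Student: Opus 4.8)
The plan is to establish this estimate as the $I_0=0$ counterpart of Proposition \ref{prop:endec1T}, following the $r^p$-weighted energy hierarchy of the companion paper \cite{paper1} (specifically the spherically symmetric part of the proof of its Theorem 1.8). The essential new feature is that the vanishing of the Newman--Penrose constant permits extending the hierarchy by \emph{two} additional levels, which is exactly what accounts for the improvement from the rate $(1+\tau)^{-3-2k+\epsilon}$ of Proposition \ref{prop:endec1T} to $(1+\tau)^{-5-2k+\epsilon}$. First I would reduce to the radiation field $\phi_0=r\psi_0$, which satisfies a $1+1$-dimensional wave equation $\partial_u\partial_v\phi_0=-V_0\,\phi_0$ whose potential obeys $V_0=O(r^{-3})$ as a consequence of the asymptotics \eqref{asm:asympD} of $D$ (the absence of a centrifugal $r^{-2}$ term is precisely what distinguishes the $\ell=0$ mode). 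Using $\partial_r|_u=\tfrac{2}{D}\partial_v$ one has $r^2\partial_v\phi_0\to \tfrac12 I_0[\psi_0]$ along each outgoing cone, so the hypothesis $I_0[\psi_0]=0$ is exactly the statement that $\lim_{r\to\infty}r^2\partial_v\phi_0=0$.

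Next I would set up the $r^p$-weighted estimates of Proposition 4.2 of \cite{paper1} applied to $\partial_v\phi_0$: for each admissible $p$,
\begin{equation*}
\int_{\mathcal{N}_{u_2}} r^p(\partial_v\phi_0)^2\,dv+\int_{u_1}^{u_2}\!\!\int_{\mathcal{N}_u} p\,r^{p-1}(\partial_v\phi_0)^2\,dv\,du\lesssim \int_{\mathcal{N}_{u_1}} r^p(\partial_v\phi_0)^2\,dv+(\mathrm{l.o.t.}),
\end{equation*}
where the lower-order spacetime terms produced by the potential $V_0$ and by the error terms in $\partial_u\partial_v\phi_0$ are absorbed using the Morawetz estimate \eqref{ass:morawetz} and the boundedness assumption \eqref{ass:ebound}. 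The decisive point is the top of the hierarchy: in the $I_0\neq 0$ case $\partial_v\phi_0\sim r^{-2}$, so $\int_{\mathcal{N}} r^p(\partial_v\phi_0)^2$ is finite only for $p<3$; when $I_0=0$ the improved fall-off $\partial_v\phi_0=O(r^{-3})$ makes the weighted flux finite for $p<5$, and this finiteness on $\Sigma_0$ is precisely what the data norm $E^\epsilon_{0,I_0=0;k}[\psi_0]$ encodes. I would therefore run the hierarchy up to $p=5-\epsilon$ and then apply the standard mean-value / pigeonhole iteration down the levels $p=5-\epsilon,\,4-\epsilon,\dots$ to convert the two extra powers of $r$-weight into two extra powers of $\tau$-decay, yielding $(1+\tau)^{-5+\epsilon}$ for the non-degenerate energy of $\psi_0$.

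For the $T^k$ statement I would commute with the Killing field $T=L+\underline{L}$, using $[\square_g,T]=0$ so that $T^k\phi_0$ solves the same equation; running the hierarchy for $T^k\phi_0$ together with the Morawetz and redshift estimates \eqref{ass:morawetz}, \eqref{ass:morawetzlocal} gains two powers of $\tau$-decay per $T$-derivative in the familiar way, while the higher-order $r$-weighted components of $E^\epsilon_{0,I_0=0;k}[\psi_0]$ control the commuted fluxes. This gives the stated $(1+\tau)^{-5-2k+\epsilon}$.

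The main obstacle I anticipate is closing the top two levels of the hierarchy, i.e.\ verifying that $I_0[\psi_0]=0$ genuinely propagates the faster decay $\partial_v\phi_0=O(r^{-3})$ needed to keep the $r^{5-\epsilon}$-weighted flux finite for all $u$. The delicate contributions are the borderline potential terms of the form $\int r^{p-3}\phi_0\,\partial_v\phi_0$, which for $p$ near $5$ are not controlled by an $r^{-2}$-type fall-off alone; here one must use the conservation of $I_0$ along null infinity (Section \ref{npsection}) together with the sharp $O_3(r^{-1-\beta})$ control on $D$ in \eqref{asm:asympD} to extract the required extra decay, exactly as in the $I_0=0$ hierarchy underlying \cite{paper1}.
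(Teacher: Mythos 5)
Your overall framework is the right one---this proposition is indeed proved by the $r^p$-weighted method of the companion paper (the paper's own ``proof'' is a one-line citation to Theorem 1.8 of \cite{paper1}), and you correctly reduce to the radiation-field equation \eqref{eqphi}, identify $I_0[\psi_0]=0$ with $\lim_{r\to\infty}r^2\partial_v\phi_0=0$, and even point at the right difficulty. But your central mechanism---running the \emph{same} hierarchy on $\partial_v\phi_0$ with the weight range extended from $p<3$ to $p<5$---is not viable, and it is not what \cite{paper1} does. At level $p$ the potential $V_0=\frac{DD'}{4r}=O(Mr^{-3})$ contributes an error $\sim M\iint r^{p-3}|\phi_0||\partial_v\phi_0|$. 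Any Cauchy--Schwarz splitting compatible with the bulk term forces you to control $\iint r^{p-5}\phi_0^2$, and the Hardy inequality needed for this requires the boundary term $r^{p-4}\phi_0^2$ to vanish at null infinity, hence $p<4$: for $u>0$ the function $\phi_0(u,\cdot)$ tends to the radiation field, which is generically \emph{nonzero} even for compactly supported data (by this very paper, $r\psi|_{\mathcal{I}}\sim u^{-2}$), so for $p\geq 4$ that integral actually diverges. Integrating by parts in $v$ fails even earlier (the boundary term $r^{p-3}\phi_0^2$ blows up for $p>3$). So the top levels $p\in[4,5-\epsilon]$---precisely the two levels responsible for upgrading $\tau^{-3+\epsilon}$ to $\tau^{-5+\epsilon}$---cannot be closed this way; your scheme caps at $\tau^{-4+\epsilon}$ at best. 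Your proposed rescue, ``use the conservation of $I_0$ along null infinity to extract extra decay of $\partial_v\phi_0$ in the bulk,'' is circular: quantitative bulk propagation of $r^2\partial_v\phi_0\approx 0$ (as in Proposition \ref{prop:lower1}) is proved using pointwise decay estimates that themselves rest on the energy decay you are trying to establish.

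The missing idea is the commutation with the vector field $r^2\partial_v$, which is the actual content of the ``vector field approach'' of \cite{paper1}. One works with the derived quantity $\Phi:=r^2\partial_v\phi_0$, whose limit at null infinity is $\tfrac12 I_0[\psi_0]$; the hypothesis $I_0[\psi_0]=0$ is exactly what makes the boundary terms and Hardy inequalities for $\Phi$ benign, so that a \emph{second} $r^p$-hierarchy, for $\partial_v\Phi$, can be closed and then coupled to the basic hierarchy for $\partial_v\phi_0$ (the lower-order $\phi_0$-terms generated by the commutation are handled via $T$- and $\partial_r$-commuted hierarchies). This two-tier structure is visible in this paper: the proof of Proposition \ref{decl1} runs the hierarchy of Proposition 4.2 of \cite{paper1} on the commuted quantity $r(r-M)\partial_r(r\psi)$, not on $\partial_v\phi$ itself, and the norm $E^{\epsilon}_{0,I_0=0;k}$ in Appendix \ref{apx:energynorms} has the tell-tale staircase form (for $k=0$: $r^{5-\epsilon}(\partial_r\phi)^2$, $r^{4-\epsilon}(\partial_rT\phi)^2$, $r^{3-\epsilon}(\partial_rT^2\phi)^2$, $r^{2}(\partial_rT^3\phi)^2$, $r(\partial_rT^4\phi)^2$), i.e.\ one power of $r$-weight traded per $T$-derivative rather than a single quantity carrying all weights up to $r^{5-\epsilon}$. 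Without introducing $\Phi$ and this coupled structure, the stated rate $(1+\tau)^{-5-2k+\epsilon}$ is out of reach.
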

\begin{proof}
A proof is contained in Theorem 1.8 of \cite{paper1}.
\end{proof}

\subsection{Pointwise decay}
\label{pointwidec12}

\begin{proposition}[Pointwise decay estimates for $\psi$]
\label{prop:pointdecpsiI00}
Let $k\in \N_0$ and assume that
\begin{equation*}
D(r)=1-\frac{2M}{r}+O_{3+k}(r^{-1-\beta}).
\end{equation*}

Assume moreover that $E_{0, I_0 = 0;k+1}^{\epsilon}[\psi_0]<\infty$, for some $\epsilon\in (0,1)$. Then there exists a constant $C \doteq C(D,R,k,\epsilon)>0$ such that the following estimates hold:
\begin{align}
\label{eq:pointrtkpsi02}
|T^k\psi_{0}|(\widetilde{\tau},\widetilde{\rho},\theta,\varphi)\leq C (1+\widetilde{\tau})^{-3-k+\epsilon}\sqrt{E^{\epsilon}_{0,I_0=0;k+1}[\psi_0]} , \\
\label{eq:pointrtkpsi04}
\sqrt{\widetilde{\rho}+1}\cdot |T^k\psi_0|(\widetilde{\tau},\widetilde{\rho},\theta,\varphi)\leq C (1+\widetilde{\tau})^{-\frac{5}{2}-k+\epsilon}\sqrt{E^{\epsilon}_{0,I_0=0;k}[\psi_0]} , \\
\label{eq:pointrtkpsi06}
(\widetilde{\rho}+1)\cdot |T^k\psi_0|(\widetilde{\tau},\widetilde{\rho},\theta,\varphi)\leq C (1+\widetilde{\tau})^{-2-k+\epsilon}\sqrt{E^{\epsilon}_{0,I_0=0;k}[\psi_0]}.
\end{align}
\end{proposition}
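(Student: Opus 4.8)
The plan is to upgrade the energy decay of Proposition \ref{prop:endec1TI00} to the three pointwise statements, following the scheme used for the $I_0\neq0$ pointwise bounds in Section \ref{pointwidec11} and for Theorems 1.9 and 1.10 of \cite{paper1}. The only new ingredient is the \emph{faster} decay rate $(1+\tau)^{-5-2k+\epsilon}$ that is available once $I_0[\psi]=0$; since each pointwise rate is produced from the corresponding energy rate by a fixed loss, this improved rate is exactly what gains one power of $(1+\widetilde{\tau})$ in every estimate relative to the $I_0\neq0$ case. Throughout I work on the spacelike hyperboloidal leaves $\mathcal{S}_{\widetilde{\tau}}$ and use the comparison \eqref{eq:comptimes} to transfer $\tau$-decay to $\widetilde{\tau}$-decay, together with the fact that $\psi_0$ is spherically symmetric, so that every quantity depends only on $(\widetilde{\tau},\widetilde{\rho})$ and the fundamental theorem of calculus in $\widetilde{\rho}$ is available.

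First I would prove the unweighted estimate \eqref{eq:pointrtkpsi02}. Writing $f=T^k\psi_0$, which tends to $0$ as $\widetilde{\rho}\to\infty$, the identity $f^2(\widetilde{\tau},\widetilde{\rho})=-2\int_{\widetilde{\rho}}^\infty f\,\partial_{\widetilde{\rho}'}f\,d\widetilde{\rho}'$ and Cauchy--Schwarz bound $\|f\|_{L^\infty(\mathcal{S}_{\widetilde{\tau}})}^2$ by the geometric mean of the energy flux $\int_{\mathcal{S}_{\widetilde{\tau}}}J^N[T^k\psi_0]\cdot n_{\widetilde{\tau}}$ and a corresponding flux controlled by $T^{k+1}\psi_0$. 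Both fluxes decay by Proposition \ref{prop:endec1TI00}, at rates $(1+\widetilde{\tau})^{-5-2k+\epsilon}$ and $(1+\widetilde{\tau})^{-7-2k+\epsilon}$ respectively; their geometric mean is $(1+\widetilde{\tau})^{-6-2k+\epsilon}$, and taking the square root yields the rate $(1+\widetilde{\tau})^{-3-k+\epsilon}$ with the $(k+1)$-order norm $E^{\epsilon}_{0,I_0=0;k+1}[\psi_0]$ (after bounding the $k$-order flux by the $(k+1)$-order one), the extra commutation entering precisely through the $T^{k+1}\psi_0$ flux.

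Next I would derive the two $r$-weighted estimates \eqref{eq:pointrtkpsi04} and \eqref{eq:pointrtkpsi06} from the $r^p$-weighted hierarchy of \cite{paper1} rather than from the non-degenerate energy alone. Setting $\phi_0=r\psi_0$ and integrating $\partial_v(rT^k\psi_0)$ inward from null infinity along ingoing rays (and along the outgoing cones $\mathcal{N}_{\widetilde{\tau}}$), the weighted fluxes of the $I_0=0$ branch of the hierarchy control $rT^k\psi_0$ and $r^{1/2}T^k\psi_0$ pointwise with the stated weights; the associated $\widetilde{\tau}$-rates give $(1+\widetilde{\tau})^{-\frac{5}{2}-k+\epsilon}$ and $(1+\widetilde{\tau})^{-2-k+\epsilon}$. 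Because this integration-from-infinity argument needs no additional $T$-commutation, these two bounds retain the $k$-order norm $E^{\epsilon}_{0,I_0=0;k}[\psi_0]$.

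The main obstacle is the $r$-weight bookkeeping rather than any single delicate estimate: one must propagate the weights through the $r^p$-hierarchy so that the three distinct weight/rate pairs emerge consistently, and one must show that the boundary terms at null infinity vanish --- which they do precisely because $I_0[\psi]=0$ forces $r^2\partial_r(r\psi_0)\to0$, rendering the limiting radiation field sufficiently regular. Once these points are settled, the three estimates follow exactly as in the proofs of Theorems 1.9 and 1.10 of \cite{paper1}, with Proposition \ref{prop:endec1TI00} substituted for the generic energy decay used there.
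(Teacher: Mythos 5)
Your overall strategy --- feed the improved $I_0=0$ energy decay of Proposition \ref{prop:endec1TI00} into the pointwise-decay machinery of Theorems 1.9 and 1.10 of \cite{paper1} --- is exactly what the paper does (its proof is a direct citation to those theorems), and your treatment of the two weighted estimates \eqref{eq:pointrtkpsi04} and \eqref{eq:pointrtkpsi06} is essentially correct: they follow from the fundamental theorem of calculus in $\widetilde{\rho}$ together with the non-degenerate and $r^p$-weighted fluxes, with no additional $T$-commutation, which is why they carry only the $k$-order norm. (One detail: the radial integration should start at $r=R$ and proceed outward, with the boundary term controlled by the flux; integrating ``inward from null infinity'' presupposes control of the very limit you are trying to estimate.)

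The genuine gap is in your derivation of the unweighted estimate \eqref{eq:pointrtkpsi02}. The identity $f^2(\widetilde{\tau},\widetilde{\rho})=-2\int_{\widetilde{\rho}}^{\infty} f\,\partial_{\widetilde{\rho}'}f\,d\widetilde{\rho}'$ followed by Cauchy--Schwarz produces two factors, an $L^2$ norm of $f$ (controlled via Hardy by the flux $\int_{\mathcal{S}_{\widetilde{\tau}}}J^N[T^k\psi_0]\cdot n_{\widetilde{\tau}}$) and an $L^2$ norm of $\partial_{\widetilde{\rho}}f$ --- and this second factor is \emph{also} controlled only by the $J^N[T^k\psi_0]$ flux. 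Nothing in a purely radial integration can convert a spatial derivative into an extra time derivative, so your claim that one factor is ``a corresponding flux controlled by $T^{k+1}\psi_0$'' is unjustified; as written, the argument yields only the rate $(1+\widetilde{\tau})^{-\frac{5}{2}-k+\epsilon}$, i.e.\ the same rate as \eqref{eq:pointrtkpsi04}, not the claimed $(1+\widetilde{\tau})^{-3-k+\epsilon}$. The missing ingredient is precisely the mechanism that makes the spatial-derivative factor decay at the $T^{k+1}$ rate: either (a) the integrated local energy decay assumption \eqref{ass:morawetz} combined with Cauchy--Schwarz in the $\tau$-direction over dyadic intervals --- the Dafermos--Rodnianski interior-improvement argument, which is what \cite{paper1} uses in Theorems 1.9 and 1.10 --- or (b) the improved decay of the $Y$-commuted energy flux, $\int_{\mathcal{S}_{\widetilde{\tau}}}J^N[YT^k\psi_0]\cdot n_{\widetilde{\tau}}\lesssim \widetilde{E}^{\epsilon}_{0,I_0=0;k+1}[\psi_0](1+\widetilde{\tau})^{-7-2k+\epsilon}$, established in Proposition \ref{prop:edecayNpsi} via the elliptic estimate of Lemma \ref{lm:elliptic}, the redshift estimates of Section \ref{commutedredshift}, and the Gr\"onwall-type Lemma \ref{lm:gronwall}; the elliptic estimate is exactly the step that trades the spatial derivative for $T$-derivatives and thereby brings $T^{k+1}\psi_0$ (hence the $(k+1)$-order norm) into the bound. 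With either ingredient your geometric-mean numerology closes correctly; without one of them the improvement from $-\frac{5}{2}-k$ to $-3-k$ does not follow.
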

\begin{proof}
A proof is contained in the proofs of Theorem 1.9 and 1.10 of \cite{paper1}.
\end{proof}

\section{Almost-sharp decay for the hyperboloidal derivative $Y\psi$}
\label{improvedhyperboloidaldecay}
In order to obtain lower bounds for $\psi$ and moreover the precise leading-order behaviour of $\psi_0$ in time (see Section \ref{sec:asymradfieldcaseI} and \ref{sec:asympsi2}), we will also need to establish improved upper bound estimates for the derivative $Y\psi_0$, where $Y$ is the spherically symmetric vector field that is tangent to the hyperboloids $\mathcal{S}_{\widetilde{\tau}}$. These improved decay estimates are obtained by applying the elliptic estimate in Lemma \ref{lm:elliptic} below, together with red-shift estimates in the $r_{\rm min}=r_+$ case (Lemma \ref{lm:redshift} and Lemma \ref{lm:redshiftNpsi}).

\subsection{An elliptic estimate}
\label{ellipticestimate}
The following lemma is proved in Section 7.5 of \cite{paper1}.
\begin{lemma}[\textbf{A degenerate elliptic estimate for $\psi$}]
\label{lm:elliptic}
Let $\psi$ be a solution to \eqref{waveequation} on a spacetime $(\mathcal{M}, g)$. Assume moreover that with respect to $(\widetilde{\tau},\widetilde{\rho},\theta,\varphi)$ coordinates on $\mathcal{S}$:
\begin{align*}
\lim_{\rho\to \infty}\sqrt{\widetilde{\rho}}\cdot T\psi(0,\widetilde{\rho},\theta,\varphi)=&0,\\
\lim_{\rho\to \infty}\sqrt{\widetilde{\rho}}\cdot Y\psi(0,\widetilde{\rho},\theta,\varphi)=&0.
\end{align*}

Then we can estimate with respect to $(\widetilde{\rho},\theta,\varphi)$ coordinates:
\begin{equation}
\label{eq:ellipticpsi}
\begin{split}
\int_{r_{\rm min}}^{\infty}&\int_{\s^2}r^{-2}(Y(Dr^2Y\psi))^2+ D^2r^2(Y^2 \psi)^2+Dr^2|\snabla Y\psi|^2+r^2(\slashed{\Delta}\psi)^2\,d\omega d\widetilde{\rho}\\
\leq&\: C(D)\int_{r_{\rm min}}^{\infty}\int_{\s^2}r^2(YT \psi)^2+h(r)^2r^2(T^2\psi)^2\,d\omega  d\widetilde{\rho}.
\end{split}
\end{equation}
\end{lemma}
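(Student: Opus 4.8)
The plan is to read off from $\square_g\psi=0$ a second-order elliptic equation on the hyperboloid $\mathcal{S}$ whose source consists only of the $T$-derivatives appearing on the right-hand side of \eqref{eq:ellipticpsi}, and then to run a weighted $L^2$ elliptic estimate. First I would pass to the coordinates $(\widetilde{\tau},\widetilde{\rho},\theta,\varphi)$, in which $T=\partial_{\widetilde{\tau}}$ and $Y=\partial_{\widetilde{\rho}}$, and use $\partial_r=Y-h_{\mathcal{S}}T$ together with $\partial_v=T$ to rewrite $\square_g\psi$ (which in $(v,r,\theta,\varphi)$ coordinates equals $2\partial_v\partial_r\psi+\tfrac{2}{r}\partial_v\psi+D\partial_r^2\psi+(\tfrac{2D}{r}+D')\partial_r\psi+\tfrac{1}{r^2}\slashed{\Delta}_{\s^2}\psi$). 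Collecting the purely ``spatial'' terms on the left produces the identity
\[
\mathcal{L}\psi:=\frac{1}{r^2}Y\big(Dr^2Y\psi\big)+\frac{1}{r^2}\slashed{\Delta}_{\s^2}\psi=G,
\]
where $G=2(1-Dh_{\mathcal{S}})\,YT\psi+h_{\mathcal{S}}(Dh_{\mathcal{S}}-2)\,T^2\psi+(\textnormal{l.o.t.})\,T\psi$, with coefficients controlled by $|1-Dh_{\mathcal{S}}|\lesssim 1$ and $|h_{\mathcal{S}}(Dh_{\mathcal{S}}-2)|\lesssim h_{\mathcal{S}}$, matching exactly the two terms on the right-hand side of \eqref{eq:ellipticpsi}.

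The core of the argument is the weighted integral identity obtained by squaring. Multiplying $\mathcal{L}\psi=G$ by $r^2$ and integrating over $\mathcal{S}$ gives $\int_{\mathcal{S}}r^{-2}\big(Y(Dr^2Y\psi)+\slashed{\Delta}_{\s^2}\psi\big)^2\,d\omega d\widetilde{\rho}=\int_{\mathcal{S}}r^2G^2$. I would expand the square into the radial term $r^{-2}(Y(Dr^2Y\psi))^2$, the angular term $r^{-2}(\slashed{\Delta}_{\s^2}\psi)^2$ (which is $r^2(\slashed{\Delta}\psi)^2$ once $\slashed{\Delta}$ is read as the Laplacian of the radius-$r$ metric sphere), and the cross term $2r^{-2}\,Y(Dr^2Y\psi)\,\slashed{\Delta}_{\s^2}\psi$. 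The crucial step is to integrate the cross term by parts: once in $\widetilde{\rho}$ (using $Y=\partial_{\widetilde{\rho}}$) and once on $\s^2$ (using that $Y$ commutes with $\slashed{\Delta}_{\s^2}$ and that $\int_{\s^2}f\slashed{\Delta}_{\s^2}f=-\int_{\s^2}|\snabla_{\s^2}f|^2$). This converts the cross term into the manifestly nonnegative $\int_{\mathcal{S}}D|\snabla_{\s^2}Y\psi|^2=\int_{\mathcal{S}}Dr^2|\snabla Y\psi|^2$, up to boundary terms at $r=r_{\rm min}$ and $r\to\infty$ and lower-order terms carrying one derivative. The remaining target term $D^2r^2(Y^2\psi)^2$ is then recovered algebraically from $Dr^2Y^2\psi=Y(Dr^2Y\psi)-(Dr^2)'Y\psi$, i.e.\ from the radial term already controlled plus a lower-order $(Y\psi)^2$ contribution.

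To close, I would bound the right-hand side $\int_{\mathcal{S}}r^2G^2\lesssim \int_{\mathcal{S}}r^2(YT\psi)^2+h^2r^2(T^2\psi)^2+(\textnormal{l.o.t.})$, and then absorb every lower-order term — both the $(Y\psi)^2$ and $(T\psi)^2$ contributions produced in the previous step and in $G$ — by Hardy inequalities on $[r_{\rm min},\infty)$. This is precisely where the hypotheses $\lim_{\rho\to\infty}\sqrt{\widetilde{\rho}}\,T\psi=0$ and $\lim_{\rho\to\infty}\sqrt{\widetilde{\rho}}\,Y\psi=0$ enter: they guarantee both that the boundary contributions at null infinity generated by the integrations by parts (and by Hardy) vanish, and that the weighted $(Y\psi)^2$ and $(T\psi)^2$ integrals are finite and dominated by $\int r^2(Y^2\psi)^2$ and $\int r^2(YT\psi)^2$ respectively.

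I expect the main obstacle to be the bookkeeping of the boundary terms together with the degeneration of the estimate at the horizon $\{r=r_{\rm min}=r_+\}$, where $D\to 0$: the operator $\mathcal{L}$ degenerates there, so the positive bulk terms $D^2r^2(Y^2\psi)^2$ and $Dr^2|\snabla Y\psi|^2$ degenerate accordingly (hence the adjective ``degenerate''), and one must check that no uncontrolled boundary flux appears at $\{r=r_+\}$ — the vanishing $D(r_+)=0$ is exactly what annihilates the would-be boundary term there. Tracking the weights so that the lower-order terms are genuinely subcritical relative to the Hardy-controlled quantities is the delicate part; everything else is the standard squaring-and-integrating-by-parts elliptic identity.
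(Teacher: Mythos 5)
The paper does not actually prove Lemma \ref{lm:elliptic}: it is imported from Section 7.5 of the companion paper \cite{paper1}, so your proposal can only be compared against the strategy of that proof, which is indeed the one you outline (recast $\square_g\psi=0$ as a degenerate elliptic equation on the hyperboloids with source built from $YT\psi$, $T^2\psi$ and $T\psi$, then square, integrate by parts, and close with Hardy inequalities). Your derivation of the elliptic identity, the structure and size of the coefficients of $G$, and the observation that $D(r_+)=0$ annihilates the would-be horizon boundary terms are all correct.

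There is, however, a genuine gap at the point where you close the lower-order terms. Your own reduction $Dr^2Y^2\psi=Y(Dr^2Y\psi)-(D'r^2+2Dr)Y\psi$ produces the error $(D'r+2D)^2(Y\psi)^2$, whose coefficient equals $(D'(r_+)r_+)^2\neq 0$ at the horizon, so you need control of the \emph{non-degenerate} integral $\int(Y\psi)^2\,d\widetilde{\rho}$. You propose to dominate it via Hardy by $\int r^2(Y^2\psi)^2$, but that quantity appears on neither side of \eqref{eq:ellipticpsi}: the lemma only controls $\int D^2r^2(Y^2\psi)^2$, which degenerates to second order at the horizon. If you substitute the degenerate version, using $(\widetilde{\rho}-r_+)^2\lesssim \widetilde{\rho}^2D^2$, the argument becomes circular, and the circle cannot be broken by absorption: the product of the relevant constants (Hardy's $4$ times $\sup(D'r+2D)^2\geq 4$, etc.) is far larger than $1$. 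The mechanism that actually closes this step—and explains why the first term on the left of \eqref{eq:ellipticpsi} has the precise form $r^{-2}(Y(Dr^2Y\psi))^2$—is to apply a weighted Hardy inequality to the quantity $g=Dr^2Y\psi$ itself, which vanishes at $\widetilde{\rho}=r_{\rm min}$ in \emph{both} cases (because $D(r_+)=0$, resp.\ $r^2|_{\widetilde{\rho}=0}=0$): this yields $\int (Y\psi)^2\,d\widetilde{\rho}\lesssim\int \frac{D^2\widetilde{\rho}^2}{(\widetilde{\rho}-r_{\rm min})^2}(Y\psi)^2\,d\widetilde{\rho} \lesssim \int r^{-2}\big(Y(Dr^2Y\psi)\big)^2\,d\widetilde{\rho}$, i.e.\ control by a term that is bounded directly by $\int r^2G^2$, with no circularity. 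Two smaller loose ends in the same spirit: the spherical Poincar\'e absorption of your cross-term error $\int\frac{D'r-D}{r^2}|\snabla_{\s^2}\psi|^2$ is exactly borderline (on Schwarzschild $\sup_r(D-D'r)_+=1$, and $\int_{\s^2}|\snabla_{\s^2}\psi|^2\leq\tfrac12\int_{\s^2}(\slashed{\Delta}_{\s^2}\psi)^2$ is saturated at $\ell=1$), so the term $r^2(\slashed{\Delta}\psi)^2$ cannot be retained through the absorption and must instead be recovered afterwards from the equation via $\slashed{\Delta}\psi=G-r^{-2}Y(Dr^2Y\psi)$; and the boundary terms at infinity generated by your angular integrations by parts involve $\snabla_{\s^2}\psi$ and $\snabla_{\s^2}Y\psi$, whose decay is not implied by the two stated hypotheses, which constrain only $T\psi$ and $Y\psi$.
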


\subsection{Red-shift estimates}
\label{commutedredshift}

We make use of a standard red-shift energy estimate in the $r_{\rm min}=r_+$ case. 

Recall from Section \ref{sec:asmebound} that the vector field $N$ satisfies the following properties in the $r_{\rm min}=r_+$ case:
\begin{align*}
N=&\:T-Y\quad \textnormal{in}\quad \{r_+\leq r\leq r_0\},\\
N=&\:T \quad \textnormal{in}\quad \{r\geq r_1\},
\end{align*}
where $r_+<r_0<r_1$. We use the vector field $N$ to obtain the following energy estimate, a proof of which can be found in Lemma \ref{lm:redshift} of \cite{lecturesMD} (in a very general setting). 
\begin{lemma}
\label{lm:redshift}
Consider the case $r_{\min}=r_+$. Let $\psi$ be a solution to \eqref{waveequation}. Let $r_+<r_0<r_1$, such that $|r_1-r_+|$ is suitably small. Then we can estimate for all $0\leq \widetilde{\tau}_1< \widetilde{\tau}_2$:
\begin{equation}
\label{eq:redshiftpsi}
\begin{split}
\int_{\mathcal{S}_{\widetilde{\tau}_2}}&J^N[\psi]\cdot n_{\widetilde{\tau}_2}\,d\mu_{\mathcal{S}_{\widetilde{\tau}_2}}+b\int_{\widetilde{\tau}_1}^{\widetilde{\tau}_2}\left[\int_{\mathcal{S}_{\widetilde{\tau}}}J^N[\psi]\cdot n_{\widetilde{\tau}}\,d\mu_{\mathcal{S}_{\widetilde{\tau}}}\right]\,d\tau\leq \int_{\mathcal{S}_{\widetilde{\tau}_1}}J^N[\psi]\cdot n_{\widetilde{\tau}_1}\,d\mu_{\mathcal{S}_{\widetilde{\tau}_1}}\\
&+C\int_{\widetilde{\tau}_1}^{\widetilde{\tau}_2}\left[\int_{\mathcal{S}_{\widetilde{\tau}}}J^T[\psi]\cdot n_{\widetilde{\tau}}\,d\mu_{\mathcal{S}_{\widetilde{\tau}}}\right]\,d\widetilde{\tau},
\end{split}
\end{equation}
where $C=C(D,\mathcal{S})>0$ and $b=b(D,\mathcal{S})>0$ are constants.
\end{lemma}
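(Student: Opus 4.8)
The plan is to establish \eqref{eq:redshiftpsi} by the red-shift vector field method of Dafermos--Rodnianski, applied to the hyperboloidal foliation $\{\mathcal{S}_{\widetilde{\tau}}\}$, with $N$ the vector field constructed in Section \ref{sec:asmebound} (so $N=T-Y$ in $\{r_+\leq r\leq r_0\}$ and $N=T$ in $\{r\geq r_1\}$). First I would write down the energy identity obtained by integrating $\mathrm{div}\,J^N[\psi]$ over the spacetime region $\mathcal{D}=\bigcup_{\widetilde{\tau}\in[\widetilde{\tau}_1,\widetilde{\tau}_2]}\mathcal{S}_{\widetilde{\tau}}$. Since $\psi$ solves \eqref{waveequation} we have $\mathcal{E}^N[\psi]=0$ by \eqref{def:EV}, so $\mathrm{div}\,J^N[\psi]=K^N[\psi]$. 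The boundary of $\mathcal{D}$ consists of $\mathcal{S}_{\widetilde{\tau}_1}$, $\mathcal{S}_{\widetilde{\tau}_2}$, a portion of the future event horizon $\mathcal{H}^+$, and a portion of null infinity. Because $N$ is future-directed and causal (strictly timelike in $\{r>r_+\}$, null along $\mathcal{H}^+$), the fluxes through $\mathcal{H}^+$ and through null infinity are nonnegative and may be discarded, yielding
\begin{equation*}
\int_{\mathcal{S}_{\widetilde{\tau}_2}}J^N[\psi]\cdot n_{\widetilde{\tau}_2}\,d\mu_{\mathcal{S}_{\widetilde{\tau}_2}}+\int_{\mathcal{D}}K^N[\psi]\leq \int_{\mathcal{S}_{\widetilde{\tau}_1}}J^N[\psi]\cdot n_{\widetilde{\tau}_1}\,d\mu_{\mathcal{S}_{\widetilde{\tau}_1}}.
\end{equation*}

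The decisive ingredient is the pointwise coercivity of the bulk term near the horizon: there exist $r_0\in(r_+,r_1)$ and a constant $b>0$ such that
\begin{equation*}
K^N[\psi]\geq b\,J^N[\psi]\cdot n_{\widetilde{\tau}}\quad\text{in }\{r_+\leq r\leq r_0\}.
\end{equation*}
This is exactly the red-shift effect, and it is where the sign of the surface gravity enters: the assumptions $D(r_+)=0$, $D>0$ on $(r_+,\infty)$ and $D'(r_+)\neq 0$ force $D'(r_+)>0$, and it is this sign --- equivalently the positivity of $\kappa=\tfrac12 D'(r_+)$ --- that renders $\mathbf{T}^{\alpha\beta}\nabla_\alpha N_\beta$ positive-definite in the full first-order jet of $\psi$, provided the transversal weight of $Y$ in $N=T-Y$ is chosen large enough. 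I would quote this coercivity from \cite{lecturesMD}; its verification is the computational heart of the argument and the main obstacle, since it is the only step that is not pure bookkeeping.

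With the coercivity in hand the remaining work is a decomposition into three $r$-ranges. In the transition region $\{r_0\leq r\leq r_1\}$ the sign of $K^N[\psi]$ is indefinite, but this is a compact $r$-range on which $N$, $T$ and the full derivative energy are uniformly comparable, so $|K^N[\psi]|\leq C\,J^T[\psi]\cdot n_{\widetilde{\tau}}$; in $\{r\geq r_1\}$ one has $N=T$, hence $K^N[\psi]=K^T[\psi]=0$ because $T$ is Killing by \eqref{def:KV}. To upgrade the near-horizon bulk term into the full spacetime integral on the left of \eqref{eq:redshiftpsi}, I would additionally use that for $r\geq r_0$ the currents satisfy $J^N[\psi]\cdot n_{\widetilde{\tau}}\leq C\,J^T[\psi]\cdot n_{\widetilde{\tau}}$, so that the spacetime integral of $J^N[\psi]\cdot n_{\widetilde{\tau}}$ away from the horizon is absorbed into the $J^T$ error term.

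Combining the energy identity, the coercivity in $\{r\leq r_0\}$, the estimate $|K^N[\psi]|\lesssim J^T[\psi]\cdot n_{\widetilde{\tau}}$ in the transition region together with the vanishing of $K^N[\psi]$ for $r\geq r_1$, and the comparison $J^N[\psi]\cdot n_{\widetilde{\tau}}\lesssim J^T[\psi]\cdot n_{\widetilde{\tau}}$ for $r\geq r_0$, and finally rewriting $\int_{\mathcal{D}}(\,\cdot\,)=\int_{\widetilde{\tau}_1}^{\widetilde{\tau}_2}\int_{\mathcal{S}_{\widetilde{\tau}}}(\,\cdot\,)\,d\widetilde{\tau}$ via the co-area formula, produces \eqref{eq:redshiftpsi} with constants $b,C$ depending only on $D$ and $\mathcal{S}$.
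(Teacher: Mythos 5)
Your proposal is correct and follows exactly the route the paper itself relies on: the paper gives no independent argument for Lemma \ref{lm:redshift} but simply cites the red-shift estimate of \cite{lecturesMD}, and your sketch is a faithful reconstruction of that standard argument (energy identity for $J^N$, near-horizon coercivity of $K^N$ from the positivity of $D'(r_+)$, absorption of the indefinite bulk term on the compact transition region $\{r_0\leq r\leq r_1\}$ into the $J^T$ error, and the comparison $J^N\cdot n_{\widetilde{\tau}}\lesssim J^T\cdot n_{\widetilde{\tau}}$ for $r\geq r_0$), with the key coercivity quoted from the same source the paper defers to.
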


We will also need a standard red-shift energy estimate for the quantity $N\psi$. A proof of Lemma \ref{lm:redshiftNpsi} can also be found in \cite{lecturesMD}.

\begin{lemma}
\label{lm:redshiftNpsi}
Consider the case $r_{\min}=r_+$. Let $\psi$ be a solution to \eqref{waveequation}. Let $r_+<r_0<r_1$, such that $|r_1-r_+|$ is suitably small. Then we can estimate for all $0\leq \widetilde{\tau}_1< \widetilde{\tau}_2$:
\begin{equation}
\label{eq:redshiftNpsi}
\begin{split}
\int_{\mathcal{S}_{\widetilde{\tau}_2}}&J^N[N\psi]\cdot n_{\widetilde{\tau}_2}\,d\mu_{\mathcal{S}_{\widetilde{\tau}_2}}+b\int_{\widetilde{\tau}_1}^{\widetilde{\tau}_2}\left[\int_{\mathcal{S}_{\widetilde{\tau}}}J^N[N\psi]\cdot n_{\widetilde{\tau}}\,d\mu_{\mathcal{S}_{\widetilde{\tau}}}\right]\,d\widetilde{\tau}\leq \int_{\mathcal{S}_{\widetilde{\tau}_1}}J^N[N\psi]\cdot n_{\widetilde{\tau}_1}\,d\mu_{\mathcal{S}_{\widetilde{\tau}_1}}\\
&+C\int_{\widetilde{\tau}_1}^{\widetilde{\tau}_2}\left[\int_{\mathcal{S}_{\widetilde{\tau}}\cap\{r_0\leq r\leq r_1\}}J^T[N\psi]\cdot n_{\widetilde{\tau}}\,d\mu_{\mathcal{S}_{\widetilde{\tau}}}\right]\,d\widetilde{\tau}\\
&+C\int_{\widetilde{\tau}_1}^{\widetilde{\tau}_2}\left[\int_{\mathcal{S}_{\widetilde{\tau}}}J^T[T\psi]\cdot n_{\widetilde{\tau}}\,d\mu_{\mathcal{S}_{\widetilde{\tau}}}\right]\,d\widetilde{\tau}\\
&+C\int_{\widetilde{\tau}_1}^{\widetilde{\tau}_2}\left[\int_{\mathcal{S}_{\widetilde{\tau}}\cap\{r \leq r_1\}}J^T[\psi]\cdot n_{\widetilde{\tau}}\,d\mu_{\mathcal{S}_{\widetilde{\tau}}}\right]\,d\widetilde{\tau},
\end{split}
\end{equation}
where $C=C(D,\mathcal{S},r_0,r_1)>0$ and $b=b(D,\mathcal{S},r_0,r_1)>0$ are constants.
\end{lemma}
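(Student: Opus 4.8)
The plan is to commute the wave equation with $N$ and run a red-shift energy estimate for the resulting inhomogeneous equation, treating the commutator as a source supported near the horizon. First I would set $\Phi=N\psi$ and record that, since $\square_g\psi=0$,
\begin{equation*}
\square_g\Phi=[\square_g,N]\psi=:\mathcal{G}.
\end{equation*}
Because $T$ is Killing we have $[\square_g,T]=0$, and since $N=T$ identically for $r\geq r_1$, the source $\mathcal{G}$ is supported in $\{r\leq r_1\}$. In $\{r_+\leq r\leq r_0\}$, where $N=T-Y$, we have $\mathcal{G}=-[\square_g,Y]\psi$, a second-order operator in $\psi$ whose coefficients are smooth and bounded up to and including $\mathcal{H}^+$; in the transition region $\{r_0\leq r\leq r_1\}$ there are additional contributions from the derivatives of the interpolating cut-off, but these are again smooth, bounded, and supported in $\{r_0\leq r\leq r_1\}$.

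Next I would apply the divergence theorem to the current $J^N[\Phi]$ over the region $\bigcup_{\widetilde{\tau}\in[\widetilde{\tau}_1,\widetilde{\tau}_2]}\mathcal{S}_{\widetilde{\tau}}$, bounded to the past by $\mathcal{S}_{\widetilde{\tau}_1}$, to the future by $\mathcal{S}_{\widetilde{\tau}_2}$, and by the segment of the event horizon $\mathcal{H}^+$ in between. Using $\mathrm{div}\,J^N[\Phi]=K^N[\Phi]+\mathcal{E}^N[\Phi]$ with $\mathcal{E}^N[\Phi]=N(\Phi)\cdot\mathcal{G}$, this yields
\begin{equation*}
\int_{\mathcal{S}_{\widetilde{\tau}_2}}J^N[\Phi]\cdot n_{\widetilde{\tau}_2}\,d\mu_{\mathcal{S}_{\widetilde{\tau}_2}}+\int_{\mathcal{H}^+}J^N[\Phi]\cdot n_{\mathcal{H}}+\int K^N[\Phi]+\int N(\Phi)\,\mathcal{G}=\int_{\mathcal{S}_{\widetilde{\tau}_1}}J^N[\Phi]\cdot n_{\widetilde{\tau}_1}\,d\mu_{\mathcal{S}_{\widetilde{\tau}_1}}.
\end{equation*}
The horizon flux is non-negative since $N$ is timelike and $\mathcal{H}^+$ is null, so it may be dropped from the left-hand side. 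For the bulk term $K^N[\Phi]$ I would invoke the red-shift property: choosing $|r_1-r_+|$ suitably small (as in the hypothesis), positivity of the surface gravity $\tfrac12 D'(r_+)>0$ gives $K^N[\Phi]\geq b\,J^N[\Phi]\cdot n$ in $\{r_+\leq r\leq r_0\}$, while in the transition region one only has $K^N[\Phi]\geq -C\,J^T[\Phi]\cdot n$, and for $r\geq r_1$ one has $K^N[\Phi]=K^T[\Phi]=0$ since $T$ is Killing and $\mathbf{T}$ is symmetric. This reproduces the positive bulk term on the left together with the $J^T[\Phi]$ term over $\{r_0\leq r\leq r_1\}$ on the right.

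It remains to absorb the commutator error $\int N(\Phi)\,\mathcal{G}$, and this is where the essential work lies. The crucial point is the structure of the principal (second-order) part of $\mathcal{G}$ near $\mathcal{H}^+$: it contains a term proportional to $N\Phi=NN\psi$ whose coefficient inherits a favourable sign from the positive surface gravity, so that, after Young's inequality, it is absorbed into the good bulk term $b\int\int J^N[\Phi]$ (this is precisely the red-shift gain, and is where the smallness of $|r_1-r_+|$ is used). The remaining contributions to $\mathcal{G}$ are of lower order, involving at most first derivatives of $T\psi$ and of $\psi$ and supported in $\{r\leq r_1\}$; pairing these with $N(\Phi)$ and applying Cauchy--Schwarz, one factor is absorbed into the bulk while the other produces exactly the flux terms $\int\int_{\{r\leq r_1\}}J^T[T\psi]$ and $\int\int_{\{r\leq r_1\}}J^T[\psi]$ on the right-hand side. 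Rewriting the spacetime bulk integrals via the coarea formula $\int(\cdots)=\int_{\widetilde{\tau}_1}^{\widetilde{\tau}_2}\!\int_{\mathcal{S}_{\widetilde{\tau}}}(\cdots)\,d\widetilde{\tau}$, up to uniformly bounded Jacobian factors, then gives \eqref{eq:redshiftNpsi}. The hard part is the step concerning the sign of the top-order piece of $[\square_g,N]\psi$ at the horizon: one must verify that commuting with $N$ genuinely improves, rather than destroys, the red-shift positivity, so that the principal error is absorbable and only the controllable lower-order fluxes survive; everything else — the divergence identity, the non-negativity of the horizon flux, and the Cauchy--Schwarz bookkeeping — is routine, which is why the detailed computation may be quoted from \cite{lecturesMD}.
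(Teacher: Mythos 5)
Your proposal is correct and takes essentially the same route as the paper: the paper establishes Lemma \ref{lm:redshiftNpsi} by quoting the standard commuted red-shift estimate of Dafermos--Rodnianski \cite{lecturesMD}, and your outline (commuting with $N$, discarding the non-negative horizon flux, using red-shift positivity of $K^N$ near $\mathcal{H}^+$ with the transition-region error controlled by $J^T[N\psi]$, and absorbing $[\square_g,N]\psi$ via the favourable sign of its top-order coefficient coming from positive surface gravity) is precisely that argument. The sign structure you correctly isolate as the essential step is exactly the content of the red-shift commutation lemma proved in \cite{lecturesMD}.
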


\subsection{Energy decay for $Y\psi$}
\label{energyweightedpoindecay}
In this section we will assume that $\psi$ is a spherically symmetric solution to \eqref{waveequation} emanating from initial data given as in Theorem \ref{thm:extuniq} on a spacetime $(\mathcal{M}, g)$ that satisfies the geometric assumptions from Section \ref{sec:geomassm}.

We will make use of a Gr\"onwall-type lemma to \emph{combine} Lemma \ref{lm:elliptic} with Lemma \ref{lm:redshift} and Lemma \ref{lm:redshiftNpsi} and also the energy decay estimates in Section \ref{gdeforpsi1} and \ref{gdeforpsi2}.

\begin{lemma}
\label{lm:gronwall}
Let $f: [0,\infty)\to \R$ be a continuous, positive function. Assume that for all $0\leq t_1\leq t_2<\infty$,
\begin{equation}
\label{eq:intineqf}
f(t_2)+b \int_{t_1}^{t_2}f(s)\,ds\leq f(t_1)+E_0(t_2-t_1)(t_1+1)^{-p},
\end{equation}
with $E_0,b,p>0$ constants and moreover, for all $0\leq t_0\leq t_1\leq t_2$
\begin{equation}
\label{eq:intineqf2}
f(t_2)+b \int_{t_1}^{t_2}f(s)\,ds\leq f(t_1)+C_0 (t_2-t_1)f(t_0),
\end{equation}
with $C_0,b,p>0$ constants. Then,
\begin{equation}
\label{eqboundf}
f(t)\leq \left(1+C_0b^{-1}\right)f(t_0)
\end{equation}
for all $t\geq t_0$ and there exists a constant $C=C(C_0,E_0,b,p)>0$, such that
\begin{equation}
\label{eqdecayf}
f(t)\leq C(f(0)+E_0)(1+t)^{-p},
\end{equation}
for all $t\geq 0$.
\end{lemma}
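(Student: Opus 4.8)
The plan is to prove the two conclusions separately, first establishing the \emph{uniform boundedness} estimate \eqref{eqboundf} and then bootstrapping it, together with the forcing inequality \eqref{eq:intineqf}, into the \emph{decay} estimate \eqref{eqdecayf}. For the boundedness claim, I would work entirely from the second hypothesis \eqref{eq:intineqf2}. Fix $t_0\geq 0$ and, for $t\geq t_0$, set $M(t)=\sup_{t_0\leq s\leq t}f(s)$; the goal is to show $M$ is bounded by $(1+C_0b^{-1})f(t_0)$. The idea is to feed the crude bound $f(t_0)$ into the right-hand side of \eqref{eq:intineqf2} and exploit the dissipative integral term $b\int_{t_1}^{t_2}f$ on the left to absorb the linear growth $C_0(t_2-t_1)f(t_0)$. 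Concretely, applying \eqref{eq:intineqf2} with $t_1=t_0$ gives $f(t_2)\leq f(t_0)+C_0(t_2-t_0)f(t_0)-b\int_{t_0}^{t_2}f(s)\,ds$, which on its own is not enough; the clean route is to iterate on short time intervals of length $\Delta=b^{-1}$ (or any length making $C_0\Delta$ controllable) and observe that on each such interval the growth contributed is at most $C_0\Delta\,f(t_0)$ while the dissipation prevents accumulation, yielding the stated uniform-in-$t$ bound.

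For the decay estimate \eqref{eqdecayf}, the strategy is a dyadic (or continuous) iteration using \eqref{eq:intineqf}. The inequality \eqref{eq:intineqf} says $f$ satisfies an approximate differential inequality $f'+bf\lesssim E_0(t+1)^{-p}$ in integrated form. First I would extract from \eqref{eq:intineqf}, by choosing $t_2-t_1$ comparable to a fixed length, a \emph{pigeonhole} statement: on any interval $[t_1,t_2]$ there is a time where $f$ is small, specifically of size $\lesssim (t_2-t_1)^{-1}\big(f(t_1)+E_0(t_2-t_1)(t_1+1)^{-p}\big)$. Then I would use \eqref{eq:intineqf2} (with the boundedness already in hand) to propagate that smallness forward to the entire later interval, at the cost of the controlled linear growth. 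Running this over a sequence of intervals whose lengths grow geometrically, $[T_n,T_{n+1}]$ with $T_n\sim 2^n$, each step improves the bound on $f$ by a fixed factor relative to the forcing scale $(T_n+1)^{-p}$, and summing the geometric series produces the final polynomial decay rate $(1+t)^{-p}$ with a constant $C=C(C_0,E_0,b,p)$.

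I expect the main obstacle to be the interplay between the two hypotheses in the decay argument: \eqref{eq:intineqf} provides the \emph{source-driven} decay but only in an averaged (integral) sense, while \eqref{eq:intineqf2} is needed to convert pointwise-in-time smallness at one instant into a bound holding on a whole interval, yet it simultaneously permits linear-in-time growth. Balancing these—choosing the interval lengths long enough that \eqref{eq:intineqf} forces genuine improvement, but short enough that the growth allowed by \eqref{eq:intineqf2} does not destroy that improvement—is the delicate step. The correct calibration is to take interval lengths proportional to the current time $T_n$, so that the growth factor $C_0(T_{n+1}-T_n)$ is a fixed multiple while the forcing $(T_n+1)^{-p}$ decays geometrically; verifying that the induction closes with a uniform constant, and that the accumulated errors from the pigeonhole step sum to something $\lesssim (f(0)+E_0)(1+t)^{-p}$, is where the quantitative care is required. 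The rest is routine once this scaling is fixed.
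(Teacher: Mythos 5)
Your plan for the decay estimate \eqref{eqdecayf} is essentially the paper's own proof: from \eqref{eq:intineqf} one extracts, by the mean value theorem on dyadic intervals $t_i\sim 2^iT$, times at which $f$ is small (with the gain coming from dividing by the interval length $\sim t_i$), one propagates that smallness forward in time using the uniform bound \eqref{eqboundf}, and one closes a bootstrap/induction with the initial segment $[0,T]$ handled by the linear growth bound $f(t)\leq f(0)+E_0t$. Once \eqref{eqboundf} is in hand, this half of your argument goes through exactly as you describe.

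The genuine gap is in your proof of \eqref{eqboundf}, and it is precisely at the assertion that ``the dissipation prevents accumulation.'' With your choice $\Delta=b^{-1}$, neither of the two ways of using \eqref{eq:intineqf2} on successive intervals closes. Dropping the integral term gives only $f(t_1+\Delta)\leq f(t_1)+C_0b^{-1}f(t_0)$, hence $f(t_0+n\Delta)\leq\bigl(1+nC_0b^{-1}\bigr)f(t_0)$: the errors accumulate linearly, which is exactly the drift you need to rule out. Using the integral term via pigeonhole gives a time $s\in[t_1,t_1+\Delta]$ with
\begin{equation*}
f(s)\leq (b\Delta)^{-1}f(t_1)+C_0b^{-1}f(t_0),
\end{equation*}
and with $b\Delta=1$ the coefficient of $f(t_1)$ is $1$, so there is no contraction and again no uniform bound. (Taking $\Delta$ a \emph{large} multiple of $b^{-1}$ does make this a contraction and yields \emph{some} uniform bound, but with a constant strictly worse than the $1+C_0b^{-1}$ claimed in the lemma, which is what the statement requires you to prove.) What actually works is a threshold or first-crossing argument rather than iteration on intervals of fixed length: the paper normalizes $h=C_0^{-1}f/f(t_0)$ and derives a contradiction at the \emph{first} time $h$ reaches the level $C_0^{-1}+b^{-1}+\epsilon$, using that the inequality forces $h(t)\geq h(t_0+w)-(t_0+w-t)$ just before the crossing, so the dissipative integral over $[t_0+w-\epsilon,t_0+w]$ alone already exceeds the permitted growth $\epsilon$. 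Equivalently, one can argue directly: whenever $f\geq (C_0/b)f(t_0)$ throughout $[t_1,t_2]$, the integral term dominates the growth term in \eqref{eq:intineqf2} and forces $f(t_2)\leq f(t_1)$; applying this on $[s^*,t]$, where $s^*$ is the last time $f$ was at or below the threshold $(C_0/b)f(t_0)$ (or $s^*=t_0$ if there is none), gives $f(t)\leq\max\bigl\{f(t_0),(C_0/b)f(t_0)\bigr\}\leq(1+C_0b^{-1})f(t_0)$. Since your decay bootstrap invokes \eqref{eqboundf} at every dyadic step, this is the piece that must be supplied before the rest of your plan is sound.
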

\begin{proof}
We will first prove \eqref{eqboundf} using only \eqref{eq:intineqf2}.
We define
\[h(t):= \frac{1}{C_0}\cdot\frac{f(t)}{f(t_0)}.\]
Then
\[h(t_0)=\frac{1}{C_0}\]
and 
\begin{equation}
h(t_2)+b\int_{t_1}^{t_2}h(t)dt\leq h(t_1)+(t_2-t_1).
\label{eq:}
\end{equation}
It suffices to show
\[h(t)\leq (C_0^{-1}+b^{-1}).\]
We argue by contradiction. Suppose that there is $t^*>t_0$ such that 
\[h(t^*)=(C_0^{-1}+b^{-1})+y\]
for some $y>0$. 
Consider the smallest number $t_0+z$ greater than $t_0$ having the property
 \[h(t_0+z)=C_0^{-1}+b^{-1}.\]
Define
\[\epsilon:=\frac{1}{2}\min\left\{ y,z \right\}>0. \]
and consider the smallest number $t_0+w$ greater than $t_0$ with the property
\[h(t_0+w)=(C_0^{-1}+b^{-1})+\epsilon.\]
Note that $t_0<t_0+w-\epsilon$ since $\epsilon<\frac{z}{2}<z<w$. 
We next use \eqref{eq:} to obtain
\[h(t)\geq h(t_0+w)-(t_0+w-t)=(C_0^{-1}+b^{-1})+\epsilon-(t_0+w-t).  \]
for all $t\in [t_0+w-\epsilon,t_0+w]$
The above lower bound for $h$ gives the following bound for the integral
\begin{equation*}
\begin{split}
b\int_{t_0+w-\epsilon}^{t_0+w}h(t)dt\geq & b\int_{t_0+w-\epsilon}^{t_0+w}\Big((C_0^{-1}+b^{-1})+\epsilon-(t_0+w-t)\Big)dt\\ 
=&b\int_{t_0+w-\epsilon}^{t_0+w}\Big((C_0^{-1}+b^{-1})+ t-(t_0+w-\epsilon)\Big)dt\\ 
=& b\epsilon(C_0^{-1}+b^{-1})+b\int_{0}^{\epsilon}sds\\
=&b\epsilon(C_0^{-1}+b^{-1})+b\frac{\epsilon^{2}}{2}.
\end{split}
\end{equation*}
Applying \eqref{eq:} for $t_1=t_0+w-\epsilon, t_2=t_0+w$ and using that $t_2-t_1=\epsilon$, the above estimate for the integral and that $h(t_1)\leq h(t_2)$ (by the definition of $t_2=t_0+w$) we obtain:
\[b\epsilon(C_0^{-1}+b^{-1})+b\frac{\epsilon^{2}}{2}\leq \epsilon   \]
or equivalently
\[b\epsilon C_0^{-1}+\epsilon+b\frac{\epsilon^{2}}{2}\leq \epsilon   \]
which is contradiction since $\epsilon>0$. 

We will use a continuity argument to prove (\ref{eqdecayf}). First of all, fix $T\geq 0$, then
\begin{equation*}
f(t)\leq f(0)+E_0t\leq (f(0)(T+1)^p+E_0(T+1)^{p+1})(T+1)^{-p}
\end{equation*}
for any $t\in [0,T]$. We will make the following bootstrap assumption:
\begin{equation}
\label{eq:ba}
f(t)\leq A(t+1)^{-p},
\end{equation}
for all $t\geq 0$, where $A>0$ will be chosen suitably large. In particular, by choosing $A>2(f(0)(T+1)^p+E_0(T+1)^{p+1})$, we can improve (\ref{eq:ba}) for $t\leq T$. We are left with improving (\ref{eq:ba}) on the interval $[T,\infty)$. 

We consider a dyadic sequence $\{t_i\}_{i\in \N_0}$, with $t_i=2^iT$, i.e.\ $t_{i+i}=2t_i$, $t_{i+1}-t_i=t_i$. Then, by (\ref{eq:intineqf}) together with \eqref{eq:ba}:
\begin{equation*}
\int_{t_i}^{t_{i+1}}f(s)\,ds\leq \frac{A}{b}(t_{i}+1)^{-p}+\frac{E_0}{b}(t_{i+1}-t_i)(t_i+1)^{-p} \leq \frac{2^pA}{b}t_{i+1}^{-p}+\frac{2^pE_0}{b}(t_{i+1}-t_i)t_{i+1}^{-p}.
\end{equation*}
By the mean-value theorem together with the dyadicity of $t_i$, there exists $(t_*)_i\in [t_i,t_{i+1}]$ such that
\begin{equation*}
f((t_*)_i)\leq 2^{-i}T^{-1}\frac{2^pA}{b}t_{i+1}^{-p}+\frac{2^pE_0}{b}t_{i+1}^{-p}\leq 2^{-i}T^{-1}\frac{2^pA}{b}(t_*)_i^{-p}+\frac{2^pE_0}{b}(t_*)_i^{-p}.
\end{equation*}
For any $\epsilon>0$ there exists a suitably large $T=T(\epsilon,b,p)>0$, such that
\begin{equation*}
f((t_*)_i)\leq \epsilon A(t_*)_i^{-p}+\frac{2^pE_0}{b}(t_*)_i^{-p}.
\end{equation*}
Denote $\tau_i:= (t_*)_{2i+1}$ Note that  $\{\tau_i\}_{i\in \N_0}$ is also a dyadic sequence with $2\leq \frac{\tau_{i+1}}{\tau_i}\leq 2^3=8$. Now, let $t\in [\tau_i,\tau_{i+1}]$. Then, by the above estimate for $f((t_*)_i)$, together with \eqref{eqboundf} with $t_0$ replaced by $\tau_{i}$, we find that:
\begin{equation*}
\begin{split}
f(t)\leq (C_0b^{-1}+1)f(\tau_{i})\leq&\: \epsilon (C_0b^{-1}+1) A\tau_i^{-p}+\frac{2^pE_0}{b} (C_0b^{-1}+1)\tau_i^{-p}\\
\leq&\: 8^p(C_0b^{-1}+1)\epsilon At^{-p}+\frac{2^{(1+3)p}E_0}{b}(C_0b^{-1}+1)t^{-p}\\
\leq&\: 8^p(C_0b^{-1}+1)\epsilon A\left(\frac{1}{2}t+\frac{1}{2}T\right)^{-p}\\
&+\frac{2^{(1+3)p}E_0}{b}(C_0b^{-1}+1)\left(\frac{1}{2}t+\frac{1}{2}T\right)^{-p}\\
\leq&\: 2^{4p}(C_0b^{-1}+1)\epsilon A\left(t+T\right)^{-p}\\
&+\frac{2^{5p}E_0}{b}(C_0b^{-1}+1)\left(t+T\right)^{-p}
\end{split}
\end{equation*}
By choosing $T\geq 1$ and $\epsilon=\frac{1}{2}\cdot2^{-4p}(C_0b^{-1}+1)^{-1}$ and $A>4\cdot\frac{2^{5p}E_0}{b}(C_0b^{-1}+1)$, we therefore obtain
\begin{equation*}
f(t)\leq \frac{3}{4}A(t+1)^{-p},
\end{equation*}
for all $t\geq T$. This improves \eqref{eq:ba} also for $t\geq T$.
\end{proof}

\begin{proposition}
\label{prop:edecayNpsi}
Let $k\in \N_0$ and fix $\epsilon\in(0,1)$. 

Assume that $E^{\epsilon}_{0,I_0\neq 0;k+1}[\psi]<\infty$ and $$\sum_{j=0}^k\int_{\Sigma_0}J^N[NT^j\psi]\cdot n_{\widetilde{\tau}}<\infty,$$ then there exists a constant $C=C(D,R,k,\epsilon)>0$ such that for all $\widetilde{\tau}\geq 0$:
\begin{align}
\label{eq:edecayNpsi1}
\int_{\mathcal{S}_{\widetilde{\tau}}}J^N[NT^k\psi]\cdot n_{\widetilde{\tau}}\,d\mu_{\mathcal{S}_{\widetilde{\tau}}}\leq&\: C\widetilde{E}^{\epsilon}_{0,I_0\neq 0;k+1}[\psi](1+\widetilde{\tau})^{-5-2k+\epsilon},\\
\label{eq:edecayYpsi1}
\int_{\mathcal{S}_{\widetilde{\tau}}}J^N[YT^k\psi]\cdot n_{\widetilde{\tau}}\,d\mu_{\mathcal{S}_{\widetilde{\tau}}}\leq&\: C\widetilde{E}^{\epsilon}_{0,I_0\neq 0;k+1}[\psi](1+\widetilde{\tau})^{-5-2k+\epsilon},
\end{align}
with
\begin{equation*}
\widetilde{E}^{\epsilon}_{0,I_0\neq 0;k+1}[\psi]=E^{\epsilon}_{0,I_0\neq 0;k+1}[\psi]+\sum_{j=0}^k\int_{\Sigma_0}J^N[NT^j\psi]\cdot n_0\,d\mu_0.
\end{equation*}
If we moreover assume that $E^{\epsilon}_{0,I_0= 0;k+1}[\psi]<\infty$, then there exists a constant $C=C(D,R,k,\epsilon)>0$ such that for all $\widetilde{\tau}\geq 0$
\begin{align}
\label{eq:edecayNpsi2}
\int_{\mathcal{S}_{\widetilde{\tau}}}J^N[NT^k\psi]\cdot n_{\widetilde{\tau}}\,d\mu_{\mathcal{S}_{\widetilde{\tau}}}\leq&\: C\widetilde{E}^{\epsilon}_{0,I_0= 0;k+1}[\psi](1+\widetilde{\tau})^{-7-2k+\epsilon},\\
\label{eq:edecayYpsi2}
\int_{\mathcal{S}_{\widetilde{\tau}}}J^N[YT^k\psi]\cdot n_{\widetilde{\tau}}\,d\mu_{\mathcal{S}_{\widetilde{\tau}}}\leq&\: C\widetilde{E}^{\epsilon}_{0,I_0= 0;k+1}[\psi](1+\widetilde{\tau})^{-7-2k+\epsilon},
\end{align}
with
\begin{equation*}
\widetilde{E}^{\epsilon}_{0,I_0=0;k+1}[\psi]=E^{\epsilon}_{0,I_0= 0;k+1}[\psi]+\sum_{j=0}^k\int_{\Sigma_0}J^N[NT^j\psi]\cdot n_0\,d\mu_0.
\end{equation*}
\end{proposition}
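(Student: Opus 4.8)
The plan is to establish the two displays \eqref{eq:edecayNpsi1}, \eqref{eq:edecayNpsi2} for the $N$-commuted flux first, and to read off \eqref{eq:edecayYpsi1}, \eqref{eq:edecayYpsi2} afterwards. Throughout I would use that $T$ is Killing with $[T,Y]=0$, so that $T^k\psi$ solves \eqref{waveequation} and every lemma of this section applies verbatim to $T^k\psi$, and that the two foliations are comparable through \eqref{eq:comptimes}, so that the $\Sigma_\tau$-decay of the $T$-commuted energies supplied by Proposition \ref{prop:endec1T} (case $I_0\neq 0$) and Proposition \ref{prop:endec1TI00} (case $I_0=0$) transfers to the hyperboloidal leaves $\mathcal{S}_{\widetilde{\tau}}$. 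Since $\psi$ is spherically symmetric, all angular terms in Lemma \ref{lm:elliptic} drop out and the elliptic estimate is effectively one-dimensional in $r$. The reduction of the $Y$-flux to the $N$-flux is then clean: in $\{r_+\le r\le r_0\}$ one has $Y=T-N$, so $\int_{\mathcal{S}_{\widetilde{\tau}}}J^N[YT^k\psi]\lesssim \int_{\mathcal{S}_{\widetilde{\tau}}}J^N[NT^k\psi]+J^N[T^{k+1}\psi]$ there, while for $r\ge r_0$ (bounded away from the horizon, where the weights $D$ are comparable to $1$) the left-hand side of \eqref{eq:ellipticpsi} dominates $J^N[YT^k\psi]$ directly; both contributions carry the asserted rate once the $N$-estimate is in hand.

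The core is a red-shift/Gr\"onwall scheme for $f(\widetilde{\tau})\doteq\int_{\mathcal{S}_{\widetilde{\tau}}}J^N[NT^k\psi]\cdot n_{\widetilde{\tau}}\,d\mu_{\mathcal{S}_{\widetilde{\tau}}}$. Applying Lemma \ref{lm:redshiftNpsi} to $T^k\psi$ produces an inequality of precisely the form required by Lemma \ref{lm:gronwall}, with a coercive spacetime bulk $b\int f$ on the left and, on the right, three error integrals: the full flux $\int J^T[T^{k+1}\psi]$, the localised flux $\int_{\{r_0\le r\le r_1\}}J^T[NT^k\psi]$, and the lowest-order localised flux $\int_{\{r\le r_1\}}J^T[T^k\psi]$. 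The first is immediate: $J^T[T^{k+1}\psi]\lesssim J^N[T^{k+1}\psi]$ decays at the rate $(1+\widetilde{\tau})^{-5-2k+\epsilon}$ (resp.\ $(1+\widetilde{\tau})^{-7-2k+\epsilon}$) by the $T$-energy decay, one commutation above the target, which is exactly the decay that will force the conclusion through Lemma \ref{lm:gronwall} with $p=5+2k-\epsilon$ (resp.\ $7+2k-\epsilon$), using $\int_{\widetilde{\tau}_1}^{\widetilde{\tau}_2}(1+s)^{-p}\,ds\le(\widetilde{\tau}_2-\widetilde{\tau}_1)(1+\widetilde{\tau}_1)^{-p}$.

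The transversal error terms are where Lemma \ref{lm:elliptic} enters. On $\{r_0\le r\le r_1\}$ one has $NT^k\psi=T^{k+1}\psi-YT^k\psi$, so $J^T[NT^k\psi]$ is controlled by $J^T[T^{k+1}\psi]$ together with the second-order transversal quantities $D^2r^2(Y^2T^k\psi)^2$ and $r^{-2}(Y(Dr^2YT^k\psi))^2$; on this fixed $r$-annulus, bounded away from the horizon, the left-hand side of \eqref{eq:ellipticpsi} (applied to $T^k\psi$) is non-degenerate and bounds these by its right-hand side $\int r^2(YT^{k+1}\psi)^2+h^2r^2(T^{k+2}\psi)^2$. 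Since $YT^{k+1}\psi=T(YT^k\psi)$ and $T^{k+2}\psi=T(T^{k+1}\psi)$ are both first derivatives of $T^{k+1}\psi$ appearing in $J^N[T^{k+1}\psi]$, this quantity again decays at the forcing rate, so the dangerous transversal derivatives are traded for $T$-derivatives enjoying faster decay. For the boundedness hypothesis \eqref{eq:intineqf2} of Lemma \ref{lm:gronwall} one repeats the same manipulations but estimates the error terms, via the energy boundedness \eqref{ass:ebound} and the elliptic estimate, by the flux $f(\widetilde{\tau}_0)$ at an earlier time; this yields \eqref{eqboundf}, and the two inputs together close the Gr\"onwall argument.

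The main obstacle is the interplay at the horizon. The elliptic estimate \eqref{eq:ellipticpsi} degenerates there — the weights $D$ in front of $Y^2$ vanish — so it cannot by itself recover the full non-degenerate flux $J^N[NT^k\psi]$ near $\{r=r_+\}$; it is exactly the coercive bulk of the red-shift estimate Lemma \ref{lm:redshiftNpsi} that compensates for this degeneration, and keeping the $D$-weights consistent between the two estimates is the crux of the bookkeeping. The most delicate point is the lowest-order localised term $\int_{\{r\le r_1\}}J^T[T^k\psi]$: naively it only inherits the slower rate $(1+\widetilde{\tau})^{-3-2k+\epsilon}$ of the full $T$-energy, which would be insufficient. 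Improving it to the forcing rate requires combining the spatially local Morawetz estimate \eqref{ass:morawetz} with the elliptic estimate and the wave equation (using spherical symmetry to rewrite the radial derivatives of $T^k\psi$ in terms of $T$-derivatives of one order higher), so that the localised flux is ultimately controlled by $J^N[T^{k+1}\psi]$ up to boundary terms at the fixed radius $r_1$. Verifying that this localised term does not degrade the rate, and that the resulting integral inequality matches the hypotheses of Lemma \ref{lm:gronwall} exactly, is the technical heart of the proof; the case $I_0=0$ is then identical, differing only in the substitution of Proposition \ref{prop:endec1TI00} for Proposition \ref{prop:endec1T}.
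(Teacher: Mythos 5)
Your overall architecture is the same as the paper's: in the $r_{\rm min}=0$ case (and away from the horizon in general) the elliptic estimate \eqref{eq:ellipticpsi} together with Propositions \ref{prop:endec1T} and \ref{prop:endec1TI00} does the work; near the horizon you feed the commuted red-shift inequality of Lemma \ref{lm:redshiftNpsi} into the Gr\"onwall Lemma \ref{lm:gronwall}, controlling the annulus term $\int_{\{r_0\le r\le r_1\}}J^T[NT^k\psi]$ by the elliptic estimate, obtaining hypothesis \eqref{eq:intineqf2} from $T$-energy conservation/boundedness and hypothesis \eqref{eq:intineqf} from the one-commutation-higher $T$-energy decay, and finally recovering the $Y$-flux from the $N$-flux near the horizon and from the elliptic estimate away from it. All of this matches the paper's proof.

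The gap is precisely at the step you yourself call the technical heart: the lowest-order localized term $\int_{\widetilde{\tau}_1}^{\widetilde{\tau}_2}\int_{\{r\le r_1\}}J^T[T^k\psi]$. Your plan to treat it with the Morawetz estimate \eqref{ass:morawetz} plus ``the wave equation'' does not work, for two reasons. First, \eqref{ass:morawetz} loses a derivative \emph{and} is anchored at the undifferentiated energy level: it bounds this spacetime integral by $\int_{\Sigma_{\tau_1}}\left(J^N[T^k\psi]+J^N[T^{k+1}\psi]\right)\cdot n_{\tau_1}$, which by Proposition \ref{prop:endec1T} decays only like $(1+\widetilde{\tau}_1)^{-3-2k+\epsilon}$, and the bound carries no factor $(\widetilde{\tau}_2-\widetilde{\tau}_1)$, so it is not of the form \eqref{eq:intineqf}; even adapting the dyadic argument inside Lemma \ref{lm:gronwall}, an additive error of this size caps the conclusion at $(1+\widetilde{\tau})^{-4-2k+\epsilon}$, short of the asserted $-5-2k+\epsilon$. (The genuinely spatially local estimate \eqref{ass:morawetzlocal} is only assumed for $\widetilde{R}\ge R$, i.e.\ away from the horizon, so it is unavailable on $\{r\le r_1\}$.) Second, the wave equation relates second-order derivatives to lower-order ones; it cannot rewrite the first-order quantities $(T^{k+1}\psi)^2$ and $(YT^k\psi)^2$ that make up $J^T[T^k\psi]$. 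What the paper actually uses here is a Hardy inequality anchored at the horizon,
\begin{equation*}
\int_{r_+}^{\infty}f^2\,d\rho\le C\int_{r_+}^{\infty}(\rho-r_+)^2(\partial_{\rho}f)^2\,d\rho\le C\int_{r_+}^{\infty}\rho^2D^2(\partial_{\rho}f)^2\,d\rho,
\end{equation*}
applied with $f=T^{k+1}\psi$ and $f=YT^k\psi$: this converts the localized first-order flux into exactly the degenerately weighted second-order quantities $D^2r^2(YT^{k+1}\psi)^2+D^2r^2(Y^2T^k\psi)^2$ appearing on the left-hand side of \eqref{eq:ellipticpsi}, which the elliptic estimate then trades for the flux of $J^T[T^{k+1}\psi]$ --- one $T$-commutation up, decaying at the forcing rate $(1+\widetilde{\tau})^{-5-2k+\epsilon}$. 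With this substitution your argument closes; without it the Gr\"onwall step does not.
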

\begin{proof}
Without loss of generality we consider the case that $k=0$. The case $k>0$ can be proved identically using the almost-sharp energy decay estimates for $T^k\psi$. 

If $r_{\rm min}=0$, the statements of the proposition follow immediately from the elliptic estimate \eqref{eq:ellipticpsi} together with Proposition \ref{prop:endec1T} and \ref{prop:endec1TI00}. In the remainder of the proof we will therefore restrict to the $r_{\rm min}=r_+$ case.

Consider the inequality \eqref{eq:redshiftNpsi}. We estimate the second term on the right-hand side by applying the elliptic estimate \eqref{eq:ellipticpsi}:
\begin{equation*}
\int_{\widetilde{\tau}_1}^{\widetilde{\tau}_2}\left[\int_{\mathcal{S}_{\widetilde{\tau}}\cap\{r_0\leq r\leq r_1\}}J^T[N\psi]\cdot n_{\widetilde{\tau}}\,d\mu_{\mathcal{S}_{\widetilde{\tau}}}\right]\,d\widetilde{\tau}\leq C\int_{\widetilde{\tau}_1}^{\widetilde{\tau}_2}\left[\int_{\mathcal{S}_{\widetilde{\tau}}}J^T[T\psi]\cdot n_{\widetilde{\tau}}\,d\mu_{\mathcal{S}_{\widetilde{\tau}}}\right]\,d\widetilde{\tau}.
\end{equation*}

We estimate the third term on the right-hand side of \eqref{eq:redshiftNpsi} by applying the following Hardy inequality:
\begin{equation*}
\int_{r_+}^{\infty} f^2(\rho)\,d\rho\leq C \int_{r_+}^{\infty} (\rho-r_+)^2\cdot (\partial_{\rho}f)^2(\rho)\,d\rho\leq C \int_{r_+}^{\infty} \rho^2 D^2(\rho)\cdot (\partial_{\rho}f)^2(\rho)\,d\rho,
\end{equation*}
with $\lim_{\rho\to \infty}\rho f^2(\rho)=0$, see for example Lemma 2.2 of \cite{paper1} for a derivation. We obtain
\begin{equation*}
\begin{split}
\int_{\widetilde{\tau}_1}^{\widetilde{\tau}_2}\left[\int_{\mathcal{S}_{\widetilde{\tau}}\cap\{r \leq r_1\}}J^T[\psi]\cdot n_{\widetilde{\tau}}\,d\mu_{\mathcal{S}_{\widetilde{\tau}}}\right]\,d\widetilde{\tau}\leq&\: C\int_{\widetilde{\tau}_1}^{\widetilde{\tau}_2}\int_{r_+}^{\infty} \int_{\s^2}(T\psi)^2+(Y\psi)^2\,d\omega d\widetilde{\rho} d\widetilde{\tau}\\
\leq&\: C\int_{\widetilde{\tau}_1}^{\widetilde{\tau}_2}\int_{r_+}^{\infty} \int_{\s^2}D^2r^2(YT\psi)^2+D^2r^2(Y^2\psi)^2\,d\omega d\widetilde{\rho} d\widetilde{\tau}.
\end{split}
\end{equation*}
The right-hand side of the above inequality can then be estimated by applying once more \eqref{eq:ellipticpsi}. We are then left with the following inequality:
\begin{equation}
\label{eq:redshiftNpsimod}
\begin{split}
\int_{\mathcal{S}_{\widetilde{\tau}_2}}&J^N[N\psi]\cdot n_{\widetilde{\tau}_2}\,d\mu_{\mathcal{S}_{\widetilde{\tau}_2}}+b\int_{\widetilde{\tau}_1}^{\widetilde{\tau}_2}\left[\int_{\mathcal{S}_{\widetilde{\tau}}}J^N[N\psi]\cdot n_{\widetilde{\tau}}\,d\mu_{\mathcal{S}_{\widetilde{\tau}}}\right]\,d\widetilde{\tau}\leq \int_{\mathcal{S}_{\widetilde{\tau}_1}}J^N[N\psi]\cdot n_{\widetilde{\tau}_1}\,d\mu_{\mathcal{S}_{\widetilde{\tau}_1}}\\
&+C\int_{\widetilde{\tau}_1}^{\widetilde{\tau}_2}\left[\int_{\mathcal{S}_{\widetilde{\tau}}}J^T[T\psi]\cdot n_{\widetilde{\tau}}\,d\mu_{\mathcal{S}_{\widetilde{\tau}}}\right]\,d\widetilde{\tau}.
\end{split}
\end{equation}

We can freely add $J^T[T\psi]$ terms to the integrals on the left-hand side in the above inequality, using the conservation property of the $T$-energy, to obtain
\begin{equation}
\label{eq:redshiftNpsimod}
\begin{split}
\int_{\mathcal{S}_{\widetilde{\tau}_2}}&\left(J^N[N\psi]+J^T[T\psi]\right)\cdot n_{\widetilde{\tau}_2}\,d\mu_{\mathcal{S}_{\widetilde{\tau}_2}}+b\int_{\widetilde{\tau}_1}^{\widetilde{\tau}_2}\left[\int_{\mathcal{S}_{\widetilde{\tau}}}\left(J^N[N\psi]+J^T[\psi]\right)\cdot n_{\widetilde{\tau}}\,d\mu_{\mathcal{S}_{\widetilde{\tau}}}\right]\,d\widetilde{\tau}\\
&\leq \int_{\mathcal{S}_{\widetilde{\tau}_1}}\left(J^N[N\psi]+J^T[\psi]\right)\cdot n_{\widetilde{\tau}_1}\,d\mu_{\mathcal{S}_{\widetilde{\tau}_1}}+C\int_{\widetilde{\tau}_1}^{\widetilde{\tau}_2}\left[\int_{\mathcal{S}_{\widetilde{\tau}}}J^T[T\psi]\cdot n_{\widetilde{\tau}}\,d\mu_{\mathcal{S}_{\widetilde{\tau}}}\right]\,d\widetilde{\tau}.
\end{split}
\end{equation}
Note that, for every $0\leq \widetilde{\tau}_0\leq \widetilde{\tau}_1\leq \widetilde{\tau}_2$:
\begin{equation*}
\begin{split}
\int_{\widetilde{\tau}_1}^{\widetilde{\tau}_2}\left[\int_{\mathcal{S}_{\widetilde{\tau}}}J^T[T\psi]\cdot n_{\widetilde{\tau}}\,d\mu_{\mathcal{S}_{\widetilde{\tau}}}\right]\,d\widetilde{\tau}\leq&\: (\widetilde{\tau}_2-\widetilde{\tau}_1) \int_{\mathcal{S}_{\widetilde{\tau}_0}}J^T[T\psi]\cdot n_{\widetilde{\tau}_0}\,d\mu_{\mathcal{S}_{\widetilde{\tau}_0}}\\
\leq&\: (\widetilde{\tau}_2-\widetilde{\tau}_1) \int_{\mathcal{S}_{\widetilde{\tau}_0}}(J^T[T\psi]+J^N[N\psi])\cdot n_{\widetilde{\tau}_0}\,d\mu_{\mathcal{S}_{\widetilde{\tau}_0}}.
\end{split}
\end{equation*}

Furthermore, by Proposition \ref{prop:endec1T}, we can alternatively estimate
\begin{equation*}
\int_{\widetilde{\tau}_1}^{\widetilde{\tau}_2}\left[\int_{\mathcal{S}_{\widetilde{\tau}}}J^T[T\psi]\cdot n_{\widetilde{\tau}}\,d\mu_{\mathcal{S}_{\widetilde{\tau}}}\right]\,d\widetilde{\tau}\leq CE^{\epsilon}_{0,I_0\neq 0;1}[\psi] (1+\widetilde{\tau}_1)^{-5+\epsilon}(\widetilde{\tau}_2-\widetilde{\tau}_1)
\end{equation*}
in the case $I_0[\psi]\neq 0$ and by Proposition \ref{prop:endec1TI00}, we can estimate
\begin{equation*}
\int_{\widetilde{\tau}_1}^{\widetilde{\tau}_2}\left[\int_{\mathcal{S}_{\widetilde{\tau}}}J^T[T\psi]\cdot n_{\widetilde{\tau}}\,d\mu_{\mathcal{S}_{\widetilde{\tau}}}\right]\,d\widetilde{\tau}\leq CE^{\epsilon}_{0,I_0= 0;1}[\psi] (1+\widetilde{\tau}_1)^{-7+\epsilon}(\widetilde{\tau}_2-\widetilde{\tau}_1)
\end{equation*}
in the case $I_0[\psi]=0$.

The estimates \eqref{eq:edecayNpsi1} and \eqref{eq:edecayNpsi2} now follow from an application of Lemma \ref{lm:gronwall} to the inequality \eqref{eq:redshiftNpsimod}.

Finally, \eqref{eq:edecayYpsi1} and \eqref{eq:edecayYpsi2} follow from the elliptic estimate \eqref{eq:ellipticpsi} applied in the region $\{r\geq r_0\}$ and from the already established decay estimates \eqref{eq:edecayNpsi1} and \eqref{eq:edecayNpsi2} for $N\psi$ in the region $\{r\leq r_0\}$, together with the decay estimates for $T\psi$ from Proposition \ref{prop:endec1T} and \ref{prop:endec1TI00}.
\end{proof}

\subsection{Global pointwise decay for $Y\psi$}
\label{globalpointwiseypsi}

\begin{corollary}
\label{cor:rhalfYpsidecay}
Let $k\in \N_0$ and fix $\epsilon\in (0,1)$.  Assume that
\begin{equation*}
D(r)=1-\frac{2M}{r}+O_{3+k}(r^{-1-\beta}).
\end{equation*}
If we also assume that $\widetilde{E}^{\epsilon}_{0,I_0\neq 0;k+1}[\psi]<\infty$, then there exists a constant $C=C(D,R,k,\epsilon)>0$ such that for all $\widetilde{\tau}\geq 0$
\begin{equation}
\label{eq:pointdecayNYpsi1}
\sqrt{\widetilde{\rho}+1}\cdot |NT^k\psi|(\widetilde{\tau},\widetilde{\rho})+\sqrt{\widetilde{\rho}+1}\cdot|YT^k\psi|(\widetilde{\tau},\widetilde{\rho})\leq C\sqrt{\widetilde{E}^{\epsilon}_{0,I_0\neq 0;k+1}[\psi]}(1+\widetilde{\tau})^{-5/2-k+\epsilon},\\
\end{equation}
If we moreover assume that $\widetilde{E}^{\epsilon}_{0,I_0= 0;k+1}[\psi]<\infty$, then there exists a constant $C=C(D,R,k,\epsilon)>0$ such that for all $\widetilde{\tau}\geq 0$:
\begin{equation}
\label{eq:pointdecayNYpsi2}
\sqrt{\widetilde{\rho}+1}\cdot |NT^k\psi|(\widetilde{\tau},\widetilde{\rho})+\sqrt{\widetilde{\rho}+1}\cdot |YT^k\psi|(\widetilde{\tau},\widetilde{\rho})\leq C\sqrt{\widetilde{E}^{\epsilon}_{0,I_0= 0;k+1}[\psi]}(1+\widetilde{\tau})^{-7/2-k+\epsilon}.
\end{equation}
\end{corollary}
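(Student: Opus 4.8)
The plan is to upgrade the energy-decay estimates of Proposition~\ref{prop:edecayNpsi} to the claimed weighted pointwise bounds by means of a one-dimensional Sobolev inequality along the hyperboloidal leaves $\mathcal{S}_{\widetilde{\tau}}$. Throughout this section $\psi$ is spherically symmetric, hence so are $NT^k\psi$ and $YT^k\psi$; consequently no angular (spherical) Sobolev estimate is needed, and it suffices to integrate in the single radial variable $\widetilde{\rho}$. As in Proposition~\ref{prop:edecayNpsi}, I would first reduce to $k=0$, the case $k\geq 1$ following verbatim after replacing $\psi$ by $T^k\psi$ and inserting the corresponding higher-order energy-decay rates.

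The core ingredient is a weighted Sobolev--Hardy inequality: for a spherically symmetric $f$ on $\mathcal{S}_{\widetilde{\tau}}$ with $(\widetilde{\rho}+1)f^2\to 0$ as $\widetilde{\rho}\to\infty$, writing $(\widetilde{\rho}+1)f^2=-\int_{\widetilde{\rho}}^{\infty}\partial_{\widetilde{\rho}'}\big[(\widetilde{\rho}'+1)f^2\big]\,d\widetilde{\rho}'$, applying Cauchy--Schwarz, and then controlling the undifferentiated term $\int f^2\,d\widetilde{\rho}'$ by $\int r^2(Yf)^2\,d\widetilde{\rho}'$ via the Hardy inequality used in the proof of Proposition~\ref{prop:edecayNpsi}, one obtains
\begin{equation*}
(\widetilde{\rho}+1)\,|f|^2(\widetilde{\tau},\widetilde{\rho})\leq C\int_{\mathcal{S}_{\widetilde{\tau}}} r^2 (Yf)^2\,d\omega\, d\widetilde{\rho}.
\end{equation*}
Applying this with $f=N\psi$ gives $Yf=YN\psi$, whose weighted square-integral is exactly controlled by the left-hand side of \eqref{eq:edecayNpsi1} (resp.\ \eqref{eq:edecayNpsi2}); taking square roots and using $\tfrac12(-5+\epsilon)=-\tfrac52+\tfrac{\epsilon}{2}$ yields $\sqrt{\widetilde{\rho}+1}\,|N\psi|\lesssim \sqrt{\widetilde{E}^{\epsilon}_{0,I_0\neq 0;1}[\psi]}\,(1+\widetilde{\tau})^{-5/2+\epsilon}$ in the case $I_0\neq 0$, and the analogous bound with exponent $-7/2+\epsilon$ when $I_0=0$.

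To pass from $N\psi$ to $Y\psi$ I would split into $\{r\leq r_0\}$ and $\{r\geq r_0\}$, exactly as in Proposition~\ref{prop:edecayNpsi}. On $\{r\geq r_0\}$, where $D$ is bounded below, the elliptic estimate \eqref{eq:ellipticpsi} controls $\int r^2(Y^2\psi)^2$ by the $T$-energies, so the Sobolev--Hardy inequality applied with $f=Y\psi$ together with \eqref{eq:edecayYpsi1}--\eqref{eq:edecayYpsi2} gives the weighted bound for $Y\psi$; on $\{r\leq r_0\}$ the weight $\sqrt{\widetilde{\rho}+1}$ is bounded, and one uses $Y\psi=T\psi-N\psi$ with the bound for $N\psi$ just obtained and the almost-sharp pointwise bound for $T\psi$. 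The main obstacle is the $r_{\rm min}=r_+$ case near the event horizon, where $Y$ degenerates and the top-order term $Y^2\psi$ must be recovered by combining the degenerate elliptic estimate \eqref{eq:ellipticpsi} with the red-shift estimates (Lemmas~\ref{lm:redshift} and \ref{lm:redshiftNpsi})---machinery already assembled inside Proposition~\ref{prop:edecayNpsi}; one must also justify that the boundary term $(\widetilde{\rho}+1)f^2$ vanishes at null infinity, which is where finiteness of the $r$-weighted data norms $\widetilde{E}^{\epsilon}_{0,I_0\neq 0;1}[\psi]$ (resp.\ $\widetilde{E}^{\epsilon}_{0,I_0=0;1}[\psi]$) is invoked.
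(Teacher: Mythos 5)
Your proposal is correct and follows essentially the same route as the paper, whose proof of \eqref{eq:pointdecayNYpsi1}--\eqref{eq:pointdecayNYpsi2} is precisely the fundamental theorem of calculus in $\widetilde{\rho}$ together with Cauchy--Schwarz (and a Hardy inequality for the undifferentiated term) applied to the energy decay estimates of Proposition \ref{prop:edecayNpsi}. The only deviation is cosmetic: your near-horizon splitting with $Y T^k\psi = T^{k+1}\psi - NT^k\psi$ is unnecessary, since the $J^N$-energy is non-degenerate up to and including the horizon, so the decay of $\int_{\mathcal{S}_{\widetilde{\tau}}}J^N[YT^k\psi]\cdot n_{\widetilde{\tau}}\,d\mu_{\mathcal{S}_{\widetilde{\tau}}}$ in \eqref{eq:edecayYpsi1}--\eqref{eq:edecayYpsi2} allows the same radial Sobolev argument to be applied globally to $f=YT^k\psi$, exactly as for $f=NT^k\psi$.
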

\begin{proof}
The estimates follow from applying the fundamental theorem of calculus in $\widetilde{\rho}$, together with Cauchy--Schwarz, and using the estimates in Proposition \ref{prop:edecayNpsi}.
\end{proof}

\begin{proposition}
\label{prop:decayNpsi}
Let $k\in \N_0$ and fix $\epsilon\in( 0,1)$. Assume that
\begin{equation*}
D(r)=1-\frac{2M}{r}+O_{3+k}(r^{-1-\beta}).
\end{equation*}
If we also assume that $\widetilde{E}^{\epsilon}_{0,I_0\neq 0;k+2}[\psi]<\infty$, then there exists a constant $C=C(D,R,k,\epsilon)>0$ such that for all $\widetilde{\tau}\geq 0$
\begin{equation}
\label{eq:pointdecayNpsi1}
||YT^k\psi||_{L^{\infty}(\mathcal{S}_{\widetilde{\tau}})}+||NT^k\psi||_{L^{\infty}(\mathcal{S}_{\widetilde{\tau}})}\leq C\sqrt{\widetilde{E}^{\epsilon}_{0,I_0\neq 0;k+2}[\psi]}(1+\widetilde{\tau})^{-3-k+\epsilon},\\
\end{equation}
If we moreover assume that $\widetilde{E}^{\epsilon}_{0,I_0= 0;k+2}[\psi]<\infty$, then there exists a constant $C=C(D,R,k,\epsilon)>0$ such that for all $\widetilde{\tau}\geq 0$:
\begin{equation}
\label{eq:pointdecayNpsi2}
||YT^k\psi||_{L^{\infty}(\mathcal{S}_{\widetilde{\tau}})}+||NT^k\psi||_{L^{\infty}(\mathcal{S}_{\widetilde{\tau}})}\leq C\sqrt{\widetilde{E}^{\epsilon}_{0,I_0= 0;k+2}[\psi]}(1+\widetilde{\tau})^{-4-k+\epsilon}.
\end{equation}
\end{proposition}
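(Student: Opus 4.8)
The plan is to reduce the uniform bound on each leaf $\mathcal{S}_{\widetilde{\tau}}$ to two $L^2$-estimates in the hyperboloidal radial variable and interpolate, recovering the extra half-power of $\widetilde{\tau}$-decay (and the missing $\sqrt{\widetilde{\rho}+1}$-weight relative to Corollary \ref{cor:rhalfYpsidecay}) at the cost of one more derivative. Since $\psi$ is spherically symmetric and $\partial_{\widetilde{\rho}}=Y$, the norm $\|\cdot\|_{L^{\infty}(\mathcal{S}_{\widetilde{\tau}})}$ is just the supremum over $\widetilde{\rho}\in[r_{\rm min},\infty)$, and for any $f$ with $\lim_{\widetilde{\rho}\to\infty}f=0$ the one-dimensional Gagliardo--Nirenberg (Agmon) inequality gives
\[
\|f\|_{L^{\infty}(\mathcal{S}_{\widetilde{\tau}})}^2\leq 2\left(\int_{r_{\rm min}}^{\infty}f^2\,d\widetilde{\rho}\right)^{1/2}\left(\int_{r_{\rm min}}^{\infty}(Yf)^2\,d\widetilde{\rho}\right)^{1/2}.
\]
I will apply this with $f=YT^k\psi$ and with $f=NT^k\psi$; the decay of $f$ at infinity needed to drop the boundary term at $\widetilde{\rho}=\infty$ is already supplied by the weighted pointwise bounds \eqref{eq:pointdecayNYpsi1}--\eqref{eq:pointdecayNYpsi2} of Corollary \ref{cor:rhalfYpsidecay}.

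For the first factor I would use the Hardy inequality $\int_{r_{\rm min}}^{\infty}g^2\,d\widetilde{\rho}\lesssim \int_{r_{\rm min}}^{\infty}r^2D^2(Yg)^2\,d\widetilde{\rho}$ (Lemma~2.2 of \cite{paper1}, as already invoked in Proposition \ref{prop:edecayNpsi}) with $g=YT^k\psi$, so that $\int (YT^k\psi)^2\,d\widetilde{\rho}\lesssim \int r^2D^2(Y^2T^k\psi)^2\,d\widetilde{\rho}$ is controlled by the $N$-energy of $YT^k\psi$ on $\mathcal{S}_{\widetilde{\tau}}$. Proposition \ref{prop:edecayNpsi} then delivers the rate $(1+\widetilde{\tau})^{-5-2k+\epsilon}$ when $I_0\neq 0$ (two powers faster if $I_0=0$), so this factor contributes $(1+\widetilde{\tau})^{-5/2-k+\epsilon/2}$.

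The crux is the second factor $\int(Y^2T^k\psi)^2\,d\widetilde{\rho}$: the $N$-energy of $YT^k\psi$ only controls it at the \emph{slower} rate $(1+\widetilde{\tau})^{-5-2k+\epsilon}$, which would merely reproduce Corollary \ref{cor:rhalfYpsidecay}. The key step is to eliminate the purely radial second derivative using \eqref{waveequation}: writing $\square_g\psi=0$ in the $(\widetilde{\tau},\widetilde{\rho})$ frame and solving for $Y^2\psi$, then commuting with $T^k$, expresses
\[
Y^2T^k\psi=a(r)\,YT^{k+1}\psi+b(r)\,T^{k+2}\psi+(\textnormal{first-order terms}),
\]
with smooth coefficients $a,b$ and first-order terms carrying decaying $r$-weights inherited from $D=1-2Mr^{-1}+O_{3+k}(r^{-1-\beta})$. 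Each term on the right now carries an extra $T$-derivative, so after Hardy its unweighted $L^2(d\widetilde{\rho})$-norm is bounded by a level-$(k+2)$ energy: $\|YT^{k+1}\psi\|$ by the $N$-energy of $YT^{k+1}\psi$ (Proposition \ref{prop:edecayNpsi} with $k\mapsto k+1$) and $\|T^{k+2}\psi\|$ by the energy of $T^{k+1}\psi$ (Proposition \ref{prop:endec1T}, resp.\ \ref{prop:endec1TI00}), both decaying like $(1+\widetilde{\tau})^{-7-2k+\epsilon}$ when $I_0\neq 0$. Hence the second factor contributes $(1+\widetilde{\tau})^{-7/2-k+\epsilon/2}$, and multiplying the two factors gives $\|YT^k\psi\|_{L^{\infty}(\mathcal{S}_{\widetilde{\tau}})}^2\lesssim (1+\widetilde{\tau})^{-6-2k+\epsilon}$, i.e.\ the claimed $(1+\widetilde{\tau})^{-3-k+\epsilon}$; the $NT^k\psi$ bound follows identically, and in the case $I_0=0$ every energy input decays two powers faster, producing $(1+\widetilde{\tau})^{-4-k+\epsilon}$.

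The main obstacle is this last step. Two points require care. First, the $r$-weighted first-order terms in the wave-equation substitution must be shown to respect the $-7-2k$ rate; they involve $T^{k+1}\psi$ and $YT^k\psi$ with coefficients that are $O(r^{-1})$ or better, and are absorbed using the already-established weighted decay together with those $r$-weights. Second, in the case $r_{\rm min}=r_+$ the quantities $D$, $T$ and $Y$ all degenerate at the horizon, so every energy input must be taken in the non-degenerate $N$-form; this is exactly what Proposition \ref{prop:edecayNpsi} provides through the red-shift estimates of Lemmas \ref{lm:redshift} and \ref{lm:redshiftNpsi}, and the Agmon integration is performed from $\widetilde{\rho}$ to $\infty$ precisely so that no boundary term is generated at $r=r_+$.
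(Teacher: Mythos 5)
Your interpolation scheme (Agmon's inequality $\|f\|_{L^{\infty}}^2\lesssim \|f\|_{L^2}\|Yf\|_{L^2}$ combined with a wave-equation substitution trading $Y^2$ for $YT$ and $T^2$) is a genuinely different route from the paper's, and the rate bookkeeping is consistent away from $r_{\rm min}$; but there is a real gap at exactly the step you call the crux, and it occurs in the black hole case $r_{\rm min}=r_+$. Solving \eqref{waveequation} for the purely radial second derivative forces a division by $D$: writing $\partial_r=Y-h_{\mathcal{S}}T$ in $D\partial_r^2\psi+(D'+2Dr^{-1})\partial_r\psi=-2\partial_rT\psi-2r^{-1}T\psi$ and commuting with $T^k$ gives
\[
D\,Y^2T^k\psi=(2Dh_{\mathcal{S}}-2)\,YT^{k+1}\psi+(2h_{\mathcal{S}}-Dh_{\mathcal{S}}^2)\,T^{k+2}\psi-(D'+2Dr^{-1})\,YT^k\psi+\big(Dh_{\mathcal{S}}'+(D'+2Dr^{-1})h_{\mathcal{S}}-2r^{-1}\big)T^{k+1}\psi,
\]
so your coefficients are $a=(2Dh_{\mathcal{S}}-2)/D$ and $b=(2h_{\mathcal{S}}-Dh_{\mathcal{S}}^2)/D$, which are \emph{not} smooth and bounded: since $2Dh_{\mathcal{S}}-2\to-2$ while $D=d(r)(r-r_+)$ vanishes at $\mathcal{H}^+$, they blow up like $(r-r_+)^{-1}$. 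This is the red-shift degeneracy: near the horizon the wave equation does not allow you to exchange the transversal derivative $Y^2$ for quantities carrying extra $T$-derivatives at bounded cost. Consequently the near-horizon contribution to your second Agmon factor $\|Y^2T^k\psi\|_{L^2(d\widetilde{\rho})}$ cannot be estimated by level-$(k+2)$ energies; the only control available there is $\int J^N[YT^k\psi]\cdot n_{\widetilde{\tau}}$ from Proposition \ref{prop:edecayNpsi}, whose rate $(1+\widetilde{\tau})^{-5-2k+\epsilon}$ merely reproduces Corollary \ref{cor:rhalfYpsidecay} and leaves you half a power short of \eqref{eq:pointdecayNpsi1}. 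Your closing remark that ``every energy input must be taken in the non-degenerate $N$-form'' does not address this: the obstruction is not degeneracy of the energies but the singularity of the substitution coefficients, and repairing it honestly would require twice-commuted red-shift estimates (decay for $J^N[N^2T^k\psi]$), which neither you nor the paper establish. Note also that the elliptic estimate \eqref{eq:ellipticpsi} only controls $D^2r^2(Y^2\psi)^2$, with the same degeneration at $\mathcal{H}^+$, so it cannot fill this hole either. A milder version of the same problem appears at $r_{\rm min}=0$: the first-order coefficients above behave like $r^{-1}$ at the center, and only the \emph{combination} $(2/r)(T\psi+\partial_r\psi)$ is regular there, so even in that case the terms cannot be estimated separately as you propose.

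The paper's proof is built precisely to dodge this division by $D$. It never isolates $Y^2\psi$: it keeps the degenerate factor inside the differentiated quantity, using \eqref{waveequation} in the form $r^{-1}Y(Dr^2\partial_r\psi)=(h_{\mathcal{S}}D-2)r\,\partial_r(T\psi)-2T\psi$, whose coefficients are uniformly bounded (and $O(r^{-\eta})$ for large $r$). It then integrates along $\mathcal{S}_{\widetilde{\tau}}$ from $r_{\rm min}$, where $Dr^2\partial_r\psi$ vanishes (because $D=0$ at $r_+$, or $r^2=0$ at the center), and uses $\int_{r_{\rm min}}^{\widetilde{\rho}}r'\,dr'\lesssim D(\widetilde{\rho})\widetilde{\rho}^2$ to divide the degeneracy back out, obtaining the pointwise reduction $\|YT^k\psi\|_{L^{\infty}(\mathcal{S}_{\widetilde{\tau}})}\lesssim\|\partial_r T^{k+1}\psi\|_{L^{\infty}(\mathcal{S}_{\widetilde{\tau}})}+\|T^{k+1}\psi\|_{L^{\infty}(\mathcal{S}_{\widetilde{\tau}})}$. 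The claimed rates then follow because the dominant term $\|T^{k+1}\psi\|_{L^{\infty}}$ already decays like $(1+\widetilde{\tau})^{-3-k+\epsilon}$ by Proposition \ref{prop:pointdecpsi}, while $\partial_rT^{k+1}\psi$ is handled by the \emph{weaker} Corollary \ref{cor:rhalfYpsidecay} at level $k+1$, which beats the target. In short: no interpolation and no second radial derivative is ever needed, and that is exactly what makes the argument survive at the horizon. If you wish to salvage your approach, you would have to restrict the Agmon step to $\{r\geq r_1\}$ with $r_1>r_+$ and treat the region $[r_+,r_1]$ by the paper's integration-from-the-horizon argument — at which point the interpolation becomes superfluous.
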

\begin{proof}
We will only prove the $k=0$ case. For the $k\geq 1$ case we replace $\psi$ with $T^k\psi$ everywhere and we apply the appropriate decay estimates for $T^k\psi$. 

To obtain an estimate for $|\partial_r\psi|$ we first apply \eqref{waveequation} in the form:
\begin{equation*}
r^{-2}\partial_r(Dr^2\partial_r\psi)=-2\partial_r\partial_v\psi-2r^{-1}\partial_v\psi.
\end{equation*}
Using that $Y=\partial_r+h_{\mathcal{S}_{0}}\partial_v$ in $(v,r,\theta,\varphi)$ coordinates, we obtain
\begin{equation*}
\begin{split}
r^{-1}Y(Dr^2\partial_r\psi)=&-2r\partial_r(T\psi)-2T\psi+h_{\mathcal{S}_{0}}Dr\partial_rT\psi\\
=&\: (h_{\mathcal{S}_{0}}D-2)r\partial_r(T\psi)-2T\psi\\
=&\: O(r^{-\eta})\partial_r(T\psi)-2T\psi,
\end{split}
\end{equation*}
where we used that $|2-Dh_{\mathcal{S}_{0}}|\lesssim r^{-1-\eta}$, for some $\eta>0$.

We now integrate along $\mathcal{S}_{\widetilde{\tau}}$ and use the above equation to obtain
\begin{equation*}
\begin{split}
Dr^2\partial_r\psi(\widetilde{\tau},\widetilde{\rho})=&\:0+\int_{r_{\rm min}}^{\widetilde{\rho}}Y(Dr^2\partial_r\psi)(\widetilde{\tau},\widetilde{\rho}')\,d\widetilde{\rho}'\\
\leq&\: \int_{r_{\rm min}}^{\widetilde{\rho}} r\cdot r^{-1}|Y(Dr^2\partial_r\psi)|(\widetilde{\tau},\widetilde{\rho}')\,d\widetilde{\rho}'\\
\lesssim &\:\int_{r_{\rm min}}^{\widetilde{\rho}} r\,d\widetilde{\rho}'\cdot \left[||\partial_r(T\psi)||_{L^{\infty}(\mathcal{S}_{\widetilde{\tau}})}+||T\psi||_{L^{\infty}(\mathcal{S}_{\widetilde{\tau}})}\right]\\
\lesssim &\: Dr^2 \left[||\partial_r(T\psi)||_{L^{\infty}(\mathcal{S}_{\widetilde{\tau}})}+||T\psi||_{L^{\infty}(\mathcal{S}_{\widetilde{\tau}})}\right],
\end{split}
\end{equation*}
where in the last inequality we used that in the $r_{\rm min}=r_+$ case we can write $D(r)=d(r)(r-r_+)$, with $d(r_+)>0$.
Hence,
\begin{equation*}
||Y\psi||_{L^{\infty}(\mathcal{S}_{\widetilde{\tau}})}+||N\psi||_{L^{\infty}(\mathcal{S}_{\widetilde{\tau}})}\lesssim ||\partial_r\psi||_{L^{\infty}(\mathcal{S}_{\widetilde{\tau}})}+||T\psi||_{L^{\infty}(\mathcal{S}_{\widetilde{\tau}})} \lesssim ||\partial_r(T\psi)||_{L^{\infty}(\mathcal{S}_{\widetilde{\tau}})}+||T\psi||_{L^{\infty}(\mathcal{S}_{\widetilde{\tau}})}.
\end{equation*}

By Proposition \ref{prop:pointdecpsi} we have that
\begin{align*}
||T\psi||_{L^{\infty}(\mathcal{S}_{\widetilde{\tau}})}\lesssim&\: \sqrt{E^{\epsilon}_{0,I_0\neq 0;2}[\psi]}(1+\widetilde{\tau})^{-3+\epsilon},\\
||T\psi||_{L^{\infty}(\mathcal{S}_{\widetilde{\tau}})}\lesssim&\: \sqrt{E^{\epsilon}_{0,I_0= 0;2}[\psi]}(1+\widetilde{\tau})^{-4+\epsilon}.
\end{align*}

By Proposition \ref{cor:rhalfYpsidecay} with $k=1$ we moreover have that
\begin{align*}
(1+\widetilde{\rho})^{\frac{1}{2}}||\partial_rT\psi||_{L^{\infty}(\mathcal{S}_{\widetilde{\tau}})}\leq&\: C\sqrt{\widetilde{E}^{\epsilon}_{0,I_0\neq 0;2}[\psi]}(1+\widetilde{\tau})^{-\frac{7}{2}+\epsilon},\\
(1+\widetilde{\rho})^{\frac{1}{2}}||\partial_rT\psi||_{L^{\infty}(\mathcal{S}_{\widetilde{\tau}})}\leq&\: C\sqrt{\widetilde{E}^{\epsilon}_{0,I_0= 0;2}[\psi]}(1+\widetilde{\tau})^{-\frac{9}{2}+\epsilon}.
\end{align*}

The statement of the proposition now follows immediately for the $k=0$ case. The $k>0$ case can be treated identically by replacing $\psi$ with $T^k\psi$ and applying the appropriate higher-order decay estimates.
\end{proof}

\section{Asymptotics I: The case $I_{0}\neq 0$}
\label{sec:asymradfieldcaseI}

We obtain in this section the precise asymptotics of solutions to the wave equation with a non-vanishing first Newman--Penrose constant $I_0[\psi]$. Here, $\psi$ will always denote a spherically symmetric solution to \eqref{waveequation} emanating from initial data given as in Theorem \ref{thm:extuniq} on a spacetime $(\mathcal{M}, g)$ that satisfies the geometric assumptions from Section \ref{sec:geomassm}, such that moreover $I_0[\psi]\neq 0$.

We will moreover make the additional stronger assumption on the $r$-asymptotics of $D(r)$:
\begin{equation*}
D(r)=1-\frac{2M}{r}+O_{3+k}(r^{-1-\beta}),
\end{equation*}
whenever we need to appeal to the decay estimates from Section \ref{gdeforpsi1}--\ref{improvedhyperboloidaldecay} that require the above assumption on $D$ with a suitably large $k\geq 1$.

\subsection{Asymptotics of $v^{2}\partial_{v}(r\psi)$ in the region $\mathcal{B}_{\alpha}$}
\label{npasy}
The starting point for deriving the asymptotics of the radiation field $\phi$ is to determine the asymptotics of the first Newman--Penrose quantity near infinity. First, let us introduce the following $L^{\infty}$ norm for the first Newman--Penrose quantity along $\Sigma_0$:
\begin{equation}
\label{def:PI0}
 P_{I_{0},\beta}[\psi] := \left\|v^{2+\beta}\cdot \left(\partial_v\phi-2\frac{I_0 [\psi ]}{v^2}\right)\right\|_{L^{\infty}(\Sigma_{0})}.
\end{equation}

In this section, we will restrict to regions of the form
\begin{equation*}
\mathcal{B}_{\alpha}:=\{r\geq R\}\cap \{0\leq u\leq v-v^{\alpha}\}\subset \mathcal{A},
\end{equation*}
with $\alpha\in (0,1)$ suitably chosen.

Note that the boundary of $\mathcal{B}_{\alpha}$ contains a subset of the timelike hypersurface
\begin{equation*}
\gamma_{\alpha}=\{v-u=v^{\alpha}\}.
\end{equation*}

Without loss of generality, we will assume that
\begin{equation*}
v_{\gamma_{\alpha}}(u)\geq v_{r=R}(u),
\end{equation*}
for all $u\geq 0$. We can always redefine $\gamma_{\alpha}$ to be the hypersurface along which $v-u=v^{\alpha}+r_*(R)$ and similarly redefine $\mathcal{B}_{\alpha}$ to ensure the above inequality holds for all $u\geq 0$.

Consequently, for all $(u,v)\in \mathcal{B}_{\alpha}$, we have that $v\geq v_{\gamma_{\alpha}}(u)$.

\begin{proposition}\label{prop:lower1}
Consider the region $\mathcal{B}_{\alpha}=\{r\geq R\}\cap \{0\leq u\leq v-v^{\alpha}\}$, with a fixed $\alpha\in (\frac{2}{3},1)$. Let $\epsilon\in\left(0,(3\alpha-2)/2\right)$ and assume that $E^{\epsilon}_{0,I_0\neq 0;0}[\psi]<\infty$, and moreover that there exists a $\beta>0$ such that
\begin{equation}
\label{eq: assinitialdat}
P_{I_{0},\beta}[\psi] <\infty .
\end{equation}

Then we have that
\begin{equation}\label{eq:lower1}
\left|v^2\partial_v\phi (u,v)- 2I_0 [\psi ]\right| \leq C\sqrt{E^{\epsilon}_{0,I_0\neq 0;0}[\psi]}\frac{1}{v^{3\alpha-2-2\epsilon}}+  P_{I_{0},\beta}[\psi]\cdot  v^{-\beta}
\end{equation}
for all $(u,v)\in \mathcal{B}_{\alpha}$, with $C=C(D,\Sigma,R,\alpha,\epsilon)>0$ constants.
\end{proposition}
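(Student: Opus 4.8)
The plan is to work entirely with the radiation field $\phi = r\psi$ in double-null coordinates and to exploit the fact that the value $2I_0[\psi]$ of the (rescaled) Newman--Penrose quantity is already encoded, quantitatively, in the initial data along $\mathcal{N}_0 = \Sigma_0 \cap \{r\geq R\}$ through the norm $P_{I_0,\beta}[\psi]$. Since $\psi$ is spherically symmetric, the reduced wave equation for $\phi$ is
\[
\partial_u \partial_v \phi = -\frac{DD'}{4r}\,\phi,
\]
which follows from $\square_g\psi = 0$ together with $\partial_v r = \tfrac12 D$, $\partial_u r = -\tfrac12 D$, so that $\partial_u\partial_v r = -\tfrac14 DD'$. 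The asymptotic flatness assumption \eqref{asm:asympD} gives the crucial decay of the source coefficient, $\left|\tfrac{DD'}{4r}\right|\lesssim r^{-3}$.

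First I would fix $(u,v)\in\mathcal{B}_\alpha$ and integrate the evolution equation for $\partial_v\phi$ in $u$ along the constant-$v$ segment from $(0,v)$ to $(u,v)$; this segment remains in $\{r\geq R\}=\mathcal{A}$, since at fixed $v$ the value of $r$ increases as $u'$ decreases, so $r(u',v)\geq r(u,v)\geq R$ for all $u'\in[0,u]$. This yields
\[
v^2\partial_v\phi(u,v) = v^2\partial_v\phi(0,v) - v^2\int_0^u \frac{DD'}{4r}\,\phi(u',v)\,du'.
\]
For the boundary term I would use that $(0,v)\in\mathcal{N}_0$ and the definition \eqref{def:PI0} of $P_{I_0,\beta}[\psi]$, which gives immediately $\left|v^2\partial_v\phi(0,v)-2I_0[\psi]\right|\leq P_{I_0,\beta}[\psi]\,v^{-\beta}$. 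This is precisely the term that transports the Newman--Penrose constant from the initial slice into the bulk and produces the leading coefficient $2I_0[\psi]$.

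The substantive step is to estimate the error integral. Here I would insert the almost-sharp pointwise bound \eqref{eq:pointrtkpsi05} with $k=0$, which for spherically symmetric $\psi$ reads $|\phi(u',v)| = \widetilde\rho\,|\psi| \lesssim (1+\widetilde\tau)^{-1+\epsilon}\sqrt{E^{\epsilon}_{0,I_0\neq 0;0}[\psi]}$, and combine it with the geometric facts that $\widetilde\tau \sim \tau = u'$ in $\mathcal{A}$ (by \eqref{eq:comptimes}) and $r\sim r_* \sim v-u'$ for $r\geq R$. This reduces the error to
\[
v^2\int_0^u\frac{DD'}{4r}|\phi|\,du' \lesssim \sqrt{E^{\epsilon}_{0,I_0\neq 0;0}[\psi]}\;v^2\int_0^u \frac{(1+u')^{-1+\epsilon}}{(v-u')^3}\,du'.
\]
Bounding $(v-u')^{-3}\leq(v-u)^{-3}$, and using $v-u\geq v^{\alpha}$ on $\mathcal{B}_\alpha$ together with $1+u'\leq 2v$, the integral is controlled by a constant times $v^{2+\epsilon-3\alpha}\leq v^{-(3\alpha-2-2\epsilon)}$, which is exactly the claimed rate. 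The hypotheses $\alpha>\tfrac23$ and $\epsilon<(3\alpha-2)/2$ are precisely what guarantee $3\alpha-2-2\epsilon>0$, i.e.\ genuine decay.

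I expect the main obstacle to be essentially bookkeeping rather than analysis: one must carefully identify $\widetilde\rho = r$ and $\widetilde\tau \sim u$ in the overlap region so that the hyperboloidal estimate \eqref{eq:pointrtkpsi05} can legitimately be evaluated at the spacetime point $(u',v)$, and one must confirm that the whole integration segment lies in $\{r\geq R\}$ where the quoted decay estimates hold. The genuine analytic content, namely the conservation of $I_0$ and its emergence as the leading-order coefficient, is already packaged into the boundary term through $P_{I_0,\beta}[\psi]$, while the error is a soft consequence of the $r^{-3}$ weight in the source and the almost-sharp decay of $\phi$.
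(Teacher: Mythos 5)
Your proposal is correct and follows essentially the same route as the paper's proof: integrating $\partial_u\partial_v\phi=-\frac{DD'}{4r}\phi$ in $u$ at fixed $v$, extracting $2I_0[\psi]$ plus the $P_{I_0,\beta}[\psi]\,v^{-\beta}$ error from the initial-data term via the definition of $P_{I_0,\beta}$, and controlling the bulk integral with the almost-sharp pointwise bound $|\phi|(u',v)\lesssim(1+u')^{-1+\epsilon}\sqrt{E^{\epsilon}_{0,I_0\neq 0;0}[\psi]}$ together with $r\gtrsim v-u'$ on $\{r\geq R\}$. The only difference is bookkeeping in the bulk estimate --- you pull out $(v-u')^{-3}\leq(v-u)^{-3}\leq v^{-3\alpha}$ uniformly, whereas the paper converts $v$-weights into $r$-weights and then into $(1+u')$-weights --- and your version is valid, in fact yielding the marginally better rate $v^{-(3\alpha-2-\epsilon)}$.
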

\begin{proof}
The equation \eqref{waveequation} implies the following equation for $\phi$ (see for example the derivation in the Appendix of \cite{paper1}):
\begin{equation}
\label{eqphi}
\partial_u\partial_v\phi =-\frac{DD' }{4r}\phi,
\end{equation}
where by the assumptions on $D$ it follows that
\begin{equation*}
\frac{DD' }{r}=O(r^{-3}). 
\end{equation*}
Integrating in the $u$-direction, together with (\ref{eqphi}), it follows that for $(u,v)\in \mathcal{B}_{\alpha}$
\begin{equation*}
v^2\partial_v \phi (u,v)=v^2\partial_v\phi (0,v)-v^2\int_{0}^u\frac{DD' }{4r}\phi (u',v)\,du'.
\end{equation*}

We can express
\begin{equation}
\frac{1}{2}(v-u)=r_*(r)=r+\int_{R}^r (D^{-1}(r')-1)\,dr'.
\label{uvr}
\end{equation}

Using the asymptotics of $D$ it therefore follows that for suitably large $R>0$, \[r\gtrsim v-u\geq v_{\gamma_{\alpha}}(u)-u=v_{\gamma_{\alpha}}^{\alpha}(u) \gtrsim (u+1)^{\alpha}\] and moreover,
\begin{equation*}
r\gtrsim v-u\geq v-u_{\gamma_{\alpha}}(v)=v^{\alpha},
\end{equation*}
which implies that
\[v\lesssim r^{\alpha^{-1}}\]
 for $R>0$ suitably large, and for $\eta$ such that $0<\eta< 3\alpha-2-\epsilon$ we can estimate
\begin{equation*}
\begin{split}
v^2\int_{0}^u\frac{|DD'|}{r}|\phi |(u',v)\,du'&\leq Cv^{-\eta}\int_{0}^u  r^{-3}\cdot v^{2+\eta}\cdot|\phi |(u',v)\,du'\\
&\leq Cv^{-\eta}\int_{0}^u  r^{-3}\cdot r^{(2+\eta)\alpha^{-1}}\cdot|\phi |(u',v)\,du'\\
&= Cv^{-\eta}\int_{0}^u  r^{-3+(2+\eta)\alpha^{-1}}\cdot|\phi |(u',v)\,du'\\
&= Cv^{-\eta}\int_{0}^u  (u'+1)^{-3\alpha+(2+\eta)}\cdot|\phi |(u',v)\,du'\\
&\leq C\sqrt{E^{\epsilon}_{0,I_0\neq 0;0}[\psi]} v^{-\eta}\int_{0}^u(u'+1)^{-3\alpha+2+\eta}(u'+1)^{-1+\epsilon}\,du'\\
&= C\sqrt{E^{\epsilon}_{0,I_0\neq 0;0}[\psi]} v^{-\eta}\int_{0}^u(u'+1)^{-3\alpha+2+\eta-1+\epsilon}\,du'\\
&\leq C\sqrt{E^{\epsilon}_{0,I_0\neq 0;0}[\psi]} v^{-\eta},
\end{split}
\end{equation*}
where we used the upper bound for $|\phi |$ from Proposition \ref{prop:pointdecpsi} and that, by the definition of $\eta$, \[-3\alpha+2+\eta-1+\epsilon<-1.\] By assumption \eqref{eq: assinitialdat}, we moreover have that
\begin{equation*}
|v^2\partial_v\phi (0,v)-2I_0 [\psi ]|\leq P_{I_0,\beta}[\psi]\cdot v^{-\beta}.
\end{equation*}
If we choose
\[\eta=(3\alpha-2-\epsilon)-\epsilon=3\alpha-2-2\epsilon\]
then we conclude that, given $\alpha\in (\frac{2}{3},1)$ and $\epsilon\in (0,(3\alpha-2)/2)$, we can estimate
\begin{equation*}
\left|v^2\partial_v\phi (u,v)- 2I_0 [\psi ]\right| \leq C\sqrt{E^{\epsilon}_{0,I_0\neq 0;0}[\psi]}v^{2-3\alpha+2\epsilon}+P_{I_0,\beta}[\psi]\cdot v^{-\beta}
\end{equation*}
for all $(u,v)\in \mathcal{B}_{\alpha}$, with $C=C(D,\Sigma,R,\alpha,\epsilon)>0$ a constant.
\end{proof}

\subsection{Asymptotics for the radiation field $r\psi$}
\label{asyrafield1}
The next proposition gives the asymptotic behaviour of the radiation field $\phi$ along null infinity. 
\begin{proposition}\label{cor1}
Under the assumptions of Proposition \ref{prop:lower1}, with additionally $\alpha\in [\frac{5}{7},1)$ and $\epsilon\in(0,\frac{1}{6}(1-\alpha))$, we have for all $(u,v)\in \mathcal{B}_{\alpha}$ that
\begin{equation}
\begin{split}
&\bigg|\phi (u,v)-2I_0 [\psi] \left((u+1)^{-1}-v^{-1}\right)\bigg| \leq \\   &\ \ \ \ \ \ \ \ \ \ \ C\left(\sqrt{E^{\epsilon}_{0,I_0\neq 0;0}[\psi]}+I_0[\psi]\right)(u+1)^{\frac{\alpha}{2}-\frac{3}{2}+2\epsilon}+C\cdot P_{I_{0},\beta}[\psi]\cdot (u+1)^{-1-\beta},
\end{split}
\label{asyphiprop}
\end{equation}
where $C=C(D,\Sigma,R,\alpha,\epsilon)>0$ is a constant.

In particular, we obtain the following asymptotics for $\phi$ along $\mathcal{I}^+$
\begin{equation*}
\left|\phi (u,\infty)-2I_0 [\psi ] (u+1)^{-1}\right|\leq C\left(\sqrt{E^{\epsilon}_{0,I_0\neq 0;0}[\psi]}+I_0[\psi]\right)(u+1)^{-1-\epsilon}+C\cdot P_{I_{0},\beta}[\psi]\cdot(u+1)^{-1-\beta}.
\end{equation*}
In fact, if we further impose $   \frac{1-\alpha}{2}<\beta+2\epsilon$, then the estimate \eqref{asyphiprop} provides first-order asymptotics for $\phi$ in the region $\mathcal{B}_{\delta}$ for $\delta $ such that $1>\delta> \frac{\alpha}{2}+\frac{1}{2}+2\epsilon>\alpha+2\epsilon$.

\end{proposition}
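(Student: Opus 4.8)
The plan is to obtain \eqref{asyphiprop} by integrating the Newman--Penrose bound of Proposition \ref{prop:lower1} in the ingoing direction, starting from the curve $\gamma_\alpha$. For a point $(u,v)\in\mathcal{B}_\alpha$ I would first write, by the fundamental theorem of calculus,
\begin{equation*}
\phi(u,v)=\phi(u,v_{\gamma_\alpha}(u))+\int_{v_{\gamma_\alpha}(u)}^{v}\partial_v\phi(u,v')\,dv',
\end{equation*}
and estimate the boundary and bulk terms separately. The boundary term is controlled by the almost-sharp pointwise estimate \eqref{eq:pointrtkpsi03} (with $k=0$): since $\psi$ is spherically symmetric, $\sqrt{r+1}\,|\psi|\lesssim (u+1)^{-3/2+\epsilon}\sqrt{E^{\epsilon}_{0,I_0\neq0;0}[\psi]}$, and since $r\sim u^{\alpha}$ along $\gamma_\alpha$ (because $r_*\sim r\sim v-u=v^{\alpha}\sim u^{\alpha}$ there), writing $|\phi|=\sqrt{r}\cdot\sqrt{r}\,|\psi|$ gives the boundary decay $\lesssim \sqrt{E^{\epsilon}_{0,I_0\neq0;0}[\psi]}\,(u+1)^{\alpha/2-3/2+\epsilon}$.

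For the bulk term I would substitute \eqref{eq:lower1} in the form $\partial_v\phi(u,v')=2I_0[\psi]v'^{-2}+O\big(\sqrt{E}\,v'^{-3\alpha+2\epsilon}+P_{I_0,\beta}[\psi]\,v'^{-2-\beta}\big)$. The principal part integrates to $2I_0[\psi]\big(v_{\gamma_\alpha}(u)^{-1}-v^{-1}\big)$, which I then replace by the target $2I_0[\psi]\big((u+1)^{-1}-v^{-1}\big)$ at the cost of an error $O\big(I_0[\psi](u+1)^{\alpha-2}\big)$, using $v_{\gamma_\alpha}(u)=u+O(u^{\alpha})$. The two error integrals are boundary-dominated: $\int_{v_{\gamma_\alpha}(u)}^{v}v'^{-3\alpha+2\epsilon}\,dv'\lesssim (u+1)^{1-3\alpha+2\epsilon}$ (convergent precisely because $3\alpha-2\epsilon>1$) and $\int_{v_{\gamma_\alpha}(u)}^{v}v'^{-2-\beta}\,dv'\lesssim (u+1)^{-1-\beta}$.

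The crux, and the main bookkeeping obstacle, is that all three non-$P$ error exponents must collapse to the single rate $(u+1)^{\alpha/2-3/2+2\epsilon}$. The bound $\alpha-2\leq \alpha/2-3/2+2\epsilon$ is immediate, while $1-3\alpha+2\epsilon\leq \alpha/2-3/2+2\epsilon$ is equivalent to $\tfrac72\alpha\geq\tfrac52$, i.e.\ exactly the hypothesis $\alpha\geq 5/7$. This is the structural reason the threshold $5/7$ appears: it is the value at which the bulk Newman--Penrose error, integrated inward from $\gamma_\alpha$, is balanced against the boundary decay $(u+1)^{\alpha/2-3/2}$ furnished by the almost-sharp estimate. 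Combining the boundary term, the principal term, and these errors yields \eqref{asyphiprop}.

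The two remaining assertions follow by specialization. Letting $v\to\infty$ in \eqref{asyphiprop} gives the $\mathcal{I}^+$ statement since the right-hand side is $v$-independent; the stated rate $(u+1)^{-1-\epsilon}$ then holds because the hypothesis $\epsilon<\tfrac16(1-\alpha)$ is exactly equivalent to $\alpha/2-3/2+2\epsilon<-1-\epsilon$, so the error exponent is subsumed. For the region $\mathcal{B}_\delta$ with $1>\delta>\tfrac{\alpha}{2}+\tfrac12+2\epsilon$, I would compare the leading term $2I_0[\psi]\tfrac{v-(u+1)}{(u+1)v}$ with the errors: in $\mathcal{B}_\delta$ one has $v-u\gtrsim v^{\delta}$, and evaluating at the smallest admissible $v\sim u$ (on $\gamma_\delta$) the leading term is $\gtrsim I_0[\psi](u+1)^{\delta-2}$, which strictly dominates $(u+1)^{\alpha/2-3/2+2\epsilon}$ precisely when $\delta>\tfrac{\alpha}{2}+\tfrac12+2\epsilon$, and dominates the $P$-error $(u+1)^{-1-\beta}$ precisely when the extra hypothesis $\tfrac{1-\alpha}{2}<\beta+2\epsilon$ holds (forcing $\delta>1-\beta$). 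This confirms that \eqref{asyphiprop} is a genuine first-order expansion throughout $\mathcal{B}_\delta$.
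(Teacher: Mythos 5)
Your proposal is correct and follows essentially the same route as the paper's proof: integrating $\partial_v\phi$ from $\gamma_\alpha$, controlling the boundary term via the almost-sharp $r^{1/2}\psi$ decay together with $r\sim u^{\alpha}$ and $v_{\gamma_\alpha}(u)\sim u+1$, swapping $v_{\gamma_\alpha}(u)^{-1}$ for $(u+1)^{-1}$ at cost $O(I_0(u+1)^{\alpha-2})$, and collapsing the error exponents exactly at the thresholds $\alpha\geq 5/7$ and $\epsilon<\tfrac{1}{6}(1-\alpha)$. Your treatment of the $\mathcal{B}_\delta$ claim (lower-bounding the leading term by $(u+1)^{\delta-2}$ on $\gamma_\delta$ and checking it dominates both error rates under $\tfrac{1-\alpha}{2}<\beta+2\epsilon$) also matches the paper's argument.
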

\begin{proof}
Denote $\gamma_{\alpha}=\{v-u=v^{\alpha}\}$. Then we can integrate in the $v$-direction in $\mathcal{B}_{\alpha}$ (defined in Proposition \ref{prop:lower1}) to estimate
\begin{equation*}
\phi (u,v)=\phi (u,v_{\gamma_{\alpha}}(u))+\int_{v_{\gamma_{\alpha}}(u)}^{v}\partial_v\phi (u,v')\,dv'.
\end{equation*}
By Proposition \ref{prop:pointdecpsi}, we have 
\begin{equation*}
\phi (u,v_{\gamma_{\alpha}}(u))=r^{\frac{1}{2}}\cdot r^{\frac{1}{2}}\psi (u,v_{\gamma_{\alpha}}(u))\leq C\sqrt{E^{\epsilon}_{0,I_0\neq 0;0}[\psi]}v_{\gamma_{\alpha}}^{\frac{\alpha}{2}}(u)(u+1)^{-\frac{3}{2}+\epsilon}.
\end{equation*}
Note that $u_{\gamma_{\alpha}}(v)+1=v-v^{\alpha}+1\geq \frac{1}{2}v$, for suitably large $R>0$, so
\begin{equation}
u+1 \leq v_{\gamma_{\alpha}}(u) \leq 2(u+1).
\label{uvsimilarity}
\end{equation}
Hence we have that
\begin{equation*}
\phi(u,v_{\gamma_{\alpha}}(u))\leq C\sqrt{E^{\epsilon}_{0,I_0\neq 0;0}[\psi]}(u+1)^{\frac{\alpha}{2}-\frac{3}{2}+\epsilon}.
\end{equation*}
Furthermore, using estimate \eqref{eq:lower1} we have that
\begin{equation*}
\begin{split}
\left|\int_{v_{\gamma_{\alpha}}(u)}^{v}\partial_v\phi(u,v')\,dv'-2I_0 [\psi ] (v_{\gamma_{\alpha}}(u)^{-1}-v^{-1})\right|\leq&\: C\sqrt{E^{\epsilon}_{0,I_0\neq 0;0}[\psi]}(v_{\gamma_{\alpha}}(u)^{-3\alpha+1+2\epsilon}-v^{-3\alpha+1+2\epsilon})\\
&+C\cdot P_{I_0,\beta}[\psi](v_{\gamma_{\alpha}}(u)^{-\beta-1}-v^{-\beta-1}).
\end{split}
\end{equation*}
We have 
\[  \left( v_{\gamma_{\alpha}}(u)^{-1}-v^{-1} \right) =
 \left( v_{\gamma_{\alpha}}(u)^{-1}-(u+1)^{-1} \right) +
\left( (u+1)^{-1}-v^{-1} \right)  \]
and
\begin{equation*}
\left|v_{\gamma_{\alpha}}(u)^{-1}-(u+1)^{-1}\right|=\left|\frac{u+1-v_{\gamma_{\alpha}}(u)}{(u+1)v_{\gamma_{\alpha}}(u)}\right|=(u+1)^{-1}v_{\gamma_{\alpha}}^{\alpha-1}(u)\leq C(u+1)^{-2+\alpha}.
\end{equation*}

Hence,
\begin{equation}
\begin{split}
\Bigg|\int_{v_{\gamma_{\alpha}}(u)}^{v}&\partial_v\phi(u,v')\,dv'-2I_0 [\psi ] ((u+1)^{-1}-v^{-1})\Bigg|\\
\leq&\: CI_0 [\psi ] (u+1)^{-2+\alpha}+C\sqrt{E^{\epsilon}_{0,I_0\neq 0;0}[\psi]}(v_{\gamma_{\alpha}}(u)^{-3\alpha+1+2\epsilon}-v^{-3\alpha+1+2\epsilon})\\
&+C\cdot P_{I_0,\beta}[\psi](v_{\gamma_{\alpha}}(u)^{-\beta-1}-v^{-\beta-1}).
\end{split}
\label{estimateforvintegral}
\end{equation}
In view of \eqref{uvsimilarity}  we have $v_{\gamma_{\alpha}}(u)\sim (u+1)$ along $\gamma_{\alpha}$ and hence  we can further estimate
\begin{equation*}
\begin{split}
\sqrt{E^{\epsilon}_{0,I_0\neq 0;0}[\psi]}&(v_{\gamma_{\alpha}}(u)^{-3\alpha+1+2\epsilon}-v^{-3\alpha+1+2\epsilon})+\cdot P_{I_0,\beta}[\psi](v_{\gamma_{\alpha}}(u)^{-\beta-1}-v^{-\beta-1})\\
\leq&\: C\sqrt{E^{\epsilon}_{0,I_0\neq 0;0}[\psi]}(u+1)^{-3\alpha+1+2\epsilon}+C\cdot P_{I_0,\beta}[\psi](u+1)^{-1-\beta}.\\
\end{split}
\end{equation*}

We have that
\begin{align*}
-2+\alpha<&\:\frac{\alpha}{2}-\frac{3}{2}+\epsilon,
\end{align*}
for all $\alpha\in (\frac{2}{3},1)$.

Note that $1-3\alpha+2\epsilon\leq \frac{\alpha}{2}-\frac{3}{2}+2\epsilon$, for $\alpha\geq \frac{5}{7}$ and $1-3\alpha+2\epsilon\geq \frac{\alpha}{2}-\frac{3}{2}+2\epsilon$ for $\alpha\leq \frac{5}{7}$. 

We can therefore conclude that for $\alpha\in [\frac{5}{7},1)$ and $\epsilon \in (0,\frac{1}{2}(3\alpha-2))$
\begin{equation*}
\begin{split}
\left|\phi (u,v)-2I_0 [\psi] \left((u+1)^{-1}-v^{-1}\right)\right|\leq&\: C\left(\sqrt{E^{\epsilon}_{0,I_0\neq 0;0}[\psi]}+I_0[\psi]\right)(u+1)^{\frac{\alpha}{2}-\frac{3}{2}+2\epsilon}\\
&+C\cdot P_{I_0,\beta}[\psi](u+1)^{-1-\beta}.
\end{split}
\end{equation*}
Furthermore, if we consider $\epsilon\in (0,\frac{1}{6}(1-\alpha))$, we have in particular that $\epsilon<\frac{1}{2}(3\alpha-2)$ and moreover $\frac{\alpha}{2}-\frac{3}{2}+2\epsilon<-1-\epsilon$.

Finally, note if $\delta $ satisfies $1>\delta> \frac{\alpha}{2}+\frac{1}{2}+2\epsilon>\alpha+2\epsilon$ then in the region $\mathcal{B}_{\delta}$ bounded by $\gamma_{\delta}$ we obtain
\[\left|\frac{1}{u+1}-\frac{1}{v}\right| \geq \left|\frac{1}{u+1}-\frac{1}{v_{\gamma_{\delta}}}\right|=
\left|\frac{ v_{\gamma_{\delta}}-(u+1)}{v_{\gamma_{\delta}}\cdot (u+1)}  \right| = \left| \frac{v^{\delta}_{\gamma_{\delta}}}{v_{\gamma_{\delta}}\cdot (u+1)}\right|\sim (u+1)^{-2+\delta}. \]
Since  $-2+\delta > \frac{\alpha}{2}-\frac{3}{2}+2\epsilon>-1-\beta$, if moreover $\frac{1-\alpha}{2}<\beta+2\epsilon$, estimate \eqref{asyphiprop} indeed provides the asymptotic behavior of $\phi$ in the region $\mathcal{B}_{\delta}$.
\end{proof}

\subsubsection{Asymptotics of $T^{k}\phi$}
\label{rftkhiasy}
We next obtain the asymptotics of $T^k\phi$ for each $k\in \N_0$. To that end, we define the following $L^{\infty}$ norm for higher-order $v$-derivatives of $\phi$:

\begin{equation}
\label{def:PI0k}
 P_{I_{0},\beta; k}  [\psi]:= \max_{1\leq j\leq k}\left\| v^{2+\beta+j}\cdot\left| \partial_{v}^{j}\left( \partial_{v}\phi-\frac{2I_{0}[\psi]}{v^{2}}  \right)  \right|\right\|_{L^{\infty}(\Sigma_{0})} .
\end{equation}

The first step towards obtaining asymptotics for $T^k\phi$ is to derive the following estimate for $\partial_v^k \phi$, with $k\in \N_0$.

\begin{proposition}\label{prop:tklower}
Let $k\in \N_0$. Consider the region $\mathcal{B}_{\alpha_k} =\{r\geq R\}\cap \{0\leq u\leq v-v^{\alpha_k}\}$, where $\alpha_k\in (\frac{k+2}{k+3},1)$. Let $\epsilon\in(0,\frac{1}{2}(k+3)\alpha-\frac{1}{2}(k+2))$ be arbitrarily small and assume that $E^{\epsilon}_{I_0\neq 0;0}[\psi]<\infty$, and moreover 
\begin{equation}
\label{eq: assinitialdatderT}
P_{I_{0},\beta; k}  [\psi]<\infty 
\end{equation}
for $\beta>0$.

Then, we have that
\begin{equation}
\label{est: assinitialdatderT}
\begin{split}
\left|v^{k+2} \partial_v^{k+1}\phi (u,v)-(-1)^{k} (k+1)! \cdot 2I_0 [\psi ]\right|\leq&\: C\sqrt{E^{\epsilon}_{0,I_0\neq 0;0}[\psi]}v^{(k+2)-(k+3)\alpha+2\epsilon} \\
&+C\cdot P_{I_{0},\beta; k}  [\psi]\cdot v^{-\beta},
\end{split}
\end{equation}
for all $(u,v)\in \mathcal{B}_{\alpha_k}$, with $C=C(D,\Sigma,R,k,\alpha_k,\epsilon)>0$ a constant.
\end{proposition}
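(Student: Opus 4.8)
The plan is to prove Proposition \ref{prop:tklower} by induction on $k$, mirroring the structure of the base case $k=0$ already established in Proposition \ref{prop:lower1}. The key idea is that the quantity $v^{k+2}\partial_v^{k+1}\phi$ should converge to $(-1)^k(k+1)!\cdot 2I_0[\psi]$, which is exactly $\partial_v^{k}$ applied to the leading term $2I_0[\psi]/v^2$, evaluated asymptotically. The starting point is to commute the fundamental equation \eqref{eqphi} for $\phi$, namely $\partial_u\partial_v\phi = -\frac{DD'}{4r}\phi$, with $\partial_v^{k}$. Since $\partial_u$ and $\partial_v$ commute, this yields
\begin{equation*}
\partial_u\left(\partial_v^{k+1}\phi\right) = -\partial_v^{k}\left(\frac{DD'}{4r}\phi\right),
\end{equation*}
where the right-hand side, by the Leibniz rule, is a sum of terms of the form $\partial_v^{j}\!\left(\frac{DD'}{4r}\right)\cdot \partial_v^{k-j}\phi$. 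The crucial structural observation is that each $v$-derivative hitting the coefficient $\frac{DD'}{r}=O(r^{-3})$ produces an additional factor of $r^{-1}$ (since $\partial_v r \sim D/2$ is bounded and $\partial_v$ of an $O(r^{-m})$ quantity is $O(r^{-m-1})$ in this geometry), while each $v$-derivative falling on $\phi$ uses the inductively available decay of $\partial_v^{k-j}\phi$.

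The core of the argument is then to integrate in the $u$-direction. Following the base case, I would write
\begin{equation*}
v^{k+2}\partial_v^{k+1}\phi(u,v) = v^{k+2}\partial_v^{k+1}\phi(0,v) - v^{k+2}\int_0^u \partial_v^{k}\!\left(\frac{DD'}{4r}\phi\right)(u',v)\,du'.
\end{equation*}
The boundary term at $u=0$ is controlled directly by the assumption $P_{I_0,\beta;k}[\psi]<\infty$, which by definition \eqref{def:PI0k} bounds exactly $v^{2+\beta+j}|\partial_v^{j}(\partial_v\phi - 2I_0[\psi]/v^2)|$ on $\Sigma_0$; taking $j=k$ shows that $v^{k+2}\partial_v^{k+1}\phi(0,v)$ equals $(-1)^k(k+1)!\cdot 2I_0[\psi]$ up to an error $O(v^{-\beta})$, since $\partial_v^{k}(2I_0[\psi]/v^2) = (-1)^k (k+1)!\, I_0[\psi]\, v^{-k-2}\cdot 2 /2 = (-1)^k(k+1)!\,2I_0[\psi]\,v^{-k-2}$. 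For the spacetime integral I would estimate each Leibniz term by inserting the pointwise bound $|\phi|\lesssim \sqrt{E^{\epsilon}}$ from Proposition \ref{prop:pointdecpsi} on the lowest-order factor and the inductive decay estimates \eqref{est: assinitialdatderT} (at lower values of $k$) on the differentiated factors, then converting powers of $v$ to powers of $(u'+1)$ via the relation $r\sim v-u \gtrsim (u'+1)^{\alpha_k}$ valid in $\mathcal{B}_{\alpha_k}$ (exactly as in \eqref{uvr} and the displayed chain of inequalities in the proof of Proposition \ref{prop:lower1}). The exponent bookkeeping is designed so that the resulting $u'$-integrand decays faster than $(u'+1)^{-1}$, making the integral uniformly bounded and producing the stated error rate $v^{(k+2)-(k+3)\alpha+2\epsilon}$; this is precisely where the lower threshold $\alpha_k > \frac{k+2}{k+3}$ and the constraint $\epsilon<\frac{1}{2}(k+3)\alpha - \frac{1}{2}(k+2)$ enter, guaranteeing the key inequality $-(k+3)\alpha + (k+2)+\eta - 1 + \epsilon < -1$ for an appropriate intermediate weight $\eta$.

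The main obstacle, and the step requiring the most care, is the Leibniz expansion of $\partial_v^{k}\!\left(\frac{DD'}{4r}\phi\right)$ and the verification that \emph{every} intermediate term integrates to something better than $v^{-\eta}$ after the $r\leftrightarrow v$ substitution. Concretely, for the term where $j$ derivatives land on the coefficient and $k-j$ on $\phi$, one gains $r^{-3-j}$ from the coefficient and uses $|\partial_v^{k-j}\phi|\lesssim v^{-(k-j)-1}$ (the inductive hypothesis at order $k-j-1$, or the base pointwise bound when $k-j=0$); the net $r$-power must then, after writing $v\lesssim r^{1/\alpha_k}$, yield an integrable decay in $u'$ uniformly in $j$. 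The worst case is $j=0$, which reproduces the base-case estimate and fixes the threshold on $\alpha_k$; the terms with $j\geq 1$ are strictly better because of the extra $r^{-j}$ gained from differentiating the coefficient. I expect the only genuinely delicate point to be confirming that the error weights produced by the inductive estimates combine without loss — i.e., that the $P_{I_0,\beta;k}$ and $\sqrt{E^{\epsilon}}$ contributions remain separated with their respective rates $v^{-\beta}$ and $v^{(k+2)-(k+3)\alpha+2\epsilon}$ — but this follows by the same triangle-inequality splitting used in Proposition \ref{prop:lower1}, now applied termwise. Once \eqref{est: assinitialdatderT} is established, it serves as the input (analogous to how \eqref{eq:lower1} fed into Proposition \ref{cor1}) for subsequently integrating in $v$ to extract the asymptotics of $T^k\phi$ itself.
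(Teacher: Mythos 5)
Your proposal follows the same route as the paper's (terse) proof: commute \eqref{eqphi} with $\partial_v^k$, integrate in the $u$-direction, control the boundary term $v^{k+2}\partial_v^{k+1}\phi(0,v)$ by the norm \eqref{def:PI0k} (your computation $\partial_v^k\big(2I_0[\psi]v^{-2}\big)=(-1)^k(k+1)!\cdot 2I_0[\psi]\,v^{-k-2}$ is correct), expand the bulk integrand by the Leibniz rule into terms $O(r^{-3-j})\,\partial_v^{k-j}\phi$, and close by induction on $k$ with Proposition \ref{prop:lower1} as the base case, exactly as the paper does (appealing also to Proposition \ref{cor1} and Proposition \ref{prop:pointdecpsi} for the undifferentiated factor). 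There is, however, a concrete error in your exponent bookkeeping, and it occurs at the one step that actually carries the content of this proposition. You claim the worst Leibniz term is $j=0$ (all $k$ derivatives on $\phi$), that it ``fixes the threshold on $\alpha_k$'', and that the terms with $j\geq 1$ are ``strictly better because of the extra $r^{-j}$''. This is backwards. In $\mathcal{B}_{\alpha_k}$ the two conversion rates are asymmetric: $v^{-1}\lesssim r^{-1}$ (since $r\lesssim v$ there), whereas a factor $r^{-1}$ is only worth $v^{-\alpha_k}$ (since $r\gtrsim v^{\alpha_k}$). Hence a $v$-derivative landing on $\phi$ buys a full power $v^{-1}$ (by the inductive hypothesis), while a $v$-derivative landing on the coefficient $DD'/r$ buys only $r^{-1}\sim v^{-\alpha_k}$; since $\alpha_k<1$, every derivative moved onto the coefficient \emph{loses} decay. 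The dominant term is therefore $j=k$: the term $O(r^{-3-k})\,\phi$, in which the undifferentiated $\phi$ has only the $u$-decay $(u'+1)^{-1+\epsilon}$ from Proposition \ref{prop:pointdecpsi} and no $v$-decay at all.

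Quantitatively: for the $j=k$ term, extracting $v^{-\eta}$ and using $v\lesssim r^{1/\alpha_k}$, $r\gtrsim (u'+1)^{\alpha_k}$ gives an integrand $\lesssim (u'+1)^{(k+2)+\eta-(k+3)\alpha_k-1+\epsilon}$, so integrability forces $\eta<(k+3)\alpha_k-(k+2)-\epsilon$; this is exactly where the threshold $\alpha_k>\frac{k+2}{k+3}$ and the stated error $v^{(k+2)-(k+3)\alpha_k+2\epsilon}$ come from. Notice that this is precisely the ``key inequality'' you yourself wrote in your second paragraph, so your third paragraph contradicts your second. By contrast, the $j=0$ term (coefficient $O(r^{-3})$, with $|\partial_v^k\phi|\lesssim v^{-k-1}$ by induction) only requires $\eta<3\alpha_k-2$, hence only $\alpha_k>\frac{2}{3}$; and since $(k+3)\alpha_k-(k+2)<3\alpha_k-2$ for all $k\geq 1$ and $\alpha_k<1$, it is strictly \emph{less} stringent than the $j=k$ term. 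If your accounting were correct, the proposition would hold for all $\alpha_k>\frac{2}{3}$ with the better error rate $v^{2-3\alpha_k+2\epsilon}$ — which is not the statement being proved and is not achievable by this method. The plan is salvageable, but the verification step ``terms with $j\geq 1$ are strictly better'' would fail as written; the correct statement is that the terms become worse as more derivatives fall on the coefficient, with $j=k$ determining both the admissible range of $\alpha_k$ and the final error exponent.
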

\begin{proof}
By assuming
\begin{equation*}
D(r)=1-\frac{2M}{r}+O_{3+k}(v^{-1-\beta})
\end{equation*}
as in the start of Section \ref{sec:asymradfieldcaseI}, it follows in particular that
\begin{equation*}
\left|\partial_v^k\left(\frac{DD'}{r}\right)\right|\leq Cr^{-3-k},
\end{equation*}
for some constant $C=C(D,k)>0$.

By commuting \eqref{eqphi} with $\partial_v^{k}$, we therefore obtain:
\begin{equation*}
\begin{split}
v^{k+2}\partial_v^{k+1} \phi (u,v)=&\:v^{k+2}\partial_v^{k+1} \phi (0,v)-2v^{k+2} \int_{0}^u\partial_v^{k}\left(\frac{DD' }{r}\phi (u',v)\right)\,du'\\
=&\:v^{k+2}\partial_v^{k+1}\phi (0,v)+ \sum_{l=0}^{k} v^{k+2} \int_{0}^u O(r^{-3-k+l})\partial_v^l \phi (u',v)\,du' ,
\end{split}
\end{equation*}

The desired result can be easily seen to hold for $k = 1$, using the asymptotics of $\partial_v\phi$ derived in Proposition \ref{prop:lower1}. The general case follows by an induction argument, appealing to the results of Proposition \ref{prop:lower1} and Proposition \ref{cor1}, and the assumption \eqref{eq: assinitialdatderT}.
\end{proof}
Using Proposition \ref{prop:tklower} we are able to derive asymptotics for $\partial_v (T^{k} \phi)$, with $k\in \mathbb{N}$.
\begin{proposition}
\label{cor:estpartialvTkphi}
Let $k\in \N$. Under the assumptions of Proposition \ref{prop:tklower}, together with $E^{\epsilon}_{0,I_0\neq0;k}[\psi]<\infty$, we have that for any $1\leq k \leq n$:
\begin{equation*}
\begin{split}
|\partial_v(T^{k}& \phi) (u,v)- (-1)^{k} (k+1)! \cdot 2I_0 [ \psi ] v^{-k-2}|\\
\leq&\: C\left(\sqrt{E^{\epsilon}_{I_0\neq 0;k}[\psi]}+I_0[\psi]\right)\sum_{l=0}^{k-1} r^{-3-l} (u+1)^{-k+l+\epsilon} \\
&+ C\sqrt{E^{\epsilon}_{I_0\neq 0;0}[\psi]}v^{-(k+3)\alpha_k+2\epsilon}+C\cdot P_{I_{0},\beta; k}  [\psi]\cdot v^{-2-k-\beta},
\end{split}
\end{equation*}
for all $(u,v)\in \mathcal{B}_{\alpha_k}$, with $C=C(D,\Sigma,R,k,\alpha_k,\epsilon)>0$ a constant.
\end{proposition}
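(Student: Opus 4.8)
The plan is to reduce the asymptotics of the mixed derivative $\partial_v(T^k\phi)$ to those of the pure radial derivative $\partial_v^{k+1}\phi$, which are already controlled by Proposition \ref{prop:tklower}, using only the commutation $T=\partial_u+\partial_v$ in double null coordinates together with the wave equation \eqref{eqphi}. First I would record the elementary but crucial recursion. Writing $T^k\phi=T(T^{k-1}\phi)=\partial_u(T^{k-1}\phi)+\partial_v(T^{k-1}\phi)$ and applying $\partial_v$, the mixed second-order term $\partial_v\partial_u(T^{k-1}\phi)$ may be replaced, using \eqref{eqphi}---valid for $T^{k-1}\phi$ since $T$ is Killing and hence $T^{k-1}\psi$ again solves \eqref{waveequation}---by $-\tfrac{DD'}{4r}T^{k-1}\phi$. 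This yields $\partial_v(T^k\phi)=\partial_v^2(T^{k-1}\phi)-\tfrac{DD'}{4r}T^{k-1}\phi$, so that $E_k:=\partial_v(T^k\phi)-\partial_v^{k+1}\phi$ obeys the clean recursion $E_k=\partial_v E_{k-1}-\tfrac{DD'}{4r}T^{k-1}\phi$ with $E_0=0$. Unrolling gives
\[
\partial_v(T^k\phi)=\partial_v^{k+1}\phi-\sum_{j=0}^{k-1}\partial_v^{\,k-1-j}\!\left(\frac{DD'}{4r}\,T^{j}\phi\right).
\]

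The leading-order behaviour then comes entirely from the first term: Proposition \ref{prop:tklower}, after division by $v^{k+2}$, produces the leading coefficient $(-1)^k(k+1)!\cdot 2I_0[\psi]\,v^{-k-2}$ together with exactly the $\sqrt{E^{\epsilon}_{0,I_0\neq 0;0}[\psi]}\,v^{-(k+3)\alpha_k+2\epsilon}$ and $P_{I_0,\beta;k}[\psi]\,v^{-2-k-\beta}$ contributions appearing on the right-hand side of the claimed estimate. It therefore remains to bound the finite sum of correction terms by $\bigl(\sqrt{E^{\epsilon}_{0,I_0\neq 0;k}[\psi]}+I_0[\psi]\bigr)\sum_{l=0}^{k-1}r^{-3-l}(u+1)^{-k+l+\epsilon}$.

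For the correction terms I would expand each $\partial_v^{\,k-1-j}(\tfrac{DD'}{4r}T^j\phi)$ by the Leibniz rule, bounding the coefficient factors by $|\partial_v^m(DD'/r)|\leq C r^{-3-m}$ (which follows from $DD'/r=O(r^{-3})$ and $\partial_v r=\tfrac12 D$), so that a term with $m$ derivatives on the coefficient contributes $r^{-3-m}$ times $|\partial_v^{\,n}(T^j\phi)|$ with $n=k-1-j-m$. The decisive input is the pointwise bound $|\partial_v^{\,n}(T^j\phi)|\lesssim \bigl(\sqrt{E^{\epsilon}_{0,I_0\neq 0;k}[\psi]}+I_0[\psi]\bigr)(u+1)^{-1-j-n+\epsilon}$ throughout $\mathcal{B}_{\alpha_k}$: for $j=0$ this is supplied by Proposition \ref{prop:lower1}, Proposition \ref{cor1}, Proposition \ref{prop:tklower} and the almost-sharp pointwise estimates of Section \ref{gdeforpsi1}, while for $1\leq j\leq k-1$ one applies those same radial-derivative asymptotics to the solution $T^j\psi$, whose first Newman--Penrose constant vanishes, so that the leading term drops out and only the error rates survive. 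Since $1+j+n=k-m$, each term has size $r^{-3-m}(u+1)^{-k+m+\epsilon}$, and summing over $m=l$ reproduces the stated error sum.

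The main obstacle is this last step: obtaining sharp pointwise control of the arbitrary-order quantities $\partial_v^{\,n}(T^j\phi)$ with the precise exponent $-1-j-n+\epsilon$ and the correct $(\sqrt{E^{\epsilon}}+I_0[\psi])$-weight, uniformly in $\mathcal{B}_{\alpha_k}$. One must be careful that the estimates invoked for $T^j\phi$ with $j\geq 1$ are only the almost-sharp decay statements of Sections \ref{gdeforpsi1}--\ref{improvedhyperboloidaldecay} and Proposition \ref{prop:tklower} applied to $T^j\psi$, and \emph{not} the precise $I_0=0$ asymptotics of Section \ref{sec:asympsi2}, so as to avoid circularity. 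Organising the Leibniz bookkeeping so that each coefficient derivative yields exactly one extra power $r^{-1}$ and each remaining derivative exactly one power $(u+1)^{-1}$ is then routine, and an induction on $k$---feeding the already-established cases $\partial_v(T^j\phi)$, $j<k$, back into the correction sum---closes the argument.
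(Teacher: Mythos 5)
Your skeleton is the same as the paper's: your unrolled recursion for $E_k$ is exactly the paper's commutation identity \eqref{eq:wave_null_k} before Leibniz expansion, the main term $\partial_v^{k+1}\phi$ is handled by Proposition \ref{prop:tklower} precisely as in the paper, and the correction terms are to be absorbed by pointwise decay plus induction. The gap is in your justification of the ``decisive input'' for the mixed terms $\partial_v^{n}(T^j\phi)$ with $j\geq 1$, $n\geq 1$. You propose to get $|\partial_v^{n}(T^j\phi)|\lesssim (\sqrt{E}+I_0)(u+1)^{-1-j-n+\epsilon}$ by applying Propositions \ref{prop:lower1}, \ref{cor1}, \ref{prop:tklower} to the solution $T^j\psi$ and using $I_0[T^j\psi]=0$. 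This step fails, because the error terms in those propositions do not improve under $T$-commutation: Proposition \ref{prop:tklower} applied to $T^j\psi$ at order $n-1$ only gives
\[
|\partial_v^{n}(T^j\phi)|\lesssim \sqrt{E[T^j\psi]}\,v^{-(n+2)\alpha_k+2\epsilon}+P[T^j\psi]\,v^{-n-1-\beta},
\]
and these rates are independent of $j$. Since $(n+2)\alpha_k<n+2\leq n+j+1$ for $j\geq 1$, even the most favorable case $j=1$, $\alpha_k\to 1$ falls short of the required exponent $1+n+j-\epsilon$, and for $j\geq 2$ the deficit is at least $j-1$ full powers (similarly the $P$-term exponent $n+1+\beta$ is insufficient once $j>\beta$). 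Vanishing of the leading Newman--Penrose term removes the $v^{-n-1}$ contribution but creates no extra decay in the remainder, so ``only the error rates survive'' is exactly the problem, not the solution.

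Your closing remark---feeding the already-established cases $\partial_v(T^j\phi)$, $j<k$, back into the sum by induction on $k$---repairs this only for terms with a single $\partial_v$ derivative ($n=1$); terms such as $\partial_v^2(T\phi)$, which first appear in the correction sum at $k=4$, are covered neither by that induction nor by the flawed direct application. The repair, and what the paper's one-line ``strong induction'' has to mean, is to induct on the \emph{total} order $n+j\leq k-1$: apply your own commutation identity to $T^j\phi$ and differentiate, writing
\[
\partial_v^{n}(T^j\phi)=\partial_v^{n+j}\phi+\sum O(r^{-3-s})\,\partial_v^{n'}(T^{j'}\phi),\qquad n'+j'+s=n+j-2,
\]
where the pure term is controlled by Proposition \ref{prop:tklower} (with room to spare, since $\alpha_k>\tfrac{k+2}{k+3}$ and $n+j\leq k-1$ give $(n+j+2)\alpha_k>n+j+1+\epsilon$), the base case $n=0$ is the weighted pointwise bound \eqref{eq:pointrtkpsi05} giving $|T^j\phi|\lesssim (u+1)^{-1-j+\epsilon}$, and all remaining terms have strictly smaller total order and fall under the induction hypothesis. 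With that replacement for your $j\geq 1$ step, the rest of your bookkeeping (coefficient bounds $|\partial_v^m(DD'/r)|\lesssim r^{-3-m}$, conversion $v\geq u$ and $r\gtrsim v^{\alpha_k}$ in $\mathcal{B}_{\alpha_k}$) closes the argument as you describe.
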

\begin{proof}
We first note that using the equation \eqref{eqphi} together with the fact that the equation commutes with $T$ and an inductive argument, we have that the following equation holds for all $k \geq 1$:
\begin{equation}\label{eq:wave_null_k}
\partial_v (T^{k} \phi ) = \partial_v^{k+1} \phi + \sum_{s=0}^{k-1}\sum_{l+m = k-1-s}  O (r^{-3-s} ) \cdot \partial_v^l ( T^m \phi ) .
\end{equation}
Now the claim follows by strong induction using equation \eqref{eq:wave_null_k}, Proposition \ref{prop:tklower}, and the pointwise decay estimates from Proposition \ref{prop:pointdecpsi}.
\end{proof}
Finally using Proposition \ref{cor:estpartialvTkphi} we are able to derive the asymptotic expansion for $T^{k} \phi$ in $\mathcal{B}_{\alpha_k}$ for any $k \in \mathbb{N}$.
\begin{proposition}\label{asymprecTk}
Let $k\in \N_0$. Under the assumptions of Proposition \ref{prop:tklower}, with additionally $\alpha_k\in [\frac{2k+5}{2k+7},1)$ and $\epsilon\in (0,\frac{1}{6}(1-\alpha_k))$, together with $E^{\epsilon}_{0,I_0\neq0;k}[\psi]<\infty$, we have that
\begin{equation}
\label{asyTkphiprop}
\begin{split}
|T^{k} \phi (u,v)&- (-1)^{k} k! \cdot  2I_0 [\psi ] \left(u^{-k-1}- v^{-k-1}\right)|\\
\leq&\: C\left(\sqrt{E^{\epsilon}_{I_0\neq 0;k}[\psi]}+I_0[\psi]\right)(u+1)^{-\frac{3}{2}-k+\frac{\alpha_k}{2}+2\epsilon}\\
&+C\cdot P_{I_{0},\beta; k}  [\psi]\cdot (u+1)^{-1-k-\beta},
\end{split}
\end{equation}
for all $(u,v)\in \mathcal{B}_{\alpha_k}$ where $C=C(D,\Sigma,R,k,\alpha_k,\epsilon)>0$ is a constant.

In particular, along $\mathcal{I}^+$ we have that
\begin{equation*}
\begin{split}
|T^{k} \phi (u,\infty)-(-1)^{k} k! \cdot 2I_0 [\psi ] u^{-k-1}|\leq&\: C\left(\sqrt{E^{\epsilon}_{I_0\neq 0;k}[\psi]}+I_0[\psi]\right)(u+1)^{-1-k+\epsilon}\\
&+C\cdot P_{I_{0},\beta; k}  [\psi]\cdot (u+1)^{-1-k-\beta}.
\end{split}
\end{equation*}
In fact, if we further impose $   \frac{1-\alpha_k}{2}<\beta+2\epsilon$, then the estimate \eqref{asyTkphiprop} provides first-order asymptotics for $\phi$ in the region $\mathcal{B}_{\delta_k}$ for $\delta_k $ such that $1>\delta_k> \frac{\alpha_k}{2}+\frac{1}{2}+2\epsilon>\alpha_k+2\epsilon$.
\end{proposition}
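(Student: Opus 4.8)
The plan is to mirror the proof of Proposition \ref{cor1}, which is exactly the $k=0$ case, replacing the pointwise input on $\partial_v\phi$ coming from Proposition \ref{prop:lower1} by the higher-order statement for $\partial_v(T^k\phi)$ established in Proposition \ref{cor:estpartialvTkphi}. First I would fix $(u,v)\in\mathcal{B}_{\alpha_k}$ and integrate $\partial_v(T^k\phi)$ in the $v$-direction starting from the curve $\gamma_{\alpha_k}$:
\[
T^k\phi(u,v) = T^k\phi(u,v_{\gamma_{\alpha_k}}(u)) + \int_{v_{\gamma_{\alpha_k}}(u)}^v \partial_v(T^k\phi)(u,v')\,dv'.
\]
For the boundary term I would use $T^k\phi = r\,T^k\psi$ together with the weighted pointwise estimate $\sqrt{\widetilde{\rho}+1}\,|T^k\psi_0|\lesssim (1+\widetilde{\tau})^{-\frac32-k+\epsilon}\sqrt{E^{\epsilon}_{0,I_0\neq0;k}[\psi]}$ from Proposition \ref{prop:pointdecpsi}, and the fact that $r\sim v_{\gamma_{\alpha_k}}(u)^{\alpha_k}\sim(u+1)^{\alpha_k}$ and $\widetilde{\tau}\sim u$ along $\gamma_{\alpha_k}$ (using \eqref{eq:comptimes} and \eqref{uvsimilarity}). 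This gives
\[
|T^k\phi(u,v_{\gamma_{\alpha_k}}(u))| \lesssim (u+1)^{\frac{\alpha_k}{2}}\cdot(u+1)^{-\frac32-k+\epsilon}=(u+1)^{\frac{\alpha_k}{2}-\frac32-k+\epsilon},
\]
which already fits inside the claimed error term.

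For the integral term I would substitute the expansion from Proposition \ref{cor:estpartialvTkphi}. Integrating the leading part yields
\[
\int_{v_{\gamma_{\alpha_k}}(u)}^v (-1)^k(k+1)!\cdot 2I_0[\psi]\,(v')^{-k-2}\,dv' = (-1)^k k!\cdot 2I_0[\psi]\left(v_{\gamma_{\alpha_k}}(u)^{-k-1}-v^{-k-1}\right),
\]
and then I would replace $v_{\gamma_{\alpha_k}}(u)^{-k-1}$ by $(u+1)^{-k-1}$: since $v_{\gamma_{\alpha_k}}(u)-(u+1)=v_{\gamma_{\alpha_k}}(u)^{\alpha_k}-1\sim(u+1)^{\alpha_k}$ while $v_{\gamma_{\alpha_k}}(u)\sim(u+1)$ by \eqref{uvsimilarity}, the mean value theorem bounds this replacement error by $C\,I_0[\psi](u+1)^{\alpha_k-k-2}$. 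The resulting leading term $(-1)^k k!\cdot 2I_0[\psi]\big((u+1)^{-k-1}-v^{-k-1}\big)$ agrees with the stated expression up to terms absorbed into the error (for $u\geq 0$ this is the correct reading of $u^{-k-1}-v^{-k-1}$).

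The remaining task is to integrate the three error terms of Proposition \ref{cor:estpartialvTkphi} in $v'$. Using that $dr\sim dv'$ for $r\geq R$ and that $r\sim(u+1)^{\alpha_k}$ at the lower endpoint $\gamma_{\alpha_k}$, I would bound
\[
\int_{v_{\gamma_{\alpha_k}}(u)}^v r^{-3-l}\,dv'\lesssim (u+1)^{-\alpha_k(2+l)},\qquad \int_{v_{\gamma_{\alpha_k}}(u)}^v (v')^{-2-k-\beta}\,dv'\lesssim (u+1)^{-1-k-\beta},
\]
and, crucially, $\int_{v_{\gamma_{\alpha_k}}(u)}^v (v')^{-(k+3)\alpha_k+2\epsilon}\,dv'\lesssim (u+1)^{-(k+3)\alpha_k+1+2\epsilon}$. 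The main obstacle is the bookkeeping: verifying that every integrated error term is dominated by $(u+1)^{-\frac32-k+\frac{\alpha_k}{2}+2\epsilon}$ together with the $P_{I_0,\beta;k}[\psi]$ contribution. I expect the binding constraint to be precisely the $(v')^{-(k+3)\alpha_k+2\epsilon}$ term, which requires $-(k+3)\alpha_k+1\leq -\frac32-k+\frac{\alpha_k}{2}$, i.e.\ $\alpha_k\geq\frac{2k+5}{2k+7}$, thereby explaining the range imposed in the statement; the sum $\sum_{l=0}^{k-1}(u+1)^{-\alpha_k(2+l)-k+l+\epsilon}$ is dominated by its $l=k-1$ entry and controlled by the weaker threshold $\frac{2k+1}{2k+3}<\frac{2k+5}{2k+7}$, so it poses no additional restriction. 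Collecting the bounds gives \eqref{asyTkphiprop} after absorbing $(u+1)^{\frac{\alpha_k}{2}-\frac32-k+\epsilon}$, $(u+1)^{\alpha_k-k-2}$ and the integrated $r^{-3-l}$ terms into the single power $(u+1)^{-\frac32-k+\frac{\alpha_k}{2}+2\epsilon}$. The $\mathcal{I}^+$ statement follows by letting $v\to\infty$, where the condition $\epsilon<\frac16(1-\alpha_k)$ ensures $\frac{\alpha_k}{2}-\frac32+2\epsilon<-1$ so that the error decays strictly faster than the $(u+1)^{-k-1}$ leading term; the final claim about $\mathcal{B}_{\delta_k}$ is obtained exactly as in the last paragraph of the proof of Proposition \ref{cor1}, by comparing $|(u+1)^{-k-1}-v^{-k-1}|\sim(u+1)^{-k-2+\delta_k}$ against the error exponent.
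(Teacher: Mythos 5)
Your proposal is correct and follows essentially the same route as the paper's own proof: integrating the estimate of Proposition \ref{cor:estpartialvTkphi} in $v$ from $\gamma_{\alpha_k}$, bounding the boundary term via the $r^{1/2}$-weighted pointwise decay of Proposition \ref{prop:pointdecpsi}, replacing $v_{\gamma_{\alpha_k}}(u)^{-k-1}$ by $(u+1)^{-k-1}$, and identifying $1-(k+3)\alpha_k \leq -\tfrac{3}{2}-k+\tfrac{\alpha_k}{2}$ (i.e.\ $\alpha_k \geq \tfrac{2k+5}{2k+7}$) as the binding constraint, with the $\mathcal{I}^+$ and $\mathcal{B}_{\delta_k}$ claims handled exactly as in Proposition \ref{cor1}. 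Your exponent bookkeeping, including the observation that the $\sum_l r^{-3-l}$ terms only impose the weaker threshold $\tfrac{2k+1}{2k+3}$, matches the paper's computation.
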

\begin{proof}
The $k=0$ case follows from Proposition \ref{asyrafield1}. For the remainder of the proof we let $k\geq 1$.

Let $\gamma_{\alpha_k}=\{v-u=v^{\alpha_k}\}$. We integrate the estimate from Proposition \ref{cor:estpartialvTkphi} starting from the curve $\gamma_{\alpha_k}$, which gives us that:
\begin{equation}\label{eq:tk_aux_asym}
\begin{split}
|T^{k} \phi& (u,v)-(-1)^{k} 2k! \cdot  I_0 [\psi ] ( v^{-k-1} - v_{\gamma_{\alpha_k}}^{-k-1}(u) ) | \leq  |T^{k} \phi| (u , v_{\gamma_{\alpha_k}} (u))\\ 
&+ C\left(\sqrt{E^{\epsilon}_{I_0\neq 0;k}[\psi]}+I_0[\psi]\right) \sum_{l=0}^{k-1}\int_{v_{\gamma_{\alpha_k}}}^v  r^{-3-l} u^{-k+l+\epsilon}\,dv'\\
&+C\sqrt{E^{\epsilon}_{I_0\neq 0;k}[\psi]}( v_{\gamma_{\alpha_k}}^{1-(k+3)\alpha_k+2\epsilon}(u)-v^{1-(k+3)\alpha_k+2\epsilon} ) \\
&+ C\cdot P_{I_{0},\beta; k}  [\psi](v_{\gamma_{\alpha_k}}^{-k-1-\beta}(u)-v^{-k-1-\beta}) . 
\end{split}
\end{equation}
Note that
$$ |  T^{k} \phi |  (u , v_{\gamma_{\alpha_k}} (u)) = r^{\frac{1}{2}} \cdot r^{\frac{1}{2}} | T^{k} \psi | (u , v_{\gamma_{\alpha_k}} (u)) \leq C \sqrt{E_{0,I_0\neq0;k}^{\epsilon}[\psi]} (u+1)^{-\frac{3}{2}-k+\frac{\alpha_k}{2}+\epsilon} , $$
by Proposition \ref{prop:pointdecpsi}.

Furthermore,
$$ \sum_{l=0}^{k-1}\int_{v_{\gamma_{\alpha_k}}}^v  r^{-3-l} (u+1)^{-k+l+\epsilon}\,dv' \leq C \sum_{l=0}^{k-1}(u+1)^{-k+l+\epsilon-(2+l)\alpha_k}\leq C (u+1)^{-1+\epsilon-(k+1)\alpha_k},
$$ 
and
\begin{equation*}
\begin{split}
C\sqrt{E^{\epsilon}_{I_0\neq 0;k}[\psi]}&( v_{\gamma_{\alpha_k}}^{1-(k+3)\alpha_k+2\epsilon}(u)-v^{1-(k+3)\alpha_k+2\epsilon})+ C\cdot P_{I_{0},\beta; k}  [\psi](v_{\gamma_{\alpha_k}}^{-k-1-\beta}(u)-v^{-k-1-\beta})\\
\leq&\:C\sqrt{E^{\epsilon}_{I_0\neq 0;k}[\psi]}(u+1)^{1-(k+3)\alpha_k+2\epsilon}+ C\cdot P_{I_{0},\beta; k}  [\psi](u+1)^{-k-1-\beta},
\end{split}
\end{equation*}
which follows from the arguments used in Proposition \ref{cor1} to obtain the above expression with $k=0$.

Finally, we use that
\begin{equation*}
\left|v_{\gamma_{\alpha_k}}(u)^{-k-1}-(u+1)^{-k-1}\right|\leq C(u+1)^{-2-k+\alpha_k}
\end{equation*}
and we combine the above estimates to obtain
\begin{equation*}
\begin{split}
|T^{k} \phi (u,v)&- (-1)^{k} k! \cdot  2I_0 [\psi ] \left(u^{-k-1}- v^{-k-1}\right)|\\
\leq &\: CI_0 [\psi ](u+1)^{-2-k+\alpha_k}+C\cdot P_{I_0,\beta;k}[\psi](u+1)^{-1-k-\beta}\\
&+C\sqrt{E^{\epsilon}_{I_0\neq 0;k}[\psi]}(u+1)^{-\frac{3}{2}-k+\frac{\alpha_k}{2}+\epsilon}\\
&+ C \sqrt{E^{\epsilon}_{I_0\neq 0;k}[\psi]}(u+1)^{1-(k+3)\alpha_k+2\epsilon}\\
&+C\left(\sqrt{E^{\epsilon}_{I_0\neq 0;k}[\psi]}+I_0[\psi]\right)(u+1)^{-1-(k+1)\alpha_k+\epsilon}.
\end{split}
\end{equation*}
For $\alpha_k\in (\frac{k+2}{k+3},1)$ we have that
\begin{equation*}
\begin{split}
\max&\left\{-\frac{3}{2}-k+\frac{\alpha_k}{2}, 1-(k+3)\alpha_k, -1-(k+1)\alpha_k,-2-k+\alpha_k\right\}\\
&=\max\left\{-\frac{3}{2}-k+\frac{\alpha_k}{2},1-(k+3)\alpha_k\right\}
\end{split}
\end{equation*}
and hence, for $\alpha_k\in (\frac{k+2}{k+3},1)$ and $\epsilon\in(0,\frac{1}{2}(k+3)\alpha_k-\frac{1}{2}(k+2))$:
\begin{equation*}
\begin{split}
|T^{k} \phi (u,v)&- (-1)^{k} k! \cdot  2I_0 [\psi ] \left(u^{-k-1}- v^{-k-1}\right)|\\
\leq&\: C\left(\sqrt{E^{\epsilon}_{I_0\neq 0;k}[\psi]}+I_0[\psi]\right)(u+1)^{\max\{-\frac{3}{2}-k+\frac{\alpha_k}{2},1-(k+3)\alpha_k\}+2\epsilon}\\
&+C\cdot P_{I_{0},\beta; k}  [\psi](u+1)^{-1-k-\beta}.
\end{split}
\end{equation*}
Furthermore,
\begin{equation*}
-\frac{3}{2}-k+\frac{\alpha_k}{2}\geq 1-(k+3)\alpha_k
\end{equation*}
if $\alpha_k\geq \frac{2k+5}{2k+7}$ (and the inequality is reversed if $\alpha_k\leq \frac{2k+5}{2k+7}$).

We also have that
\begin{equation*}
-\frac{3}{2}-k+\frac{\alpha_k}{2}+2\epsilon\leq -1-k-\epsilon
\end{equation*}
if $\epsilon<\frac{1}{6}(1-\alpha_k)$. Note that $\frac{1}{6}(1-\alpha_k)<\frac{1}{2}(k+3)\alpha_k-\frac{1}{2}(k+2)$.

So, for $\alpha_k\in [\frac{2k+5}{2k+7},1)$ and $\epsilon\in (0,\frac{1}{6}(1-\alpha_k))$ we obtain in particular
\begin{equation*}
\begin{split}
|T^{k} \phi (u,v)&- (-1)^{k} k! \cdot  2I_0 [\psi ] \left(u^{-k-1}- v^{-k-1}\right)|\\
\leq&\: C\left(\sqrt{E^{\epsilon}_{I_0\neq 0;k}[\psi]}+I_0[\psi]\right)(u+1)^{-1-k-\epsilon}\\
&+C\cdot P_{I_{0},\beta; k}  [\psi](u+1)^{-1-k-\beta}.
\end{split}
\end{equation*}

Finally, if $\delta_k $ satisfies $1>\delta_k> \frac{\alpha_k}{2}+\frac{1}{2}+2\epsilon>\alpha_k+2\epsilon$ then in the region $\mathcal{B}_{\delta_k}$ bounded by $\gamma_{\delta_k}$ we obtain
\begin{align*}
\left|\frac{1}{(u+1)^{k+1}}-\frac{1}{v^{k+1}}\right| \geq&\: \left|\frac{1}{(u+1)^{k+1}}-\frac{1}{v_{\gamma_{\delta_k}}^{k+1}}\right|=
\left|\frac{ v_{\gamma_{\delta_k}}^{k+1}-(u+1)^{k+1}}{v_{\gamma_{\delta_k}}^{k+1}\cdot (u+1)^{k+1}}  \right| \\
=&\:\sum_{j=0}^k\left|\frac{ v_{\gamma_{\delta_k}}-(u+1)}{v_{\gamma_{\delta_k}}^{j+1}\cdot (u+1)^{k+1-j}}  \right|\\
\sim &\: (u+1)^{-2-k+\delta_k},
\end{align*}
where we made use of the identity
\begin{equation*}
v^{k+1}-(u+1)^{k+1}=(v-u)\sum_{j=0}^{k}(u+1)^jv^{k-j}.
\end{equation*}
Since  $-2+\delta_k-k > \frac{\alpha_k}{2}-\frac{3}{2}-k+2\epsilon>-1-\beta-k$, if moreover $\frac{1-\alpha_k}{2}<\beta+2\epsilon$, estimate \eqref{asyTkphiprop} indeed provides the asymptotic behaviour of $\phi$ in the region $\mathcal{B}_{\delta_k}$.
\end{proof}

\subsection{Global asymptotics for the scalar field $\psi$}
\label{sec:asympsinonzeronp}

In this section, we propagate the late-time asymptotics of $\psi$ that follow from the asymptotics of $r\psi$ near infinity in the $I_0[\psi]\neq 0$ case to the entire spacetime region $\mathcal{R}$.
\begin{proposition}\label{prop:psi_lower}
Let $\epsilon\in (0,\frac{1}{98})$  such that $\beta>\epsilon$. Assume that the initial data for a solution $\psi$ to the wave equation satisfy \[ P_{I_{0},\beta}[\psi] <\infty, \] with $P_{I_{0},\beta}[\psi]$ defined in \eqref{def:PI0},
 and
\[\widetilde{E}^{\epsilon}_{0,I_0\neq0;1}[\psi]< \infty.\] 
Then for all $(u,v)\in \mathcal{R}\cap\{r\geq R\}$ we can estimate
\begin{equation}\label{price_0_vpsi}
\left|\psi (u,v) -\frac{4I_0 [\psi ]}{(u+1)v}\right| \leq C\left(\sqrt{\widetilde{E}^{\epsilon}_{0,I_0\neq 0;1}[\psi]}+I_0[\psi]+P_{I_0,\beta}[\psi]\right)(u+1)^{-1-\epsilon}v^{-1},
\end{equation}
where $C=C(D,\Sigma,R,\epsilon)>0$ is a constant.

Furthermore, we can estimate in $\mathcal{R}\cap\{r\leq R\}$:
\begin{equation}\label{price_0_v2}
\left|\psi (\tau,\rho) -\frac{4I_0 [\psi ]}{(\tau+1)^2}\right| \leq C\left(\sqrt{\widetilde{E}^{\epsilon}_{0,I_0\neq 0;1}[\psi]}+I_0[\psi]+P_{I_0,\beta}[\psi]\right)(\tau+1)^{-2-\epsilon},
\end{equation}
where $C=C(D,\Sigma,R,\epsilon)>0$ is a constant.
\end{proposition}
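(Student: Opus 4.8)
The plan is to obtain the global asymptotics of $\psi$ by propagating the radiation-field asymptotics already established in Proposition \ref{cor1} inward from null infinity, through a sequence of integrations. The starting point is estimate \eqref{asyphiprop} for $\phi=r\psi$, which holds throughout a region $\mathcal{B}_\delta$ with $\delta<1$ but close to $1$, subject to $\alpha\in[\frac{5}{7},1)$, $\epsilon\in(0,\frac{1}{6}(1-\alpha))$ and $1>\delta>\frac{\alpha}{2}+\frac{1}{2}+2\epsilon$; the restriction $\epsilon\in(0,\frac{1}{98})$ in the statement is precisely what is needed to make this chain of inequalities, together with the further constraint $\delta\le 1-4\epsilon$ arising below, simultaneously satisfiable. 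The three regions to treat are $\mathcal{B}_\delta\cap\{r\ge R\}$, the intermediate strip $\{r\ge R\}\setminus\mathcal{B}_\delta$, and $\{r\le R\}$.

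First, in $\mathcal{B}_\delta\cap\{r\ge R\}$ I would divide \eqref{asyphiprop} by $r$. Using $\frac{1}{2}(v-u)=r_*(r)=r+O(\log r)$ from \eqref{uvr} together with the identity
\[
(u+1)^{-1}-v^{-1}=\frac{v-u-1}{(u+1)v},
\]
the leading term $\frac{2I_0[\psi]\,((u+1)^{-1}-v^{-1})}{r}$ reduces to $\frac{4I_0[\psi]}{(u+1)v}$ up to the relative error $\frac{1}{v-u}\lesssim v^{-\delta}$ (plus harmless logarithmic corrections from $r_*$). To control the error terms of \eqref{asyphiprop} after division by $r$, I would split $\mathcal{B}_\delta=\{r\ge Cv\}\cup\{r\le Cv\}$ as in Step 6 of Section \ref{prevtech}: on $\{r\ge Cv\}$ dividing by $r\ge Cv$ produces the weight $v^{-1}$ directly, while on $\{r\le Cv\}$ one has $u\sim v$ and $r\sim v-u\gtrsim v^\delta$, so dividing gains the factor $v^{-\delta}$. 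In both cases the choices of $\alpha,\delta,\epsilon$ above render the errors $\lesssim(u+1)^{-1-\epsilon}v^{-1}$, and the $P_{I_0,\beta}$-error $\lesssim(u+1)^{-1-\beta}v^{-1}$, which is admissible since $\beta>\epsilon$. This yields \eqref{price_0_vpsi} on $\mathcal{B}_\delta\cap\{r\ge R\}$, and by continuity along the boundary curve $\gamma_\delta$.

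Next I would propagate \eqref{price_0_vpsi} into the strip $\{v_{r=R}(u)\le v\le v_{\gamma_\delta}(u)\}$ by integrating $\partial_v\psi=L\psi$ along $\mathcal{N}_\tau=\{u=\tau\}$,
\[
\psi(u,v)=\psi(u,v_{\gamma_\delta}(u))-\int_v^{v_{\gamma_\delta}(u)}L\psi(u,v')\,dv'.
\]
The crucial input is the weighted bound $r^{1/2}|L\psi|\lesssim\sqrt{\widetilde{E}^{\epsilon}_{0,I_0\neq0;1}[\psi]}\,(1+\tau)^{-5/2+\epsilon}$, which follows from \eqref{eq:pointdecayNYpsi1} of Corollary \ref{cor:rhalfYpsidecay} after expressing $L$ in terms of $Y$ and $T$ via \eqref{relationlr0}--\eqref{relationlr}: since $L=\frac{D}{2}Y+O(r^{-1-\eta})T$ for $r\ge R$, the bounds on $\sqrt{\rho+1}|Y\psi|$ and $\sqrt{\rho+1}|T\psi|$, together with \eqref{eq:comptimes}, give the claim. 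As $dv\sim dr$ along constant $u$, Cauchy--Schwarz gives $\int_v^{v_{\gamma_\delta}(u)}r^{-1/2}\,dv'\lesssim r^{1/2}|_{\gamma_\delta}\sim u^{\delta/2}$, so the integral is bounded by $(u+1)^{-5/2+\delta/2+\epsilon}$; since $v\sim u$ in the strip, comparing $\frac{4I_0}{(u+1)v_{\gamma_\delta}(u)}$ with the target $\frac{4I_0}{(u+1)v}$ costs only $\lesssim u^{-3+\delta}$, and \eqref{price_0_vpsi} follows exactly when $\delta\le 1-4\epsilon$. This is the step where $\delta<1$ is indispensable: at $\delta=1$ the integrated error would be of the same order $u^{-2}$ as the main term. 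Securing this strict inequality against the lower bound $\delta>\frac{\alpha}{2}+\frac{1}{2}+2\epsilon$ forced by the radiation-field step is the main obstacle, and is exactly the tension that dictates the admissible range of $\epsilon$.

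Finally, for $\{r\le R\}$ I would integrate the radial derivative $\partial_\rho\psi$ tangent to $\Sigma_\tau$ inward from $\{r=R\}$. Writing $\partial_\rho=\partial_r+h_\Sigma\partial_v$ and comparing with $Y=\partial_r+h_{\mathcal{S}}\partial_v$ gives $\partial_\rho\psi=Y\psi-(h_{\mathcal{S}}-h_\Sigma)T\psi$, so the weighted bound \eqref{eq:pointdecayNYpsi1} of Corollary \ref{cor:rhalfYpsidecay} (whose proof rests on the elliptic estimate of Lemma \ref{lm:elliptic} together with the redshift Lemmas \ref{lm:redshift}--\ref{lm:redshiftNpsi} in the $r_{\rm min}=r_+$ case), which on the bounded set $\{r\le R\}$ reads $|Y\psi|\lesssim(1+\widetilde{\tau})^{-5/2+\epsilon}$, together with the pointwise decay of $T\psi$ and \eqref{eq:comptimes}, controls $\partial_\rho\psi$ by $(\tau+1)^{-5/2+\epsilon}$. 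Since $R-\rho$ is bounded, integration yields $|\psi(\tau,\rho)-\psi(\tau,R)|\lesssim(\tau+1)^{-5/2+\epsilon}$; combining with $\psi(\tau,R)=\frac{4I_0}{(\tau+1)^2}+O((\tau+1)^{-2-\epsilon})$, obtained from the first part evaluated at $r=R$ (where $u=\tau$ and $v=\tau+2r_*(R)$), then gives \eqref{price_0_v2}. The remaining work is the consistent bookkeeping of the competing error terms across the three regions, which the stated energy norms and the range of $\epsilon$ are tailored to absorb.
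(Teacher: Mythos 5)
Your proposal is correct and follows essentially the same route as the paper's own proof: dividing the radiation-field asymptotics of Proposition \ref{cor1} by $r$ with the same two-sub-region splitting (your $\{r\geq Cv\}$/$\{r\leq Cv\}$ is the paper's $\{v-u-1>\frac{v}{2}\}$/$\{v-u-1\leq\frac{v}{2}\}$ partition of $\mathcal{B}_{\alpha+6\epsilon}$), then integrating $\partial_v\psi$ across the strip using the $r^{1/2}\partial_v\psi$ bound \eqref{eq:pointdecayNYpsi1}, and finally integrating $\partial_\rho\psi$ inward in $\{r\leq R\}$. The only differences are cosmetic parameter bookkeeping (you keep a single $\epsilon$ with the constraint $\delta\leq 1-4\epsilon$, whereas the paper fixes $\alpha=1-7\epsilon$ and rescales via $\epsilon'=\epsilon/4$), and both choices close within the stated range $\epsilon\in(0,\tfrac{1}{98})$.
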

\begin{proof}
Let $\alpha$ be a real number such that 
\begin{equation}
\frac{5}{7}<\alpha<1-6\epsilon,
\label{relae}
\end{equation}
where we take $0<\epsilon<\frac{1}{21}$, such that the above choice of $\alpha$ is well-defined. From Proposition \ref{cor1} it therefore follows that for all $(u,v)\in \mathcal{B}_{\alpha}$ we have that
\begin{equation*}
\begin{split}
\left|r\psi (u,v)-2I_0 [\psi] \left((u+1)^{-1}-v^{-1}\right)\right|\leq&\: C\left(\sqrt{E^{\epsilon}_{0,I_0\neq 0;0}[\psi]}+I_0[\psi]\right)(u+1)^{\frac{\alpha}{2}-\frac{3}{2}+2\epsilon}\\
&+C\cdot P_{I_{0},\beta}[\psi]\cdot (u+1)^{-1-\beta}.
\end{split}
\end{equation*}

By \eqref{uvr} we can estimate $C(v-u-1)\leq r\leq C(v-u-1)$ in $\mathcal{B}_{\alpha}$ for some constants $c(D,R),C(D,R)>0$. Furthermore, from the asymptotics of $D$ it follows that
\begin{equation*}
\begin{split}
\left|(v-u-1)-2r\right|\leq&\: C \log(r)\leq C\log (v-u-1)\\
\leq &\: C \log v.
\end{split}
\end{equation*}
in $\mathcal{B}_{\alpha}$. Note moreover that
\begin{equation*}
\begin{split}
2I_0 [\psi] \left((u+1)^{-1}-v^{-1}\right)=&\:2I_0[\psi]\frac{v-u-1}{(u+1)v}\\
&\:=2I_0[\psi]\cdot 2r\cdot \frac{1}{(u+1)v}+2I_0[\psi]((v-u-1)-2r)\cdot \frac{1}{(u+1)v}.
\end{split}
\end{equation*}
Therefore,
\begin{equation}
\label{eq:mainestpsiBalpha}
\begin{split}
&\Bigg|\psi (u,v)-4I_0 [\psi]\cdot \frac{1}{(u+1)v}\Bigg|\\ \ \ \ \ \ \leq &\: C\left(\sqrt{E^{\epsilon}_{0,I_0\neq 0;0}[\psi]}+I_0[\psi]\right)(v-u-1)^{-1}\Big[(u+1)^{\frac{\alpha}{2}-\frac{3}{2}+2\epsilon}\\ \ \ \ \ \ 
&+\log(v)(u+1)^{-1}v^{-1}\Big]+C(v-u-1)^{-1}\cdot P_{I_{0},\beta}[\psi]\cdot (u+1)^{-1-\beta}\\
\ \ \ \ \  \leq&\:C\left(\sqrt{E^{\epsilon}_{0,I_0\neq 0;0}[\psi]}+I_0[\psi]+P_{I_{0},\beta}[\psi] \right)(v-u-1)^{-1}(u+1)^{\frac{\alpha}{2}-\frac{3}{2}+2\epsilon},
\end{split}
\end{equation}
for all $(u,v)\in\mathcal{B}_{\alpha}$, where in the last inequality we used  that $\beta>\epsilon$ and we also used that the logarithmic term is bounded as follows: by the definition of the region $\mathcal{B}_{\alpha}$ we have $v\geq (u+1)$ and hence
\[\left|\frac{\log v}{(u+1)v•}\right|\leq \frac{C}{(u+1)v^{1-\epsilon}•}\leq \frac{C}{(u+1)(u+1)^{1-\epsilon}•}=\frac{C}{(u+1)^{2-\epsilon}}\leq \frac{C}{(u+1)^{\frac{3}{2}}}.\]

\textbf{Asymptotics in $\mathcal{B}_{\alpha+6\epsilon}$:}

To estimate the right-hand side of \eqref{eq:mainestpsiBalpha}, we will restrict further to the region
\begin{equation*}
\mathcal{B}_{\alpha+6\epsilon}\subset \mathcal{B}_{\alpha}.
\end{equation*}

We will moreover partition $\mathcal{B}_{\alpha+6\epsilon}$ as follows:
\begin{equation*}
\mathcal{B}_{\alpha+6\epsilon}=\left(\mathcal{B}_{\alpha+6\epsilon}\cap\left \{v-u-1\leq \frac{v}{2}\right\}\right)\cup \left(\mathcal{B}_{\alpha+6\epsilon}\cap\left \{v-u-1> \frac{v}{2}\right\}\right).
\end{equation*}

\bigskip

\textbf{Asymptotics in $\mathcal{B}_{\alpha+6\epsilon}\cap\left \{v-u-1> \frac{v}{2}\right\}$}: 

\medskip

We can estimate
\begin{equation*}
(v-u-1)^{-1}(u+1)^{\frac{\alpha}{2}-\frac{3}{2}+2\epsilon}\leq 2v^{-1}(u+1)^{\frac{\alpha}{2}-\frac{3}{2}+2\epsilon}<2v^{-1}(u+1)^{-1-\epsilon},
\end{equation*}
where we used that $\frac{\alpha}{2}-\frac{3}{2}+2\epsilon<-1-\epsilon$ which follows from \eqref{relae}.

Therefore,
\begin{equation*}
\begin{split}
\Bigg|\psi (u,v)-4I_0 [\psi]\cdot \frac{1}{(u+1)v}\Bigg|\leq&\:C\left(\sqrt{E^{\epsilon}_{0,I_0\neq 0;0}[\psi]}+I_0[\psi]+P_{I_{0},\beta}[\psi]\right)v^{-1}(u+1)^{-1-\epsilon}
\end{split}
\end{equation*}
for all $(u,v)\in \mathcal{B}_{\alpha+6\epsilon}\cap\left \{v-u-1> \frac{v}{2}\right\}$.\\
\\
\textbf{Asymptotics in $\mathcal{B}_{\alpha+6\epsilon}\cap\left \{v-u-1\leq \frac{v}{2}\right\}$}:

 We use that in $\mathcal{B}_{\alpha+6\epsilon}\cap\left \{v-u-1\leq \frac{v}{2}\right\}$ we can estimate $v-u\geq v^{\alpha+6\epsilon}$ and $v\sim u+1$, to obtain:
\begin{equation}
\label{eq:asymppsileft}
\begin{split}
(v-u-1)^{-1}(u+1)^{\frac{\alpha}{2}-\frac{3}{2}+2\epsilon} & \leq C v^{-\alpha-6\epsilon}(u+1)^{\frac{\alpha}{2}-\frac{3}{2}+2\epsilon}\\
& \leq C v^{-1}v^{1-\alpha-6\epsilon}(u+1)^{\frac{\alpha}{2}-\frac{3}{2}+2\epsilon}\\ 
&\leq  C v^{-1}(u+1)^{1-\alpha-6\epsilon+\frac{\alpha}{2}-\frac{3}{2}+2\epsilon}=v^{-1}\cdot (u+1)^{-\frac{1}{2}-\frac{\alpha}{2}-4\epsilon }.
\end{split}
\end{equation}
We have however 
\[ -\frac{1}{2}-\frac{\alpha}{2}-4\epsilon\leq -1-\frac{\epsilon}{2},   \]
if we require 
\begin{equation}
1-\alpha\leq 7\epsilon.
\label{relae1}
\end{equation}
In fact, if we had simply considered the slightly larger region $\mathcal{B}_{\alpha}$ then the exponent of $(u+1)$ on the right-hand side of \eqref{eq:asymppsileft} would not have been smaller than $-1$ and hence we would not be able to obtain precise first-order asymptotics for $\psi$.

Therefore, for $\alpha$ satisfying both \eqref{relae} and \eqref{relae1}, that is for $\alpha$ such that 
$1-7\epsilon\leq \alpha\leq 1-6\epsilon$ we obtain 

\begin{equation*}
\begin{split}
\Bigg|\psi (u,v)-4I_0 [\psi]\cdot \frac{1}{(1+u)v}\Bigg|\leq&\:C\left(\sqrt{E^{\epsilon}_{0,I_0\neq 0;0}[\psi]}+I_0[\psi]+P_{I_{0},\beta}[\psi]\right)v^{-1}(u+1)^{-1-\frac{\epsilon}{2}}
\end{split}
\end{equation*}
for all $(u,v)\in \mathcal{B}_{\alpha+6\epsilon}\cap\left \{v-u-1\leq \frac{v}{2}\right\}$.\\
\\
\textbf{Asymptotics in $(\mathcal{R}\cap\{r\geq R\})\setminus  \mathcal{B}_{\alpha+6\epsilon}$}:

To extend the above estimates into the region $\{r\geq R\}$ we will make use of the estimate
\begin{equation*}
r^{\frac{1}{2}}|\partial_v\psi|\leq C\sqrt{\widetilde{E}^{\epsilon'}_{0,I_0\neq0;1}} (1+u)^{-\frac{5}{2}+\epsilon'},
\end{equation*}
which follows from \eqref{eq:pointdecayNYpsi1}, for an  $\epsilon'>0$ that will be determined later as a function of $\epsilon$. We integrate along $\mathcal{N}_{\tau}$ to obtain:
\begin{equation*}
\begin{split}
|\psi(u,v)-\psi(u,v_{\gamma_{\alpha+6\epsilon}}(u))|=&\:\left|\int_v^{v_{\gamma_{\alpha+6\epsilon}}(u)}\partial_v\psi(u,v')\,dv'\right|\\
\leq &\: \int_v^{v_{\gamma_{\alpha+6\epsilon}}(u)}r^{-\frac{1}{2}}\cdot r^{\frac{1}{2}}|\partial_v\psi|(u,v')\,dv' \\
\leq &\:C\sqrt{\widetilde{E}^{\epsilon'}_{0,I_0\neq0;1}} (1+u)^{-\frac{5}{2}+\epsilon'} \int_v^{v_{\gamma_{\alpha+6\epsilon}}(u)}r^{-\frac{1}{2}}(v')\,dv' \\
\end{split}
\end{equation*}
Note that for $r\geq R$ we have $r^{*}(u,v)\sim r(u,v)\sim \frac{1}{2}(v-u)>0$ and hence
\begin{equation*}
\begin{split}
\int_v^{v_{\gamma_{\alpha+6\epsilon}}(u)}r^{-\frac{1}{2}}(v')\,dv' & \leq C \int_{v}^{v_{\gamma_{\alpha+6\epsilon}}(u)}(v'-u)^{-\frac{1}{2}}\, dv' \\
& =2C \cdot (v_{\gamma_{\alpha+6\epsilon}}(u)-u )^{1/2}-2C \cdot (v-u )^{1/2}  
\\ &\leq 2 C \cdot (v_{\gamma_{\alpha+6\epsilon}}(u)-u )^{1/2}\\
&= 2C \cdot (v_{\gamma_{\alpha+6\epsilon}}(u))^{\frac{\alpha+6\epsilon}{2}}  
\\ &\leq 2 C(1+u)^{\frac{\alpha}{2}+3\epsilon }.
\end{split}
\end{equation*}
Therefore, 
\begin{equation*}
\begin{split}
|\psi(u,v)-\psi(u,v_{\gamma_{\alpha+6\epsilon}}(u))| \leq 
C &\:\sqrt{\widetilde{E}^{\epsilon'}_{0,I_0\neq0;1}} (1+u)^{-\frac{5}{2}+\frac{1}{2}\alpha+3\epsilon+\epsilon'}\\
\leq C &   \:\sqrt{\widetilde{E}^{\epsilon'}_{0,I_0\neq0;1}} v_{\gamma_{\alpha+6\epsilon}}^{-1}(1+u)^{-\frac{3}{2}+\frac{1}{2}\alpha+3\epsilon+\epsilon'}\\
\leq C &   \:\sqrt{\widetilde{E}^{\epsilon'}_{0,I_0\neq0;1}} v^{-1}(1+u)^{-\frac{3}{2}+\frac{1}{2}\alpha+3\epsilon+\epsilon'},
\end{split}
\end{equation*}
where we used that $1+u\sim v_{\gamma_{\alpha+6\epsilon}}(u) $ and that in the region under consideration $v\leq v_{\gamma_{\alpha+6\epsilon}}(u)$.

Note that it is very crucial that we integrate only up to the curve $\gamma_{\alpha+6\epsilon}$, where $r\sim v^{\alpha+6\epsilon}$ with $\alpha+6\epsilon<1 $ and not, in particular, up to a curve where $r\sim v$. That also underlines the importance of the fact that we were able to derive lower bounds for the weighted derivative $v^{2}\partial_{v}\phi$ all the way up to the curve $\gamma_{\alpha}$.

 Let us now \emph{fix}
\begin{equation*}
\alpha=1-7\epsilon.
\end{equation*}
Then we obtain
\begin{equation*}
\begin{split}
|\psi(u,v)-\psi(u,v_{\gamma_{\alpha+6\epsilon}}(u))|\leq & \,  C \: \sqrt{\widetilde{E}^{\epsilon'}_{0,I_0\neq0;1}} v^{-1}(1+u)^{ -1-\frac{7}{2}\epsilon+3\epsilon+\epsilon' }\\
\leq \, C &\:\sqrt{\widetilde{E}^{\epsilon'}_{0,I_0\neq0;1}} v^{-1}(1+u)^{-1-\frac{1}{2}\epsilon+\epsilon'},
\end{split}
\end{equation*}
for all $(u,v)\in (\mathcal{R}\cap\{r\geq R\})\setminus  \mathcal{B}_{\alpha+6\epsilon}$. If we choose \[\epsilon'=\frac{1}{4}\epsilon\] then
\begin{equation*}
\begin{split}
|\psi(u,v)-\psi(u,v_{\gamma_{\alpha+6\epsilon}}(u))|\leq & C\:\sqrt{\widetilde{E}^{\epsilon/4}_{0,I_0\neq0;1}} v^{-1}(1+u)^{-1-\epsilon/4}.
\end{split}
\end{equation*}

Since $E^{\epsilon}_{0,I_0\neq 0;0}[\psi]\geq \widetilde{E}^{\epsilon/4}_{0,I_0\neq 0;0}[\psi]$, we can conclude that for $\beta>\epsilon/4$
\begin{equation*}
\begin{split}
\Bigg|\psi (u,v)-4I_0 [\psi]\cdot \frac{1}{(1+u)v}\Bigg|\leq&\:C\left(\sqrt{\widetilde{E}^{\epsilon/4}_{0,I_0\neq 0;0}[\psi]}+I_0[\psi]+P_{I_{0},\beta}[\psi]\right)v^{-1}(u+1)^{-1-\epsilon/4}
\end{split}
\end{equation*}
for all $(u,v)\in \{r\geq R\}$ and for $\epsilon\in(0,\frac{2}{49})$.\\
\\
\textbf{Asymptotics in $(\mathcal{R}\cap\{r\leq R\})$}:

Finally, we will extend the asymptotics of $\psi$ into the rest of the spacetime by integrating along $\Sigma_{\tau}\cap \{r\leq R\}$. By \eqref{price_0_vpsi} we have that
\begin{equation*}
\left|\psi|_{\rho=R}(\tau)-4I_0 [\psi]\cdot \frac{1}{(\tau+1)^2}\right|\leq C\left(\sqrt{\widetilde{E}^{\epsilon}_{0,I_0\neq 0;1}[\psi]}+I_0[\psi]+P_{I_{0},\beta}[\psi]\right)(\tau+1)^{-2-\epsilon},
\end{equation*}
since $\tau=u$ along $\{r=R\}$. Furthermore, 
\begin{equation*}
\rho^{\frac{1}{2}}|\partial_{\rho}\psi|\leq C\sqrt{\widetilde{E}^{\epsilon}_{0,I_0\neq0;1}} (1+\tau)^{-\frac{5}{2}+\epsilon},
\end{equation*}
which follows from \eqref{eq:pointdecayNYpsi1}. We now integrate in $\rho$ to obtain
\begin{equation*}
\begin{split}
|\psi(\tau,\rho)-\psi|_{\rho=R}(\tau))|=&\:\left|\int_{\rho}^{R}\partial_{\rho}\psi(\tau,\rho')\,d\rho'\right|\\
\leq &\: \int_{\rho}^{R}{\rho}^{-\frac{1}{2}}\cdot \rho^{\frac{1}{2}}|\partial_{\rho}\psi|(\tau,\rho')\,dv'\\
\leq &\:C\sqrt{\widetilde{E}^{\epsilon}_{0,I_0\neq0;1}} (1+\tau)^{-\frac{5}{2}+\epsilon}.
\end{split}
\end{equation*}
\end{proof}

After commuting $\psi$ with $T$ we get the following improved asymptotics.
\begin{proposition}\label{prop:tkpsi_lower}
Let $k\in \N_0$. There exists an $\epsilon>0$ suitably small such that under the assumptions:
\[\widetilde{E}^{\epsilon}_{0,I_0\neq0;k+1}[\psi]< \infty,\] and
\begin{equation*}
P_{I_0,\beta;k}[\psi]<\infty,
\end{equation*}
with $\beta>\epsilon$ and $P_{I_{0},\beta;k}[\psi]$ defined in \eqref{def:PI0k}, we have that for all $(u,v)\in \mathcal{R}\cap\{r\geq R\}$:\begin{equation}\label{price_0_v}
\begin{split}
\Bigg|T^k\psi& (u,v) -4(-1)^k k!\cdot \frac{I_0 [\psi ]}{(u+1)^{k+1}v}\left(1+\sum_{j=1}^k\left(\frac{u+1}{v}\right)^j\right)\Bigg|\\
 \leq&\: C\left(\sqrt{\widetilde{E}^{\epsilon}_{0,I_0\neq 0;k+1}[\psi]}+I_0[\psi]+P_{I_0,\beta;k}[\psi]\right)(u+1)^{-k-1-\epsilon}v^{-1},
 \end{split}
\end{equation}
where $C=C(D,\Sigma,R,k,\epsilon)>0$ is a constant.

Furthermore, we can estimate in $\mathcal{R}\cap\{r\leq R\}$:
\begin{equation}\label{price_0_v2}
\begin{split}
\Bigg|T^k\psi& (\tau,\rho) -4(-1)^k(k+1)!\cdot \frac{I_0 [\psi ]}{(\tau+1)^{k+2}}\Bigg| \\
\leq&\: C\left(\sqrt{\widetilde{E}^{\epsilon}_{0,I_0\neq 0;k+1}[\psi]}+I_0[\psi]+P_{I_0,\beta;k}[\psi]\right)(\tau+1)^{-k-2-\epsilon},
\end{split}
\end{equation}
where $C=C(D,\Sigma,R,k,\epsilon)>0$ is a constant.
\end{proposition}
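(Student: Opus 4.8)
The plan is to deduce the $T^k\psi$ asymptotics directly from the radiation-field expansion of Proposition~\ref{asymprecTk}, running in parallel with the $k=0$ argument of Proposition~\ref{prop:psi_lower}. Since $T=\partial_v$ in $(v,r,\theta,\varphi)$ coordinates annihilates $r$, one has the exact identity $T^k\psi=r^{-1}T^k\phi$, so the leading behaviour of $T^k\psi$ near infinity is obtained by dividing the $T^k\phi$ expansion by $r$. From Proposition~\ref{asymprecTk}, in a region $\mathcal{B}_{\delta_k}$ with $\delta_k<1$ suitably close to $1$,
\[
\left|T^k\phi(u,v)-(-1)^k k!\cdot 2I_0[\psi]\big(u^{-k-1}-v^{-k-1}\big)\right|\leq C\big(\sqrt{\widetilde{E}^{\epsilon}_{0,I_0\neq 0;k}[\psi]}+I_0[\psi]\big)(u+1)^{-\frac32-k+\frac{\alpha_k}{2}+2\epsilon}+C\cdot P_{I_0,\beta;k}[\psi]\,(u+1)^{-1-k-\beta}.
\]
First I would replace $u^{-k-1}$ by $(u+1)^{-k-1}$ at the cost of an $O(u^{-k-2})$ error and rewrite the leading factor via the telescoping identity $v^{k+1}-(u+1)^{k+1}=(v-u-1)\sum_{j=0}^{k}(u+1)^j v^{k-j}$ together with $v-u-1=2r+O(\log r)$, which follows from \eqref{uvr}. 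Dividing by $r$ then produces exactly the claimed term $4(-1)^k k!\,I_0[\psi](u+1)^{-k-1}v^{-1}\big(1+\sum_{j=1}^k((u+1)/v)^j\big)$, while the $O(\log r)$ discrepancy contributes an error $\lesssim \log(v)\,(u+1)^{-k-1}v^{-1}r^{-1}$ that is subleading. As in the $k=0$ case this is handled by partitioning $\mathcal{B}_{\delta_k}$ into $\{v-u-1>v/2\}$ and $\{v-u-1\leq v/2\}$ and estimating separately.

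Next I would propagate these asymptotics from $\mathcal{B}_{\delta_k}$ into all of $\{r\geq R\}$ and then into $\{r\leq R\}$. For the former, integrate $\partial_v(T^k\psi)$ along the outgoing cones $\mathcal{N}_\tau$ inward from $\gamma_{\delta_k}$. The required weighted bound $r^{1/2}|\partial_v(T^k\psi)|\lesssim \sqrt{\widetilde{E}^{\epsilon}_{0,I_0\neq 0;k+1}[\psi]}\,(u+1)^{-\frac52-k+\epsilon}$ follows from Corollary~\ref{cor:rhalfYpsidecay}: using \eqref{relationlr0} one writes the double-null derivative as $L=\tfrac12(2-Dh_{\mathcal{S}})T+\tfrac12 D\,Y$, whence $|\partial_v(T^k\psi)|\lesssim r^{-1-\eta}|T^{k+1}\psi|+|YT^k\psi|$, and the dominant contribution is $YT^k\psi$, to which \eqref{eq:pointdecayNYpsi1} applies. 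Integrating $r^{-1/2}$ in $v$ up to $\gamma_{\delta_k}$ gains a factor $(u+1)^{\delta_k/2}$, and after extracting $v^{-1}$ via $v\leq v_{\gamma_{\delta_k}}(u)\sim u+1$ the net rate is $(u+1)^{-\frac32-k+\frac{\delta_k}{2}+\epsilon}v^{-1}$. For $\{r\leq R\}$ I would integrate $\partial_\rho(T^k\psi)$ along $\Sigma_\tau$ from $\rho=R$, where the $\{r\geq R\}$ asymptotics evaluated at $r=R$ (so $v\sim u+1$) collapse $1+\sum_{j=1}^k 1=k+1$ factors into the claimed $4(-1)^k(k+1)!\,I_0[\psi](\tau+1)^{-k-2}$, again using a weighted bound $\rho^{1/2}|\partial_\rho(T^k\psi)|\lesssim (1+\tau)^{-\frac52-k+\epsilon}$ of the type provided by Corollary~\ref{cor:rhalfYpsidecay}.

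The main obstacle, exactly as emphasized in the $k=0$ proof, is the calibration of the curve exponent $\delta_k$: it must be strictly below $1$ for the radiation-field expansion of Proposition~\ref{asymprecTk} to hold with subleading error, yet close enough to $1$ that the $(u+1)^{\delta_k/2}$ loss incurred in the inward integration still beats the target rate, i.e.\ $-\tfrac32-k+\tfrac{\delta_k}{2}+\epsilon<-1-k$. Fixing $\delta_k=1-7\epsilon$ (equivalently choosing $\alpha_k$ in the range dictated by Proposition~\ref{asymprecTk}) and taking $\epsilon$ small enough balances all exponents; this is precisely the step where integrating up to the limiting curve $r\sim v$ would fail. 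The remaining ingredients — the telescoping algebra, the logarithmic-correction partition, and the $r=R$ matching — are routine bookkeeping that mirror Proposition~\ref{prop:psi_lower}.
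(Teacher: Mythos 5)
Your proposal is correct and follows essentially the same route as the paper's proof: rewriting the leading term of Proposition \ref{asymprecTk} via the telescoping identity, dividing by $r$ with the logarithmic correction $v-u-1=2r+O(\log r)$, propagating inward from $\gamma_{\delta_k}$ using the weighted bound \eqref{eq:pointdecayNYpsi1} for $r^{1/2}\partial_v(T^k\psi)$ and $r^{1/2}\partial_\rho(T^k\psi)$, and collapsing $1+\sum_{j=1}^k((u+1)/v)^j$ to $k+1$ in $\{r\leq R\}$ to produce the $(k+1)!$ factor. In fact, you correctly flesh out precisely the steps the paper compresses into ``the rest of the proof proceeds analogously to the proof of Proposition \ref{prop:psi_lower},'' including the decomposition $L=\tfrac{1}{2}(2-Dh_{\mathcal{S}})T+\tfrac{1}{2}DY$ justifying the $\partial_v$-derivative bound and the $\epsilon$-calibration of the curve exponent.
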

\begin{proof}
Note that the leading-order term in the asymptotics for $rT^k\psi$, which is on the left-hand side of \eqref{asyTkphiprop} can be written as:
\begin{equation*}
\begin{split}
2(-1)^k\cdot k!\cdot I_0((u+1)^{-k-1}-v^{-k-1})=&\:2 (-1)^k\cdot k!\cdot I_0\\
&\cdot (u+1)^{-k-1}v^{-k-1}(v-u)\left(\sum_{j=0}^{k}(u+1)^jv^{k-j}\right),
\end{split}
\end{equation*}
where we made use of the identity
\begin{equation*}
v^{k+1}-(u+1)^{k+1}=(v-u)\sum_{j=0}^{k}(u+1)^jv^{k-j}.
\end{equation*}
We have that
\begin{equation*}
\begin{split}
(u+1)^{-k-1}&v^{-k-1}(v-u-1)\left(\sum_{j=0}^{k}(u+1)^jv^{k-j}\right)\\
=&\:(u+1)^{-k-1}v^{-1}(v-u-1)\left(1+\sum_{j=1}^{k}\left(\frac{u+1}{v}\right)^j\right).\\
\end{split}
\end{equation*}

Note that $|1-\frac{v-u-1)}{r}|\leq Cr^{-1}\log(v-u-1)\leq C(u+1)^{-\frac{3}{4}\alpha}$, so we have that
\begin{equation*}
\begin{split}
\Bigg|(u+1)^{-k-1}v^{-1}\frac{v-u-1)}{r}&\left(1+\sum_{j=1}^{k}\left(\frac{u+1}{v}\right)^j\right)\\
&-(u+1)^{-k-1}v^{-1}\left(1+\sum_{j=1}^{k}\left(\frac{u+1}{v}\right)^j\right)\Bigg|\\
\leq& C v^{-1}(u+1)^{-k-1-\epsilon}.
\end{split}
\end{equation*}

From the above, it therefore follows that
\begin{equation*}
\begin{split}
\Bigg|2(-1)^k\cdot r^{-1}\cdot k!\cdot &I_0[\psi]((u+1)^{-k-1}-v^{-k-1})\\
&- 2(-1)^k\cdot k!\cdot I_0[\psi] (u+1)^{-k-1}v^{-1}\left(1+\sum_{j=1}^{k}\left(\frac{u+1}{v}\right)^j\right) \Bigg| \\
\leq&\: C \cdot I_0[\psi] v^{-1}(u+1)^{-k-1-\epsilon}.
\end{split}
\end{equation*}

To obtain the asymptotics in $\{r\leq R\}$ we additionally note, using \eqref{uvr}, that in $\{r\leq R\}$:
\begin{equation*}
\left|1+\sum_{j=1}^{k}\left(\frac{u+1}{v}\right)^j-(k+1)\right|\leq C(\tau+1)^{-1}.
\end{equation*}
The rest of the proof proceeds analogously to the proof of Proposition \ref{prop:psi_lower} but now we use Proposition \ref{prop:tklower} and choose $\alpha_k>\frac{2k+5}{2k+7}$, and we use the estimate \eqref{eq:pointdecayNYpsi1} for $r^{\frac{1}{2}}\partial_{\rho}T^k\psi$.
\end{proof}

\section{Inverting the time-translation operator $T$}
\label{sec:consttinvphi}

\subsection{Construction of the time integral $\psi^{(1)}$}
\label{sectiontimeintegral}

In this section, we construct of a regular solution ${\psi}^{(1)}$ to \eqref{waveequation} such that \[T{\psi}^{(1)}=\psi,\] where $\psi$ is a given solution to the wave equation \eqref{waveequation} that decays suitably in $r$ along the initial hypersurface $\Sigma_{0}$. In view of \eqref{nptpsi}, a necessary condition for the existence of $\psi^{(1)}$ is that $I_0[\psi]=0$. We have the following proposition
\begin{proposition}
\label{prop:constructionTinvpsi}
Let $D=1-\frac{2M}{r}+o_1(r^{-1})$ and let $\psi$ be a spherically symmetric solution to \eqref{waveequation}, arising from smooth initial data on $\Sigma_0$, such that moreover
\begin{align}
\label{asm:initialpsik2}
\lim_{r\to\infty}\:\left. r^3\partial_r(r\psi)\right|_{\Sigma_{0}}<&\:\infty
\end{align}
with respect to the Bondi coordinates $(u,r,\theta,\varphi)$.

Then, there exists a unique smooth spherically symmetric solution $ \psi^{(1)}: \mathcal{R} \to \R$ to \eqref{waveequation} that decays along the initial hypersurface $\Sigma_{0}$
\begin{align*}
\lim_{r\to \infty}\left.\psi^{(1)}\right|_{\Sigma_{0}}=&\:0
\end{align*}
and that satisfies
\[T\psi^{(1)}=\psi\]
everywhere in $\mathcal{R}$.

In particular, $\psi^{(1)}$ satisfies the following integrability condition:
\begin{equation}
\begin{split}
\lim_{r\rightarrow \infty}r^{2}\left.\!\partial_{r}{\psi}^{(1)}\right|_{\Sigma_{0}}= & R(2-Dh_{\Sigma_{0}}(R))\phi(0,R)+2\int_{r\geq R}r L\phi\Big|_{\mathcal{N}_{0}}\,dv'\\
&-\int_{r_{\rm min}}^R 2(1-h_{\Sigma_{0}}D)r\partial_{\rho}\phi-(2-Dh_{\Sigma_{0}})rh_{\Sigma_{0}} T\phi-(r\cdot (Dh_{\Sigma_{0}})')\cdot\phi\Big|_{\Sigma_{0}}\,d\rho',
\end{split}
\label{integrabi}
\end{equation}
where $R$ and $h_{\Sigma_{0}}$ are as defined in Section \ref{sec:geomassm}.
\end{proposition}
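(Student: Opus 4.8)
The plan is to build $\psi^{(1)}$ by first prescribing Cauchy data on $\Sigma_0$ so that $T\psi^{(1)}=\psi$ holds there to first order, then solving the Cauchy problem of Theorem \ref{thm:extuniq}, and finally upgrading the relation $T\psi^{(1)}=\psi$ to all of $\mathcal{R}$ by uniqueness. Concretely, I would set $T\psi^{(1)}|_{\Sigma_0}=\psi|_{\Sigma_0}$, which fixes the transverse derivative of $\psi^{(1)}$ along $\Sigma_0$; since the tangential derivative $\partial_\rho\psi^{(1)}|_{\Sigma_0}$ is already determined by the function $f:=\psi^{(1)}|_{\Sigma_0}$, specifying $f$ together with $T\psi^{(1)}|_{\Sigma_0}=\psi|_{\Sigma_0}$ is equivalent to prescribing the pair $(\Psi,\Psi')$ of Theorem \ref{thm:extuniq}. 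Note that the necessary condition $I_0[\psi]=0$ from \eqref{nptpsi} is built into the hypotheses: $\lim_{r\to\infty}r^3\partial_r(r\psi)|_{\Sigma_0}<\infty$ forces $\lim_{r\to\infty}r^2\partial_r\phi=0$.

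To determine $f$, I would use the wave equation as a compatibility condition. Writing $\square_g\psi^{(1)}=0$ in $(v,r)$ coordinates for a spherically symmetric function and substituting $\partial_v\psi^{(1)}=\psi$ together with $\partial_r\partial_v\psi^{(1)}=\partial_r\psi$, the equation collapses, after multiplication by $r^2$ and use of $\phi=r\psi$, to the exact identity
\begin{equation*}
\partial_r\!\left(r^2 D\,\partial_r\psi^{(1)}\right)=-2r\,\partial_r\phi.
\end{equation*}
Converting $\partial_r=\partial_\rho-h_{\Sigma_0}T$ along $\Sigma_0$ turns this into a second-order linear ODE of the form $\partial_\rho(r^2 D\,\partial_\rho f)=F(\rho)$, whose right-hand side $F$ is an explicit expression in $\psi$, $T\psi$ and $\partial_\rho\psi$ restricted to $\Sigma_0$. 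This ODE is precisely the statement $T^2\psi^{(1)}|_{\Sigma_0}=T\psi|_{\Sigma_0}$, which is the exact compatibility one needs to later conclude $T\psi^{(1)}=\psi$ throughout $\mathcal{R}$.

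I would then integrate this ODE, fixing the two constants of integration by (i) the decay requirement $\lim_{r\to\infty}\psi^{(1)}|_{\Sigma_0}=0$ and (ii) regularity of $\psi^{(1)}$ at $r=r_{\rm min}$. Integrating the first-order form $r^2 D\,\partial_\rho f=\mathrm{const}+\int F$ over $[r_{\rm min},\infty)$, and splitting the range at $r=R$ where $\Sigma_0$ transitions from its spacelike portion (on which $\partial_\rho$ and the $\Sigma_0$-integrand appear) to the null cone $\mathcal{N}_0$ (on which $\partial_r=\tfrac{2}{D}L$ and $dv\sim\tfrac{2}{D}dr$ produce the $\int_{r\ge R}rL\phi\,dv$ contribution), yields the junction boundary term $R(2-Dh_{\Sigma_0}(R))\phi(0,R)$ and, after routine bookkeeping of signs and coefficients, the stated formula \eqref{integrabi} for $\lim_{r\to\infty}r^2\partial_r\psi^{(1)}|_{\Sigma_0}$. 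The decay hypothesis gives $\partial_r\phi=O(r^{-3})$, so that $\int r\,\partial_r\phi\,dr$ and $\int rL\phi\,dv$ converge and $f\to 0$.

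With smooth, spherically symmetric, suitably decaying data $(\Psi,\Psi')$ in hand, Theorem \ref{thm:extuniq} produces $\psi^{(1)}$ on $\mathcal{R}$, spherical symmetry being inherited from the data by uniqueness. To verify $T\psi^{(1)}=\psi$, I set $\chi:=T\psi^{(1)}-\psi$; since $T$ is Killing, $\square_g\chi=0$, while $\chi|_{\Sigma_0}=0$ (killing all tangential derivatives) and the ODE compatibility gives $T\chi|_{\Sigma_0}=0$, so $\chi$ has vanishing Cauchy data and $\chi\equiv 0$. Uniqueness of $\psi^{(1)}$ follows because any two candidates differ by a stationary spherically symmetric solution $\Xi$ ($T\Xi=0$) decaying at infinity, for which $r^2 D\,\partial_r\Xi=\mathrm{const}$; the only globally smooth such $\Xi$ is $\Xi\equiv 0$, since a nonzero constant makes $\partial_r\Xi\sim \mathrm{const}/(r^2 D)$ singular at $r=r_{\rm min}$ (the horizon where $D$ vanishes to first order, or the origin). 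This last regularity analysis at the degenerate point $r=r_{\rm min}$ is the step I expect to be the main obstacle: one must show there is exactly one integration constant for which the solution extends smoothly across $r=r_{\rm min}$ and that it is compatible with decay at infinity. It is precisely this constraint that both pins down \eqref{integrabi} and secures uniqueness.
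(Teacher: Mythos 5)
Your proposal is correct and takes essentially the same approach as the paper: both substitute $T\psi^{(1)}=\psi$ into the wave equation to obtain an ODE for $\psi^{(1)}|_{\Sigma_0}$ along the null and spacelike portions of $\Sigma_0$, integrate it inward from infinity with one constant fixed by the decay condition, and observe that regularity at $r=r_{\rm min}$ (where $D$ vanishes in the horizon case, or $r^{-2}$ blows up when $r_{\rm min}=0$) forces exactly the integrability condition \eqref{integrabi}. The only difference is presentational: you spell out the final evolution-plus-uniqueness step (the vanishing of $\chi=T\psi^{(1)}-\psi$ via the Cauchy problem, and the fact that a stationary, spherically symmetric, decaying solution must vanish since $Dr^{2}\partial_r\Xi=\mathrm{const}$ is singular at $r_{\rm min}$ unless the constant is zero), which the paper leaves implicit.
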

\begin{definition}
The solution $\psi^{(1)}$ to the wave equation that arises from Proposition \ref{prop:constructionTinvpsi} is called the \textbf{time integral} of the solution $\psi$.
\label{deftimeintegral}
\end{definition}

\begin{proof}
Note first that condition \eqref{asm:initialpsik2} implies that $I_{0}[\psi]=0$. 

First, let us restrict to the region $\{r\geq R\}$. The wave equation \eqref{waveequation} under the spherical symmetry assumption reduces to 
\begin{equation}
\label{eq:waveequationv2} 
\begin{split}
\partial_r(Dr^2\partial_r\psi^{(1)})=&\:2r^2\partial_r\partial_u\psi^{(1)}+2r\partial_u \psi^{(1)}=2r\partial_{r}\big(r\partial_{u}\psi^{(1)}\big)=2r\partial_{r}\big(rT\psi^{(1)}\big) \\
=&\:2r \partial_r\phi.
\end{split}
\end{equation}
where we use the $(u,r,\theta,\varphi)$ coordinate system (recall that $\partial_u=T$ in this system). Here, $\phi=r\psi$. By integrating \eqref{eq:waveequationv2} along $\mathcal{N}_{0}$ we obtain
\begin{equation}
Dr^2\partial_r\psi^{(1)}(0,r)=\lim_{r\rightarrow \infty}r^{2}\left.\!\partial_{r}\psi^{(1)}\right|_{\Sigma_{0}}
-2\int_{r}^{\infty}r' \partial_{r'}\phi\,dr'.
\label{preliequ}
\end{equation}
Using \eqref{relationlr} this yields
\begin{equation}
\label{eq:tildepsiatR}
2R^2L\psi^{(1)}(0,R)=\lim_{r\rightarrow \infty}r^{2}\left.\!\partial_{r}\psi^{(1)}\right|_{\Sigma_{0}}-2\int_{\mathcal{N}_0}r L\phi\,dv'.
\end{equation}
We next consider the region $\{r\leq R\}$. In this region it will be more convenient to work with $(v,r,\theta,\varphi)$ coordinates and $(\tau,\rho,\theta,\varphi)$ coordinates rather than $(u,r,\theta,\varphi)$ coordinates. With respect to $(v,r,\theta,\varphi)$ coordinates, the wave equation \eqref{eq:waveequationv2} can be rewritten as:
\begin{equation}
\label{eq:waveequationv2b}
\begin{split}
\partial_r(Dr^2\partial_r\psi^{(1)})=&\:-2r^2\partial_r\partial_v\psi^{(1)}-2r\partial_v \psi^{(1)}.\\
\end{split}
\end{equation}
Recall that for tangential vector field $\partial_{\rho}$ to the hypersurface $\Sigma_{0}$ we have $\partial_{\rho}=\partial_r+h_{\Sigma_{0}}\partial_v$. We can therefore replace $r$-derivatives with $\rho$-derivatives to obtain:
\begin{equation*}
\begin{split}
\partial_{\rho}(Dr^2\partial_{\rho}\psi^{(1)})=&\:\partial_r(Dr^2\partial_r\psi^{(1)})+2h_{\Sigma_{0}}(r)r^2 D\partial_{\rho}\partial_v\psi^{(1)}-Dh_{\Sigma_{0}}^2(r)r^2 \partial_v^2\psi^{(1)}\\
&+(\partial_r(Dr^2h_{\Sigma_{0}}(r))\partial_v\psi^{(1)}.
\end{split}
\end{equation*}
Hence, by \eqref{eq:waveequationv2b} it follows that
\begin{equation}
\label{eq:waveequationv3}
\begin{split}
\partial_{\rho}(Dr^2\partial_{\rho}\psi^{(1)})=&\:(2h_{\Sigma_{0}}D-2)r^2 \partial_{\rho}\partial_v\psi^{(1)}+(2h_{\Sigma_{0}}-Dh_{\Sigma_{0}}^2)r^2\partial_v^2\psi^{(1)}\\
&+((Dr^2h_{\Sigma_{0}})'-2r)\partial_v\psi^{(1)}\\
=&\:-2(1-h_{\Sigma_{0}}D)r\partial_{\rho}\phi+(2-Dh_{\Sigma_{0}})rh_{\Sigma_{0}} T\phi+(r\cdot (Dh_{\Sigma_{0}})')\cdot\phi.
\end{split}
\end{equation}
Then we use \eqref{eq:waveequationv3} to obtain in $(\tau,\rho)$ coordinates on $\Sigma_{0}\cap \{r\leq R\}$:
\begin{equation}
\label{eq:drpsikplus0}
\begin{split}
Dr^2\partial_{\rho}\psi^{(1)}(0,\rho)=&\:DR^2\partial_{\rho}\psi^{(1)}(0,R)\\
&+\int_{\rho}^R2(1-h_{\Sigma_{0}}D)r'\partial_{\rho}\phi-(2-Dh_{\Sigma_{0}})rh_{\Sigma_{0}} T\phi-(r\cdot (Dh_{\Sigma_{0}})')\cdot\phi\Big|_{\tau=0}\,d\rho
\end{split}
\end{equation}
Recalling \eqref{relationlr0} we can write
\begin{equation*}
\begin{split}
\partial_{\rho}=&\:\partial_r+h_{\Sigma_{0}}\partial_v=-\frac{2}{D}\underline{L}+h_{\Sigma_{0}}T\\
=&\:\frac{2}{D}L -\left(\frac{2}{D}-h_{\Sigma_{0}}\right)T,
\end{split}
\end{equation*}
where we used that $T=L+\underline{L}$. Together with \eqref{eq:tildepsiatR} this implies that
\begin{equation*}
\begin{split}
DR^2\partial_{\rho}\psi^{(1)}(0,R)=&\:2R^2L\psi^{(1)}|_{u=0}(R)-R^2(2-h_{\Sigma_{0}}(R)D)T\psi^{(1)}(0,R)\\
=&\:\lim_{r\rightarrow \infty}r^{2}\left.\!\partial_{r}\psi^{(1)}\right|_{\Sigma_{0}}-R(2-Dh_{\Sigma_{0}}(R))\phi(0,R)-2\int_{\mathcal{N}_{0}}r L\phi\,dv'.
\end{split}
\end{equation*}
Using the above equation together with \eqref{eq:drpsikplus0} therefore gives:
\begin{equation}
\label{eq:drpsikplus1}
\begin{split}
Dr^2\partial_{\rho}\psi^{(1)}(0,\rho)=&\:\lim_{r\rightarrow \infty}r^{2}\left.\!\partial_{r}\psi^{(1)}\right|_{\Sigma_{0}}-R(2-Dh_{\Sigma_{0}}(R))\phi(0,R)-2\int_{\mathcal{N}_{0}}r L\phi\,dv'\\
&+\int_{\rho}^R2(1-hD)r\partial_{\rho}\phi-(2-Dh_{\Sigma_{0}})rh_{\Sigma_{0}} T\phi-(r\cdot (Dh_{\Sigma_{0}})')\cdot\phi\Big|_{\Sigma_{0}}\,d\rho'
\end{split}
\end{equation}
for all $\rho \leq R$. 

\textbf{The horizon case:} Suppose that $r_{\rm min}=r_+ >0$. Working in $(\tau, \rho)$ coordinates, by \eqref{eq:drpsikplus1} we have that
\begin{equation*}
\begin{split}
\partial_{\rho}\psi^{(1)}(0,r_+)=&\:\lim_{\rho\rightarrow r_{+}}D^{-1}(\rho)\rho^{-2}\cdot\Bigg[\lim_{r\rightarrow \infty}r^{2}\left.\!\partial_{r}\psi^{(1)}\right|_{\Sigma_{0}}-R(2-Dh_{\Sigma_{0}}(R))\phi(0,R)-2\int_{\mathcal{N}_{0}}r L\phi\,dv'\\
&+\int_{\rho}^R2(1-h_{\Sigma_{0}}D)r\partial_{\rho}\phi-(2-Dh_{\Sigma_{0}})rh_{\Sigma_{0}} T\phi-(r\cdot (Dh_{\Sigma_{0}})')\cdot\phi\Big|_{\Sigma_{0}}\,d\rho'\Bigg].
\end{split}
\end{equation*}

Note that we can write $D^{-1}=d^{-1}(r)\cdot(r-r_+)^{-1}$ with $d(r_+)>0$. Then $\partial_{\rho}\psi^{(1)}(0,r_+)$ is well-defined if and only if the following \textit{integrability condition} is satisfied:
\begin{equation*}
\begin{split}
\lim_{r\rightarrow \infty}r^{2}\left.\!\partial_{r}\psi^{(1)}\right|_{\Sigma_{0}}= & R(2-Dh_{\Sigma_{0}}(R))\phi(0,R)+2\int_{\mathcal{N}_{0}}r L\phi\,dv'\\
&-\int_{r_{+}}^R 2(1-h_{\Sigma_{0}}D)r\partial_{\rho}\phi-(2-Dh_{\Sigma_{0}})rh_{\Sigma_{0}} T\phi-(r\cdot (Dh_{\Sigma_{0}})')\cdot\phi\Big|_{\Sigma_{0}}\,d\rho'
\end{split}
\end{equation*}

\textbf{The case where $r_{\rm min}=0$}: We similarly obtain
\begin{equation*}
\begin{split}
\lim_{\rho\downarrow 0}\partial_{\rho}\psi^{(1)}(0,\rho)=&\:\lim_{\rho\downarrow 0}D^{-1}(\rho)\rho^{-2}\cdot\Bigg[\lim_{r\rightarrow \infty}r^{2}\left.\!\partial_{r}\psi^{(1)}\right|_{\Sigma_{0}}-R(2-Dh_{\Sigma_{0}}(R))\phi(0,R)-2\int_{\mathcal{N}_{0}}r L\phi\,dv'\\
&+\int_{\rho}^R2(1-h_{\Sigma_{0}}D)r\partial_{\rho}\phi-(2-Dh_{\Sigma_{0}})rh_{\Sigma_{0}} T\phi-(r\cdot (Dh_{\Sigma_{0}})')\cdot\phi\Big|_{\Sigma_{0}}\,d\rho'\Bigg],
\end{split}
\end{equation*}
so we require the following \textit{integrability condition}:
\begin{equation*}
\begin{split}
\lim_{r\rightarrow \infty}r^{2}\left.\!\partial_{r}\psi^{(1)}\right|_{\Sigma_{0}}= & R(2-Dh_{\Sigma_{0}}(R))\phi(0,R)+2\int_{\mathcal{N}_{0}}r L\phi\,dv'\\
&-\int_{0}^R 2(1-h_{\Sigma_{0}}D)r\partial_{\rho}\phi-(2-Dh_{\Sigma_{0}})rh_{\Sigma_{0}} T\phi-(r\cdot (Dh_{\Sigma_{0}})')\cdot\phi\Big|_{\Sigma_{0}}\,d\rho'.
\end{split}
\end{equation*}
Moreover, we can bound
\begin{equation*}
\left|2(1-h_{\Sigma_{0}}D) r\partial_{\rho} \phi-(2-Dh_{\Sigma_{0}})rh_{\Sigma_{0}} T\phi-(r\cdot (Dh_{\Sigma_{0}})')\cdot\phi\Big|_{\Sigma_{0}}\right|\leq C r,
\end{equation*}
so we can conclude that $\lim_{r\downarrow 0}\partial_{\rho}\psi^{(1)}(0,r)$ is indeed well-defined.

In both case, it easily follows that arbitrarily many $\rho$-derivatives of $\psi^{(1)}(0,r)$ are continuous for $\rho\in [r_{\rm min},\infty)$. Continuity of any (higher-order) derivative of $\psi^{(1)}$ involving $T$-derivatives follows directly from the requirement $T\psi^{(1)}=\psi$. Hence, $\psi^{(1)}$ is a smooth function on $\mathcal{R}$.

\end{proof}

\subsection{The time-inverted Newman--Penrose constants $I_{0}^{(k)}$}
\label{timeinvertednpsection}

The next proposition provides a formula for the Newman--Penrose constant associated to the time integral $\psi^{(1)}$ of a solution to the wave equation $\psi$.
\begin{proposition}
\label{prop:timeinvertedNP}
Let $\psi^{(1)}$ be the time integral associated to a solution $\psi$ to the wave equation. Then the Newman--Penrose constant of $\psi^{(1)}$ is given by 
\begin{equation}
\begin{split}
I_0[\psi^{(1)}]=& -\lim_{r'\to \infty}r'^3\partial_{r'}\phi(0,r')+M R(2-Dh_{\Sigma_{0}}(R))\phi(0,R)+2M\int_{r\geq R}r L\phi\Big|_{\mathcal{N}_{0}}\,dv'\\
&-M\int_{r_{\rm min}}^R 2(1-h_{\Sigma_{0}}D)r\partial_{\rho}\phi-(2-Dh_{\Sigma_{0}})rh_{\Sigma_{0}} T\phi-(r\cdot (Dh_{\Sigma_{0}})')\cdot\phi\Big|_{\Sigma_{0}}\,d\rho',
\end{split}
\label{nptimeintegralformula}
\end{equation}
where $\phi=r\psi$. In particular, if the initial data for $\psi$ is compactly supported in $\{r_{\rm min}\leq r<R\}$, we have that
\begin{equation}
\begin{split}
I_0[\psi^{(1)}]=&\:-M\int_{r_{\rm min}}^R 2(1-h_{\Sigma_{0}}D)r\partial_{\rho}\phi-(2-Dh_{\Sigma_{0}})rh_{\Sigma_{0}} T\phi-(r\cdot (Dh_{\Sigma_{0}})')\cdot\phi\Big|_{\Sigma_{0}}\,d\rho'.
\end{split}
\label{npformulatincompact}
\end{equation}
\label{nptimeintegralprop}
\end{proposition}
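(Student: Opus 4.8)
The plan is to evaluate the Newman--Penrose constant $I_0[\psi^{(1)}]=\lim_{r\to\infty}r^2\partial_r(r\psi^{(1)})$ of the time integral along the initial hypersurface, exploiting its $u$-independence (established in Section \ref{npsection}) so that the limit may be taken along the null portion $\mathcal{N}_0=\{u=0,\ r\geq R\}$ of $\Sigma_0$, where the radial form of the wave equation is available. Since $\psi^{(1)}$ is spherically symmetric the angular average in the definition is trivial. Writing $\phi^{(1)}=r\psi^{(1)}$, the two competing quantities $r\phi^{(1)}$ and $r^3\partial_r\psi^{(1)}$ in the identity $r^2\partial_r\phi^{(1)}=r\phi^{(1)}+r^3\partial_r\psi^{(1)}$ both diverge linearly in $r$, and the whole difficulty is to extract the finite difference that survives.

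First I would invoke the construction in Proposition \ref{prop:constructionTinvpsi}, which supplies the value $A:=\lim_{r\to\infty}r^2\partial_r\psi^{(1)}|_{\Sigma_0}$ explicitly through the integrability condition \eqref{integrabi}. Restricting to $\mathcal{N}_0$ and integrating the radial wave equation \eqref{eq:waveequationv2}, namely $\partial_r(Dr^2\partial_r\psi^{(1)})=2r\partial_r\phi$, from $r$ to infinity (the boundary term at infinity being $A$ since $D\to1$, and the integral converging because $\partial_r\phi=O(r^{-3})$ by \eqref{asm:initialpsik2}) yields
\[
Dr^2\partial_r\psi^{(1)}(0,r)=A-\int_r^\infty 2s\,\partial_s\phi(0,s)\,ds.
\]
I then set $G(s):=s^2\partial_s\psi^{(1)}(0,s)=D^{-1}(s)\big(A-\int_s^\infty 2t\,\partial_t\phi\,dt\big)$, so that $G(s)\to A$ and $G(s)/s\to 0$.

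Next I would integrate once more, using $\lim_{r\to\infty}\psi^{(1)}|_{\Sigma_0}=0$ to fix the constant, to obtain $\psi^{(1)}(0,r)=-\int_r^\infty G(s)s^{-2}\,ds$ and hence
\[
r^2\partial_r\phi^{(1)}=rG(r)-r^2\int_r^\infty \frac{G(s)}{s^2}\,ds.
\]
A single integration by parts (the boundary term vanishing because $G(s)/s\to0$) collapses this to the manifestly convergent expression
\[
r^2\partial_r\phi^{(1)}=-r^2\int_r^\infty \frac{G'(s)}{s}\,ds,
\]
in which the two linear divergences have cancelled identically. Differentiating $G$ and using $D=1-2Mr^{-1}+O(r^{-1-\beta})$ together with \eqref{asm:initialpsik2} gives $s^2G'(s)\to -2MA+2B$, where $B:=\lim_{r\to\infty}r^3\partial_r\phi(0,r)$; an elementary Abelian-type estimate (for any $f$ with $\lim_{s\to\infty}f(s)=\ell$ one has $r^2\int_r^\infty s^{-3}f(s)\,ds\to \ell/2$) then produces $I_0[\psi^{(1)}]=\lim_{r\to\infty}r^2\partial_r\phi^{(1)}=MA-B$. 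Substituting the explicit value of $A$ from \eqref{integrabi} and $B=\lim_{r\to\infty}r^3\partial_r\phi$, and distributing $M$, yields \eqref{nptimeintegralformula}. For data compactly supported in $\{r_{\rm min}\le r<R\}$ one has $\phi\equiv0$ on $\mathcal{N}_0$ and $B=0$, so the boundary term and the $\mathcal{N}_0$-integral drop out and \eqref{npformulatincompact} follows.

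The main obstacle is precisely the exact cancellation of the divergent contributions $r\phi^{(1)}$ and $r^3\partial_r\psi^{(1)}$; the integration-by-parts identity reducing everything to $-r^2\int_r^\infty G'(s)/s\,ds$ is what isolates the finite answer and makes transparent that the surviving $M$-correction originates \emph{jointly} from the $r^{-1}$ term of $D^{-1}$ and the leading $r^{-3}$ decay of $\partial_r\phi$. All remaining steps are routine manipulations of convergent integrals, justified by the smoothness of $\psi^{(1)}$ established in Proposition \ref{prop:constructionTinvpsi} and the stated asymptotics of $D$ and of the data.
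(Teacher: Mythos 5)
Your proof is correct, and its skeleton coincides with the paper's: both start from the construction of $\psi^{(1)}$ in Proposition \ref{prop:constructionTinvpsi}, use the integrability condition \eqref{integrabi} to identify $A=\lim_{r\to\infty}r^2\partial_r\psi^{(1)}|_{\Sigma_0}$ (the paper's $C_0$), integrate the radial equation \eqref{eq:waveequationv2} along $\mathcal{N}_0$ to get \eqref{preliequ}, integrate once more using $\lim_{r\to\infty}\psi^{(1)}|_{\Sigma_0}=0$, and land on $I_0[\psi^{(1)}]=MA-B$ with $B=\lim_{r\to\infty}r^3\partial_r\phi(0,r)$, after which substituting \eqref{integrabi} gives \eqref{nptimeintegralformula} and, in the compactly supported case, \eqref{npformulatincompact}. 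The one genuine difference is the endgame, i.e.\ how the limit $\lim_{r\to\infty}r^2\partial_r(r\psi^{(1)})$ is extracted. The paper computes the explicit second-order expansion \eqref{eq:exprpsi1}, $\psi^{(1)}(0,r)=-C_0r^{-1}-MC_0r^{-2}+Br^{-2}+o_2(r^{-2})$, and then differentiates it term by term; this is legitimate because the errors arise by integrating pointwise-controlled integrands, so they automatically carry the $o_1$/$o_2$ derivative control encoded in the paper's little-$o$ notation. You instead keep the exact identity $r^2\partial_r\phi^{(1)}=rG(r)-r^2\int_r^\infty G(s)s^{-2}\,ds$ with $G(s)=s^2\partial_s\psi^{(1)}(0,s)$, cancel the two linearly divergent terms by a single integration by parts (the boundary term vanishing since $G(s)/s\to 0$), and reduce everything to $\lim_{s\to\infty}s^2G'(s)=-2MA+2B$ plus an elementary Abelian lemma; this needs only the pointwise asymptotics of $D$, $D'$ and $\partial_s\phi$, never a differentiated asymptotic expansion. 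The two computations consume the same hypotheses and produce the same constant $MA-B$; your version buys a slightly more robust justification of the limit (no term-by-term differentiation of an expansion), while the paper's buys an explicit expansion of $\psi^{(1)}(0,r)$ that it reuses later, e.g.\ in the higher-order statement of Proposition \ref{prop:tinverseconstr_k}.
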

\begin{proof}

 By  \eqref{asm:initialpsik2}, we have that for $r\geq R$
\begin{equation*}
r^2\partial_r\phi(0,r)=\lim_{r'\to \infty}r'^3\partial_{r'}(r'\psi)(0,r')\cdot \frac{1}{r}+o(r^{-1}),
\end{equation*}
with respect to $(u,r,\theta,\varphi)$ coordinates. 
Let us also denote 
\[C_{0}= \lim_{r\rightarrow \infty}r^{2}\left.\!\partial_{r}\psi^{(1)}\right|_{\Sigma_{0}}. \]
By integrating \eqref{preliequ}, using that $D^{-1}=1+2Mr^{-1}+o_1(r^{-1})$ and assuming that $\lim_{r\to \infty}\psi^{(0)}\left.\right|_{\Sigma_{0}}=0$, we obtain for $r\geq R$ in $(u,r,\theta,\varphi)$ coordinates:
\begin{equation}
\label{eq:exprpsi1}
\begin{split}
\psi^{(1)}(0,r)=&\:-\int_r^{\infty} D^{-1}r'^{-2}\left[C_{0}-2\int_{r'}^{\infty}r''\partial_{r''}\phi\,dr''\right]\,dr'\\
=&-C_0\int_r^{\infty} r'^{-2}+2Mr'^{-3}+o_1(r'^{-3})\,dr'\\
&+2\int_{r}^{\infty} r'^{-3}\cdot \lim_{r''\to \infty}r''^3\partial_r\phi(0,r'')+o_1(r'^{-3})\,dr'\\
&=-C_0r^{-1}-MC_0r^{-2}+r^{-2}\cdot \lim_{r'\to \infty}r'^3\partial_r\phi(0,r')+o_2(r^{-2}).
\end{split}
\end{equation}
Now, it follows that
\begin{equation*}
r^2\partial_r(r\psi^{(1)})(0,r)=MC_0-\lim_{r'\to \infty}r'^3\partial_r\phi(0,r')+o_1(1).
\end{equation*}
In particular, using \eqref{integrabi} we obtain
\begin{equation*}
\begin{split}
I_0[\psi^{(1)}]=& -\lim_{r'\to \infty}r'^3\partial_{r'}\phi(0,r')+MC_0\\
=& -\lim_{r'\to \infty}r'^3\partial_{r'}\phi(0,r')+M R(2-Dh_{\Sigma_{0}}(R))\phi(0,R)+2M\int_{\mathcal{N}_{0}}r L\phi\,dv'\\
&-M\int_{r_{\rm min}}^R 2(1-h_{\Sigma_{0}}D)r\partial_{\rho}\phi-(2-Dh_{\Sigma_{0}})rh_{\Sigma_{0}} T\phi-(r\cdot (Dh_{\Sigma_{0}})')\cdot\phi\Big|_{\Sigma_{0}}\,d\rho'.
\end{split}
\end{equation*}
In particular, if the initial data for $\psi$ is compactly supported in $\{r_{\rm min}\leq r<R\}$, we have that
\begin{equation*}
\begin{split}
I_0[\psi^{(1)}]=&\:-M\int_{r_{\rm min}}^R 2(1-h_{\Sigma_{0}}D)r\partial_{\rho}\phi-(2-Dh_{\Sigma_{0}})rh_{\Sigma_{0}} T\phi-(r\cdot (Dh_{\Sigma_{0}})')\cdot\phi\Big|_{\Sigma_{0}}\,d\rho'.
\end{split}
\end{equation*}
\end{proof}

\begin{remark}
If we consider the time function $t=\frac{1}{2}(u+v)$ and the hypersurface $\Sigma_0=\{t=0\}$, the corresponding functions $h_{\Sigma_{0}}: (r_{+},\infty)\to \R$ and $h_{\Sigma_{0}}: [0,\infty)\to \R$ satisfy $h_{\Sigma_{0}}(r)=D^{-1}(r)$. Note that in the $r_{\rm min}=r_+$ case, $h_{\Sigma_{0}}$ cannot be smoothly extended to $[r_+,\infty)$. However, we can still apply the arguments in the proof of Proposition \ref{prop:constructionTinvpsi} in this case to construct $\psi^{(1)}$, if we make the additional assumption that \underline{the initial data are supported away from the bifurcation sphere}. With this choice of $h_{\Sigma_{0}}$, assuming moreover that $\psi$ is compactly supported on $\Sigma_0\cap \{r< R\}$, the expression for $I_0[\psi^{(1)}]$ becomes:
\begin{equation*}
I_0[\psi^{(1)}]=M\int_{\{t=0\}}D^{-1}T\psi\:r^2\, \sin\theta \,d\theta\, d\varphi \,d\rho.
\end{equation*} 

In particular, we can immediately see that $I_0[\psi^{(1)}]=0$ if the initial data for $\psi$ satisfies $T\psi|_{\Sigma}=0$, i.e. it is ``time-symmetric'' and $\psi$ vanishes at the bifurcation sphere. As a consequence, the decay in rate in time of $\psi$ will be one power higher than the generic (``time-asymmetric'') case, where $I_0[\psi^{(1)}]\neq 0$; see Proposition \ref{prop:tkpsi_lowert}. See also the discussion in Section \ref{introo1} and Remark \ref{rmk:initstatic}.
\end{remark}
We have the following important definition
\begin{definition}
Let $\psi$ be a solution to the wave equation \eqref{waveequation} satisfying the condition \eqref{asm:initialpsik2} of Proposition \ref{prop:constructionTinvpsi}. We define the \textbf{time-inverted Newman--Penrose constant} $I_{0}^{(1)}[\psi]$ of $\psi$  to be the Newman--Penrose constant $I_{0}[\psi^{(1)}]$ of the time integral $\psi^{(1)}$ of $\psi$. That is,
\[ I_{0}^{(1)}[\psi]:=I_{0}[\psi^{(1)}]. \]
\label{timeinvertedNP}
\end{definition}
Clearly, the time-inverted Newman--Penrose constant $I_{0}^{(1)}[\psi]$ is only defined for solutions $\psi$ with vanishing Newman--Penrose constants $I_{0}[\psi]=0$. In view of \eqref{nptimeintegralformula}, if $M>0$ then the time-inverted Newman--Penrose constant does not vanish for generic (compactly-supported) initial data for $\psi$. That is,
\[\text{If }M>0 \ \text{ and  } I_{0}[\psi]=0\  \Rightarrow \  I^{(1)}_{0}[\psi]\neq 0 \text{ generically}.\]
 Note, on the other hand, that on Minkowski space the time-inverted Newman--Penrose constant must necessarily vanish if the initial data of $\psi$ is compactly supported. The latter is of course related to Huygens' principle on Minkowski space. 

If, on the hand, for $M>0$, $\psi$ is such that its  time integral $\psi^{(1)}$ satisfies
\[\lim_{r\rightarrow \infty}r^{3}\partial_{r}(r\psi^{(1)})\left.\right|_{\Sigma_{0}} < \infty \]
then, using Proposition \ref{prop:constructionTinvpsi}, one can make sense of the time integral $\psi^{(2)}$ of $\psi^{(1)}$. Then,
\[T^{2}\psi^{(2)}=\psi. \]
Generically, we have that $I_{0}[\psi^{(2)}]\neq 0$. However, if $I_{0}[\psi^{(2)}]=0$ and in fact \[ \lim_{r\rightarrow \infty}r^{3}\partial_{r}(r\psi^{(2)})\left.\right|_{\Sigma_{0}} < \infty\]
then one can make sense of the time integral $\psi^{(3)}$ of $\psi^{(2)}$ 
which satisfies
\[T^{3}\psi^{(3)}=\psi. \]
Inductively, if the $k^{\text{th}}$ time integral $\psi^{(k)}$ of $\psi$ is defined and if 
\[\lim_{r\rightarrow \infty}r^{3}\partial_{r}(r\psi^{(k)})\left.\right|_{\Sigma_{0}} < \infty \]
then one can defined  the $(k+1)^{\text{th}}$ time integral $\psi^{(k+1)}$ of $\psi$ which satisfies 
\[T^{k+1}\psi^{(k+1)}=\psi. \]
We have the following definition
\begin{definition}
Let $\psi$ be a solution to the wave equation such that the $k^{\text{th}}$ time integral  $\psi^{(k)}$ is defined. We define the $k^{\text{th}}$-order time-inverted Newman--Penrose $I_{0}^{(k)}[\psi]$ of $\psi$ to be equal to the Newman--Penrose constant of $\psi^{(k)}$, that is 
\[I_{0}^{(k)}[\psi]:=I_{0}[\psi^{(k)}].  \]
\label{hightimeinvnp}
\end{definition}
Clearly, if $I_{0}^{(k+1)}[\psi]$ is well-defined, then we necessarily have 
\[I_{0}[\psi]=I_{0}^{(1)}[\psi]=\cdots I_{0}^{(k)}[\psi]=0. \]
The next proposition provides asymptotic conditions on the metric and the initial data for $\psi$ such that the relations
\[I_{0}[\psi]=I_{0}^{(1)}[\psi]=\cdots I_{0}^{(k)}[\psi]=0 \]
are sufficient to make sense of the $(k+1)^{\text{th}}$ time integral $\psi^{(k+1)}$ of $\psi$ and hence of $I_{0}^{(k+1)}[\psi]$.
\begin{proposition}\label{prop:tinverseconstr_k}
Let $k\in \N_0$, $N\in\mathbb{N}$ and $\beta>0$ and assume the following additional asymptotics for $D$:
\begin{equation}
\label{asm:additionalasmD}
D(r)=1+\sum_{m=0}^{N-1}d_mr^{-m-1}+O_{N+k}(r^{-N-\beta}),
\end{equation}
where $d_m\in \mathbb{R}$ for $m=0,1,\cdots, N-1$ and $d_1=-2M$. Let also $\psi$ be a spherically symmetric solution to \eqref{waveequation}, arising from smooth initial data on $\Sigma_0$ such that
\begin{align}
\label{asm:initialpsik1v2}
r^2\partial_r(r\psi)\left.\!\right|_{\Sigma_{0}}
=\sum_{m=1}^{N}p_{m}r^{-m}+O_k(r^{-N-\beta}),
\end{align}
where $p_m\in \R$  for $m=1,\cdots, N$. Then the $1^{\text{st}}$ time integral $\psi^{(1)}$ exists and satisfies
\begin{align}
\label{eq:initialpsikmin1}
r^2\partial_r(r\psi^{(1)})\left.\right|_{\Sigma_{0}}=&\:I_0[\psi^{(1)}]+O_{k+1}(r^{-N-\beta+1})\quad\textnormal{if}\quad N=1,\\
r^2\partial_r(r\psi^{(1)})\left.\right|_{\Sigma_{0}}=&\:I_0[\psi^{(1)}]+\sum_{m=1}^{N-1}p_m^{(1)}r^{-m}+O_{k+1}(r^{-N-\beta+1})\quad\textnormal{if}\quad N\geq 2,
\end{align}
where $p_m^{(1)}\in \R$ are determined by the initial data of $\psi$. 

Hence, if 
\[I_{0}[\psi]=I_0[\psi^{(1)}]=0\]
then one can make sense of the $2^{\text{nd}}$ time integral of $\psi$, and more generally, if
\[I_{0}[\psi]=I_{0}^{(1)}[\psi]=\cdots =I_{0}^{(j)}[\psi]=0 \]
for $0\leq j\leq N-1$, then one can make sense of the $(j+1)^{\text{th}}$ time integral $\psi^{(j+1)}$ of $\psi$ and hence of the $(j+1)^{\text{th}}$-order time-inverted Newman--Penrose constant $I_{0}^{(j+1)}[\psi]$, i.e.
\begin{align*}
r^2\partial_r(r\psi^{(j+1)})\left.\right|_{\Sigma_{0}}=&\:I_0[\psi^{(j+1)}]+O_{k+1+j}(r^{-N-\beta+1+j})\quad\textnormal{if} \quad j= N-1,\\
r^2\partial_r(r\psi^{(j+1)})\left.\right|_{\Sigma_{0}}=&\:I_0[\psi^{(j+1)}]+\sum_{m=1}^{N-1-j}p_m^{(j+1)}r^{-m}+O_{k+1+j}(r^{-N-\beta+1+j})\\\textnormal{if} \quad 0\leq j\leq N-2,
\end{align*}
where $p_m^{(j+1)}\in \R$ are determined by the initial data of $\psi$.
\end{proposition}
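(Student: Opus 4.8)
The plan is to establish the statement for the first time integral $\psi^{(1)}$ directly, and then obtain the general statement for $\psi^{(j+1)}$ by iterating this base case, since the asymptotic profile produced for $\psi^{(1)}$ turns out to be of exactly the form assumed for $\psi$ but with $N$ replaced by $N-1$ and $k$ by $k+1$. First I would check that hypothesis \eqref{asm:initialpsik2} of Proposition \ref{prop:constructionTinvpsi} holds: from $r^2\partial_r(r\psi)|_{\Sigma_0}=\sum_{m=1}^N p_m r^{-m}+O_k(r^{-N-\beta})$, multiplying by $r$ gives $r^3\partial_r(r\psi)|_{\Sigma_0}=p_1+O(r^{-1})$, which is bounded as $r\to\infty$. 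Hence $\psi^{(1)}$ exists (and $I_0[\psi]=0$), and it remains only to compute the $r\to\infty$ asymptotics of $r^2\partial_r(r\psi^{(1)})$ along $\Sigma_0$.

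The computation takes place in $\{r\geq R\}$, where $\Sigma_0$ coincides with $\mathcal{N}_0$ and equation \eqref{eq:waveequationv2} for $\psi^{(1)}$ in $(u,r,\theta,\varphi)$ coordinates is available. Writing $C_0=\lim_{r\to\infty}r^2\partial_r\psi^{(1)}|_{\Sigma_0}$, a finite constant by Proposition \ref{prop:constructionTinvpsi}, the integrated form \eqref{preliequ} reads $Dr^2\partial_r\psi^{(1)}(0,r)=C_0-2\int_r^\infty r'\partial_{r'}\phi\,dr'$. I would substitute the assumed expansion of $r^2\partial_r\phi$, divide by $r'$ and integrate termwise — the polynomial terms integrate exactly and the remainder $O_k(r^{-N-1-\beta})$ yields $O_{k+1}(r^{-N-\beta})$ — and then divide by $D$ using the reciprocal of the expansion \eqref{asm:additionalasmD}. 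This produces $r^2\partial_r\psi^{(1)}=C_0+\sum_{m=1}^N q_m r^{-m}+O_{k+1}(r^{-N-\beta})$, with coefficients $q_m$ built from $C_0$, the $p_m$ and the $d_m$. Integrating once more in $r$, using $\psi^{(1)}\to 0$, gives the expansion of $\psi^{(1)}$ and hence of $\phi^{(1)}=r\psi^{(1)}$; the decisive feature is that upon forming $\partial_r\phi^{(1)}=\psi^{(1)}+r\partial_r\psi^{(1)}$ the two $O(r^{-1})$ contributions, of sizes $-C_0$ and $+C_0$, cancel. Consequently $r^2\partial_r\phi^{(1)}$ has no growing part, its leading term is the constant $\tfrac12 q_1$, and after re-indexing one obtains $r^2\partial_r(r\psi^{(1)})|_{\Sigma_0}=\tfrac12 q_1+\sum_{m=1}^{N-1}p_m^{(1)}r^{-m}+O_{k+1}(r^{-N-\beta+1})$. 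Since $\tfrac12 q_1=\lim_{r\to\infty}r^2\partial_r(r\psi^{(1)})$, it is by definition the Newman--Penrose constant $I_0[\psi^{(1)}]=I_0^{(1)}[\psi]$ (consistently with formula \eqref{nptimeintegralformula}), which settles the $N=1$ case, where the intermediate sum is empty, and the $N\geq 2$ case at once.

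For the inductive step I would note that when $I_0[\psi^{(1)}]=0$ the expansion just derived reads $r^2\partial_r(r\psi^{(1)})|_{\Sigma_0}=\sum_{m=1}^{N-1}p_m^{(1)}r^{-m}+O_{k+1}(r^{-(N-1)-\beta})$, which has the same shape as \eqref{asm:initialpsik1v2} with $(N,k)$ replaced by $(N-1,k+1)$ and the same $\beta$. Crucially, the metric hypothesis \eqref{asm:additionalasmD} is invariant under this replacement: the required expansion order drops from $N$ to $N-1$ while the regularity index rises from $N+k$ to $(N-1)+(k+1)=N+k$. One may therefore reapply the base case to $\psi^{(1)}$ to construct $\psi^{(2)}$, and iterate, obtaining the stated expansions for each $\psi^{(j+1)}$, $0\leq j\leq N-1$, whenever $I_0[\psi]=\cdots=I_0^{(j)}[\psi]=0$.

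The substance of the argument, and the main point requiring care, is the bookkeeping of the differentiability index and the decay power in the $O_\cdot$ notation as these operations are composed: the construction involves two radial integrations and, in forming $\partial_r\phi^{(1)}$, one radial differentiation, for a net gain of one differentiability order, which is exactly what produces the improvement $O_k\to O_{k+1}$ in the conclusion, while the weakened decay exponent $r^{-N-\beta+1}$ likewise originates from these integrations. The only genuine subtlety beyond this routine tracking is verifying that the products and quotients of the asymptotic series — multiplication by $r^{-1}$, division by $D$, and multiplication by $r$ and by $r^2$ — retain the claimed uniform $O_{k+1}$ control, which follows from the $C^{N+k}$ regularity built into \eqref{asm:additionalasmD} together with the assumed regularity of the data; since $N+k\geq k+1$, the regularity of $D$ is never the binding constraint.
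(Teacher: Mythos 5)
Your proposal is correct and follows essentially the same route as the paper: the paper's own proof constructs $\psi^{(1)}$ via Proposition \ref{prop:constructionTinvpsi} and re-runs the derivation of \eqref{eq:exprpsi1} (integrating \eqref{preliequ}, dividing by $D$, integrating once more, and forming $r^2\partial_r(r\psi^{(1)})$ so that the $\mp C_0$ terms cancel) with the refined expansions of $D$ and of the data, identifying the resulting leading constant with $I_0[\psi^{(1)}]$ exactly as you do. Your explicit $(N,k)\to(N-1,k+1)$ bookkeeping for the iteration to higher $j$ is precisely the induction the paper leaves implicit.
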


\begin{proof}
The proof proceeds identically to the proof of Proposition \ref{prop:constructionTinvpsi}, where we obtain the precise asymptotics \eqref{eq:initialpsikmin1} by using the more precise asymptotics of $D$ and the initial data of $\psi$ in the derivation of \eqref{eq:exprpsi1}.
\end{proof}

A direct corollary of the asymptotics of  the time integrals $\psi^{(k)}$ in the above proposition is finiteness of the $P_{I_0,\beta;k}$ norms of $\psi^{(k)}$.
\begin{corollary}
\label{cor:estPnorms}
Under the assumptions of Proposition \ref{prop:tinverseconstr_k} with $N=1$ and $k\in \N$, we have that
\begin{equation*}
P_{I_0,\beta;k+1}[\psi^{(1)}]<\infty.
\end{equation*}
Furthermore, for general $N\in \N$, if
\[I_{0}[\psi]=I_{0}^{(1)}[\psi]=\cdots =I_{0}^{(N-1)}[\psi]=0, \]
then
\begin{equation*}
P_{I_0,\beta;k+N}[\psi^{(N)}]<\infty.
\end{equation*}
\end{corollary}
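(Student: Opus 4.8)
The plan is to deduce finiteness of the $P$-norms directly from the polynomial asymptotics for $r^2\partial_r(r\psi^{(N)})\big|_{\Sigma_0}$ already supplied by Proposition \ref{prop:tinverseconstr_k}, after translating the $\partial_v$-derivatives and $v$-weights in the definition \eqref{def:PI0k} into $r$-derivatives and $r$-weights along $\mathcal{N}_0$. First I observe that on the compact piece $\Sigma_0\cap\{r\leq R\}$ the function $v=v_{\Sigma}(r)$ is bounded and $\psi^{(1)}$ (resp.\ $\psi^{(N)}$) is smooth by Proposition \ref{prop:constructionTinvpsi}, so this region contributes a finite amount to the $L^{\infty}(\Sigma_0)$ norm. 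The only issue is therefore the behaviour as $r\to\infty$ along $\mathcal{N}_0=\Sigma_0\cap\{r\geq R\}$, where $u=0$, $v=2r_*$, and by \eqref{relationlr} one has $\partial_v=\tfrac{1}{2}D\partial_r$ in $(u,r,\theta,\varphi)$ coordinates.

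Consider first $N=1$. Proposition \ref{prop:tinverseconstr_k} with $N=1$ gives $r^2\partial_r\phi^{(1)}\big|_{\Sigma_0}=I_0[\psi^{(1)}]+G(r)$ with $\phi^{(1)}=r\psi^{(1)}$ and $G=O_{k+1}(r^{-\beta})$, i.e.\ $|\partial_r^iG|\lesssim r^{-\beta-i}$ for all $0\leq i\leq k+1$. Writing $\partial_v\phi^{(1)}=\tfrac{1}{2}D\,r^{-2}\bigl(r^2\partial_r\phi^{(1)}\bigr)$ and using $v=2r_*$, I expand
\[
v^2\partial_v\phi^{(1)}=\frac{2r_*^2 D}{r^2}\bigl(I_0[\psi^{(1)}]+G(r)\bigr),
\]
and show that the prefactor equals $2+O(\log r/r)$, the logarithm arising solely from the geometric discrepancy $r_*-r\sim 2M\log r$. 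Hence $\partial_v\phi^{(1)}-2I_0[\psi^{(1)}]v^{-2}$ decays like $v^{-2-\beta}$ up to a lower-order $v^{-3}\log v$ correction, and multiplying by $v^{2+\beta}$ gives a bounded quantity. For the higher-order terms $1\leq j\leq k+1$ I repeatedly apply $\partial_v=\tfrac{1}{2}D\partial_r$: each application lowers the $r$-power by one while preserving the $O_{k+1}$ control on $G$, so that $\partial_v^{j}\bigl(\partial_v\phi^{(1)}-2I_0[\psi^{(1)}]v^{-2}\bigr)=O(v^{-2-\beta-j})$ (again modulo harmless logarithmic corrections), the derivatives hitting $2I_0[\psi^{(1)}]v^{-2}$ reproducing exactly the expected leading term $2I_0[\psi^{(1)}](-1)^j(j+1)!\,v^{-2-j}$. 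Multiplying by $v^{2+\beta+j}$ and taking the maximum over $j$ yields $P_{I_0,\beta;k+1}[\psi^{(1)}]<\infty$.

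For general $N$, the hypotheses $I_0[\psi]=\cdots=I_0^{(N-1)}[\psi]=0$ permit iterating Proposition \ref{prop:tinverseconstr_k}, producing the time integral $\psi^{(N)}$ together with the expansion $r^2\partial_r\phi^{(N)}\big|_{\Sigma_0}=I_0[\psi^{(N)}]+O_{k+N}(r^{-\beta})$. The decisive bookkeeping point is that each successive time inversion costs exactly one order of regularity in the remainder, so after $N$ inversions the remainder is still $C^{k+N}$ with decay $r^{-\beta}$; this is precisely the radial regularity needed to control the $j\leq k+N$ derivatives $\partial_v^j$ entering $P_{I_0,\beta;k+N}$. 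Repeating the conversion argument of the previous paragraph verbatim then gives $P_{I_0,\beta;k+N}[\psi^{(N)}]<\infty$.

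The main obstacle is the dictionary between the two coordinate descriptions. The time-integral construction and the Newman--Penrose constant are phrased via $r$-derivatives in Bondi coordinates, whereas the $P$-norms are phrased via $\partial_v$ and powers of $v$. Matching the leading coefficients and their derivatives requires careful tracking of the relation $v=2r_*=2r+4M\log r+O(1)$: the logarithmic gap between $v$ and $2r$ is what generates the error terms, and one must verify these are dominated by the genuine $r^{-\beta}$ remainder (which holds in the relevant range of $\beta$ used throughout, in particular for $\beta$ small). Checking that this change of variables commutes cleanly with all $k+N$ derivatives $\partial_v^j$ — so that no derivative lands on a slowly-decaying geometric factor in a way that spoils the $v$-weight — is the one step that is genuinely calculation-heavy, though entirely routine given the $O_{k+N}$ bounds from Proposition \ref{prop:tinverseconstr_k}.
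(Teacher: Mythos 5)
Your proposal is correct and coincides with the paper's (essentially unwritten) argument: the paper treats the corollary as an immediate consequence of Proposition \ref{prop:tinverseconstr_k}, and your conversion of the expansion $r^2\partial_r\phi^{(N)}\big|_{\Sigma_0}=I_0[\psi^{(N)}]+O_{k+N}(r^{-\beta})$ into the $v$-weighted $P$-norm via $\partial_v=\tfrac{1}{2}D\partial_r$ and $v=2r_*$ along $\mathcal{N}_0$, with the derivative count $j\leq k+N$ matching the $O_{k+N}$ regularity exactly, is precisely that consequence spelled out. The logarithmic correction you flag (coming from $r_*-r\sim 2M\log r$) is genuine and does force $\beta<1$ whenever $M\,I_0[\psi^{(N)}]\neq 0$, a restriction the paper leaves implicit; it is harmless in the regime the paper actually uses, where $\beta$ may always be decreased (the $P$-norms are monotone in $\beta$) and only $\beta>\epsilon$ with $\epsilon$ small is ever required.
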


\begin{remark}
Consider the \emph{conformal} coordinate $s=-1/r$ along $\mathcal{N}_0$. Note that $s=0$ at $\mathcal{I}^+$. We can interpret the assumptions \eqref{asm:additionalasmD} and \eqref{asm:initialpsik1v2} as resulting from regularity assumptions of $D$ and $\psi|_{\mathcal{N}_0}^{(N)}$ with respect to the coordinate $s$. Indeed, if $\psi|_{\Sigma_0}(s)\in C^{N+2}((-\epsilon,0])$ and $D(s)\in C_s^{N+1}((-\epsilon,0])$ for $\epsilon>0$ arbitrarily small, then we have by Taylor's theorem that
\begin{align*}
\partial_s\phi(s)=&\:\partial_s\phi(0)+\sum_{m=1}^{N} \frac{1}{m!}\partial_s^{m+1}\phi(0)\cdot s^m+o(s^{N+1}),\\
D(s)=&\:1+\sum_{m=1}^{N} \frac{1}{m!} \frac{d^mD}{ds^m}(0)\cdot s^m+o(s^{N+1}),
\end{align*}
with $\partial_s\phi(0)=I_0[\psi]$ and $D'(0)=2M$.

The above expansions imply \eqref{asm:additionalasmD} and \eqref{asm:initialpsik1v2} if we use that $s=-1/r$ and we take $\partial_s\phi(0)=0$.
\end{remark}

\subsection{Initial energy norms for the time integral $\psi^{(1)}$}
\label{estiamtefortimeinte}
In the following propositions we obtain estimates for the initial energy norms of time integrals $\psi^{(1)}$ in terms of initial energy norms of $\psi$. This allows us to apply the decay estimates of Section \ref{gdeforpsi1} to $\psi^{(1)}$. 

\begin{proposition}
\label{prop:initenergytimeint}
Let $\psi^{(1)}$ be the time integral associated to a solution $\psi$ to the wave equation, with $E^{\epsilon}_{0,I_0=0;0}[\psi]<\infty$ for some $\epsilon>0$ and $$\int_{\Sigma_0}\Big(J^N[N\psi]+J^N[NT\psi]+J^N[N^2\psi]\Big)\cdot n_0\,d\mu_0<\infty.$$
Then there exists a constant $C=C(D,R,\Sigma,\epsilon)>0$ such that
\begin{align}
\label{eq:energytimeint}
E^{\epsilon}_{0,I_0\neq 0;0}[\psi^{(1)}]\leq&\: C\cdot \left(E^{\epsilon}_{0,I_0= 0;0}[\psi]+\int_{\Sigma_0}J^N[N\psi]\cdot n_0\,d\mu_0\right),\\
\label{eq:NenergyNtimeint}
\int_{\Sigma_0}J^N[N\psi^{(1)}]\cdot n_0\,d\mu_0\leq&\: C \cdot \bigg(E^{\epsilon}_{0,I_0= 0;0}[\psi]+\int_{\Sigma_0}\Big(J^N[N\psi]+J^N[NT\psi]\\
&+J^N[N^2\psi]\Big)\cdot n_0\,d\mu_0\bigg).
\end{align}
\end{proposition}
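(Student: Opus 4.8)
The guiding principle is that, since $T\psi^{(1)}=\psi$, every factor of $T$ falling on $\psi^{(1)}$ may be replaced by $\psi$, so that the only genuinely new quantities to estimate are $\psi^{(1)}$ itself and its \emph{spatial} derivatives along $\Sigma_0$. The plan is to control these using the explicit representations derived in the proof of Proposition~\ref{prop:constructionTinvpsi}: formula \eqref{preliequ} together with the integrability condition \eqref{integrabi} on $\{r\ge R\}$, and formula \eqref{eq:drpsikplus1} on $\{r\le R\}$. I would split $\Sigma_0$ into the outgoing null piece $\mathcal{N}_0=\{r\ge R\}$ and the spacelike piece $\{r\le R\}$ and treat each separately, using Cauchy--Schwarz together with the Hardy inequality of Lemma~2.2 of \cite{paper1} to convert the pointwise identities into weighted $L^2$ bounds. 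Throughout, the crucial bookkeeping is to track how the single $r$-integration in \eqref{preliequ} shifts the $r$-weights, matching the weights appearing in the $I_0\ne 0$ norm of $\psi^{(1)}$ to those available in the $I_0=0$ norm of $\psi$.

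For \eqref{eq:energytimeint}, on $\mathcal{N}_0$ I would start from \eqref{preliequ}, namely $Dr^2\partial_r\psi^{(1)}(0,r)=C_0-2\int_r^\infty r'\partial_{r'}\phi\,dr'$ with $C_0=\lim_{r\to\infty}r^2\partial_r\psi^{(1)}|_{\Sigma_0}$. Applying Cauchy--Schwarz to the $r'$-integral against the appropriate $r$-weight bounds the weighted $L^2$ norm of $\partial_r\psi^{(1)}$ by that of $\partial_r\phi$, which is part of $E^{\epsilon}_{0,I_0=0;0}[\psi]$, while the boundary constant $C_0$ is estimated through \eqref{integrabi} by $\phi$, $T\phi$ and $\partial_\rho\phi$ on $\Sigma_0$. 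The undifferentiated field $\psi^{(1)}$ is then recovered by integrating $\partial_r\psi^{(1)}$ inward from infinity, using $\lim_{r\to\infty}\psi^{(1)}|_{\Sigma_0}=0$, and a Hardy inequality upgrades the bound on $\partial_r\psi^{(1)}$ to one on $\psi^{(1)}$ with the correct weight. On $\{r\le R\}$ I would instead use \eqref{eq:drpsikplus1}, whose right-hand side is pointwise dominated by $|\partial_\rho\phi|+|T\phi|+|\phi|$, that is (since $r$ is bounded) by $|\partial_\rho\psi|+|T\psi|+|\psi|$; integrating in $\rho$, these terms are controlled by $E^{\epsilon}_{0,I_0=0;0}[\psi]$ together with the non-degenerate flux $\int_{\Sigma_0}J^N[N\psi]\cdot n_0$. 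The latter is unavoidable here because the $r$-weighted $I_0=0$ norm degenerates at the event horizon and does not by itself control the transversal derivative $\partial_\rho\psi$ up to $r=r_+$.

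The estimate \eqref{eq:NenergyNtimeint} requires one additional order of differentiation. Since $N=T$ in $\mathcal{A}$, near infinity $N\psi^{(1)}=\psi$ and $J^N[N\psi^{(1)}]=J^N[\psi]$ is controlled directly. In the redshift region $\{r_{\rm min}\le r\le r_1\}$, where $N=T-Y$, one has $N\psi^{(1)}=\psi-Y\psi^{(1)}$, so $J^N[N\psi^{(1)}]\cdot n_0$ involves second-order spatial derivatives of $\psi^{(1)}$. I would obtain these by commuting the wave equation once more, differentiating \eqref{eq:waveequationv3} (equivalently, using the equation to trade a second radial derivative of $\psi^{(1)}$ for the mixed and pure time derivatives $T\partial_\rho\psi^{(1)}=\partial_\rho\psi$ and $T^2\psi^{(1)}=T\psi$, plus lower-order terms already handled in the first estimate). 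This is exactly why the right-hand side of \eqref{eq:NenergyNtimeint} must also contain $\int_{\Sigma_0}(J^N[NT\psi]+J^N[N^2\psi])\cdot n_0$: controlling $\partial_\rho\psi$ and $T\psi$, together with their transversal derivatives, up to and including the horizon forces the appearance of the non-degenerate energies of $T\psi$ and $N\psi$. The main obstacle, in both estimates, is the simultaneous management of the $r$-weights at infinity and the $D^{-1}$-singularity at $r_{\rm min}$: one must verify that the $r$-integration in the construction produces precisely the weight shift needed to close the $I_0\ne 0$ norm of $\psi^{(1)}$, and that the integrability condition \eqref{integrabi} renders $\partial_\rho\psi^{(1)}$ and its commutators regular at the horizon, so that the non-degenerate fluxes of $\psi$, $T\psi$ and $N\psi$ on the right-hand side indeed suffice.
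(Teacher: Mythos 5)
Your overall strategy coincides with the paper's: isolate the genuinely new terms (the flux $J^N[\psi^{(1)}]\cdot n_0$ and the $r^{3-\epsilon}$-weighted flux of $\partial_r\phi^{(1)}$ along $\mathcal{N}_0$), control them through the representation formulas \eqref{preliequ}, \eqref{integrabi} and \eqref{eq:drpsikplus1} coming from the construction of $\psi^{(1)}$, with Cauchy--Schwarz doing the weight bookkeeping (the $r^{5-\epsilon}$ weight on $\partial_r\phi$ becoming the $r^{3-\epsilon}$ weight on $\partial_r\phi^{(1)}$ after one $r$-integration). However, there is a genuine gap at precisely the step you yourself leave as ``to verify'': the \emph{pointwise} control of $\partial_\rho\psi^{(1)}$ and, for \eqref{eq:NenergyNtimeint}, of $\partial_\rho^2\psi^{(1)}$ up to and including $r=r_{\rm min}$. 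Your proposed mechanism for the second derivative --- using the wave equation to trade $\partial_\rho^2\psi^{(1)}$ for $\partial_\rho\psi$ and $T\psi$ --- does not close: equation \eqref{eq:waveequationv3} only yields $Dr^2\partial_\rho^2\psi^{(1)} = -\partial_\rho(Dr^2)\,\partial_\rho\psi^{(1)} + (\textnormal{terms in }\partial_\rho\phi,\,T\phi,\,\phi)$, and extracting $\partial_\rho^2\psi^{(1)}$ requires dividing by $D$, which vanishes at $r_+$; at the horizon this identity degenerates into a constraint relating lower-order quantities, not an estimate for $\partial_\rho^2\psi^{(1)}$. The wave equation alone cannot supply the cancellation you need.

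The mechanism the paper actually uses is a quantitative Taylor expansion of $F(\rho):=Dr^2\partial_\rho\psi^{(1)}(0,\rho)$, which --- precisely because of the integrability condition \eqref{integrabi} --- admits the representation \eqref{eq:timeintboundedR} as an integral from $r_{\rm min}$, hence $F(r_{\rm min})=0$ (and additionally $F'(0)=0$ in the $r_{\rm min}=0$ case). For the first estimate the integrand is $O(\rho')$, so $|F(\rho)|\lesssim \rho^2-r_{\rm min}^2\lesssim D(\rho)\rho^2$ on $[r_{\rm min},R]$, using the factorization $D=d(r)(r-r_+)$ with $d(r_+)>0$; this is what cancels the degenerate factor. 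For the second estimate one expands $F$ to second order about $r_{\rm min}$ and differentiates the quotient $F(\rho)/\bigl(d(\rho)\rho^2(\rho-r_+)\bigr)$, obtaining $|\partial_\rho^2\psi^{(1)}|\lesssim\sup\bigl(|F|+|F'|+|F''|\bigr)$, which is then bounded by the fluxes of $N^{j_1}T^{j_2}\psi$ with $j_1+j_2\le 2$ --- this is exactly where $J^N[NT\psi]$ and $J^N[N^2\psi]$ enter the right-hand side of \eqref{eq:NenergyNtimeint}. A smaller inaccuracy: your stated reason for the appearance of $\int_{\Sigma_0}J^N[N\psi]\cdot n_0$ in \eqref{eq:energytimeint} (alleged degeneracy of the $I_0=0$ norm at the horizon) is not correct, since $E^{\epsilon}_{0,I_0=0;0}[\psi]$ already contains the non-degenerate fluxes $J^N[T^l\psi]\cdot n_0$ and hence controls $\partial_\rho\psi$ in $L^2$ up to the horizon; the extra term is needed because the argument requires $\sup_{[r_{\rm min},R]}\bigl(|\partial_\rho\psi|+|T\psi|+|\psi|\bigr)$, and pointwise control via the fundamental theorem of calculus and Cauchy--Schwarz costs one transversal derivative more than the $L^2$ flux.
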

\begin{proof}
See Appendix \ref{sec:initenergytimeint}.
\end{proof}

\begin{proposition}
\label{propositiontransition}
Let $k\in \N_0$. Let $\psi^{(1)}$ be the time integral associated to a solution $\psi$ to the wave equation with $\widetilde{E}^{\epsilon}_{0,I_0=0;k}[\psi]<\infty$ and $$\int_{\Sigma_0}J^N[N^2\psi]\cdot n_0\,d\mu_0<\infty.$$ Then there exists a constant $C=C(\Sigma,R,D,k,\epsilon)>0$, such that
\begin{align}
\label{eq:maininitenergyesttimeint}
{E}_{0,I_0\neq 0;k+1}^{\epsilon}[\psi^{(1)}]\leq&\: C\cdot \left({E}_{0,I_0=0;k}^{\epsilon}[\psi]+\int_{\Sigma_0}J^N[N\psi]\cdot n_0\,d\mu_0\right),\\
\label{eq:maininitenergyesttimeint2}
\widetilde{E}_{0,I_0\neq 0;k+1}^{\epsilon}[\psi^{(1)}]\leq&\: C\cdot \left(\widetilde{E}_{0,I_0=0;k}^{\epsilon}[\psi]+\int_{\Sigma_0}J^N[N^2\psi]\cdot n_0\,d\mu_0\right).
\end{align}
\end{proposition}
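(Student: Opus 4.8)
The plan is to exploit the defining relation $T\psi^{(1)}=\psi$ in order to reduce every higher-order energy norm of the time integral to a norm of $\psi$ itself, isolating the single genuinely new ingredient — the part of $\psi^{(1)}$ on $\Sigma_0$ carrying no $T$-derivative — which is already controlled by the base estimate of Proposition \ref{prop:initenergytimeint}. First I would unwind the definition of $E^{\epsilon}_{0,I_0\neq 0;k+1}[\psi^{(1)}]$ from Appendix \ref{apx:energynorms} and organise its constituent $r^p$-weighted spatial integrals according to the number $j$ of $T$-derivatives they contain, with $0\leq j\leq k+1$.

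For every term in which at least one $T$ acts, I use $T^j\psi^{(1)}=T^{j-1}\psi$ (valid for $j\geq 1$ by Proposition \ref{prop:constructionTinvpsi}), so that such a term is literally the corresponding term of the $\psi$-norm with the $T$-index shifted down by one. Summing over $j\geq 1$ reproduces exactly the spatial integrals making up $E^{\epsilon}_{0,I_0\neq 0;k}[\psi]$, and since the $I_0=0$ norm controls the $I_0\neq 0$ norm (its $r$-weights are stronger) this contribution is bounded by $E^{\epsilon}_{0,I_0=0;k}[\psi]$. The terms with $j=0$ involve only radial and angular derivatives of $\psi^{(1)}$; under spherical symmetry these reduce to $\partial_\rho$-derivatives of $\psi^{(1)}$ and $\psi^{(1)}$ itself. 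To control the first radial derivative and the $r$-weighted value of $\psi^{(1)}$ I would use the explicit representation formulas \eqref{eq:drpsikplus1} and \eqref{eq:exprpsi1}, together with a Hardy inequality (as in Lemma 2.2 of \cite{paper1}) to recover $\psi^{(1)}$ from $\partial_\rho\psi^{(1)}$ and its decay at infinity; this is precisely the content of the base estimate \eqref{eq:energytimeint}. Higher radial derivatives needed in the $k\geq 1$ norms are obtained by repeatedly differentiating the elliptic relation \eqref{eq:waveequationv3} (respectively \eqref{eq:waveequationv2}) along $\Sigma_0$, which expresses $\partial_\rho(Dr^2\partial_\rho\psi^{(1)})$ through $\phi=r\psi$, $T\phi$ and $\partial_\rho\phi$, so that each additional radial derivative of $\psi^{(1)}$ is traded for data of $\psi$ carrying the correct $r^p$-weight; the resulting $L^2$-bounds then follow from Cauchy--Schwarz and weight bookkeeping.

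The $\widetilde{E}$-version is handled identically: the additional red-shift terms $\int_{\Sigma_0}J^N[NT^j\psi^{(1)}]\cdot n_0\,d\mu_0$ with $j\geq 1$ reduce to $\int_{\Sigma_0}J^N[NT^{j-1}\psi]\cdot n_0\,d\mu_0$, which are part of $\widetilde{E}^{\epsilon}_{0,I_0=0;k}[\psi]$, while the single $j=0$ term $\int_{\Sigma_0}J^N[N\psi^{(1)}]\cdot n_0\,d\mu_0$ is estimated by the second base inequality \eqref{eq:NenergyNtimeint}, which is exactly where the $\int_{\Sigma_0}J^N[N^2\psi]\cdot n_0\,d\mu_0$ contribution enters. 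I would organise the whole argument as an induction on $k$, with Proposition \ref{prop:initenergytimeint} serving as the base case.

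The main obstacle I anticipate is the weight bookkeeping for the $j=0$, high-radial-derivative terms: one must check that differentiating \eqref{eq:waveequationv3} the requisite number of times and inserting the asymptotics of $D$ produces exactly the powers of $r$ appearing in the definition of $E^{\epsilon}_{0,I_0\neq 0;k+1}$, and that the boundary contributions at $r=R$ — and, in the $r_{\rm min}=r_+$ case, the behaviour dictated by the integrability condition \eqref{integrabi} at the horizon — are absorbed without loss of weight. Once these are tracked, the two asserted inequalities close.
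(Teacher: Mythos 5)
Your proposal follows essentially the same route as the paper's proof in Appendix \ref{sec:propositiontransition}: terms carrying at least one $T$-derivative are reduced to data for $\psi$ via $T\psi^{(1)}=\psi$ and absorbed into $E^{\epsilon}_{0,I_0=0;k}[\psi]$, the remaining $J^N$-terms without $T$-derivatives are handled by Proposition \ref{prop:initenergytimeint}, and the higher pure-radial-derivative terms are traded for derivatives of $\phi=r\psi$ by differentiating the wave equation along the initial hypersurface (the paper uses $D\partial_r^2\phi^{(1)}=2\partial_r\phi+D'\partial_r\phi^{(1)}+D'r^{-1}\phi^{(1)}$ together with the asymptotics of $D$), closing with an induction on $k$. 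The weight bookkeeping you flag as the main obstacle is indeed the only delicate point, and it is resolved exactly by the mechanism you describe.
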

\begin{proof}
See Appendix \ref{sec:propositiontransition}.
\end{proof}

\section{Asymptotics II: The case $I_{0}=0$}
\label{sec:asympsi2}

\subsection{Asymptotics for the radiation field $r\psi$}
\label{sec:asmradfieldzeroNP}

We consider now spherically symmetric solutions $\psi$ for which $I_0 [\psi ] = 0$. We use Proposition \ref{prop:constructionTinvpsi} to construct a solution ${\psi}^{(1)}$ to \eqref{waveequation} such that $T{\psi}^{(1)}=\psi$ and apply Proposition \ref{asymprecTk} to $T{\psi}^{(1)}$ to arrive at the following result.

\begin{proposition}
\label{prop:asymphinp0}
Let $\alpha_1\in [\frac{7}{9},1)$ and $\epsilon\in (0,\frac{1}{6}(1-\alpha_1))$ and assume that
\begin{equation*}
\widetilde{E}^{\epsilon}_{0,I_0= 0;0}[\psi]<\infty,
\end{equation*}
and assume moreover that $$ v^3\partial_v(r\psi)(0,v)=\lim_{v'\to \infty}v'^3\partial_v(r\psi)(0,v')+O(v^{-\beta}). $$

Then we have that
\begin{equation}
\label{eq:asymphinp0}
\begin{split}
|r\psi (u,v)&+2I_0^{(1)} [\psi ] \left(u^{-2}- v^{-2}\right)|\\
\leq&\: C\left(\sqrt{E^{\epsilon}_{I_0\neq 0;1}[\psi^{(1)}]}+I_0^{(1)}[\psi]\right)(u+1)^{-2-\epsilon}\\
&+C\cdot P_{I_0,\beta;1}[\psi^{(1)}]\cdot (u+1)^{-2-\beta},
\end{split}
\end{equation}
for all $(u,v)\in \mathcal{B}_{\alpha_1}$ where 
\begin{enumerate}
\item $C=C(D,\Sigma,R,\alpha_1,\epsilon)>0$ is a constant,
\item  $\psi^{(1)}$ is the time integral of $\psi$, 
\item $I_{0}^{(1)}[\psi]$ is the time-inverted Newman--Penrose constant of $\psi$ and
\item  $P_{I_0,\beta;1}[\psi^{(1)}]$ is as defined in \eqref{def:PI0k}.

\end{enumerate}

In particular, along $\mathcal{I}^+$ we have that the following asymptotics for the radiation field $\phi=r\psi$:
\begin{equation*}
\begin{split}
|\phi (u,\infty)+ 2I_0^{(1)} [\psi ] u^{-2}|\leq&\: C\left(\sqrt{E^{\epsilon}_{I_0\neq 0;1}[\psi^{(1)}]}+I_0^{(1)}[\psi]\right)(u+1)^{-2-\epsilon}\\
&+C\cdot P_{I_0,\beta;1}[\psi^{(1)}]\cdot (u+1)^{-2-\beta}.
\end{split}
\end{equation*}
In fact, if we further impose $\frac{1-\alpha_1}{2}<\beta+2\epsilon$, the estimate \eqref{eq:asymphinp0} provides first-order asymptotics for $\phi$ in the region $\mathcal{B}_{\delta_1}$ for $\delta_1 $ such that $1>\delta_1> \frac{\alpha_1}{2}+\frac{1}{2}+2\epsilon>\alpha_1+2\epsilon$.
\end{proposition}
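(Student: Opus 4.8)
The plan is to reduce the statement to a single application of Proposition \ref{asymprecTk} to the time integral $\psi^{(1)}$ of $\psi$. First I would invoke Proposition \ref{prop:constructionTinvpsi} to construct the unique regular spherically symmetric solution $\psi^{(1)}$ satisfying $T\psi^{(1)}=\psi$. The hypothesis $v^3\partial_v(r\psi)(0,v)=\lim_{v'\to\infty}v'^3\partial_v(r\psi)(0,v')+O(v^{-\beta})$ guarantees in particular that $\lim_{r\to\infty}r^3\partial_r(r\psi)|_{\Sigma_0}<\infty$, which is precisely the admissibility condition \eqref{asm:initialpsik2} needed for the construction; by Definition \ref{timeinvertedNP} the Newman--Penrose constant of $\psi^{(1)}$ equals the time-inverted constant, $I_0[\psi^{(1)}]=I_0^{(1)}[\psi]$.

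The key algebraic observation is that, since $T=\partial_v$ in $(v,r,\theta,\varphi)$ coordinates annihilates $r$, the operator $T$ commutes with multiplication by $r$, so that
\[
\phi=r\psi=r\,T\psi^{(1)}=T(r\psi^{(1)})=T\phi^{(1)},
\]
where $\phi^{(1)}=r\psi^{(1)}$. Thus the asymptotics of the radiation field $\phi$ coincide with those of $T\phi^{(1)}$, which are furnished by Proposition \ref{asymprecTk} applied to $\psi^{(1)}$ in the case $k=1$. For $k=1$ the admissible range $\alpha_k\in[\tfrac{2k+5}{2k+7},1)$ becomes $\alpha_1\in[\tfrac{7}{9},1)$, matching the hypothesis, and the leading-order coefficient $(-1)^k k!\cdot 2I_0[\psi^{(1)}]$ becomes $-2I_0^{(1)}[\psi]$, producing exactly the term $-2I_0^{(1)}[\psi](u^{-2}-v^{-2})$ in \eqref{eq:asymphinp0}. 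Since for $\epsilon\in(0,\tfrac16(1-\alpha_1))$ one has $-\tfrac32-1+\tfrac{\alpha_1}{2}+2\epsilon<-2-\epsilon$, the error exponents collapse to $(u+1)^{-2-\epsilon}$ and $(u+1)^{-2-\beta}$, as claimed.

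The remaining work is to verify that the finiteness hypotheses of Proposition \ref{asymprecTk} hold for $\psi^{(1)}$, and this is where I expect the only real obstacle to lie; it is essentially bookkeeping through Section \ref{sec:consttinvphi}. Concretely I must check $E^{\epsilon}_{0,I_0\neq0;1}[\psi^{(1)}]<\infty$ and $P_{I_0,\beta;1}[\psi^{(1)}]<\infty$. The energy finiteness follows from the transfer estimates of Proposition \ref{prop:initenergytimeint} and Proposition \ref{propositiontransition}, which bound the $I_0\neq0$ norms of the time integral by the $I_0=0$ norm $\widetilde{E}^{\epsilon}_{0,I_0=0;0}[\psi]$ together with the accompanying $N$-energy fluxes controlled by the standing hypotheses on $\psi$. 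The finiteness of the weighted $v$-derivative norm $P_{I_0,\beta;1}[\psi^{(1)}]$ is supplied by Corollary \ref{cor:estPnorms}, whose hypotheses are precisely the $O(v^{-\beta})$ decay assumed for $v^3\partial_v(r\psi)(0,v)$, equivalently the expansion \eqref{asm:initialpsik1v2} of $r^2\partial_r(r\psi^{(1)})|_{\Sigma_0}$ up to an $O(r^{-\beta})$ remainder.

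With these inputs in place, the conclusion of Proposition \ref{asymprecTk} with $k=1$, rewritten via $T\phi^{(1)}=\phi$, is exactly the estimate \eqref{eq:asymphinp0}. The asymptotics along $\mathcal{I}^+$ then follow by letting $v\to\infty$, so that $v^{-2}\to0$, and the sharpness in the region $\mathcal{B}_{\delta_1}$ under the additional condition $\tfrac{1-\alpha_1}{2}<\beta+2\epsilon$ follows verbatim from the lower-bound computation at the end of the proof of Proposition \ref{asymprecTk}, now read off for $\psi^{(1)}$ with $k=1$.
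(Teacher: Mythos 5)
Your proposal is correct and follows essentially the same route as the paper: construct the time integral $\psi^{(1)}$ via the time-inversion theory of Section \ref{sec:consttinvphi}, verify $E^{\epsilon}_{0,I_0\neq 0;1}[\psi^{(1)}]<\infty$ via Proposition \ref{propositiontransition} and $P_{I_0,\beta;1}[\psi^{(1)}]<\infty$ via Corollary \ref{cor:estPnorms}, and then apply Proposition \ref{asymprecTk} with $k=1$, using $\phi=T(r\psi^{(1)})$ and $I_0[\psi^{(1)}]=I_0^{(1)}[\psi]$. Your explicit checks of the exponent arithmetic and of the identification $\phi=T\phi^{(1)}$ are details the paper leaves implicit, but they do not change the argument.
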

\begin{proof}
From Proposition \ref{prop:tinverseconstr_k} it follows that we can construct the time integral $\psi^{(1)}$ of $\psi$, satisfying $T\psi^{(1)}=\psi$. Furthermore, by Corollary \ref{cor:estPnorms} we have that $P_{I_0,\beta;1}[\psi^{(1)}]<\infty$ and by Proposition \ref{propositiontransition} we have that $E_{0,I_0\neq0;1}[\psi^{(1)}]<\infty$. We are therefore able to apply Proposition \ref{asymprecTk} with $k=1$ to $\psi^{(1)}$.
\end{proof}

\begin{proposition}\label{prop:tkpsi_lowert}
Let $n\in\mathbb{N}$ and assume the following additional asymptotics for $D$:
\begin{equation*}
D(r)=1-2Mr^{-1}+\sum_{m=0}^{n-1}d_mr^{-m-1}+O_{3+n}(r^{-n-\beta}),
\end{equation*}
where $d_m\in \mathbb{R}$ for $m=0,1,\cdots, n-1$. Consider the region $\mathcal{B}_{\alpha_n} =\{r\geq R\}\cap \{0\leq u\leq v-v^{\alpha_n}\}$, where $\alpha_n\in (\frac{2n+5}{2n+7},1)$. Let $\epsilon\in(0,\frac{1}{6}(1-\alpha_n))$ be arbitrarily small and assume that $$\widetilde{E}^{\epsilon}_{I_0= 0;n-1}[\psi]<\infty.$$ Assume moreover that
\begin{align*}
r^2\partial_r\phi\left.\!\right|_{\Sigma_{0}}
=\sum_{m=1}^{n}p_{m}r^{-m}+O(r^{-n-\beta}),
\end{align*}
where $p_m\in \R$  for $m=1,\cdots, n$.

Let $k\leq n$ and assume further more that
\begin{equation*}
I_0^{(0)}[\psi]=\ldots=I^{(k-1)}[\psi]=0.
\end{equation*}
Then,
\begin{equation}
\label{eq:asymTkphinp0}
\begin{split}
|\phi (u,v)&- (-1)^{k} k! \cdot  2I_0^{(k)} [\psi ] \left(u^{-k-1}- v^{-k-1}\right)|\\
\leq&\: C\left(\sqrt{E^{\epsilon}_{I_0\neq 0;k}[\psi^{(k)}]}+I_0^{(k)}[\psi]\right)(u+1)^{-1-k-\epsilon}\\
&+C\cdot P_{I_0,\beta;k}[\psi^{(k)}]\cdot (u+1)^{-2-k},
\end{split}
\end{equation}
for all $(u,v)\in \mathcal{B}_{\alpha_N}$ where 
\begin{enumerate}
\item $C=C(D,\Sigma,R,k,\alpha_n,\epsilon)>0$ is a constant,
\item $\psi^{(k)}$ is the $k^{\text{th}}$ time integral of $\psi$,
\item $I_{0}^{(k)}[\psi] $ is the $k^{\text{th}}$-order time-inverted Newman--Penrose constant of $\psi$ and
\item $P_{I_0,\beta;k}[\psi^{(k)}]$ is defined in \eqref{def:PI0k}.
\end{enumerate}

In particular, along $\mathcal{I}^+$ we have that
\begin{equation*}
\begin{split}
|\phi (u,\infty)-(-1)^{k} k! \cdot 2I_0^{(k)} [\psi ] u^{-k-1}|\leq&\: C\left(\sqrt{E^{\epsilon}_{I_0\neq 0;k}[\psi^{(k)}]}+I_0^{(k)}[\psi]\right)(u+1)^{-k-1-\epsilon}\\
&+C\cdot P_{I_0,\beta;k}[\psi^{(k)}]\cdot (u+1)^{-k-2}.
\end{split}
\end{equation*}
In fact, if we further impose $\frac{1-\alpha_n}{2}<\beta+2\epsilon$, the estimate \eqref{eq:asymTkphinp0} provides first-order asymptotics for $\phi$ in the region $\mathcal{B}_{\delta_n}$ for $\delta_n $ such that $1>\delta_n> \frac{\alpha_n}{2}+\frac{1}{2}+2\epsilon>\alpha_n+2\epsilon$.
\end{proposition}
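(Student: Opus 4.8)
The plan is to reduce the $I_0=0$ case to the already-established non-vanishing Newman--Penrose analysis of Section~\ref{sec:asymradfieldcaseI} by passing to the $k^{\text{th}}$ time integral. First I would invoke Proposition~\ref{prop:tinverseconstr_k} with $N=n$: the assumed expansion \eqref{asm:additionalasmD} for $D$ and the expansion \eqref{asm:initialpsik1v2} for $r^2\partial_r\phi|_{\Sigma_0}$ guarantee that the integrability condition \eqref{integrabi} can be satisfied at each stage. Crucially, the standing hypothesis $I_0^{(0)}[\psi]=\cdots=I_0^{(k-1)}[\psi]=0$ ensures that each intermediate time integral $\psi^{(j)}$, $0\le j\le k-1$, has vanishing Newman--Penrose constant, which is exactly the condition needed (via \eqref{nptpsi} and \eqref{asm:initialpsik2}) to construct the next integral $\psi^{(j+1)}$. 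Iterating, I obtain a smooth spherically symmetric solution $\psi^{(k)}$ with $T^k\psi^{(k)}=\psi$ and with $I_0[\psi^{(k)}]=I_0^{(k)}[\psi]$, which is generically non-zero.

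The key structural observation is that the stationary Killing field annihilates the area radius: since $r$ depends only on $r^*=\tfrac12(v-u)$ and $T=L+\underline{L}=\partial_v+\partial_u$, we have $Tr=\tfrac{D}{2}-\tfrac{D}{2}=0$. Hence $T$ commutes with multiplication by $r$, and writing $\phi^{(k)}=r\psi^{(k)}$ we get
\begin{equation*}
\phi=r\psi=r\,T^k\psi^{(k)}=T^k(r\psi^{(k)})=T^k\phi^{(k)}.
\end{equation*}
This identifies the radiation field of the original solution with the $k^{\text{th}}$ $T$-derivative of the radiation field of a solution with non-vanishing Newman--Penrose constant, for which Proposition~\ref{asymprecTk} already supplies the asymptotic expansion. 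Applying that proposition to $\psi^{(k)}$ (playing the role of ``$\psi$'' there, with its constant $I_0[\psi^{(k)}]=I_0^{(k)}[\psi]$) then yields precisely the claimed leading term $(-1)^k k!\cdot 2I_0^{(k)}[\psi]\,(u^{-k-1}-v^{-k-1})$, including the limit along $\mathcal{I}^+$.

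Before applying Proposition~\ref{asymprecTk} I would check its hypotheses for $\psi^{(k)}$. The parameter range is compatible: since $\tfrac{2m+5}{2m+7}$ is increasing in $m$ and $k\le n$, the assumed $\alpha_n\in(\tfrac{2n+5}{2n+7},1)$ lies in $[\tfrac{2k+5}{2k+7},1)$, so I may take the proposition's $\alpha_k$ equal to $\alpha_n$ and obtain the estimate directly on $\mathcal{B}_{\alpha_n}$ (and, as $v\to\infty$, along $\mathcal{I}^+$). Finiteness of $P_{I_0,\beta;k}[\psi^{(k)}]$ follows from Corollary~\ref{cor:estPnorms}, and finiteness of the initial energy norm $E^{\epsilon}_{0,I_0\neq0;k}[\psi^{(k)}]$ follows by iterating the transition estimate of Proposition~\ref{propositiontransition}: at each step $0\le j<k$ the solution $\psi^{(j)}$ has $I_0[\psi^{(j)}]=0$, so the $I_0=0$ norm of $\psi^{(j)}$ controls the $I_0\neq0$ norm of $\psi^{(j+1)}$ (together with the redshift-controlled $J^N[N^2\,\cdot\,]$ fluxes), telescoping down to the input quantity $\widetilde{E}^{\epsilon}_{I_0=0;n-1}[\psi]$. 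With the constants $I_0[\psi^{(k)}]=I_0^{(k)}[\psi]$ identified, Proposition~\ref{asymprecTk} then gives \eqref{eq:asymTkphinp0}; the refinement to genuine first-order asymptotics on $\mathcal{B}_{\delta_n}$ follows, as there, by comparing the lower bound $|u^{-k-1}-v^{-k-1}|\gtrsim (u+1)^{-k-2+\delta_n}$ against the remainder.

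The main obstacle I anticipate is the bookkeeping in the iterated energy-norm transition: Proposition~\ref{propositiontransition} is stated for a single time integration, and one must verify that the precise $r$-weighted norms appearing after $k$ successive integrations close up against the finite input norms of $\psi$, keeping careful track of the shifting weights dictated by the expansions \eqref{asm:additionalasmD} and \eqref{asm:initialpsik1v2}. A secondary, purely cosmetic point is the matching of error exponents: the direct output of Proposition~\ref{asymprecTk} carries a remainder proportional to $P_{I_0,\beta;k}[\psi^{(k)}]\,(u+1)^{-1-k-\beta}$, which must be reconciled with the stated $(u+1)^{-2-k}$; this is harmless once $\beta\ge 1$ (or after absorbing into the $\epsilon$-loss term), but the reduction should be recorded explicitly.
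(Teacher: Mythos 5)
Your proposal is correct and follows essentially the same route as the paper: construct the $k^{\text{th}}$ time integral $\psi^{(k)}$ via Proposition \ref{prop:tinverseconstr_k}, verify $P_{I_0,\beta;k}[\psi^{(k)}]<\infty$ via Corollary \ref{cor:estPnorms} and the initial energy norms via Proposition \ref{propositiontransition}, and then apply Proposition \ref{asymprecTk} to $\psi^{(k)}$ with $I_0[\psi^{(k)}]=I_0^{(k)}[\psi]$. Your additional remarks (the identity $T^k\phi^{(k)}=\phi$ since $Tr=0$, the parameter check $\alpha_n\in[\tfrac{2k+5}{2k+7},1)$, and the reconciliation of the $(u+1)^{-1-k-\beta}$ versus $(u+1)^{-2-k}$ remainder, which the paper handles by effectively taking $\beta=1$ after time inversion) fill in details the paper leaves implicit.
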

\begin{proof}
From Proposition \ref{prop:tinverseconstr_k} it follows that we can construct the $k$-th time integral $\psi^{(k)}$ of $\psi$, satisfying $T^k\psi^{(k)}=\psi$. Furthermore, by Corollary \ref{cor:estPnorms} we have that $P_{I_0,1;k}[\psi^{(1)}]<\infty$ and by Proposition \ref{propositiontransition} we have that $E_{0,I_0\neq0;k}[\psi^{(1)}]<\infty$. We are therefore able to apply Proposition \ref{asymprecTk} to $\psi^{(k)}$.
\end{proof}

\subsection{Global asymptotics for the scalar field $\psi$}
\label{asymptoticspsi2}

The next proposition determines the late-time asymptotics of spherically symmetric solutions to the wave equation $\psi$.

\begin{proposition}
\label{prop:asympsinp0}
There exists an $\epsilon=\epsilon(k)>0$ suitably small, such that under the assumption $$\widetilde{E}^{\epsilon}_{0,I_0=0;1}[\psi]+\int_{\Sigma_0}J^N[N^2\psi]\cdot n_0\,d\mu_0< \infty,$$ and moreover
\begin{equation*}
v^3\partial_v(r\psi)(0,v)=\lim_{v'\to \infty}v'^3\partial_v(r\psi)(0,v')+O(v^{-\beta})<\infty,
\end{equation*}
we have that for all $(u,v)\in \mathcal{R}\cap\{r\geq R\}$ we can estimate
\begin{equation*}
\begin{split}
\Bigg|\psi& (u,v) +4\frac{I_0^{(1)} [\psi ]}{(u+1)^2v}\left(1+\frac{u}{v}\right)\Bigg|\\
 \leq&\: C\left(\sqrt{\widetilde{E}^{\epsilon}_{0,I_0\neq 0;2}[\psi^{(1)}]}+I_0^{(1)}[\psi]+P_{I_0,\beta;1}[\psi^{(1)}]\right)(u+1)^{-1-\epsilon}v^{-1},
 \end{split}
\end{equation*}
where 
\begin{enumerate}
\item $C=C(D,\Sigma,R,\epsilon)>$ is a constant,
\item $I_{0}^{(1)}[\psi] $ is the time-inverted Newman--Penrose constant of $\psi$,
\item $\psi^{(1)}$ is the time integral of $\psi$ and
\item $P_{I_0,\beta;1}[\psi^{(1)}]$ is as defined in \eqref{def:PI0k}.
\end{enumerate}

Furthermore, we can estimate in $\mathcal{R}\cap\{r\leq R\}$:
\begin{equation*}
\begin{split}
\Bigg|\psi& (\tau,\rho) +8\frac{I_0^{(1)} [\psi ]}{(\tau+1)^{3}}\Bigg| \\
\leq&\: C\left(\sqrt{\widetilde{E}^{\epsilon}_{0,I_0\neq 0;2}[\psi^{(1)}]}+I_0^{(1)}[\psi]+P_{I_0,\beta;1}[\psi^{(1)}]\right)(\tau+1)^{-3-\epsilon}.
\end{split}
\end{equation*}
\end{proposition}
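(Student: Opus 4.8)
The plan is to deduce the asymptotics of $\psi$ with vanishing Newman--Penrose constant directly from the already-established asymptotics of its time integral $\psi^{(1)}$, which has a \emph{non-vanishing} Newman--Penrose constant $I_0[\psi^{(1)}]=I_0^{(1)}[\psi]$. The key structural observation is that $\psi = T\psi^{(1)} = (L+\underline{L})\psi^{(1)}$, so asymptotics for $\psi$ follow by applying the results of Section \ref{sec:asymradfieldcaseI} (the $I_0\neq 0$ theory) to $\psi^{(1)}$ with one $T$-derivative. Concretely, I would invoke Proposition \ref{prop:tkpsi_lower} applied to $\psi^{(1)}$ with $k=1$. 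That proposition states precisely the asymptotics of $T^k\psi^{(1)}$ in terms of $I_0[\psi^{(1)}]$, and the $k=1$ case yields the asymptotics of $T\psi^{(1)}=\psi$.

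First I would verify that all hypotheses of Proposition \ref{prop:tkpsi_lower} (with $k=1$, applied to $\psi^{(1)}$ in place of $\psi$) are met. This requires controlling the initial energy norm $\widetilde{E}^{\epsilon}_{0,I_0\neq 0;2}[\psi^{(1)}]$ and the norm $P_{I_0,\beta;1}[\psi^{(1)}]$ in terms of data for $\psi$. These transitions are exactly what Proposition \ref{propositiontransition} and Corollary \ref{cor:estPnorms} provide: the former bounds $\widetilde{E}^{\epsilon}_{0,I_0\neq 0;k+1}[\psi^{(1)}]$ by $\widetilde{E}^{\epsilon}_{0,I_0=0;k}[\psi]$ plus the $\int_{\Sigma_0}J^N[N^2\psi]\cdot n_0$ term, and the latter gives finiteness of $P_{I_0,\beta;k+1}[\psi^{(1)}]$. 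The hypotheses of the present proposition --- namely $\widetilde{E}^{\epsilon}_{0,I_0=0;1}[\psi]+\int_{\Sigma_0}J^N[N^2\psi]\cdot n_0 <\infty$ together with the initial decay $v^3\partial_v(r\psi)(0,v)=\lim_{v'\to\infty}v'^3\partial_v(r\psi)(0,v')+O(v^{-\beta})$ --- are precisely calibrated so that the construction of $\psi^{(1)}$ via Proposition \ref{prop:constructionTinvpsi} succeeds (using $I_0[\psi]=0$) and these transition estimates apply with $k=1$.

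Next I would read off the conclusion. Proposition \ref{prop:tkpsi_lower} with $k=1$ gives, for $r\geq R$,
\begin{equation*}
\left|T\psi^{(1)}(u,v)-4(-1)\cdot 1!\cdot \frac{I_0[\psi^{(1)}]}{(u+1)^2v}\left(1+\frac{u+1}{v}\right)\right|\leq C(\cdots)(u+1)^{-2-\epsilon}v^{-1},
\end{equation*}
and since $T\psi^{(1)}=\psi$ and $I_0[\psi^{(1)}]=I_0^{(1)}[\psi]$, this yields the claimed $r\geq R$ estimate after identifying the leading coefficient as $-4I_0^{(1)}[\psi](u+1)^{-2}v^{-1}(1+u/v)$; the slight discrepancy between $1+\frac{u+1}{v}$ and $1+\frac{u}{v}$ is absorbed into the error term since $v^{-1}\cdot(u+1)^{-2}v^{-1}$ is lower order. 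Similarly, the $r\leq R$ estimate from Proposition \ref{prop:tkpsi_lower} with $k=1$ gives the coefficient $4(-1)\cdot 2!\cdot I_0^{(1)}[\psi](\tau+1)^{-3}=-8I_0^{(1)}[\psi](\tau+1)^{-3}$, matching the stated $+8I_0^{(1)}[\psi](\tau+1)^{-3}$ inside the absolute value.

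The main obstacle I anticipate is not the final identification but the \emph{bookkeeping of the norm equivalences and the choice of $\epsilon$}: one must check that the number of derivatives and $r$-weights demanded by Proposition \ref{prop:tkpsi_lower} for $\psi^{(1)}$ at order $k=1$ (i.e.\ $\widetilde{E}^{\epsilon}_{0,I_0\neq 0;2}[\psi^{(1)}]$ and $P_{I_0,\beta;1}[\psi^{(1)}]$) are genuinely controlled by the order-$1$ data assumed for $\psi$, and that $\epsilon$ can be taken small enough (as a function of the fixed geometric data, consistent with the restrictions $\epsilon<\frac{1}{98}$ and $\beta>\epsilon$ appearing in Proposition \ref{prop:psi_lower}) to close everything simultaneously. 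Once the transition lemmas are lined up correctly, the proof is a direct specialization and the error terms collect cleanly into the stated right-hand sides.
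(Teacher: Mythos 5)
Your proposal is correct and follows essentially the same route as the paper: construct the time integral $\psi^{(1)}$ (the paper invokes Proposition \ref{prop:tinverseconstr_k}, which also supplies the data expansion needed for Corollary \ref{cor:estPnorms}), control $\widetilde{E}^{\epsilon}_{0,I_0\neq 0;2}[\psi^{(1)}]$ and $P_{I_0,\beta;1}[\psi^{(1)}]$ via Proposition \ref{propositiontransition} and Corollary \ref{cor:estPnorms}, and then apply Proposition \ref{prop:tkpsi_lower} with $k=1$ to $\psi^{(1)}$, identifying $T\psi^{(1)}=\psi$ and $I_0[\psi^{(1)}]=I_0^{(1)}[\psi]$. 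Your additional bookkeeping (absorbing the $1+\frac{u+1}{v}$ versus $1+\frac{u}{v}$ discrepancy into the error, and the sign check giving the $-8I_0^{(1)}[\psi](\tau+1)^{-3}$ leading term) is also consistent with the paper's statement.
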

\begin{proof}
The proposition follows immediately by using Proposition \ref{prop:tinverseconstr_k} with $k=1$, Corollary \ref{cor:estPnorms} with $N=1$ and Proposition \ref{propositiontransition} to show that $\psi^{(1)}$ satisfies the assumptions needed to subsequently apply Proposition \ref{prop:tkpsi_lower} with $k=1$.
\end{proof}

More generally, we can apply Proposition \ref{prop:tinverseconstr_k}, Corollary \ref{cor:estPnorms}, Proposition \ref{propositiontransition} and Proposition \ref{prop:tkpsi_lower} to determine the late-time behaviour of suitably decaying $\psi$ if $I_0^{(k)}[\psi]=0$ for all $0\leq k\leq n-1$, where $n\in \N$. This concludes the proof of Theorem \ref{prop:tkpsi_lower1}.

Finally, observe that Theorem \ref{thm:asmpsinpn0rfo00}  follows from Proposition  \ref{prop:tkpsi_lower} and \ref{prop:constructionTinvpsi}.

\appendix

\section{Energy norms}
\label{apx:energynorms}
Consider the following energy norms for the initial data of a function $\psi$ that satisfies \eqref{waveequation}.

Let $\psi$ be spherically symmetric and  $\epsilon>0$. Then the relevant energy norms are defined as follows:
\begin{equation*}
\begin{split}
E^{\epsilon}_{0,I_0\neq0;k}[\psi]=&\:\sum_{l\leq 3+3k}\int_{\Sigma_{0}}J^N[T^l\psi]\cdot n_{0}\,d\mu_{\Sigma_0}\\
&+\sum_{l\leq 2k}\int_{\mathcal{N}_{0}} r^{3-\epsilon}(\partial_rT^l\phi)^2\,dr+r^{2}(\partial_rT^{l+1}\phi)^2+r(\partial_rT^{2+l}\phi)^2\,dr\\
&+\sum_{\substack{m\leq k\\ l\leq 2k-2m+\min\{k,1\}}} \int_{\mathcal{N}_{0}}r^{2+2m-\epsilon}(\partial_r^{1+m}T^{l}\phi)^2\, dr\\
&+\int_{\mathcal{N}_{0}}r^{3+2k-\epsilon}(\partial_r^{1+k}\phi)^2\, dr
\end{split}
\end{equation*}
and
\begin{equation*}
\begin{split}
E^{\epsilon}_{0,I_0=0;k}[\psi]=&\:\sum_{l\leq 5+3k}\int_{\Sigma_{0}}J^N[T^l\psi]\cdot n_{0}\,d\mu_{\Sigma_0}\\
&+\sum_{l\leq 2k}\int_{\mathcal{N}_{0}} r^{5-\epsilon}(\partial_rT^l\phi)^2+r^{4-\epsilon}(\partial_rT^{1+l}\phi)^2+r^{3-\epsilon}(\partial_rT^{2+l}\phi)^2\\
&+r^{2}(\partial_rT^{3+l}\phi)^2+r(\partial_rT^{4+l}\phi)^2\, dr\\
&+\sum_{\substack{m\leq k\\ l\leq 2k-2m+\min\{k,1\}}} \int_{\mathcal{N}_{0}}r^{4+2m-\epsilon}(\partial_r^{1+m}T^{l}\phi)^2\, dr\\
&+\int_{\mathcal{N}_{0}}r^{5+2k-\epsilon}(\partial_r^{1+k}\phi)^2\, dr.
\end{split}
\end{equation*}

Let $\psi$ be supported on angular frequencies with $\ell=1$ and let $\epsilon>0$. Then the relevant energy norm is defined as follows:
\begin{equation*}
\begin{split}
E_{1;k}^{\epsilon}[\psi]\doteq &\:\sum_{\substack{l\leq 6+3k}}\int_{\Sigma_{0}}J^N[T^l\psi]\cdot n_{0}\;d\mu_{\Sigma_0}\\
&+\sum_{ l\leq 4+2k}\int_{\mathcal{N}_0}r^{2}(\partial_rT^l\phi)^2+r^{1}(\partial_rT^{1+l}\phi)^2\,d\omega dr\\
&+\sum_{l\leq 3,m\leq 2k}\int_{\mathcal{N}_{0}}r^{4-l-\epsilon}(\partial_rT^{l+m}\widetilde{\Phi})^2\,d\omega dr\\
&+\sum_{\substack{ m\leq \max\{k-1,0\}\\ l\leq k-2m+\min\{k,1\}}}\int_{\mathcal{N}_{0}}r^{4+2m-\epsilon}(\partial_r^{1+m}T^{l}\widetilde{\Phi})^2\,d\omega dr\\
&+\sum_{\substack{ m\leq k\\ l\leq 2k-2m+1}}\int_{\mathcal{N}_{0}} r^{3+2m-\epsilon}(\partial_r^{1+m}T^{l}\widetilde{\Phi})^2\,d\omega dr\\
&+\int_{\mathcal{N}_{0}} r^{4+2k-\epsilon}(\partial_r^{1+k}\widetilde{\Phi})^2\,d\omega dr.
\end{split}
\end{equation*}

Let $\psi$ be supported on angular frequencies with $\ell\geq 1$ and let $\epsilon>0$. Then the relevant energy norm is defined as follows:
\begin{equation*}
\begin{split}
E_{2;k}^{\epsilon}[\psi]\doteq &\:\sum_{\substack{|\alpha|\leq k\\ l+|\alpha|\leq 6+3k}}\int_{\Sigma_{0}}J^N[T^l\Omega^{\alpha}\psi]\cdot n_{0}\;d\mu_{\Sigma_0}\\
&+\sum_{ l\leq 4+2k}\int_{\mathcal{N}_0}r^{2}(\partial_rT^l\phi)^2+r^{1}(\partial_rT^{1+l}\phi)^2\,d\omega dr\\
&+\sum_{ l\leq 2k+2}\int_{\mathcal{N}_{0}} r^{2-\epsilon}(\partial_rT^{l}{\Phi})^2+r^{1-\epsilon}(\partial_rT^{l+1}{\Phi})^2\,d\omega dr\\
&+\sum_{\substack{|\alpha|\leq k\\l+|\alpha|\leq 2k}}\int_{\mathcal{N}_{0}} r^{2-\epsilon}(\partial_rT^{l}\Omega^{\alpha}{\Phi}_{(2)})^2+r^{1-\epsilon}(\partial_rT^{l+1}\Omega^{\alpha}{\Phi}_{(2)})^2\,d\omega dr\\
&+\sum_{\substack{|\alpha|\leq \max\{0,k-1\}\\ m\leq \max\{k-1,0\}\\ l+|\alpha|\leq k-2m+\min\{k,1\}}}\int_{\mathcal{N}_{0}}r^{2+2m-\epsilon}(\partial_r^{1+m}\Omega^{\alpha}T^{l}{\Phi}_{(2)})^2\,d\omega dr\\
&+\sum_{\substack{|\alpha|\leq \max\{0,k-1\}, m\leq k\\ l+|\alpha|\leq 2k-2m+1}}\int_{\mathcal{N}_{0}} r^{1+2m-\epsilon}(\partial_r^{1+m}\Omega^{\alpha}T^{l}{\Phi}_{(2)})^2\,d\omega dr\\
&+\int_{\mathcal{N}_{0}} r^{2+2k-\epsilon}(\partial_r^{1+k}{\Phi}_{(2)})^2\,d\omega dr.
\end{split}
\end{equation*}

\section{Proof of Proposition \ref{prop:initenergytimeint}}
\label{sec:initenergytimeint}
\begin{proof}
From the expressions in Appendix \ref{apx:energynorms} with $k=0$, it follows that the only terms in $E^{\epsilon}_{0,I_0=0;0}[\psi^{(1)}]$ that are not contained in $E^{\epsilon}_{0,I_0=0;0}[\psi]$ are
\begin{equation*}
\int_{\Sigma_0}J^N[\psi^{(1)}]\cdot n_0\,d\mu_0
\end{equation*}
and
\begin{equation*}
\int_{\mathcal{N}_0}r^{3-\epsilon}(\partial_r\phi^{(1)})^2\,dr.
\end{equation*}
We first split and estimate
\begin{equation*}
\begin{split}
\int_{\Sigma_0}J^N[\psi^{(1)}]\cdot n_0\,d\mu_0=&\:\int_{\Sigma_0\cap \{\rho\leq R\}}J^N[\psi^{(1)}]\cdot n_0\,d\mu_0+\int_{\mathcal{N}_0}J^N[\psi^{(1)}]\cdot L\,r^2\,dr\\
\sim&\: \int_{r_{\rm min}}^R(\partial_{\rho}\psi^{(1)})^2+(T\psi^{(1)})^2\,dr+\int_{R}^{\infty}r^2(\partial_r\psi^{(1)})^2\,dr,
\end{split}
\end{equation*}
where the second estimate can for example be found in the appendix of \cite{paper1}. We can estimate
\begin{equation*}
\int_{r_{\rm min}}^R(T\psi^{(1)})^2\,dr\lesssim \sup_{r_{\rm min}\leq r\leq R}|\phi|^2(0,r)\lesssim \int_{\Sigma_0}J^N[\psi]\cdot n_0\,d\mu_0,
\end{equation*}
where the second inequality follows from a straightforward application of the fundamental theorem of calculus together with Cauchy--Schwarz.

By Proposition \ref{prop:constructionTinvpsi} we have the following expression for $\rho\leq R$:
\begin{equation}
\label{eq:timeintboundedR}
Dr^2\cdot \partial_{\rho}\psi^{(1)}(0,\rho)= \int_{r_{\rm min}}^{\rho} -2(1-h_{\Sigma_0}D)r\partial_{\rho}\phi+(2-Dh_{\Sigma_{0}})rh_{\Sigma_{0}} T\phi+(r\cdot (Dh_{\Sigma_{0}})')\cdot\phi\Big|_{\Sigma_{0}}\,d\rho'.
\end{equation}

Hence,
\begin{equation*}
\begin{split}
Dr^2\cdot |\partial_{\rho}\psi^{(1)}|(0,\rho) \lesssim&\: \int_{r_{\rm min}}^{\rho}\rho'\,d\rho'\cdot \sup_{r_{\rm min}\leq \rho'\leq R} (|\partial_{\rho}\psi|+|T\psi|+|\psi|)(0,\rho')\\
\lesssim& Dr^2\cdot \sup_{r_{\rm min}\leq \rho'\leq R} (|\partial_{\rho}\psi|+|T\psi|+|\psi|)(0,\rho')\\
\lesssim& Dr^2\cdot\sqrt{\int_{\Sigma_0}(J^N[\psi]+J^N[N\psi]+J^N[T\psi])\cdot n_0\,d\mu_0},
\end{split}
\end{equation*}
where we applied once more the fundamental theorem of calculus and Cauchy--Schwarz to arrive at the final inequality.

Therefore,
\begin{equation*}
\int_{\Sigma_0\cap \{\rho\leq R\}}J^N[\psi^{(1)}]\cdot n_0\,d\mu_0\lesssim \int_{\Sigma_0}(J^N[\psi]+J^N[N\psi]+J^N[T\psi])\cdot n_0\,d\mu_0.
\end{equation*}

By \eqref{preliequ}, together with the integrability condition from Proposition \ref{prop:constructionTinvpsi} and the estimates in $\{r\leq R\}$ above, we moreover have that for $r\geq R$
\begin{equation*}
\begin{split}
r^2|\partial_r \psi^{(1)}|(0,r)\lesssim&\: \sqrt{\int_{\Sigma_0}(J^N[\psi]+J^N[N\psi]+J^N[T\psi])\cdot n_0\,d\mu_0}+ \int_{R}^{\infty}r'|\partial_{r'}\phi|\,dr'\\
\lesssim&\: \sqrt{\int_{\Sigma_0}(J^N[\psi]+J^N[N\psi]+J^N[T\psi])\cdot n_0\,d\mu_0}\\
&+\sqrt{\int_{R}^{\infty}r'^{-3+\epsilon}\,dr'}\cdot \sqrt{\int_R^{\infty} r^{5-\epsilon} (\partial_r\phi)^2\,dr'}\\
&\lesssim \sqrt{E_{0,I_0= 0;0}[\psi]+\int_{\Sigma_0}J^N[N\psi]\cdot n_0\,d\mu_0}
\end{split}
\end{equation*}
We can therefore conclude that
\begin{equation*}
\int_{\mathcal{N}_0}r^2(\partial_{\rho}\psi^{(1)})^2\,dr\lesssim E_{0,I_0= 0;0}[\psi]+\int_{\Sigma_0}J^N[N\psi]\cdot n_0\,d\mu_0.
\end{equation*}

In order to obtain \eqref{eq:NenergyNtimeint}, we need to additionally estimate
\begin{equation*}
\int_{r_{\rm min}}^R (\partial_{\rho}^2\psi^{(1)})^2\,d\rho.
\end{equation*}

By \eqref{eq:timeintboundedR}, we can estimate
\begin{equation*}
Dr^2\cdot \partial_{\rho}\psi^{(1)}(0,\rho)=F(\rho),
\end{equation*}
where
\begin{align*}
F(\rho)=&\:\int_{r_{\rm min}}^{\rho} -2(1-h_{\Sigma_0}D)r\partial_{\rho}\phi+(2-Dh_{\Sigma_{0}})rh_{\Sigma_{0}} T\phi+(r\cdot (Dh_{\Sigma_{0}})')\cdot\phi\Big|_{\Sigma_{0}}\,d\rho',\\
F'(\rho)=&\:\left(-2(1-h_{\Sigma_0}D)r\partial_{\rho}\phi(2-Dh_{\Sigma_{0}})rh_{\Sigma_{0}} T\phi+(r\cdot (Dh_{\Sigma_{0}})')\cdot\phi\right)(0,\rho),\\
F''(\rho)=&\: \partial_{\rho}\left(-2(1-h_{\Sigma_0}D)r\partial_{\rho}\phi+(2-Dh_{\Sigma_{0}})rh_{\Sigma_{0}} T\phi+(r\cdot (Dh_{\Sigma_{0}})')\cdot\phi\right)(0,\rho).
\end{align*}
Hence, we can apply Taylor's theorem to estimate
\begin{equation*}
F(\rho)=0+F'(r_{\rm min})\cdot (r-r_{\rm min})+\frac{1}{2}F''(r_{\rm min})\cdot (r-r_{\rm min})^2+ h(\rho)\cdot (r-r_{\rm min})^2,
\end{equation*}
where $h:[r_{\rm min},\infty)\to \R$ is a smooth function such that $\lim_{\rho\downarrow r_{\rm min}}g(\rho)=0$.

Consider first the case $r_{\rm min}=r_+$. Then we use that $D(r)=d(r)(r-r_+)$ to express for $\rho\leq R$:
\begin{equation*}
\partial_{\rho}\psi^{(1)}(0,\rho)=r^{-2}d^{-1}(r)F'(r_{+})+\frac{1}{2}r^{-2}d^{-1}(r)F''(r_{+})\cdot (r-r_{+})+ r^{-2}d^{-1}(r)h(\rho)\cdot (r-r_{+}).
\end{equation*}
It therefore follows immediately that for all $\rho\leq R$
\begin{equation*}
\begin{split}
|\partial_{\rho}^2\psi^{(1)}|(0,\rho)\lesssim&\: \sup_{r_{\rm min}\leq \rho'\leq R}(|F'|+|F''|+|h|)(0,\rho')\\
\lesssim&\: \sup_{r_{\rm min}\leq \rho'\leq R}(|F|+|F'|+|F''|)(0,\rho')
\end{split}
\end{equation*}

Now, consider the $r_{\rm min}=0$ case. Then, we also have that $F'(r_{\rm min})=0$, so we obtain
\begin{equation*}
\partial_{\rho}\psi^{(1)}(0,\rho)=\frac{1}{2}D^{-1}(r)F''(0)+D^{-1}(r)h(\rho).
\end{equation*}
So we have in this case also that
\begin{equation*}
\begin{split}
|\partial_{\rho}^2\psi^{(1)}|(0,\rho)\lesssim&\: \sup_{r_{\rm min}\leq \rho'\leq R}(|F|+|F'|+|F''|)(0,\rho').
\end{split}
\end{equation*}

Finally, we use that
\begin{equation*}
\sup_{r_{\rm min}\leq \rho'\leq R}(|F|+|F'|+|F''|)(0,\rho')\lesssim \sum_{0\leq j_1+j_2\leq 2}\int_{\Sigma_0}J^N[N^{j_1}T^{j_2}\psi]\cdot n_0\,d\mu_0
\end{equation*}
to obtain \eqref{eq:NenergyNtimeint}.

Let us now consider $\partial_r\phi^{(1)}$ in the region where $r\geq R$. We split
\begin{equation*}
\begin{split}
D\partial_r\phi^{(1)}=&\:D\partial_r(r\psi^{(1)})=Dr\partial_r\psi^{(1)}+D\psi^{(1)}\\
=&\:(Dr\partial_r\psi^{(1)}-C_0r^{-1})+(D\psi^{(1)}-C_0r^{-1}),
\end{split}
\end{equation*}
where $C_0=\lim_{r\to \infty}r^2\partial_r\psi^{(1)}(0,r)$.

By \eqref{preliequ} we can estimate
\begin{equation*}
\begin{split}
\left|Dr\partial_r\psi^{(1)}-C_0r^{-1}\right|(0,r)\lesssim&\: \frac{1}{r}\cdot \left|\int_{r}^{\infty}r' \partial_{r'}\phi\,dr'\right|\\
\leq&\: \frac{1}{r}\cdot \sqrt{\int_{r}^{\infty}r'^{-3+\epsilon}\,dr'}\cdot \sqrt{\int_{r}^{\infty}r'^{5-\epsilon} (\partial_{r'}\phi)^2\,dr'}\\
\leq &\: r^{-2+\frac{\epsilon}{2}}\int_{R}^{\infty}r'^{5-\epsilon} (\partial_{r'}\phi)^2\,dr.
\end{split}
\end{equation*}

By \eqref{eq:exprpsi1} and the estimates above, we can estimate
\begin{equation*}
\begin{split}
\left|\psi^{(1)}+C_0r^{-1}\right|(0,r)\lesssim&\: |C_0| \int_{r}^{\infty} O(r'^{-3})\,dr'+ \int_{r}^{\infty} r'^{-2} \left( \int_{r'}^{\infty}\left|r'' \partial_{r''}\phi\right|\,dr''\right)\,dr'\\
\lesssim&\: r^{-2+\frac{\epsilon}{2}}\cdot \sqrt{E_{0,I_0=0;0}[\psi]+\int_{\Sigma_0}J^N[N\psi]\cdot n_0\,d\mu_0}
\end{split}
\end{equation*}

Therefore, we can conclude that
\begin{equation*}
\int_{\mathcal{N}_0}r^{3-\epsilon}(\partial_r\phi^{(1)})^2\,dr\lesssim E_{0,I_0=0;0}[\psi]+\int_{\Sigma_0}J^N[N\psi]\cdot n_0\,d\mu_0.
\end{equation*}
\end{proof}

\section{Proof of Proposition \ref{propositiontransition}}
\label{sec:propositiontransition}
\begin{proof}
The only terms in $\widetilde{E}_{0,I_0\neq 0;k}^{\epsilon}[\psi^{(1)}]$ that are not contained in $\widetilde{E}_{0,I_0=0;k}^{\epsilon}[\psi]$ (i.e.\ do not only involve (higher-order) time-derivatives of $\psi^{(1)}$) are
\begin{equation}
\label{eq:Nenergies}
\int_{\Sigma_0}(J^N[\psi^{(1)}]+J^N[N\psi^{(1)})\cdot n_0\,d\mu_0
\end{equation}
and
\begin{equation}
\label{eq:rweights}
\sum_{j=0}^k\int_{\mathcal{N}_0}r^{3+2(k+1)-\epsilon-2j} (\partial_r^{k+2-j}\phi^{(1)})^2\,dr.
\end{equation}
The integral \eqref{eq:Nenergies} can be estimated directly by applying Proposition \ref{prop:initenergytimeint}.

Since $\psi^{(1)}$ is a solution to \eqref{waveequation}, we have that
\begin{equation*}
D\partial_r^2\phi^{(1)}=2\partial_r\phi+D'\partial_r \phi^{(1)}+D'r^{-1}\phi^{(1)}.
\end{equation*}
Therefore (using the appropriate asymptotics for $D$),
\begin{equation*}
\partial_r^{j+2}\phi^{(1)}=2\partial_r^{j+1}\phi+\sum_{m=0}^jO(r^{-1-m})\partial_r^{j+1-m} \phi^{(1)}+O(r^{-3-j})\phi^{(1)}.
\end{equation*}
for all $j\leq k$.

Hence,
\begin{equation*}
\begin{split}
\sum_{j=0}^k\int_{\mathcal{N}_0}r^{3+2(k+1)-\epsilon-2j}(\partial_r^{k+2-j}\phi^{(1)})^2\,dr'\lesssim&\: \int_{\mathcal{N}_0}r^{5+2k-\epsilon}(\partial_r^{k+1}\phi)^2+\sum_{j=0}^kr^{3+2k-\epsilon-2j}(\partial_r^{k+1-j}\phi^{(1)})^2\\
&+O(r^{-1-\epsilon})(\phi^{(1)})^2\,dr'\\
\lesssim &\: E^{\epsilon}_{0,I_0=0;k}[\psi]+E^{\epsilon}_{0,I_0\neq 0;k}[\psi^{(1)}]
\end{split}
\end{equation*}

By using Proposition \ref{prop:initenergytimeint} to estimate $E^{\epsilon}_{0,I_0\neq 0;0}[\psi^{(1)}]$, we can therefore conclude that \eqref{eq:maininitenergyesttimeint} and \eqref{eq:maininitenergyesttimeint2} hold for $k=0$. By the above equation, the general $k$ case then follows by induction.
\end{proof}

Department of Mathematics, University of California, Los Angeles, CA 90095, United States, yannis@math.ucla.edu

\bigskip

Princeton University, Department of Mathematics, Fine Hall, Washington Road, Princeton, NJ 08544, United States, aretakis@math.princeton.edu

\bigskip

Department of Mathematics, University of Toronto Scarborough 1265 Military Trail, Toronto, ON, M1C 1A4, Canada, aretakis@math.toronto.edu

\bigskip

Department of Mathematics, University of Toronto, 40 St George Street, Toronto, ON, Canada, aretakis@math.toronto.edu

\bigskip

Department of Mathematics, Imperial College London, SW7 2AZ, London, United Kingdom, dejan.gajic@imperial.ac.uk

\bigskip

Department of Applied Mathematics and Theoretical Physics, University of Cambridge, Wilberforce Road, Cambridge CB3 0WA, United Kingdom, dg405@cam.ac.uk

\end{document}